\newcommand{\Rat}{\mathrm{Rat}}
\newcommand{\ring}{\mathrm{ring}}
\newcommand{\scoop}{\mathrm{Scoop}}
\newcommand{\Trunk}{\mathrm{Trunk}}
\renewcommand{\root}{\emptyset}
\newcommand{\out}{\mathrm{out}}
\newcommand{\per}{\mathrm{per}}
\newcommand{\TPerm}{\mathrm{TPerm}}
\newcommand{\CRT}{\mathrm{CRT}}
\newcommand{\BGW}{\mathrm{BGW}}
\newcommand{\CLE}{\mathrm{CLE}}
\newcommand{\Vol}{\mathrm{Vol}}
\newcommand{\ttF}{\mathtt{F}}
\newcommand{\cU}{\mathcal{U}}
\newcommand{\Loop}{\mathrm{Loop}}
\newcommand{\RR}{\mathbb{R}}
\newcommand{\CC}{\mathbb{C}}
\newcommand{\PP}{\mathbb{P}}
\newcommand{\EE}{\mathbb{E}}
\newcommand{\bchi}{\pmb{\chi}}
\newcommand{\NN}{\mathbb{N}}
\newcommand{\inn}{\mathrm{in}}
\newcommand{\inner}{_{\infty}^\inn}
\renewcommand{\epsilon}{\varepsilon}
\newcommand{\cL}{\mathbf{c}_{\mathrm{L}}}
\newcommand{\bbe}{\mathbbm{e}}
\numberwithin{equation}{section}
\newtheorem{theorem}{Theorem}[section]
\newtheorem{lemma}[theorem]{Lemma}
\newtheorem{corollary}[theorem]{Corollary}
\newtheorem{proposition}[theorem]{Proposition}
\theoremstyle{definition}
\newtheorem{definition}[theorem]{Definition}
\newtheorem{remark}[theorem]{Remark}
\newcommand{\ind}{\mathbf{1}}
\newcommand{\cR}{\mathcal{R}}
\newcommand{\fF}{\mathfrak{F}}
\newcommand{\cA}{\mathcal{A}}
\newcommand{\ZZ}{\mathbb{Z}}
\newcommand{\ext}{\mathrm{ext}}
\newcommand{\dis}{\mathrm{dis}}
\newcommand{\cT}{\mathcal{T}}
\newcommand{\bfq}{\mathbf{q}}
\newcommand{\unif}{\mathrm{unif}}
\begin{document}

\title{Area measures and branched polymers in \\supercritical Liouville quantum gravity}
 \date{ }
 \author{
\begin{tabular}{c} Manan Bhatia\\[-5pt]\small MIT \end{tabular}
\begin{tabular}{c} Ewain Gwynne\\[-5pt]\small University of Chicago \end{tabular}  
\begin{tabular}{c} Jinwoo Sung\\[-5pt]\small University of Chicago \end{tabular}  
}
\maketitle

\begin{abstract}
  We study Liouville quantum gravity (LQG) in the supercritical (a.k.a.\ strongly coupled) phase, which has background charge $Q \in (0,2)$ and central charge $\mathbf c_{\mathrm{L}} = 1+6Q^2 \in (1,25)$. Recent works have shown how to define LQG in this phase as a planar random geometry associated with a variant of the Gaussian free field, which exhibits ``infinite spikes." In contrast, a number of results from physics, dating back to the 1980s, suggest that supercritical LQG surfaces should behave like ``branched polymers": i.e., they should look like the continuum random tree.

  We prove a result which reconciles these two descriptions of supercritical LQG. More precisely, we show that for a family of random planar maps with boundary in the universality class of supercritical LQG, if we condition on the (small probability) event that the planar map is finite, then the scaling limit is the continuum random tree. 
  
  We also show that there does not exist any locally finite measure associated with supercritical LQG which is locally determined by the field and satisfies the LQG coordinate change formula. Our proofs are based on a branching process description of supercritical LQG which comes from its coupling with CLE$_4$ (Ang and Gwynne, 2023).
\end{abstract}
\tableofcontents
\section{Introduction}
\label{sec:introduction}

Liouville quantum gravity (LQG) is a canonical one-parameter family of random geometries on an orientable surface with given topology. Conceived by Polyakov \cite{Pol81}, LQG was studied initially in the physics literature in the context of string theory and two-dimensional quantum gravity and since has become an active area of mathematical research. 

LQG is indexed by the parameter $\mathbf{c}_{\mathrm L} > 1$ called the \textbf{(Liouville) central charge}. (Some works on LQG alternatively consider the \textbf{matter central charge} $\mathbf c_{\mathrm M} = 26 - \mathbf c_{\mathrm L}$.) Heuristically speaking, an LQG surface of given topology is described by a random metric $g$ sampled from the ``uniform measure on the space of Riemannian metric tensors weighted by $(\det \Delta_g)^{-(26-\mathbf c_{\mathrm L})/2}$," where $\Delta_g$ is the Laplace--Beltrami operator of the surface with the metric $g$. While this definition does not make literal sense, there are various ways to define LQG rigorously that we discuss below. These definitions utilize alternative parameters, with common choices being the \textbf{background charge} $Q>0$ and the \textbf{coupling constant} $\gamma \in (0,2]\cup \{z\in \CC: |z|=2\}$ related by the formulae
\begin{equation}\label{eq:parameters}
    \mathbf{c}_{\mathrm L} = 1+6Q^2 \quad \text{and} \quad Q = \frac{2}{\gamma} + \frac{\gamma}{2}.
\end{equation}
The geometric properties of an LQG surface depend heavily on the value of its central charge, which can be classified into subcritical, critical, and supercritical phases as summarized in Table~\ref{table:1}. 
\begin{table} 
\renewcommand\baselinestretch{1.3}\selectfont
\centering
\begin{tabular}{ |c||c|c|c| } 
\hline 
& Central charge $\mathbf c_{\mathrm L}$ & Background charge $Q$ & Coupling constant $\gamma$\\
 \hline
 Subcritical &  $(25,\infty)$ & $(2,\infty)$ & $(0,2)$ \\ 
 \hline
 Critical & 25 & 2 & 2 \\ 
 \hline
 Supercritical & $(1,25)$ & $(0,2)$ & $\gamma \in \mathbb C\setminus \mathbb R$ with $|\gamma|=2$ \\ 
 \hline 
\end{tabular}
\label{table:1}
\bigskip
\caption{The phases of LQG and their parameter ranges.}
\end{table}
In the physics literature, LQG is said to be \textit{weakly coupled} in the subcritical and critical phases ($\mathbf c_{\mathrm L} \geq 25)$ and \textit{strongly coupled} in the supercritical phase ($\mathbf c_{\mathrm L} \in (1,25)$). In this paper, we are primarily interested in the latter case.

Over the past two decades, the subcritical and critical phases of LQG have been studied at length in the mathematical community. In these cases, the so-called  \textit{DDK ansatz} \cite{Dav88,DK89} states that, when defined on a domain $U\subseteq \CC$, the metric tensor associated with LQG should take the form
\begin{equation}\label{eq:ddk}
    g = e^{\gamma h}(dx^2 + dy^2)
\end{equation}
where $dx^2+dy^2$ is the Euclidean metric tensor and $h$ is a variant of the Gaussian free field (GFF) on $U$. (We assume that the reader is familiar with the GFF and we refer to the surveys \cite{She07, WP21, BP24} for detailed introductions to it.)
A GFF is rigorously defined only as a random generalized function and thus the metric tensor \eqref{eq:ddk} does not make a priori sense. Nevertheless, the geometry associated with \eqref{eq:ddk} can be made rigorous through a renormalization procedure approximating $h$ by a sequence of continuous functions. Indeed, as the limit of such approximations, the LQG area measure was constructed in \cite{DS11} as a Gaussian multiplicative chaos measure (see also \cite{Kah,RV08}). Furthermore, the distance function associated with LQG (or simply, the LQG metric) was constructed in a series of works culminating in \cite{DDDF20,GM21} again by a renormalization procedure involving smooth approximations to the GFF $h$. 
We also note the vast literature on the deep connections between LQG in the subcritical and critical phases and various mathematical objects including Schramm--Loewner evolution (SLE), random planar maps, conformal field theory, and random permutations; we direct the reader to the surveys \cite{Gwy-notices,GHS19,She22,BP24,GKR24} as starting points.

\subsection{Physics predictions on the behavior of supercritical LQG and the main contributions of this paper}
\label{sec:physics}
In the context of string theory, it is the supercritical phase of LQG which is expected to be physically relevant: indeed, the regime $\mathbf c_{\mathrm L} \in \{1,\dots,25\}$ is the one that was originally considered by Polyakov \cite{Pol81} to model the worldsheets swept out by strings moving in $\mathbb R^{26-\mathbf c_{\mathrm L}}$. The reader may refer to \cite[Remark~1.4]{AG23} for further explanation on the above-mentioned string theory connection. Moreover, given the recent works \cite{CJ16,Jaf16,Cha19a,Cha19b,MP19,MP24,CPS23} which have shown that Wilson loop observables in lattice gauge theory can be represented as sums over surfaces in $\RR^d$, it is plausible that supercritical LQG may be relevant to Yang--Mills theory. To be specific, the surfaces appearing in the surface-sum representation for $U(N)$ lattice gauge theory in the $N \rightarrow \infty$ limit might be related to supercritical LQG; we note such connections are still speculative and refer the reader to \cite[Section~7]{CPS23} for more on this.

However, when compared to the subcritical and critical phases, the supercritical phase of LQG has stayed much more mysterious. There are a number of works in the physics literature painting the underlying geometry of supercritical LQG in different lights.

\begin{enumerate}[(a)]
\item \label{item:infinite-spikes} One line of thought (e.g., \cite{Sei90}, see also \cite[Figure 10]{GM93}) suggests that when extending the conformal field theory to the regime $\mathbf c_{\mathrm L} \in (1,25)$, the geometry of the surface is torn apart by \textbf{``infinite spikes"} at a dense set of points. In fact, this picture of supercritical LQG is consistent with the mathematical literature \cite{GHPR20,DG23-tightness,DG23,Pfe21,AG23}, where it is represented as a random geometry constructed from a variant of the GFF with a dense set of singular points which are at infinite distance from all other points.
\item \label{item:branched-polymer}  A drastically different viewpoint is taken in the works \cite{DFJ84,ADFO86,Cat88,BH92,BJ92,CKR92,ADJT93,Dav97}, where, by often using numerical and heuristic analyses of random planar maps weighted by $(\det \Delta_{\mathrm{graph}})^{(\mathrm c_{\mathrm L}-26)/2}$ as an approximation of LQG, they suggest that LQG in the supercritical regime should look like a continuum random tree, or as it is often called in the physics literature, a \textbf{branched polymer}.
\item \label{item:Gervais} Yet another picture arises in the works of Gervais et al.\ \cite{GN84,GN85,GR94,GR96,CGS97,Ger94,Ger95,Ger97}, where, based on an algebraic study of the path integral formulation of LQG, it is claimed that there are some special integrable algebraic structures associated with supercritical LQG at the specific values $\cL=7,13,19$. Moreover, for these values of $\cL$, these authors define finite local operators which gives rise to a so-called ``area element'' (e.g., (4.3) in \cite{GR94}), which could conceivably be interpreted as a measure associated with LQG.

\item There are also purely algebraic approaches to supercritical LQG based on conformal field theory. See, e.g., \cite{ribault-cft} for an introduction to this approach.
\end{enumerate}

While the branched polymer picture seems to have the most support in the physics literature, there have been no mathematically rigorous results in support of this view prior to this work. 
In the works \cite{GHPR20,APPS22}, a possible reconciliation of the branched polymer and the infinite spikes pictures was suggested. Namely, it was suggested that, while a supercritical LQG surface a priori has infinite spikes at a dense set of points, it degenerates to a branched polymer when ``conditioned'' on the zero-probability event of having no infinite spikes. However, previous to our work, it was unclear whether this heuristic explanation is correct. Furthermore, it was not clear how to make this idea rigorous, especially given that we need to condition on the zero-probability event of a supercritical LQG surface having no infinite spikes at all. 

Our goal in this work is to present a rigorous formulation of a supercritical LQG surface reconciling the two viewpoints (\ref{item:infinite-spikes}) and (\ref{item:branched-polymer}). Moreover, in disagreement with the claim in (\ref{item:Gervais}), we establish that it is not possible to construct a measure associated with supercritical LQG for any value of $\mathbf c_\mathrm{L}\in (1,25)$: this was unclear from the previous mathematical literature (see \cite[Questions~6.1--2]{GHPR20}). We now give a quick overview of the main results of this paper. 

\begin{enumerate}
\item We establish a link between the branched polymer and infinite spikes pictures via a discrete model of supercritical LQG. We consider a model of random planar maps with boundary (see Section~\ref{sec:discrete-model}) which can be expected to converge to supercritical LQG in the scaling limit (see Theorem~\ref{thm:loop-convergence}). While a map in this model has a finite perimeter $p\in \NN$, it has infinitely many vertices (and thereby ``infinite spikes") with positive probability. In fact, the probability of the map with perimeter $p$ being infinite increases to $1$ as $p\rightarrow \infty$ (Proposition~\ref{thm:2}). We establish that upon \textit{conditioning} this planar map on the rare event that it contains finitely many vertices, we obtain a continuum random tree in the scaling limit as $p\rightarrow \infty$ (Theorem~\ref{thm:1}). 
The physics papers referenced in (\ref{item:branched-polymer}) considered \textit{finite} planar maps (e.g., with a fixed number of edges), so they were implicitly conditioning the surface to be finite.
\item One interpretation of the ``area element'' described in (\ref{item:Gervais}) above is that for the values $\cL=7,13,19$, a supercritical LQG surface has a locally finite volume form associated to it. In other words, given a GFF $h$, one would expect to find a locally finite Borel measure $\mathfrak{m}_h$ which is locally determined by and compatible with the geometry of a supercritical LQG surface associated with $h$. Nonetheless, in Theorem \ref{thm:3}, we show that there does not exist any natural notion of a locally finite and locally determined volume measure for the entire range $\cL \in (1,25)$ of supercritical LQG.
\end{enumerate}

The central idea of our proofs is to exploit the multiplicative cascade structure of supercritical LQG, which comes from its coupling with nested CLE$_4$ \cite{AG23}. This is closely related to the scaling limit of the analogous structure for the loop $O(n)$ model identified in \cite{CCM20} based on the gasket decomposition of \cite{BBG11}. The crucial difference is that in our model, even in the discrete version introduced in the next subsection, there are infinitely many loops of macroscopic length. This is the first paper to use multiplicative cascades to prove properties of objects related to supercritical LQG. We expect that this idea will have many additional applications in the future.

We now introduce our results in further detail, first giving a precise description of our random planar map model of supercritical LQG. This is followed by statements of the main results of this paper and the ideas of their proofs.

\subsection{A discrete model of supercritical LQG}
\label{sec:discrete-model}
As mentioned above, we give a rigorous meaning to ``conditioning a supercritical LQG surface to have no infinite spikes" by considering a discrete model in the universality class of supercritical LQG, for which there is a natural notion of conditioning it to have finite size. In particular, we consider a random loop-decorated planar map which is a discrete analog of the supercritical LQG disk decorated with a conformal loop ensemble of parameter $\kappa=4$ ($\mathrm{CLE}_4$) as constructed recently in \cite{AG23}; see Section~\ref{sec:2} for an introduction to this continuum model. This discrete model is a variant of planar maps decorated by the loop $O(2)$ model where we force the loops to have discrepancies between their outer and inner lengths. This is a slight modification of the model suggested in \cite{AG23}, which is constructed iteratively from the gasket decomposition of the loop $O(n)$ model \cite{BBG11}. We refer the reader to Remark \ref{rem:model} for a further discussion on the relations between these discrete models.

We begin with a few basic definitions.
A \textbf{planar map} is a planar graph embedded into the Riemann sphere considered up to orientation-preserving homeomorphisms. Our planar map $M$ will have a designated \textbf{root face}, which we view as the ``outside'' of a planar map which has the ``topology'' of a disk. The \textbf{boundary} $\partial M$ of the map $M$ is the subgraph of $M$ consisting of vertices and edges incident to the root face. The non-root faces of $M$ are its \textbf{interior faces}, denoted as $\fF(M)$. Given a face $f$, we use $\per(f)$ to denote the \textbf{half-perimeter} of $f$. That is, $2\,\per(f)$ is equal to the number of edges incident to $f$, where an edge is counted twice if it is incident to $f$ on both of its sides (see, e.g., the root face $f$ in Figure \ref{fig:model}(a) with $\per(f)=13$).

We shall construct the model inductively, with two types of random planar maps inserted into the existing planar map one after another. 
\begin{itemize}
\item The first kind is a rooted bipartite\footnote{A graph is called bipartite if all of its faces have even degrees. We choose to work with bipartite Boltzmann maps so as to utilize the random walk bijections of \cite{BDG04} and \cite{JS15} (see Proposition~\ref{prop:10}). We expect our results to hold in a greater generality without assuming the planar maps to be bipartite.} \textbf{Boltzmann map} conditioned on a fixed boundary length, which should be understood as the discrete counterpart of the CLE$_4$ gasket. 
We fix a critical non-generic weight sequence $\bfq=(q_i)_{i\in \NN}$ of type ${a=2}$, and choose our maps from the probability measure $\PP_\bfq^{(p)}$ on rooted bipartite planar maps with boundary perimeter $2p$. For example, we can take $\bfq$ to be a weight sequence corresponding to the gasket of a version of the critical $O(2)$ loop model-decorated quadrangulation (see the case $g=h/2$ in \cite[Appendix~B]{ADH24}).

We define these precisely in Section~\ref{sec:maps-def}, and at this point we simply note that a Boltzmann map sampled from the law $\PP_{\bfq}^{(p)}$ is expected to converge in the $p\to\infty$ scaling limit to the gasket of a CLE$_4$ on an independent critical LQG disk. (See Lemma \ref{lem:13} for a rigorous result on the convergence of perimeters of faces to length of CLE$_4$ loops in support of this). We also emphasize that these Boltzmann maps are not required to have simple boundaries; for example, we may sample from $\PP_\bfq^{(p)}$ a tree with $p$ edges, which, we note, has no interior faces ($\fF(M)=\varnothing$).

\item The second kind of random planar map that we need is a \textbf{``ring"} with two distinguished faces --- an outer face $f_{\mathrm{out}}$ and an inner face $f_{\mathrm{in}}$ --- such that both $f_\out$ and $f_\inn$ have even degrees. 
See Figure~\ref{fig:ring} for an example of a ring. These rings are inserted into each interior face of a Boltzmann map. They are analogous to the loops in the loop $O(n)$ model (see \cite{BBG11}) on the dual map, where the two faces of a ring correspond to the inside and outside of a loop. 

Let $Q\in (0,2)$ be the background charge associated with the given universality class of supercritical LQG and define \begin{equation}\label{eq:beta-Q}
      \beta_Q :=\frac{\pi \sqrt{4-Q^2}}{Q}.
    \end{equation}
We fix a sequence of probability measures $\big(\PP^{(p)}_{\ring}\big)_{p \in \NN}$ of rings such that under $\PP_\ring^{(p)}$, we have $\per(f_\out) = p$ a.s.\ and the ratio $\per(f_\inn)/\per(f_\out)$ is approximately equal in distribution to $\exp(\beta_Q Y)$ where $Y$ is a Rademacher random variable. This is the law of the ratio of inner and outer LQG lengths of CLE$_4$ loops on supercritical LQG in the setting of \cite{AG23}  (see \eqref{eq:length-ratio}).

 For concreteness, we can choose the law of $\per(f_\inn)$ under $\PP_\ring^{(p)}$ to be equal to the law of $\lfloor p\exp(\beta_Q Y)\rfloor $, and let $\PP_\ring^{(p)}$ to be supported on rings where all vertices are on the boundary of either the inner or outer face and all faces other than $f_\out$ and $f_\inn$ are quadrilaterals.  However, we can allow a more general class of distributions of rings, and this is specified later in Definition~\ref{def:ring}.

\end{itemize}

\begin{figure}
\centering
\includegraphics[width=0.3\linewidth]{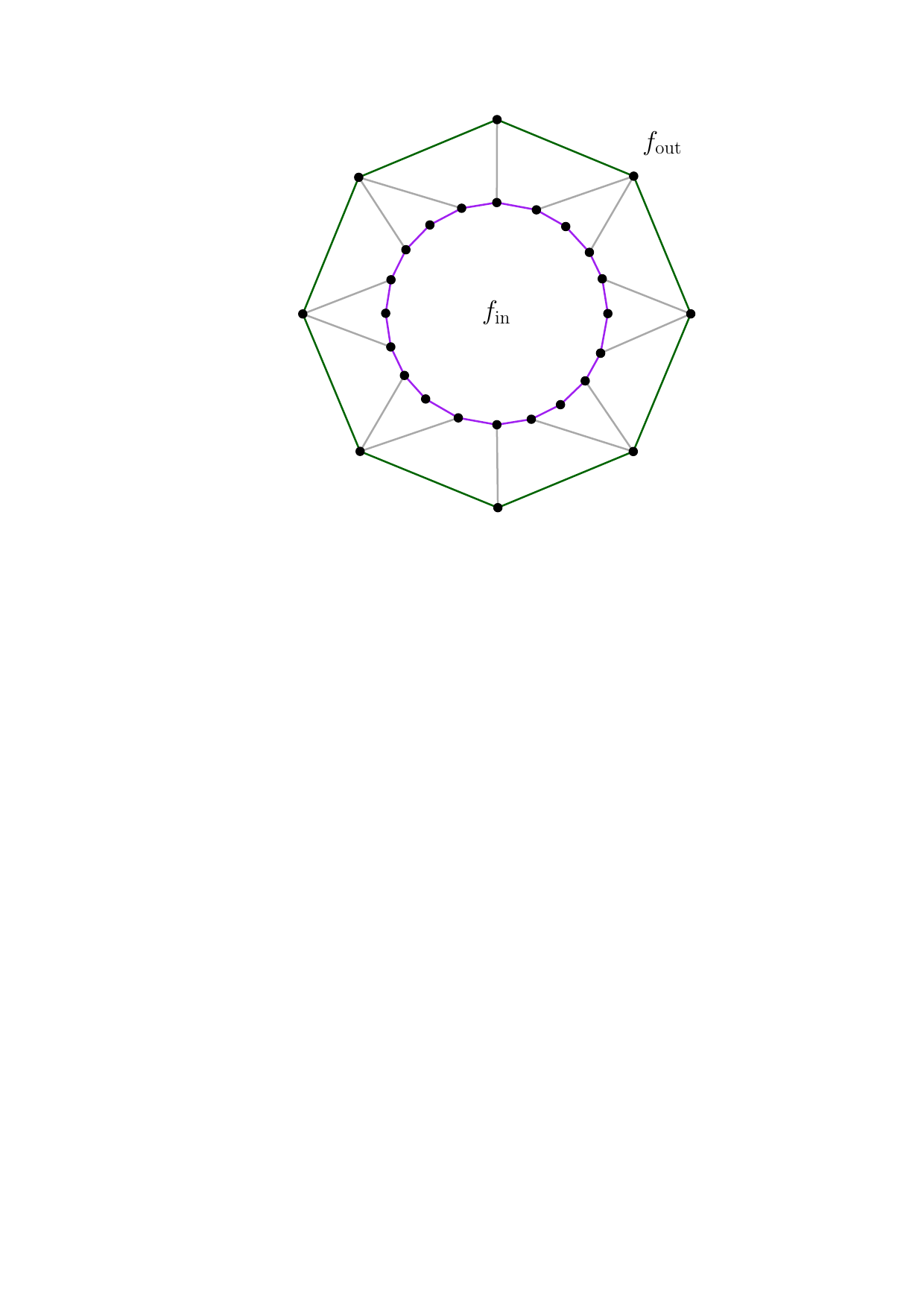}
\caption{An example of a ring sampled from $\mathbb P_\ring^{(4)}$ with the outer boundary colored in green and the inner boundary in purple. For this ring, $\per(f_\out)=4$ and $\per(f_\inn)=10$.}
\label{fig:ring}
\end{figure}

With these building blocks at hand, we now iteratively construct a discrete model of supercritical LQG. See Figure~\ref{fig:model} for an illustration of the following construction.

\begin{figure}
\centering
\includegraphics[width=0.8\linewidth]{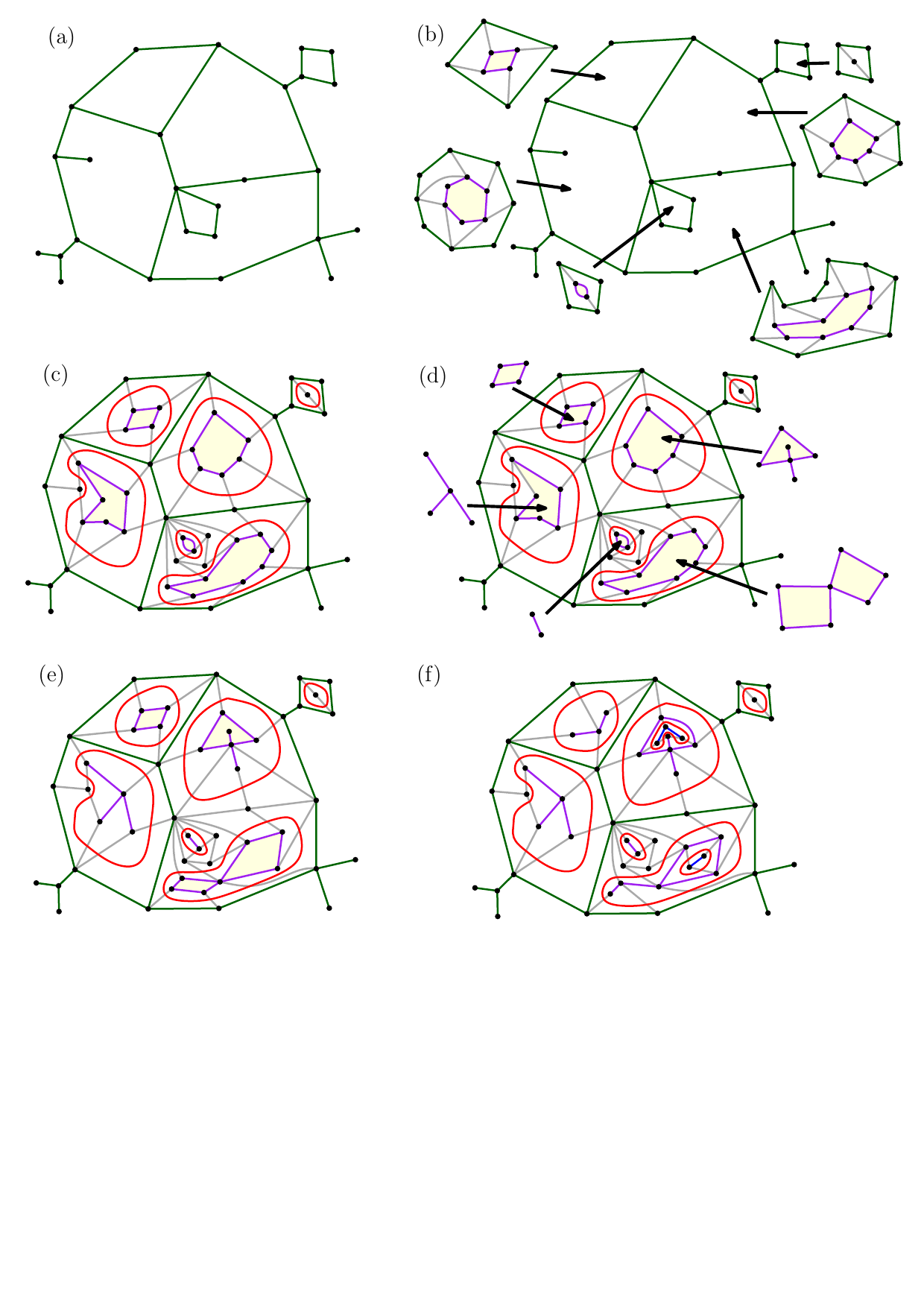}
\caption{An illustration of the iterative construction in Definition~\ref{def:model} with $p=13$. (a) The outermost gasket $M_0$ is sampled from $\mathbb P_\bfq^{(p)}$. The collection $\fF_0$ comprises the inner faces of $M_0$. (b) Given $M_0$, a ring $R(f)$ is sampled conditionally independently for each face $f\in \fF_0$. The outer/inner boundaries of the rings are colored in green/purple, respectively. The ring in the top-right of the figure has inner perimeter zero. (c) The rings are attached to the corresponding faces, with the possible rotations chosen uniformly. The rings are identified with red loops that separate inner and outer boundaries. These loops are the discrete analogs of CLE$_4$ loops in the coupling of supercritical LQG disk with CLE$_4$. (d) Given the previous figure, for each ring $R(f)$,  sample conditionally independent Boltzmann maps from $\PP_\bfq^{\per_\inn(f)}$ where $\per_\inn(f)$ is the inner half-perimeter of the ring $R(f)$. (e) The Boltzmann maps are glued to the inner boundaries of the rings, with the possible rotations chosen uniformly. These comprise the map $M_1$ colored purple and their inner faces $\fF_1$ colored yellow. (f) The map after another iteration, with $M_0$ colored in green, $M_1$ in purple, and $M_2$ in blue. In this case, $\fF_2=\varnothing$ as the Boltzmann maps comprising $M_2$ do not have non-root (inner) faces.  The construction terminates at this stage, giving us a finite map.  }
\label{fig:model}
\end{figure}

\begin{definition}
  \label{def:model}
Fix $p\in\NN$. Let $\PP_\infty^{(p)}$ denote the law of the Markov chain $\{(M_i,\fF_i)\}_{i\in\NN\cup \{0\}}$ defined below, where $M_i$ is a rooted bipartite planar map whose root face has degree $2p$ and $\fF_i\subseteq\fF(M_i)$. As can be seen below, $\fF_i$ denotes the set of interior faces of $M_i$ where Boltzmann maps are to be inserted in the subsequent step. 

\begin{itemize}
\item We start with $M_0$ sampled from the law $\PP^{(p)}_{\bfq}$ and set $\fF_0=\fF(M_0)$.
\item The induction step consists of two phases.
\begin{enumerate}[(i)]
    \item Given $(M_i,\fF_i)$, sample rings $\{R(f)\}_{f\in \fF_i}$ conditionally independently from $\PP_{\ring}^{(\per(f))}$. For each $f\in \fF_i$, glue the outer boundary of the ring $R(f)$ to the edges encircling $f$ with a uniform choice among the possible rotations of $R(f)$ relative to $f$. Define the inner perimeter $\per_{\inn}(f)$ of $f$ to be that of $R(f)$ (i.e., $\per(f_{\inn})$ with $f_\inn$ being the inner face of $R(f)$).
    \item Conditioned on the rings $\{R(f)\}_{f\in \fF_i}$ glued to the marked faces $\fF_i \subseteq \fF(M_i)$ in the previous phase, for each $f\in \fF_i$, sample an independent Boltzmann map $M(f)$ from $\PP_{\bfq}^{(\per_{\inn}(f))}$. (If $\per_\inn(f) = 0$, then we declare $M(f)$ to be a single point.) Glue its outer boundary to the inner boundary of the ring $R(f)$, again choosing uniformly among the possible rotations of $M(f)$ relative to the inner boundary of $R(f)$. 
\end{enumerate}

 \noindent The new map $M_{i+1}$ is the one resulting from gluing in all of the rings $\{R(f)\}_{f\in \fF_i}$ and the Boltzmann maps $\{M(f)\}_{f\in \fF_i}$ into each $f\in \fF_i$ as described above. The new marked faces are $\fF_{i+1}:=\bigcup_{f\in \fF_i}\fF(M(f))$.

\item There is a positive probability that the above iterative construction terminates after a finite number of steps. For example, it could be that $M_0$ is a tree, in which case $\fF_0 = \varnothing$. Also, given $\fF_i\neq \varnothing$, we may have $\fF_{i+1} = \varnothing$ if for every $f \in \fF_{i}$, either the inner boundary of the ring $R(f)$ is a single vertex (that is, $\per(f_{\inn})=0$) or $\fF(M(f)) = \varnothing$ (i.e., the Boltzmann map $M(f)$ is a tree). These events can happen with positive probabilities since for every $p\in \NN$, the Boltzmann map with perimeter $2p$ has a positive chance of having no interior faces.

If $\fF_i = \varnothing$ for some $i \in \NN\cup \{0\}$, we define $(M_j,\fF_j) := (M_{i},\varnothing)$ for all $j\geq i$ and set $M :=M_{i}$. In this case, we say that the resulting map $M$ is \textbf{finite}, as it has finitely many vertices. 

 \item Otherwise, we define $M=\bigcup_{i=0}^\infty M_0$, where we note that we have the natural inclusions $M_0\subseteq M_1\subseteq \cdots$. More formally, we define $M$ to be the projective limit of $M_i$ as $i\to\infty$. That is, $M$ is the unique (infinite) planar map with an embedding of the whole chain $(M_i, \fF_i)_{i\in\NN\cup \{0\}}$ into $M$ such that every vertex and edge of $M$ appears in some $M_i$ under this embedding. We say that the resulting map $M$ is \textbf{infinite}, as it has infinitely many vertices. 
\end{itemize}
We shall refer to the map $M_0$ in the above process as the \textbf{outermost gasket} of the discrete supercritical map $M$. Also, for any $f\in \bigcup_{i\in\NN\cup \{0\}}\fF_i$, we will use $M\lvert_f$ to denote the submap of $M$ inside the inner boundary of the ring $R(f)$. Finally, let $\PP_\ttF^{(p)}$ be the probability measure obtained by conditioning $\PP_\infty^{(p)}$ to output a finite map $M$. 
\end{definition}

\subsection{Main result 1: supercritical LQG disk conditioned to be finite is a branched polymer}\label{sec:main-discrete}

Our first main result (Theorem~\ref{thm:1}) is that our discrete model of supercritical LQG disk given in Definition~\ref{def:model}, conditioned on the event that it is finite, degenerates into a continuum tree as the perimeter of its boundary tends to infinity. 
We expect that a planar map sampled from $\PP_{\infty}^{(p)}$, when scaled appropriately, would converge to a supercritical LQG surface with central charge $\mathbf c_{\mathrm L} = 1 + 6Q^2$. We establish this scaling limit for the loops appearing in our discrete and continuum models, observing that the joint law of the lengths of the discrete loops scaled by $1/p$ converges to the joint law of the supercritical LQG lengths of the CLE$_4$ loops. We refer the reader to \cite[Section 3.2]{AG23} for a further discussion on the relation between our discrete and continuum models of supercritical LQG.

\begin{theorem}[See Proposition~\ref{prop:26} for a precise statement] \label{thm:loop-convergence}
    Consider the loop-decorated planar map sampled from the unconditioned law $\PP_\infty^{(p)}$ defined in Definition \ref{def:model}. For each $n\in \NN$, let $(\chi_i^n)_{i\in \NN}$ be the inner half-perimeters of the $n$th generation rings $\{\per_\inn(f): f\in \fF_{n-1}\}$ arranged in a decreasing order and padded by zeros. Let $(Z_i^n)_{i\in \NN}$ be the inner supercritical LQG lengths of the $n$th generation CLE$_4$ loops in the coupling of \cite{AG23}, arranged in a decreasing order. Then, as $p\to\infty$, we have
    \begin{equation}
        (p^{-1}\chi_i^n)_{i,n\in \NN} \stackrel{d}{\to} (Z_i^n)_{i,n\in\NN}
    \end{equation}
    jointly with respect to the product topology on $\RR^{\NN\times \NN}$.
\end{theorem}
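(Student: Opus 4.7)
The plan is to prove Theorem~\ref{thm:loop-convergence} by induction on the generation index $n$, leveraging the self-similar multiplicative cascade structure shared by the discrete construction of Definition~\ref{def:model} and the nested-CLE$_4$-on-supercritical-LQG coupling of \cite{AG23}. Since the product topology on $\RR^{\NN \times \NN}$ is determined by its finite-dimensional marginals, it suffices to show, for each $N, I \in \NN$, that the truncated vector $(p^{-1}\chi_i^n)_{1 \le i \le I,\, 1 \le n \le N}$ converges in distribution to $(Z_i^n)_{1 \le i \le I,\, 1 \le n \le N}$ as $p \to \infty$.

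For the base case $n = 1$, I would invoke Lemma~\ref{lem:13}, which asserts that when $M_0 \sim \PP_\bfq^{(p)}$, the sorted half-perimeters $(p^{-1}\per(f))_{f \in \fF(M_0)}$ converge in distribution to the sorted outer LQG lengths of the outermost CLE$_4$ loops on a unit-perimeter critical LQG disk. Then, by Definitions~\ref{def:model} and~\ref{def:ring}, conditionally on these outer perimeters each is multiplied by a factor asymptotically equal to $\exp(\beta_Q Y)$ for an independent Rademacher $Y$, while by \cite{AG23} the inner-to-outer LQG length ratio of a CLE$_4$ loop in the continuum coupling has exactly this law, independent of the outer length. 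Combined with the conditional independence of rings attached to distinct faces, this yields $(p^{-1} \chi_i^1)_{i \in \NN} \to (Z_i^1)_{i \in \NN}$ in distribution.

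For the inductive step, assume joint convergence through generation $n-1$. Under $\PP_\infty^{(p)}$, conditional on $\fF_{n-1}$ and $(\chi_i^{n-1})_i$, the generation-$n$ data are produced by sampling conditionally independent Boltzmann maps $M(f_i) \sim \PP_\bfq^{(\chi_i^{n-1})}$ and attaching independent rings to each of their faces. In parallel, by the Markov property of nested CLE$_4$ encoded in the coupling of \cite{AG23}, the $n$th generation loops inside each $(n-1)$th generation inner loop of length $Z_i^{n-1}$ form, after LQG scale-invariance, an independent copy of the base-case process rescaled by $Z_i^{n-1}$. Truncating to the $I$ largest branches at generation $n-1$, applying the base case conditionally on each such branch via Skorokhod representation, and combining with the inductive hypothesis promotes the convergence to generation $n$.

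The main obstacle is controlling Lemma~\ref{lem:13} jointly in the boundary perimeter: the conditional application in each branch uses a random discrete perimeter $\chi_i^{n-1}$ whose law only converges to the continuum perimeter $Z_i^{n-1}$, so one requires a version of the single-step convergence that is continuous in the boundary length and robust under random conditioning. Once this is in hand and combined with the mutual independence of the different branches and ring factors, a diagonal argument in $(I, N)$ delivers the full $\RR^{\NN \times \NN}$ product-topology limit.
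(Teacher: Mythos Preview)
Your approach is essentially the paper's own: the proof of Proposition~\ref{prop:26} reduces, via the iterative branching structure of Definition~\ref{def:model} and Corollary~\ref{cor:supercritical-cascade}, to the first-generation statement, and then establishes the latter by combining Lemma~\ref{lem:13} with condition~(1) of Definition~\ref{def:ring}. Your explicit induction on $n$ with Skorokhod representation simply unpacks what the paper compresses into the sentence ``it suffices to show the convergence of the first generation.''

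One comment on your ``main obstacle'': you do not need a continuity-in-boundary-length strengthening of Lemma~\ref{lem:13}. For each fixed index $i$, the inductive hypothesis gives $p^{-1}\chi_i^{n-1} \to Z_i^{n-1}$, and $Z_i^{n-1} > 0$ almost surely; hence the random perimeter $\chi_i^{n-1}$ itself tends to infinity with $p$. The conditional law of the rescaled children $(\chi_i^{n-1})^{-1}(\text{children of }i)$ given $\chi_i^{n-1}$ is exactly the first-generation law at perimeter $\chi_i^{n-1}$, which converges to $\rho_Q^{(1)}$ merely because that perimeter diverges---no uniformity is needed. A one-line conditioning argument (test against $f(\text{gen }n{-}1)\cdot g(\text{gen }n)$, pull out the conditional expectation given $\chi_i^{n-1}$, and use bounded convergence) then yields joint convergence with generation $n-1$; multiplying by $p^{-1}\chi_i^{n-1}$ restores the scale. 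So the obstacle you flag dissolves once you note $Z_i^{n-1}>0$ a.s.\ for each fixed $i$.
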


When we condition on the event that the map sampled from $\PP_\infty^{(p)}$ is finite, we are looking at an event that becomes increasingly rarer as the outermost half-perimeter $p$ tends to $\infty$. Using Theorem~\ref{thm:loop-convergence} as a key input, we show that this probability decays exponentially in $p$.

\begin{proposition}
  \label{thm:2}
  For $p\in \NN$, define 
  \begin{equation}
    \label{eq:83}
    F(p)=\PP_\infty^{(p)}(M \textnormal{ is finite}).
  \end{equation}
  Then, there exists a constant $\alpha\in (0,\infty)$ depending only on $Q \in (0,2)$ such that as $p\rightarrow \infty$,
  \begin{equation}
    \label{eq:84}
    \lim_{p\to\infty} \frac{-\log F(p)}{p}= \alpha.
  \end{equation}
\end{proposition}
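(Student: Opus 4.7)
The plan is to show that $a_p := -\log F(p)$ is approximately subadditive, in the form $a_{p_1+p_2} \le a_{p_1} + a_{p_2} + O(\log(p_1+p_2))$, apply a Fekete-type lemma to conclude that $a_p/p$ converges to some limit $\alpha$, and verify separately that $\alpha \in (0,\infty)$.

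The starting point is the master recursion following from the conditional independence built into Definition~\ref{def:model}. Writing $G(q) := \EE_\ring^{(q)}[F(\per_\inn)]$ for the finiteness probability of the sub-construction rooted at a single face of half-perimeter $q$, the event that $M$ is finite factorizes over the interior faces of the outermost gasket:
\begin{equation*}
F(p) = \EE_\bfq^{(p)}\Bigl[\prod_{f \in \fF(M_0)} G(\per(f))\Bigr].
\end{equation*}
For the bound $\alpha < \infty$, I would exhibit the event $\{M_0 \text{ is a tree}\}$, on which $\fF_0=\varnothing$ so that $M = M_0$ is automatically finite; using the Catalan-type enumeration of plane bipartite trees together with the exponential asymptotics of the Boltzmann partition function $Z_\bfq(p)$ for the critical non-generic weight sequence of type $a=2$, this event has probability at least $e^{-Cp}$. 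For $\alpha > 0$, the key input is the supercriticality of the branching: since $\EE[e^{\beta_Q Y}] = \cosh(\beta_Q) > 1$, the convergent cascade identified by Theorem~\ref{thm:loop-convergence} is supercritical in expectation, and a standard second-moment-type argument yields $\PP_\infty^{(q)}(M \text{ is infinite}) \ge \delta > 0$ for all $q$ sufficiently large, whence $G(q) \le 1-\delta/2$ for such $q$. Combined with the fact that for the type-$2$ weight sequence the number of interior faces of $M_0$ of half-perimeter at least $q_0$ exceeds $cp$ with overwhelming probability (a consequence of the linear-in-$p$ area scaling), the master recursion gives $F(p) \le e^{-c' p}$.

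The heart of the argument is the approximate submultiplicativity. Given $p_1, p_2 \ge 1$, I would consider the event $E_{p_1,p_2}$ that $M_0 \sim \PP_\bfq^{(p_1+p_2)}$ has a \emph{pinch vertex} on its boundary --- a vertex appearing twice in the boundary contour and separating it into sub-walks of lengths $2p_1$ and $2p_2$. On this event, by the standard Markov property of the Boltzmann measure, the two half-disks cut off by the pinch are conditionally independent Boltzmann maps from $\PP_\bfq^{(p_1)}$ and $\PP_\bfq^{(p_2)}$, and the partition function asymptotic $Z_\bfq(p) \asymp \rho_\bfq^p p^{-5/2}$, together with the combinatorial count of admissible pinch locations, gives $\PP(E_{p_1,p_2}) \ge (p_1+p_2)^{-C}$. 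Since the subsequent ring and inner map insertions in the two half-disks are independent by construction, this yields $F(p_1+p_2) \ge (p_1+p_2)^{-C} F(p_1) F(p_2)$, equivalently $a_{p_1+p_2} \le a_{p_1} + a_{p_2} + C\log(p_1+p_2)$. A Fekete-type lemma for sequences subadditive up to a logarithmic correction then produces $\lim a_p/p = \alpha$, and the previous paragraph places $\alpha$ in $(0,\infty)$. The main technical obstacle is to make the pinch decomposition rigorous --- in particular, to encode admissible pinch vertices and to verify the polynomial partition function asymptotic for the specific weight sequence of \cite[Appendix~B]{ADH24} --- which should follow from the Bouttier--Di Francesco--Guitter-type random walk bijections referenced in the paper.
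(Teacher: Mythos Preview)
Your overall strategy---approximate subadditivity of $a_p=-\log F(p)$ with a logarithmic defect, then Fekete---is exactly what the paper does, and your lower bound for $F(p)$ via the tree event for $M_0$ is the same as the paper's (Lemma~\ref{lem:1}). The two genuine differences are in the mechanism for subadditivity and in the argument for $\alpha>0$.

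For subadditivity, the paper does not use a geometric pinch. Instead it works directly with the random walk encoding of Proposition~\ref{prop:10}: writing $F(p)$ as the ratio in \eqref{eq:4}, it splits the walk $S$ at the hitting time $T_S^{(p_1)}$ of $-p_1$, uses the strong Markov property to factor the product $\prod_i F((X_i+1)R_i)$, and bounds $(L_S^{(p_1+p_2)}+1)^{-1}\ge (L_S^{(p_1)}+1)^{-1}(L_{\hat S}^{(p_2)}+1)^{-1}$. This yields $F(p_1+p_2)\ge C(p_1+p_2)(p_1 p_2)^{-3/2}F(p_1)F(p_2)$ with no partition-function asymptotics needed. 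Your pinch decomposition is a legitimate alternative and should go through, but the random walk split is cleaner precisely because $F$ depends on $M_0$ only through the multiset of face perimeters, so one never needs the geometric Markov property or the exponent in $W_\bfq^{(p)}$ (which, incidentally, is $p^{-2}$ for type $a=2$, not $p^{-5/2}$; this does not affect your argument since any polynomial suffices).

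For $\alpha>0$, your argument has a real gap. You need not merely that the number of large faces exceeds $cp$ with high probability, but that $\EE_\bfq^{(p)}[(1-\delta)^{N_{q_0}}]\le e^{-c'p}$, i.e.\ an \emph{exponential} lower tail for $N_{q_0}$ under the $(L_S^{(p)}+1)^{-1}$-tilted law. This does not follow from ``linear-in-$p$ area scaling'' alone (and note the area is in fact of order $p^{3/2}$ for $a=2$). The paper's route is again more direct: from \eqref{eq:4} and the trivial bounds $T_S^{(p)}\ge p$, $\EE[(L_S^{(p)}+1)^{-1}]\ge Cp^{-3/2}$, one gets $F(p)\le Cp^{3/2}\bigl(\EE[F((X_1+1)R_1)]\bigr)^p$, and then $\EE[F((X_1+1)R_1)]<1$ follows because $(X_1+1)R_1>0$ with positive probability and $F(q)<1$ for all $q\ge 1$ (Lemma~\ref{lem:1}, itself relying on $F(p)\to 0$ via the continuum cascade). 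Your second-moment sketch for $F(q)<1$ is aiming at the same conclusion but would need to be made precise.
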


Note that Proposition~\ref{thm:2} implies that a map sampled from $\PP_\infty^{(p)}$ is actually infinite with positive probability (i.e., $F(p)<1$) for large $p$, which is not a priori obvious. In fact, we show that $F(p)<1$ for all $p\in \NN$ (Lemma~\ref{lem:1}). The value of the constant $\alpha$ is unknown as it is shown to exist via subadditivity, nor do we know whether this $\alpha$ depends on the particular random planar map model or just on $\mathbf c_{\mathrm L}$.

We now give the first main result of this paper after stating a few preliminaries. We use the abbreviation CRT to refer to the law of Aldous's continuum random tree \cite{Ald91}, which is a canonical random tree constructed out of a Brownian excursion. We describe the scaling limit using the Gromov--Hausdorff distance, which is a canonical metric on the space of compact metric spaces defined up to isometry. (These concepts are reviewed in Section~\ref{sec:crt}.) Also, given a planar map $M$ with vertex set $V_M$ and graph distance $d_M$, for $r\in \RR$, let $rM$ denote the metric space $(V_M, rd_M)$. 
\begin{theorem}
  \label{thm:1}
 For each $p\in \NN$, let $M^{(p)}$ be the planar map sampled from $\PP^{(p)}_{\ttF}$ --- i.e., by conditioning a map drawn from $\PP^{(p)}_\infty$ to be finite. Then, there is a constant $\theta_\ttF$ depending on $\bfq$ such that 
  \begin{equation}
    \frac{\theta_\ttF}{ \sqrt{p}} M^{(p)} \stackrel{d}{\rightarrow} \CRT
  \end{equation}
  with respect to the Gromov--Hausdorff distance.
\end{theorem}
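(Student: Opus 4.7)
The proof rests on interpreting the iterative construction in Definition~\ref{def:model} as a multiplicative cascade, and then conditioning this cascade on extinction. Encode the construction as a branching process $T$ whose nodes correspond to the gaskets appearing at each generation: the root is $M_0$ with label $p$, and a node labeled $q$ (a Boltzmann map of perimeter $2q$) has offspring labeled by the inner half-perimeters obtained from its interior faces via the insertion of rings sampled from $\PP_\ring^{(\cdot)}$. The event $\{M\text{ is finite}\}$ coincides with the extinction event of $T$, so $\PP_\ttF^{(p)}$ is the law of the cascade conditioned on extinction. By a standard Doob $h$-transform, this is equivalent to tilting the offspring law at each step by the extinction function $F$ of Proposition~\ref{thm:2}, producing a subcritical cascade.

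Using the sharp exponential rate $F(p) = e^{-\alpha p + o(p)}$ from Proposition~\ref{thm:2} together with the joint loop-perimeter scaling limit of Theorem~\ref{thm:loop-convergence}, I would analyze the tilted cascade to show that the dominant contribution to the extinction event comes from configurations in which $M_0$ (and each subsequent gasket) is essentially tree-like. The heuristic is that any interior face of inner perimeter $\chi$ contributes a multiplicative penalty $F(\chi) \approx e^{-\alpha \chi}$ to the extinction probability, whereas having no interior faces is ``free''. Thus the tilted law concentrates on gaskets with small (or no) interior faces, and one can read off the limiting offspring distribution of the cascade tree from the joint scaling limit of the first-generation perimeters combined with the exponential tilt.

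With the cascade structure understood, the next step is to show that the Gromov--Hausdorff geometry of $M^{(p)}$ is governed by a single dominant tree. Concretely, one must show that with high probability under $\PP_\ttF^{(p)}$, every gasket produced in the cascade has graph diameter $o(\sqrt{p})$ --- in particular, $M_0$ itself should be close in Gromov--Hausdorff distance to a planar tree with $p$ edges rather than a full Boltzmann map of perimeter $2p$. These diameter estimates should follow by combining the exponential tilt by $F$ with the perimeter--volume scaling for critical non-generic Boltzmann maps of type $a=2$, together with the subcritical decay of perimeters across successive generations of the tilted cascade (so that all gaskets beyond the first are negligibly small at scale $\sqrt{p}$).

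Finally, classical scaling limits for random planar trees (Aldous, Marckert--Mokkadem) imply that a Boltzmann-distributed planar tree of perimeter $2p$, rescaled by $\theta_\ttF/\sqrt{p}$ for an appropriate constant $\theta_\ttF$ derived from the effective offspring variance of the associated Galton--Watson tree (itself a function of $\bfq$), converges in Gromov--Hausdorff distance to the CRT. Combined with the previous two steps, this yields the theorem. The main obstacle will be the diameter-control step: establishing that the tilt by $F$ forces every gasket (including the outermost one of perimeter $2p$) to be tree-like requires quantitative estimates on the interplay between the exponential tilt and the heavy-tailed perimeter distribution of type $a=2$ Boltzmann maps, carried out uniformly across all generations of the cascade. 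This likely demands sharper information on $F$ than the bare exponential rate provided by Proposition~\ref{thm:2}.
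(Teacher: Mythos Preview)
Your broad strategy---tilt the offspring law by $F$ to get a subcritical cascade, then show the submaps beyond the first generation are negligible---matches the paper's. But two concrete steps in your plan are wrong, and the second of them is the actual technical heart of the proof.

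First, the picture of $M_0$ under $\PP_\ttF^{(p)}$ is off. You write that the tilt ``concentrates on gaskets with small (or no) interior faces'' and that $M_0$ should be ``close to a planar tree with $p$ edges,'' after which you invoke Aldous--Marckert--Mokkadem for trees. This is not what happens. The tilt replaces the weight $q_k$ by $q_k' = q_k\,\EE_\ring^{(k)}[F(\per_\inn(f))]$, yielding a \emph{subcritical} admissible weight sequence (Proposition~\ref{prop:2}, Lemma~\ref{lem:9}). The resulting $M_0$ has $\Theta(p)$ interior faces (Lemma~\ref{lem:38}); they are just individually small. In particular, your assertion that ``every gasket produced in the cascade has graph diameter $o(\sqrt{p})$'' is false for $M_0$, whose diameter is $\Theta(\sqrt p)$. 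The CRT limit of $M_0$ does not come from tree convergence; it comes from the nontrivial results of \cite{KR20,Mar22} for subcritical Boltzmann maps with boundary (Propositions~\ref{prop:4}--\ref{prop:6}).

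Second, and more seriously, you do not address the relation between $d_{M^{(p)}}$ and $d_{M_0^{(p)}}$ on the boundary. Even once the submaps $M^{(p)}\lvert_f$ are shown to have volume $o(\sqrt p)$ (Proposition~\ref{lem:23}), geodesics in $M^{(p)}$ cut through $\Theta(\sqrt p)$ of these submaps, so the accumulated shortcuts change distances by a multiplicative constant, not an additive $o(\sqrt p)$ error. The paper handles this by a spine decomposition (Lemma~\ref{lem:219}) and a law of large numbers along the spine (Proposition~\ref{prop:129}): the $d_{M^{(p)}}$-distance from the root to a uniform boundary vertex decomposes as a sum of independent face-crossing distances, each with finite mean $\hat\theta_\ttF$. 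The constant $\theta_\ttF$ in the theorem is \emph{not} the Galton--Watson variance constant you describe; it is $\hat\theta_\ttF \sigma_{\bfq'}/(2\sqrt 2)$, where $\hat\theta_\ttF$ encodes the expected shortcut through a size-biased face of the subcritical gasket together with its filled-in supercritical submap. Your proposal has no mechanism to produce or identify this constant.
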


As discussed earlier in Section \ref{sec:physics}, this result provides a rigorous reconciliation of the ``infinite spikes'' picture (\ref{item:infinite-spikes}) and the ``branched polymer" description (\ref{item:branched-polymer}) of supercritical LQG. 
There are various settings in which one could attempt to prove versions of the statement that "supercritical LQG surfaces conditioned to be finite converge to the CRT". For instance, one could look at the approximation scheme used to construct the supercritical LQG metric in \cite{DG23-tightness,DG23} and condition on the event that the approximating metrics have bounded diameter. Or, one could consider the dyadic tiling model studied in \cite{GHPR20,APPS22} and condition on the (rare event) that there are only finitely many dyadic squares in some bounded region. These approaches have been tried unsuccessfully in the past. A key difficulty is that it is hard to describe what effect the conditioning has on the underlying GFF. The advantage of the random planar map approximations considered in Theorem \ref{thm:1} is that the operation of conditioning to be finite is in some sense compatible with the solvability of the random planar map model.

The heuristic reasoning behind Theorem \ref{thm:1} is that the iterative construction of the map in Definition~\ref{def:model} can be thought of, in some sense, as a supercritical branching process. As is the case for the Bienaym\'e--Galton--Watson (BGW) processes, it turns out that our planar map becomes ``subcritical" when conditioned to be finite, and we use this subcriticality to show that the planar map sampled from $\PP_\ttF^{(p)}$ degenerates to a branched polymer in the scaling limit. Implementing this heuristic requires a combination of several ideas, including results from \cite{Bet15,JS15,KR20,Mar22} on the convergence of rescaled subcritical Boltzmann maps to the CRT, combinatorial encodings of Boltzmann maps in terms of heavy-tailed random walks \cite{BDG04,JS15,CCM20}, and tricky estimates to show that the submaps surrounded by the outermost loops on $M^{(p)}$ are all microscopic (Section~\ref{sec:tail-bounds}). We refer the reader to Section \ref{sec:outline} for a more detailed overview of the proof. 

Finally, we note that it was shown in \cite{Feng23} that random planar maps decorated by the Fortuin--Kasteleyn (FK) cluster model with parameter $q > 4$ converge to the CRT in the Gromov--Hausdorff sense (using completely different techniques from the ones in this paper). FK-decorated random planar maps for $q \in[0,4]$ are expected to converge to $\gamma$-LQG where $\gamma \in [\sqrt 2,2]$ satisfies $q = 2 + 2 \cos(\pi \gamma^2/2)$ \cite{She11}, but we currently have no reason to believe that such maps for $q > 4$ are related to supercritical LQG.

\subsection{Main result 2: non-existence of supercritical LQG volume measure}

In the subcritical and critical phases, an LQG surface with the disk topology is associated with two natural measures: a length measure on the boundary and an area measure in the bulk. In saying that these measures are ``natural," we mean that they are intrinsic to the LQG surface in the following sense: (i) they are locally determined by the background GFF $h$ and (ii) they transform appropriately under a conformal change of coordinates (see \eqref{eq:109} and \eqref{eq:170} for the precise condition). In the recent work \cite{AG23}, it was shown that for a supercritical LQG disk, there exists a natural notion of a boundary length measure. 

Given this result, one may wonder if it is also possible to construct a natural volume measure in the bulk of a supercritical LQG surface. In this work, we establish that there does not exist any such locally finite, nontrivial measure. 
It is not at all obvious how to prove this directly from the above two axioms. One reason for this is that there are a huge number of possible ways to construct random measures (see the discussion just after Theorem \ref{thm:3}). The key insight in our proof is that the exact self-similarity provided by the coupling of supercritical LQG with CLE$_4$ from \cite{AG23} prevents a measure satisfying the conditions of Theorem \ref{thm:3} from being locally finite.

To state the result, we require the following notion of local absolute continuity with respect to the GFF. Let $\tilde h$ be a Dirichlet GFF on an open set $U\subseteq \CC$. A random generalized function $h$ on $U$ is said to be \textbf{locally absolutely continuous} to a GFF if for every $z\in U$, there exists a open neighborhood $V\subsetneq U$ containing $z$ with a compact closure in $U$ such that the law of $h\lvert_V$ is absolutely continuous to that of $\tilde h\lvert_V$. We are now ready to state the second main result of this article.

\begin{theorem}
  \label{thm:3}
Fix the background charge $Q\in (0,2)$. Suppose that for each domain $U \subset \mathbb C$, we have a measurable mapping $h \mapsto \mathfrak m_h$ from generalized functions on $U$ to Borel measures on $U$. Assume further that the following conditions hold whenever $h$ is a random generalized function on a domain $U\subset \CC$ which is locally absolutely continuous to a GFF.
  \begin{itemize}
  \item \textbf{Locality:} Almost surely, for any fixed $V\subseteq U$, we have $(\mathfrak m_h)|_V = \mathfrak m_{(h|_V)}$.
  \item \textbf{Coordinate change:} For any fixed conformal map $f : \widetilde U\to U$, we almost surely have
    \begin{equation}
      \label{eq:109}
      \mathfrak m_{h \circ f + Q \log |f'|}(A) = \mathfrak m_h(f(A))
    \end{equation}
    for every Borel subset $A\subseteq \widetilde U$.
  \end{itemize}
  Then, any such mapping $\mathfrak m$ must be trivial in the following sense: for any random generalized function $h$ on a domain $U\subseteq \CC$ which is locally absolutely continuous to a GFF, the measure $\mathfrak{m}_h$ is either a.s.\ equal to the zero measure or a.s.\ assigning infinite mass to every Euclidean open subset of $U$.
\end{theorem}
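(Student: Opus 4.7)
The plan is to use the coupling of supercritical LQG quantum disks with nested $\mathrm{CLE}_4$ from \cite{AG23} to derive a distributional fixed-point equation for any candidate measure $\mathfrak m_h$, and then to show that the supercriticality of the resulting multiplicative cascade precludes any locally finite nontrivial solution. Suppose for contradiction that there is some $h$ locally absolutely continuous to a GFF for which $\mathfrak m_h$ is locally finite and assigns positive mass to some open set with positive probability. By locality, one may reduce to the case where $h$ is a unit-boundary-length quantum disk on $\mathbb D$ coupled with a nested CLE$_4$ as in \cite{AG23}. Let $(\ell_i)_{i\in I}$ be the outermost loops, $D_{\ell_i}$ the bounded components they enclose, and $V := \mathfrak m_h(\mathbb D)$. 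Locality yields $V = G + \sum_{i\in I}\mathfrak m_h(D_{\ell_i})$, with $G$ the outer gasket mass. For conformal uniformisations $\psi_i\colon \mathbb D\to D_{\ell_i}$, setting $\widetilde h_i:=h\circ\psi_i+Q\log|\psi_i'|$, the coordinate change axiom gives $\mathfrak m_h(D_{\ell_i})=\mathfrak m_{\widetilde h_i}(\mathbb D)$, and the conformal/quantum Markov property of the CLE$_4$--LQG coupling identifies the $\widetilde h_i$, conditionally on the inner LQG lengths $L_i$, with independent quantum disks of boundary length $L_i$.

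Next, by applying the coordinate change to a dilation $z\mapsto rz$ (which converts a spatial rescaling into an additive field shift $Q\log r$) and combining this with the distributional scale invariance of the GFF and the boundary length scaling under $h\mapsto h+c$ established in \cite{AG23}, one extracts a measurable $\Phi\colon(0,\infty)\to[0,\infty)$ such that the volume of a quantum disk of boundary length $L$ is equal in law to $\Phi(L)\,V_*$ with $V_*$ an independent copy of $V$. Substituting into the decomposition above gives the cascade equation
\begin{equation*}
    V \stackrel{d}{=} G + \sum_{i\in I}\Phi(L_i)\,V_i',
\end{equation*}
where $(V_i')$ are i.i.d.\ copies of $V$, independent of $(L_i)$ and of $G\geq 0$.

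Iterating this decomposition through the nested CLE$_4$ tree produces a multi-type branching process in which, along every ancestral line, the inner lengths are multiplied by independent Rademacher factors $e^{\beta_Q Y}$ as in \eqref{eq:beta-Q}. Because $\cosh(\beta_Q)>1$, the length cascade is supercritical, so on the event of non-extinction the number of deep-generation loops whose inner length lies in any fixed compact subinterval of $(0,\infty)$ is almost surely infinite. If $\Phi\not\equiv 0$ and $\mathbb P[V>0]>0$, the cascade equation then represents $V$ as a sum of infinitely many i.i.d.\ strictly positive summands with a common nondegenerate law, so $V=\infty$ with positive probability, contradicting local finiteness. If instead $\Phi\equiv 0$, the cascade equation forces $V=0$ a.s., and by locality and conformal covariance $\mathfrak m_h\equiv 0$ for every admissible $h$. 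The strong dichotomy claimed in the theorem is then obtained by applying this weak dichotomy to the restricted field $h|_V$ on each Euclidean open subset $V\subseteq U$ and invoking the locality axiom $(\mathfrak m_h)|_V=\mathfrak m_{h|_V}$.

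The hardest step will be the extraction of the clean scaling $\Phi(L)$ in the second paragraph: the locality and coordinate change axioms on their own only permit spatial rescalings and additions of $Q\log|f'|$ to the field, and do not directly compare quantum disks of different boundary lengths. Establishing the desired multiplicative relationship requires a careful interplay of the dilation coordinate change, the scale invariance in law of the GFF, and the boundary length scaling of \cite{AG23}, with due attention to the additive Gaussian shifts that arise when rescaling a GFF. Once this scaling is in hand, the analysis of the supercritical cascade driven by the Rademacher weights $e^{\beta_Q Y}$ is a comparatively standard argument showing that no nontrivial finite solution can exist.
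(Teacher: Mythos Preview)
Your overall architecture---use the CLE$_4$ coupling, decompose $\mathfrak m_h(\mathbb D)$ over the outermost loops, and exploit supercriticality of the length cascade---matches the paper's proof. However, the step you flag as ``the hardest'' is not merely hard: it cannot be carried out from the stated axioms. The coordinate change rule only lets you add $Q\log|f'|$ for conformal $f$; a dilation $z\mapsto rz$ changes the domain, and $(\mathbb D,h)$ and $(r^{-1}\mathbb D, h(r\cdot)+Q\log r)$ are two embeddings of the \emph{same} $Q$-LQG surface with the \emph{same} boundary length. To change boundary length you must add a constant $c$ to the field, and no axiom relates $\mathfrak m_{h+c}$ to $\mathfrak m_h$. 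There is therefore no reason the law $P^{(L)}$ of $\mathfrak m_h(\mathbb D)$ for a boundary-length-$L$ disk should factor as $\Phi(L)\cdot V_*$, and without this your ``sum of i.i.d.\ strictly positive summands'' conclusion fails: the masses $\mathfrak m_h(D_{\ell_i})$ at generation $n$ have laws $P^{(Z_Q(\ell_i))}$ which vary with the random lengths $Z_Q(\ell_i)$.

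The paper sidesteps this entirely. It works directly with the family $\{P^{(L)}\}_{L>0}$ and argues a measurable dichotomy: either $P^{(L)}(\{0\})=1$ for Lebesgue-a.e.\ $L$, or there exist $\epsilon>0$ and a set $A\subset(0,\infty)$ of positive Lebesgue measure with $P^{(L)}((\epsilon,\infty))>\epsilon$ for all $L\in A$. In the second case Proposition~\ref{prop:many-macroscopic-loops} guarantees that each generation eventually contains arbitrarily many loops with inner length in $A$; conditional independence of the enclosed disks then gives infinitely many independent $\epsilon$-chances of mass exceeding $\epsilon$, forcing $\mathfrak m_h(\mathbb D)=\infty$ a.s. In the first case the paper upgrades ``a.e.\ $L$'' to ``all $L$'' not via scaling but via absolute continuity: adding a constant to the zero-boundary GFF component $h^D$ is absolutely continuous away from $\partial\mathbb D$, so $\mathfrak m_{h^{(L)}}(B_r(0))=0$ a.s.\ for one $L$ implies the same for all $L$. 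This is exactly the mechanism your proposal is missing.
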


In the above statement, we do not care how the mapping $h \mapsto \mathfrak m_h$ is defined when $h$ is not absolutely continuous with respect to a GFF. For instance, we could require it to be identically zero for concreteness. (A similar setup appears in the definition of LQG metric in \cite{DFGPS20,GM21,Pfe21,DG23}.)

While the above theorem is stated with $\mathfrak{m}_h$ being a positive measure, we note that by the usual decomposition of signed measures into positive and negative parts, it follows immediately that there does not exist any natural signed (or even complex) bulk measure in the supercritical phase of LQG. This rules out some possible constructions for the volume measure of supercritical LQG, including some versions of Gaussian multiplicative chaos (GMC): e.g., supercritical GMC in either the dual phase \cite{BJRV13} or the glassy phase \cite{MRV16}, as well as some version of complex GMC \cite{LRV15}. It also rules out a construction based on finding a natural operator other than the exponential of the field $h$ such as in the works of Gervais et al.\ reviewed in Section~\ref{sec:physics}. 

Finally, we note that all of the three conditions for $\mathfrak{m}_h$ --- a random measure, locally determined by $h$, and satisfying the coordinate change rule --- are necessary in Theorem~\ref{thm:3}. Here are examples of functionals of the field $h$ which satisfy a strict subset of these conditions.

\begin{enumerate}[(a)]
    \item A clear candidate for a random measure locally determined by a GFF $h$ is the $\beta$-LQG measure $\mu_h$ where $\beta \in (0,2]$. However, this does not satisfy the coordinate-change rule \eqref{eq:109} for any $Q\in (0,2)$.
    \item Since a GFF $h$ in $U$ is an element of the Sobolev space $H^{-\epsilon}(U)$ for $\epsilon>0$ (see, e.g., \cite{WP21}), $\Delta h$ can be defined as a generalized function in $H^{-(2+\epsilon)}(U)$ for $\epsilon>0$. While $\Delta h$ is not a random measure, it is a local functional of $h$ and satisfies 
    \begin{equation}
        \Delta( h \circ f + Q\log|f'|) = (\Delta h) \circ f
    \end{equation}
trivially for any conformal $f$ and $Q\in\RR$ since $\Delta \log |f'| = 0$.
    
    \item Let $h$ be a Gaussian field on $U\subsetneq \CC$. Fix $\beta\in (0,2]$ and let $\alpha = Q-(\frac{2}{\beta}+\frac{\beta}{2})$. Recall that the conformal radius of a domain $U$ viewed from $z\in U$ is defined as $\mathrm{CR}(z;U) := |g'(0)|$ where $g:\mathbb{D}\to U$ is a conformal map sending 0 to $z$. Consider
    \begin{equation}\label{eq:measure-nonlocal}
        \mathfrak{m}_h(dz) := \mathrm{CR}(z;U)^\alpha\mu_h^{(\beta)}(dz)
    \end{equation}
    where $\mu_h^{(\beta)}$ is the $\beta$-LQG measure on $U$. Then, for a conformal map $f:U\to \widetilde U$ and a GFF $\tilde h$ on $\widetilde U$, the pullback of $\mathfrak{m}_{\tilde h}$ under $f$ is given by
    \begin{equation}
    \begin{split}
        f^*\mathfrak{m}_{\tilde h}(dz) &= \mathrm{CR}(f(z);\widetilde U)^\alpha \mu_{\tilde h}^{(\beta)}(df(z))\\
        &= |f'(z)|^\alpha \mathrm{CR}(z;U)^\alpha \mu_{\tilde h \circ f + (\frac{2}{\beta}+\frac{\beta}{2})\log|f'|}^{(\beta)}(dz) = \mathfrak{m}_{\tilde h \circ f + Q\log|f'|}(dz).
    \end{split}
    \end{equation}
    Hence, this $\mathfrak{m}_h$ is a random Borel measure satisfying the coordinate change rule $\eqref{eq:109}$ with $Q\in(0,2)$. Nevertheless, $\mathfrak{m}_h$ does not satisfy the locality condition of Theorem~\ref{thm:3} since the measure $\mathfrak{m}_{(h\lvert_V)}$ on a domain $V\subseteq U$ also depends on the background domain $U$ via the conformal radius factor appearing in \eqref{eq:measure-nonlocal}.

    \item In Section~\ref{sec:cascade-measure}, we give a natural family of random measures on supercritical LQG disks constructed via the multiplicative cascade associated to the coupling of supercritical LQG with nested CLE$_4$. These measures satisfy the coordinate change rule \eqref{eq:109}, but they are not local.
\end{enumerate}

\begin{remark}
  For $\mathbf{c}_{\mathrm L} \in (1,25)$, the LQG metric does not induce the Euclidean topology. In particular, the $\alpha$-thick points for $\alpha >Q$ lie at infinite distance from every other point (see \cite[Proposition 1.11]{Pfe21}), and consequently LQG metric balls have empty Euclidean interior. Could there be a measure satisfying the hypotheses of Theorem \ref{thm:3} which assigns finite mass to LQG metric balls, but infinite mass to Euclidean open sets? 
   
  We expect that no such measure exists. It is shown in \cite[Proposition~1.14]{Pfe21} that almost surely for every $s < r$, a supercritical LQG metric ball of radius $r$ cannot be covered by finitely many supercritical metric balls of radius $s$.
    This means that if we have a measure $\mathfrak m_h$ associated with a supercritical LQG surface with field $h$, then either $\mathfrak m_h$ assigns very small mass to most supercritical LQG metric balls, or $\mathfrak m_h$ assigns infinite mass to every supercritical LQG metric ball.  In particular, there cannot be any measure $\mathfrak m_h$ which is ``compatible with the supercritical LQG metric $D_h$" in the same sense that the LQG measure is compatible with the metric in the subcritical case. 
\end{remark}

\subsection{Proof outline}
\label{sec:outline}
This article is divided into two main parts: continuous (Sections \ref{sec:2}--\ref{sec:3}) and discrete (Sections \ref{sec:boltzmann}--\ref{sec:conv}). Throughout both parts, the central idea is to consider the multiplicative cascade of lengths of loops on a supercritical LQG disk or its discrete analog. The linchpin between the two parts is Theorem~\ref{thm:loop-convergence}, in which we establish the convergence of the discrete cascade of loop lengths in Definition~\ref{def:model} to the continuum cascade of loop lengths in the CLE$_4$-coupled supercritical LQG disk of \cite{AG23}. This is the supercritical analog of \cite{CCM20}, which considered the convergence of the discrete cascade originating from the gasket decomposition \cite{BBG11} of loop $O(n)$ model decorated planar maps with $n\in (0,2)$ to the continuum cascade of subcritical $\gamma$-LQG disks coupled with independent CLE$_\kappa$ with $\kappa = \gamma^2 \in (8/3,4)$ in \cite{MSW22}.

The first part of this work concentrates on the proof of Theorem~\ref{thm:3}. In Section \ref{sec:2}, we recall from \cite{AG23} the definition of a supercritical LQG disk and its coupling with (nested) CLE$_4$. We identify the exact law of the multiplicative cascade on the boundary lengths of CLE$_4$ loops in this coupling (Corollary~\ref{cor:supercritical-cascade}). In Section \ref{sec:macro-loops}, we draw classical facts from the branching random walk literature to analyze the law of the multiplicative cascade and show, in particular, that the number of CLE$_4$ loops of any given size tends to infinity as we look at further generations of loops (Proposition~\ref{prop:many-macroscopic-loops}). Given these results on the lengths of CLE$_4$ loops, the proof of Theorem~\ref{thm:3} in Section~\ref{sec:proof-nonexistence} follows straightforwardly from the fact the supercritical LQG disks enclosed by CLE$_4$ loops of the same generation are conditionally independent given their boundary lengths (Proposition~\ref{prop:nested-coupling}). Section~\ref{sec:cascade-measure} gives the construction of a natural family of (non-local) measures on supercritical LQG disks using its coupling with nested CLE$_4$.

In the second part, we analyze the behavior of our random planar map model of supercritical LQG and prove Theorem~\ref{thm:1}. 
Section~\ref{sec:boltzmann} recalls from the random planar maps literature the two key tools for our analysis: the connection between Boltzmann maps and random walks (Proposition~\ref{prop:10}) and the convergence of the  boundaries of large subcritical Boltzmann maps to the CRT (Proposition~\ref{prop:4}). 

In Section~\ref{sec:prob-dis}, we consider the probability $F(p)$ that a map sampled from our discrete model $\PP_\infty^{(p)}$ of supercritical LQG with boundary length $2p$ is finite. 
Recalling the convergence (Theorem \ref{thm:loop-convergence}) of the discrete perimeter cascade of our model to the continuum perimeter cascade of the CLE$_4$-coupled supercritical LQG disk, we use the existence of large loops in the latter (Proposition~\ref{prop:many-macroscopic-loops} again) to conclude that $F(p)\to 0$ as $p\to\infty$ (Lemma~\ref{lem:7}). 
We then prove Proposition~\ref{thm:2} based on a slightly modified form of the standard subadditivity argument which allows for logarithmic errors, which we establish using the random walk description of Boltzmann maps. 

In Section~\ref{sec:estimates}, we analyze the behavior of our discrete model $\PP_\ttF^{(p)}$ of a supercritical LQG disk conditioned to be finite. This is the most difficult part of the paper. We first identify that the marginal law on the outermost gasket $M_0^{(p)}$ under this conditioning is that of a subcritical Boltzmann map (Proposition~\ref{prop:2}). From here, we extrapolate the subcriticality of the entire map $M^{(p)}$ sampled from $\PP_\ttF^{(p)}$ in the sense that the expected size of maps added in subsequent generations decays exponentially (Corollary~\ref{lem:21}). In particular, the size of the submaps of $M^{(p)}$ inside each face of the outermost gasket $M_0^{(p)}$ is of smaller order than the diameter of the boundary $\partial M^{(p)}$ (Proposition~\ref{lem:23}). Establishing this requires intricate estimates for the branching process which describes the lengths of the loops in each generation. Throughout this analysis, we find another use for Proposition~\ref{thm:2} in that it allows us to use $p$ and $-\alpha^{-1}\log F(p)$ interchangeably for large values of $p$ in many of our estimates. This is useful due to the relation
\begin{equation}
    F(p) = \EE\Bigg[\prod_{f\in \fF_0} F(\per_\inn(f))\Bigg],
\end{equation}
where $\fF_0$ is the collection of interior faces of the outermost gasket $M_0^{(p)}$.

We finally prove Theorem~\ref{thm:1} in Section~\ref{sec:conv}. Applying the results of Section~\ref{sec:estimates} and the convergence of the entire subcritical Boltzmann map to the CRT (Proposition~\ref{prop:4}), we see that a map $M^{(p)}$ sampled from $\PP_\ttF^{(p)}$ is ``thin" in the sense that the maximum graph distance from an interior vertex of $M^{(p)}$ to the boundary $\partial M^{(p)}$ is of smaller order than the diameter of $\partial M^{(p)}$. Thus, the problem at hand reduces to showing the convergence to the CRT when we equip $\partial M^{(p)}$ with the restriction of the graph distance $d_{M^{(p)}}$ on the entire map (Proposition~\ref{prop:8}). We adapt the proof of the convergence of $\partial M^{(p)}$ with the graph distance $d_{\partial M^{(p)}}$ in \cite{KR20} by showing that the two distances $d_{M^{(p)}}$ and $d_{\partial M^{(p)}}$ are equivalent on $\partial M^{(p)}$ (Proposition~\ref{prop:129}). This follows from a law of large numbers argument based on the spine decomposition of the critical looptree structure of $\partial M^{(p)}$ identified in \cite{CK15,Ric18}.

\paragraph{\textbf{Notational comments}} Throughout the paper, we shall have multiple occasions to use the Bienaym\'e-Galton-Watson (or Galton--Watson) tree, which which refer to as a $\BGW$ tree. For any planar map $M$, we denote its vertex set as $V_M$ and use $d_{M}(x,y)$ to denote the graph distance between vertices $x$ and $y$ in $M$. Also, we will use $\Vol(M)$ to denote the total number of vertices present in a planar map $M$. Let $\NN$ denote the set $\{1,2\dots\}$ of positive integers and $\NN^{\#}=\NN\cup \{0\}$ denote the nonnegative integers. For $a<b\in \RR$, the double interval $[\![a,b]\!]$ refers to $[a,b]\cap \ZZ$.

\paragraph{\textbf{Acknowledgements}} We thank Morris Ang, Bruno Balthazar, Nicolas Curien, William Da Silva, Antti Kupiainen, David Kutasov, Eveliina Peltola, Josuha Pfeffer, Scott Sheffield, Xin Sun, and Paul Wiegmann for helpful discussions. M.B.\ acknowledges the partial support of the NSF grant DMS-2153742. E.G.\ was partially supported by a Clay research fellowship and the NSF grant DMS-2245832. J.S.\ was partially supported by a Kwanjeong Educational Foundation scholarship. This work was completed in part during visits by M.B.\ and J.S.\ to the Thematic Program on Randomness and Geometry at the Fields Institute, whose hospitality is gratefully acknowledged.

\section{Background on the supercritical LQG disk}
\label{sec:2}

\subsection{Supercritical LQG disk}
\label{sec:lqg}
We begin with a general definition of an LQG surface, which is a two-dimensional domain equipped with a generalized function considered up to a conformal change of parameterization. The following is a generalization of the definitions in \cite{DS11,She16,DMS14}, etc., to the supercritical case $Q\in (0,2)$.

\begin{definition}\label{def:lqg-surface}
    Let $\cL > 1$ and let $Q = \sqrt{(\cL - 1)/6} > 0$ be the corresponding background charge. Consider the set of tuples $(U, h, A)$ where $U \subset \CC$ is open, $h$ is a generalized function on $U$, and $\cA$ a sequence of compact subsets of $U$ (this is some sort of ``decoration" which could be a collection of curves, points, etc.). Let $\sim_Q$ be an equivalence relation on such tuples where we define 
    \begin{equation}
        (U, h, \cA) \sim_Q (\widetilde U, \tilde h, \widetilde \cA)
    \end{equation}
    if there is a conformal map $f:\widetilde U \to U$ such that 
    \begin{equation}\label{eq:170}
        \tilde h = h \circ f + Q\log |f'| \quad \text{and} \quad f(\widetilde \cA) = \cA.
    \end{equation}
    A \textbf{Liouville quantum gravity (LQG) surface} of central charge $\cL$, which we often call a \textbf{$Q$-LQG surface}, is an equivalence class under $\sim_Q$. A representative of an LQG surface is called an \textbf{embedding}.
\end{definition}

In the theory of LQG, the generalized function $h$ is always a Gaussian free field (GFF) or some variant of it, and we now give a quick introduction to the GFF. A GFF on a domain $U\subset \CC$ is a centered Gaussian generalized function $h$ with the covariance kernel $G$ given by a Green's function\footnote{We assume that the Green's function $G$ is normalized so that $G(z,w) \sim -\log|z-w|$ as $w\to z$.} corresponding to the Laplacian on $U$. That is, if we denote the paring of $h$ with a smooth function $f\in C^\infty(U)$ as $(h,f)$, then this is a Gaussian random variable with
\begin{equation}
  \label{eq:110}
  \EE[(h,f_1)(h,f_2)]=\int_{U\times U} f_1(z_1) G(z_1,z_2) f_2(z_2)\,dz_1dz_2.
\end{equation}
for any smooth functions $f_1,f_2\in C^\infty(U)$.
We note that the GFF is said to have \textbf{Dirichlet/zero boundary conditions} if the Green's function $G$ is defined with zero boundary conditions. The same is true with \textbf{Neumann/free boundary conditions}.

We note that the definition of the equivalence relation $\sim_Q$ in \eqref{eq:170} is given to reflect the conformal covariance of the LQG measure \cite{DS11,SW16} and the LQG metric \cite{GM21b} in the subcritical regime ($\cL>25$, $Q>2$). Thus, if $h$ is a GFF on $U$ and $\mu_h$ and $D_h$ are the LQG measure and the metric corresponding to the field $h$ with central charge $\cL$, then for a conformal map $f:\widetilde U\to U$ with $\tilde h$ given by \eqref{eq:170}, we have $f_* \mu_{\tilde h} = \mu_h$ and $f_* D_{\tilde h} = D_h$ almost surely, where, as usual, $f_*$ is used to denote the push-forward with respect to $f$. This result extends to the critical case ($\cL=25$, $Q=2$) for the LQG measure \cite[Theorem~13]{DRSV14b} and is expected to hold for the LQG metric in the critical/supercritical regime ($\cL\in(1,25]$, $Q\in(0,2]$) but is so far known only for complex affine $f$ \cite[Propositon~1.9]{DG23} (see also \cite[Theorem~1.4]{Dev23}). This is the context in which the analogous conformal covariance condition \eqref{eq:109} is required in our definition of a supercritical LQG measure in Theorem~\ref{thm:3}.

We now present the definition of the supercritical LQG disk in \cite{AG23}, which is based on the critical LQG disk defined in \cite{AHPS23}. Let $\nu_h$ be the \textit{critical} LQG boundary measure corresponding to the field $h$; this was constructed in \cite{DRSV14b} as 
\begin{equation}
    \nu_h:= \lim_{\epsilon \to 0} \sqrt{\log(1/\epsilon)}\epsilon e^{h_\epsilon(z)}|dz|,
\end{equation}
where $h_\epsilon$ is a mollification of $h$ of size $\epsilon$ and $|dz|$ denotes the Lebesgue measure on the boundary of the domain. Also see \cite[Section~4.1.2]{APS19} for its construction as a limit of subcritical LQG boundary measures. As observed in \cite[Lemma~2.1]{AG23-critical}, the conformal covariance relation $f_* \nu_{\tilde h} = \nu_h$ given \eqref{eq:170} follows from this limiting construction thanks to the analogous result for subcritical boundary measures \cite[Section~6]{DS11}.

\begin{definition}[Critical LQG disk]\label{def:2-disk}
    Let $\mathcal S = \RR \times (0,2\pi)$.
    Let $\mathcal B:\RR \to (-\infty,0]$ be a random function such that $(\mathcal B_s/\sqrt 2)_{s\geq 0}$ and $(\mathcal B_{-s}/\sqrt 2)_{s\geq 0}$ are independent 3-dimensional Bessel processes started at 0, and let $h^|: \mathcal S \to \RR$ be the random function which is identically equal to $\mathcal B_s$ on each vertical segment $\{s\} \times (0,2\pi)$. Let $h^\dagger$ be the lateral part of a free-boundary GFF on $\mathcal S$  (i.e., the orthogonal projection to the subspace of generalized functions on $\mathcal S$ which has zero mean on every vertical segment) sampled independently from $h^|$. Define $h := h^| + h^\dagger$ and $\hat h:= h - \log \nu_h(\partial \mathcal S)$. For $L>0$, the \textbf{critical LQG disk with boundary length $L$} is the 2-LQG surface $(\mathcal S, \hat h + \log L)/\!\sim_2$.
\end{definition}

\begin{definition}[Supercritical LQG disk]\label{def:Q-disk}
    Fix $Q \in (0,2)$ and $L>0$. Let $(U,h^C)$ be an embedding of the critical LQG disk with boundary length $L$ given in Definition~\ref{def:2-disk}. Let $h^D$ be an independent zero-boundary GFF on $U$. Then, the \textbf{supercritical LQG disk with central charge $\cL = 1 + 6Q^2$ and boundary length $L$} (also called the \textbf{$Q$-LQG disk with boundary length $L$}) is the $Q$-LQG surface $(U, h)/\!\sim_Q$ with 
    \begin{equation}\label{eq:supercritical-field}
        h := \frac{Q}{2} h^C + \frac{\sqrt{4-Q^2}}{2} h^D.
    \end{equation}
\end{definition}

We note that for any $\tilde L >0$, if $(U,h)$ is an embedding of a $Q$-LQG disk with boundary length $L$, then $(U,h + \frac{2}{Q} \log (\tilde L /L))/\!\sim_Q$ is a $Q$-LQG disk with boundary length $\tilde L$. The \textbf{supercritical LQG length measure} on the boundary of a $Q$-LQG disk is defined as 
\begin{equation}\label{eq:supercritical-length}
    \mathfrak n_h : = \nu_{h^C} \quad ``= e^{h^C}|dz| = e^{\frac{2}{Q} h}|dz|"
\end{equation}
where $\nu_{h^C}$ is the critical LQG boundary length measure associated with the field $h^C$ given in Definition~\ref{def:Q-disk}. The second half of \eqref{eq:supercritical-length} is an informal expression based on the following heuristic reasoning: since $h^D$ is a zero-boundary GFF, we have $h^D\lvert_{\partial U}=0$ and hence $h^C\lvert_{\partial U}=\frac{2}{Q} h\lvert_{\partial U}$. Thus, if $(U,h)$ is an embedding of a $Q$-LQG disk, $f:\widetilde U \to U$ is a conformal map, and $\tilde h = h \circ f + Q\log|f'|$, then we have $f_* \mathfrak n_{\tilde h} = \mathfrak n_h$ almost surely \cite[Lemma~2.8]{AG23}.

\subsection{Coupling with \texorpdfstring{CLE$_4$}{CLE(4)}}

For $\kappa \in (8/3,4]$, the \textbf{conformal loop ensemble} (CLE$_\kappa$) on a simple connected domain $U \subsetneq \CC$ is a random countable collection of disjoint non-nested Jordan curves in $U$ which look locally like SLE$_\kappa$. It can be defined via branching SLE curves \cite{She09} or as the outer boundaries of the outermost clusters of a Brownian loop soup on $U$ \cite{SW12}. We are interested in the critical case $\kappa=4$, which corresponds to the Brownian loop soup of intensity 1/2.

The coupling of CLE$_4$ with a supercritical LQG disk in \cite{AG23} is given by combining two different couplings. First, we record the following integrable description of CLE$_4$ on an independent critical LQG disk proved in \cite{AG23-critical} (see \cite{BBCK18,CCM20,MSW22} for the subcritical analog).

\begin{proposition}\label{prop:critical-gasket-coupling}
    Let $(U,h^C)$ be an embedding of the critical LQG disk with boundary length $L>0$. Let $\Gamma$ be a (non-nested) CLE$_4$ in $U$ sampled independently from $h^C$. For each loop $\ell \in \Gamma$, let $U_\ell\subset U$ be the open region enclosed by $\ell$ and $Z(\ell):=\nu_{h^C}(\ell)$ denote the critical LQG length of the loop $\ell$.
    \begin{enumerate}[(a)]
        \item The 2-LQG surfaces $\{(U_\ell, h^C|_{U_\ell})/\!\sim_2\}_{\ell \in \Gamma}$ are conditionally independent critical LQG disks given their boundary lengths $\{Z(\ell)\}_{\ell\in\Gamma}$.

        \item Let $(\zeta(t))_{t\geq 0}$ be a 3/2-stable L\'evy process with no downward jumps\footnote{\label{footnote:scale-par}We note that this is defined up to the choice of the scale parameter $C>0$, where the L\'evy measure of the process is equal to $Cx^{-5/2}\,\ind_{\{x>0\}}\,dx$. The precise choice of $C$ is irrelevant for us, since, as discussed in the introduction of \cite{CCM20}, any choice of $C$ yields the same distribution $\rho^{(L)}$.} and $\tau_L:= \inf\{t\geq 0: \zeta(t)=-L\}$. Denote by $(\Delta\zeta)_L^\downarrow$ the sizes of upward jumps of $\zeta$ in $[0,\tau_L]$ enumerated in a decreasing order. Define the probability distribution $\rho^{(L)}$ on $(\RR_+)^{\NN}$ so that for each measurable function $F:(\RR_+)^{\NN}\to \RR$, we have
  \begin{equation}
    \label{eq:1}
    \int F\,d\rho^{(L)}=\frac{\EE[(\tau_L)^{-1}F((\Delta \zeta)_L ^\downarrow)]}{\EE [(\tau_L)^{-1}]}.
  \end{equation}
        The sequence of boundary lengths $\{Z(\ell)\}_{\ell\in\Gamma}$ arranged in a decreasing order has the law $\rho^{(L)}$. Note that by the scaling relation for stable processes, 
\begin{equation}
    (\Delta\zeta)_L^\downarrow \stackrel{d}{=} L \cdot (\Delta_\zeta)_1^\downarrow.
\end{equation}
    \end{enumerate}  

\end{proposition}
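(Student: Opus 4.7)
The plan is to split the proposition into its two claims and handle them by separate techniques: part (a) should reduce to the spatial Markov property of the GFF together with a characterization of the critical LQG disk, while part (b) is most naturally obtained as a limit of the known CLE$_\kappa$/$\gamma$-LQG couplings in the subcritical range $\kappa \in (8/3,4)$.

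For part (a), I would work with the explicit decomposition $h^C = h^{|} + h^\dagger$ from Definition~\ref{def:2-disk}, where $h^\dagger$ is a free-boundary GFF on the cylinder independent of the Bessel field $h^{|}$. Since $\Gamma$ is sampled independently of $h^C$, the domain Markov property of the GFF implies that, conditional on the CLE$_4$ gasket, the restriction of $h^\dagger$ to each $U_\ell$ decomposes as a zero-boundary GFF on $U_\ell$ plus the harmonic extension of $h^\dagger|_{\partial U_\ell}$, jointly independent across loops. Combined with the conformal covariance of $\nu_{h^C}$ and the conformal invariance of CLE$_4$ inside each $U_\ell$, this should show that each decorated quantum surface $(U_\ell, h^C|_{U_\ell})/\!\sim_2$ is, conditionally on $Z(\ell)$, an independent sample of a critical LQG disk of boundary length $Z(\ell)$. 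To pin down the conditional law one would use a characterization of the critical LQG disk through its boundary length together with horizontal-translation invariance of its radial Bessel profile, paralleling the subcritical welding arguments of \cite{MSW22,DMS14} and the critical characterization in \cite{AHPS23}.

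For part (b), I would implement a subcritical-to-critical approximation. For $\gamma_n \uparrow 2$ and $\kappa_n = \gamma_n^2 \in (8/3,4)$, the cited subcritical results \cite{BBCK18,CCM20,MSW22} give an explicit $\alpha_n$-stable L\'evy description of the decreasing sequence of loop lengths of CLE$_{\kappa_n}$ on an independent $\gamma_n$-LQG disk, with the same ``$1/\tau_L$'' biasing as in \eqref{eq:1}. Using the convergence of subcritical LQG disks to the critical LQG disk from \cite{AHPS23} together with joint convergence of CLE$_{\kappa_n}$ to CLE$_4$ and of the corresponding boundary length measures, the ordered length sequences converge in distribution; one then verifies that $\alpha_n \to 3/2$ as $\kappa_n \uparrow 4$ and that the biased excursion measures converge to $\rho^{(L)}$ via the closed-form subcritical formulas.

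The main obstacle is the critical limit itself. The critical LQG boundary measure is defined through a nonstandard regularization involving a $\sqrt{\log(1/\epsilon)}$ factor, and the critical LQG disk is built from Bessel processes rather than from a finite-mass Gaussian multiplicative chaos, so every convergence statement above requires uniform control of both the field and its boundary measure as $\gamma_n \uparrow 2$. A related subtlety is the $1/\tau_L$ biasing in \eqref{eq:1}, whose most transparent origin is the uniform rooting of a loop-decorated planar map; an alternative route to part (b) is therefore to bypass the subcritical continuum limit and instead prove it directly as the scaling limit of the gasket decomposition of \cite{BBG11} for loop-$O(n)$ decorated Boltzmann maps at $n=2$, using convergence of ordered jumps of rescaled random walks to those of the $3/2$-stable L\'evy process.
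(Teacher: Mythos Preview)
The paper does not prove this proposition at all: it is stated as background and attributed to \cite{AG23-critical}, with the subcritical analogs cited from \cite{BBCK18,CCM20,MSW22}. There is therefore no ``paper's own proof'' to compare against; the authors simply invoke the result.

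Your proposal is a reasonable high-level sketch of how one might establish such a statement, and the two routes you outline for part (b) --- a $\gamma_n \uparrow 2$ limit of the subcritical \cite{MSW22} coupling, or a direct scaling limit from $O(2)$-decorated planar maps via \cite{BBG11} --- are indeed the natural candidates. That said, neither is close to a proof as written. For part (a), the decomposition $h^C = h^{|} + h^\dagger$ does not immediately yield that the restriction to $U_\ell$ is again a critical LQG disk: the Bessel component $h^{|}$ is not a GFF and has no domain Markov property, so you cannot simply invoke ``domain Markov for $h^\dagger$'' and be done. The actual argument in \cite{AG23-critical} requires a genuine characterization of the critical disk law, which is delicate precisely because the critical GMC involves a derivative martingale rather than an ordinary one. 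For part (b), you correctly identify the main obstacle (uniform control of boundary measures as $\gamma \uparrow 2$), but this is the entire difficulty, not a technicality --- the convergence of subcritical boundary GMC to critical boundary GMC jointly with CLE$_{\kappa_n} \to$ CLE$_4$ is a substantial result in its own right.

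In short: your outline points in the right directions, but since the paper treats this as a black-box citation, you should do the same rather than attempt a self-contained proof.
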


The second coupling that we recall is the level-line coupling of CLE$_4$ with zero-boundary GFF due to Miller and Sheffield (see \cite{ASW19} for a proof).

\begin{proposition}\label{prop:level-line-coupling}
    Given a domain $U \subsetneq \CC$, there is a coupling of the zero-boundary GFF $h^D$ and the (non-nested) CLE$_4$ in $U$ such that $\Gamma$ is almost surely determined by $h^D$ and the following holds:
   There exists a sequence of random variables $\{Y_\ell\}_{\ell\in \Gamma}$ such that, conditioned on $\Gamma$, $\{Y_\ell\}_{\ell\in \Gamma}$ are i.i.d.\ Rademacher random variables and $\{h^D|_{U_\ell} - \pi Y_\ell\}_{\ell \in \Gamma}$ are independent zero-boundary GFFs on the respective domains $\{U_\ell\}_{\ell \in \Gamma}$.
\end{proposition}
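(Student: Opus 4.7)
The plan is to construct the coupling by running a branching level-line exploration of the zero-boundary GFF $h^D$ and identifying the loops it traces as a $\CLE_4$, following the Miller--Sheffield imaginary geometry approach as formalized in \cite{ASW19}. The input is the classical $\SLE_4$ level-line coupling for the Dirichlet GFF due to Schramm--Sheffield and Dubédat: if $\eta$ is an $\SLE_4$ from $a$ to $b$ on $\partial U$ and $u_{a,b}$ is the bounded harmonic function with boundary data $\pm\lambda$ on the two boundary arcs (where $\lambda=\pi/2$), then there is a unique coupling in which $\eta$ is the zero level line of $h^D+u_{a,b}$ and $\eta$ is measurable with respect to that field; conditionally on $\eta$, the field on each side of $\eta$ is an independent zero-boundary GFF plus the harmonic extension of $\pm\lambda$ on $\eta$.

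The construction then proceeds in three steps. First, I would recall the Sheffield--Werner / Werner--Wu branching $\SLE_4(-2)$ exploration that produces a $\CLE_4$: starting from a boundary point, a trunk is drawn and each time it makes an excursion off the boundary a loop is traced out. Second, I would couple this branching exploration with $h^D$ by iterating the level-line coupling branch by branch, using target-independence of $\SLE_4(-2)$ together with the strong Markov property of the GFF to splice the couplings consistently. Each $\CLE_4$ loop $\ell$ is completed when a branch closes on itself, and since the conditional mean of $h^D$ jumps by $\pm 2\lambda=\pm\pi$ across every level line segment, a local-set computation shows that the conditional mean of $h^D$ on $U_\ell$ is a.s.\ the constant function $+\pi$ or $-\pi$; define $Y_\ell\in\{-1,+1\}$ to record this sign. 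Third, I would invoke the theory of thin local sets (Schramm--Sheffield) to conclude that the union of all the $\CLE_4$ loops is a thin local set of $h^D$, so that conditionally on this local set together with $\{Y_\ell\}$, $h^D$ decomposes as $\pi Y_\ell$ on $U_\ell$ plus independent zero-boundary GFFs $\{h^D|_{U_\ell}-\pi Y_\ell\}_{\ell\in\Gamma}$, which is precisely the second half of the proposition. That the $Y_\ell$ are conditionally i.i.d.\ Rademacher given $\Gamma$ then follows from the symmetry $h^D\stackrel{d}{=}-h^D$ applied inductively along the tree of the branching exploration: each branching flips the sign independently, so conditional on the loop collection the labels are exchangeable i.i.d.\ fair signs.

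The main obstacle will be proving that $\Gamma$ is genuinely measurable with respect to $h^D$ and that the loop collection produced by the branching level-line construction has the standard $\CLE_4$ law. The iterated $\SLE_4$ level-line coupling only a priori couples the two objects, with potential extra randomness coming from the choice of branch direction at each branching point; to upgrade it into a deterministic functional $\Gamma=\Gamma(h^D)$ one needs a uniqueness theorem for generalized level lines of the GFF targeting a loop, which is the technical core of \cite{ASW19} and leans on Dubédat's uniqueness for classical level lines plus a continuity argument as the exploration approaches the closing of a loop. Once this measurability and uniqueness input is available, the remaining distributional statements (independent interior GFFs with $\pm\pi$ harmonic shifts, and i.i.d.\ Rademacher signs conditional on $\Gamma$) follow from standard local-set calculations and the $h^D\leftrightarrow -h^D$ symmetry.
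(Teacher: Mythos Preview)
The paper does not give its own proof of this proposition: it is stated as a known result due to Miller and Sheffield, with the parenthetical ``(see \cite{ASW19} for a proof)'' and no further argument. So there is no proof in the paper to compare against; the proposition functions purely as background input.

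Your sketch is a reasonable outline of how the result is established in the literature you cite, and the ingredients you list (the $\SLE_4$ level-line coupling, the branching $\SLE_4(-2)$ exploration, thin local sets, the measurability/uniqueness theorem of \cite{ASW19}) are the right ones. For the purposes of this paper, however, no such argument is expected or needed: you should simply cite the result and move on.
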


The coupling of a supercritical LQG disk with CLE$_4$ in \cite{AG23} is obtained by sampling independent fields $h^C$ and $h^D$ on a domain $U\subsetneq \CC$ such that $(U,h^C)/\!\sim_Q$ is a critical LQG disk with boundary length $L>0$ and $h^D$ is a zero-boundary GFF, and then defining the field $h$ as in \eqref{eq:supercritical-field} and the CLE$_4$ $\Gamma$ as a function of $h^D$ as in Proposition~\ref{prop:level-line-coupling}. 

It is straightforward to adapt Proposition~\ref{prop:critical-gasket-coupling} to this supercritical coupling except that, since $h^D$ has a ``height gap'' of size $\pi Y_\ell$ across the loop $\ell \in \Gamma$ as in Proposition~\ref{prop:level-line-coupling}, the supercritical field $h$ has a corresponding gap of size $\beta_Q Y_\ell$ across the loop $\ell$ where $\beta_Q := \pi \sqrt{4-Q^2}/Q$ as defined in \eqref{eq:beta-Q}. This means that the length of loop $\ell$ measured using the $Q$-LQG length measure \eqref{eq:supercritical-length} is different based on whether we use the field to the inside or outside of $\ell$. More precisely, define the \textbf{outer boundary length} of $\ell$ as 
\begin{equation}
    \mathfrak{n}_h^\out(\ell) := \nu_{h^C}(\ell)
\end{equation}
and the \textbf{inner boundary length} of $\ell$ as 
\begin{equation}\label{eq:inner-length}
    \mathfrak n_h^\inn(\ell) = \mathfrak n_{h|_{U_\ell}}(\ell) := \nu_{h^C + \beta_Q Y_\ell}(\ell).
\end{equation}
In particular, the two lengths are related by ratio 
\begin{equation} \label{eq:length-ratio} \frac{\mathfrak n_h^\inn(\ell)}{\mathfrak n_h^\out(\ell)} = e^{\beta_Q Y_\ell}. \end{equation}
The notation $\mathfrak n_{h|_{U_\ell}}(\ell)$ makes sense since if we let $f_\ell:\mathbb D \to U_\ell$ be a conformal map, then combining the coordinate change rule \eqref{eq:170} of a $Q$-LQG surface and the definition \eqref{eq:supercritical-length} of the $Q$-LQG boundary length of a domain, we have $\mathfrak n_h^\inn(\ell) = \mathfrak n_{(h|_{U_\ell}) \circ f_\ell + Q\log |f_\ell'|}(\partial \mathbb D)$.

\begin{proposition}[{\cite[Theorem~2.9]{AG23}}]\label{prop:gasket-coupling}
    Let $Q\in (0,2]$ and $(U,h)$ be an embedding of a $Q$-LQG disk with boundary length $L>0$. There exists a coupling $(h,\Gamma)$ of $h$ with a (non-nested) CLE$_4$ $\Gamma$ on $U$ such that the following is true. For each loop $\ell \in \Gamma$, let $U_\ell \subset U$ be the open region enclosed by $\ell$ and $Z_Q(\ell):= \mathfrak n_h^\inn(\ell)$ be the supercritical LQG length of the loop $\ell$ measured using the field $h|_{U_\ell}$ within the loop.
    \begin{enumerate}[(a)]
        \item The $Q$-LQG surfaces $\{(U_\ell, h|_{U_\ell})/\!\sim_Q\}_{\ell \in \Gamma}$ are conditionally independent $Q$-LQG disks given their boundary lengths $\{Z_Q(\ell)\}_{\ell\in\Gamma}$.

        \item Let $\{Z(\ell)\}_{\ell\in\Gamma}$ be the sequence with the law $\rho^{(L)}$ given in \eqref{eq:1}, and let $\{Y_\ell\}_{\ell \in \Gamma}$ be independent Rademacher random variables which are also independent from $\{Z(\ell)\}_{\ell\in\Gamma}$. Recall the constant $\beta_Q:= \pi \sqrt{4-Q^2}/Q$.  Then, $\{Z_Q(\ell)\}_{\ell \in \Gamma}$ has the same law as $\{Z(\ell) \exp(\beta_Q Y_\ell)\}_{\ell\in\Gamma}$ if both are arranged in decreasing orders. Let us denote this law as $\rho_Q^{(L)}$.
    \end{enumerate}
    In this coupling, $\Gamma$ is neither independent from nor a.s.\ determined by $h$.
\end{proposition}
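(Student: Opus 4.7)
The plan is to derive the coupling from Propositions~\ref{prop:critical-gasket-coupling} and~\ref{prop:level-line-coupling} by unpacking the decomposition $h = \frac{Q}{2} h^C + \frac{\sqrt{4-Q^2}}{2} h^D$ prescribed in Definition~\ref{def:Q-disk}. Concretely, I would fix the joint realization by sampling $h^C$ and $h^D$ independently as in Definition~\ref{def:Q-disk}, and then define $\Gamma$ to be the (non-nested) $\CLE_4$ obtained from $h^D$ via the level-line coupling of Proposition~\ref{prop:level-line-coupling}. Since $\Gamma$ is measurable with respect to $h^D$ and $h^D$ is independent of $h^C$, the pair $(h^C,\Gamma)$ satisfies the hypothesis of Proposition~\ref{prop:critical-gasket-coupling}, so the critical-phase gasket coupling applies to $h^C$ inside this joint realization.

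The next step is to decompose $h|_{U_\ell}$ using Proposition~\ref{prop:level-line-coupling}, which gives $h^D|_{U_\ell} = \pi Y_\ell + \tilde h_\ell$ where, conditionally on $\Gamma$, the $\tilde h_\ell$ are independent zero-boundary GFFs and the $Y_\ell$ are i.i.d.\ Rademacher. Substituting and using $\beta_Q = \pi\sqrt{4-Q^2}/Q$ yields
\begin{equation*}
  h|_{U_\ell} = \frac{Q}{2}\bigl(h^C|_{U_\ell} + \beta_Q Y_\ell\bigr) + \frac{\sqrt{4-Q^2}}{2}\,\tilde h_\ell.
\end{equation*}
Because the critical boundary measure satisfies $\nu_{h+c} = e^c\nu_h$, the $\sim_2$-equivalence class of $h^C|_{U_\ell} + \beta_Q Y_\ell$ is a critical $\LQG$ disk of boundary length $e^{\beta_Q Y_\ell}\nu_{h^C}(\ell) = e^{\beta_Q Y_\ell} Z(\ell)$, which by \eqref{eq:inner-length} is exactly $Z_Q(\ell)$. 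The display above then matches the Definition~\ref{def:Q-disk} recipe for $(U_\ell, h|_{U_\ell})/\!\sim_Q$ as a $Q$-$\LQG$ disk of boundary length $Z_Q(\ell)$. For part (a), the conditional independence across $\ell$ given $\{Z_Q(\ell)\}$ is then obtained by combining Proposition~\ref{prop:critical-gasket-coupling}(a) for the $h^C|_{U_\ell}$ factors with the conditional independence of the $\tilde h_\ell$ given $\Gamma$, invoking the independence of $h^C$ and $h^D$ to glue the two statements together. For part (b), the $h^C$-measurable variables $\{Z(\ell)\}$ are independent of the $h^D$-measurable variables $\{Y_\ell\}$ given $\Gamma$; the law of $\{Z(\ell)\}$ is $\rho^{(L)}$ by Proposition~\ref{prop:critical-gasket-coupling}(b), and combining with $Z_Q(\ell) = Z(\ell) e^{\beta_Q Y_\ell}$ yields the claimed description of $\rho_Q^{(L)}$.

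The main obstacle is the bookkeeping of the two-layered conditioning: Propositions~\ref{prop:critical-gasket-coupling} and~\ref{prop:level-line-coupling} both condition on $\Gamma$ but act on disjoint sources of randomness ($h^C$ versus $h^D$), and one must carefully verify that their factorizations interlock once the supercritical field is reassembled from its Definition~\ref{def:Q-disk} pieces. Once this is in place, the identity $h|_{\partial U_\ell} = \frac{Q}{2}(h^C|_{\partial U_\ell} + \beta_Q Y_\ell)$, valid because $\tilde h_\ell$ vanishes on $\partial U_\ell$, is what converts the $\pi Y_\ell$ height gap of the GFF across $\CLE_4$ loops into the multiplicative factor $e^{\beta_Q Y_\ell}$ between outer and inner $Q$-$\LQG$ lengths recorded in \eqref{eq:length-ratio}. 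The final assertion that $\Gamma$ is neither independent of nor determined by $h$ follows at once: $\Gamma$ is $h^D$-measurable so cannot be independent of the mixture $h$, but $h^D$ itself (hence $\Gamma$) cannot be recovered from $h$ since the independent component $h^C$ obstructs the recovery.
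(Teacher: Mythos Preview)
Your construction and the derivation of parts (a) and (b) match exactly the approach the paper sketches in the paragraph preceding the proposition: sample $h^C$ and $h^D$ independently, take $\Gamma$ from $h^D$ via Proposition~\ref{prop:level-line-coupling}, and combine with Proposition~\ref{prop:critical-gasket-coupling}. The paper does not give a self-contained proof but cites \cite[Theorem~2.9]{AG23}, and your write-up is a faithful expansion of that sketch.

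One caveat: your justification of the final clause is not a proof. That $\Gamma$ is $h^D$-measurable does not by itself preclude independence from the mixture $h=\frac{Q}{2}h^C+\frac{\sqrt{4-Q^2}}{2}h^D$ (a nontrivial function of one summand can still be independent of a sum in contrived situations), and the non-recoverability of $h^D$ from $h$ does not automatically transfer to the coarser variable $\Gamma$. The paper likewise does not argue this point and simply inherits it from \cite{AG23}; if you want an honest argument, you would need to exhibit, for instance, an event in $\sigma(\Gamma)$ whose conditional probability given $h$ is nondegenerate, and separately an event in $\sigma(\Gamma)$ whose probability changes under conditioning on $h$.
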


\subsection{Coupling with nested \texorpdfstring{CLE$_4$}{CLE(4)} and the perimeter cascade}

For a simply connected domain $U \subsetneq \CC$, the \textbf{nested CLE$_4$} $\overline \Gamma$ on $U$ is obtained from the non-nested CLE$_4$ by the following inductive procedure. Let $\Gamma^0:=\{\partial U\}$. Given collections of loops $\{\Gamma^i\}_{i\in [\![0,n]\!]}$, sample a conditionally independent CLE$_4$ $\Gamma_\ell$ for each open region $U_\ell$ enclosed in a loop $\ell \in \Gamma^n$ and let $\Gamma^{n+1} = \bigcup_{\ell\in \Gamma^n} \Gamma_\ell$. The nested CLE$_4$ is defined as $\overline \Gamma:= \bigcup_{n\in\NN^\#} \Gamma^n$, where we recall that $\NN^{\#}=\NN\cup\{0\}$. We call the elements of $\Gamma^n$ as \textbf{$\boldsymbol n$th generation loops}.

Given a zero-boundary GFF $h^D$ in a simply connected domain $U\subsetneq \CC$, we can produce the CLE$_4$ $\Gamma_\ell$ in this iterative construction as a function of the zero-boundary GFF $h^D|_{U_\ell} - \pi Y_\ell$ as in Proposition~\ref{prop:level-line-coupling}. Sampling an embedding $(U,h^C)$ of a critical LQG disk independently from $h^D$ and again defining $h$ as a linear combination of $h^C$ and $h^D$ as in \eqref{eq:supercritical-field}, we have the following coupling of a supercritical LQG disk and a nested CLE$_4$. 

\begin{proposition}[{\cite[Theorem~2.13]{AG23}}]\label{prop:nested-coupling}
    Let $Q\in (0,2]$ and let $(U,h)$ be an embedding of a $Q$-LQG disk with boundary length $L>0$. There exists a coupling of $(h,\overline\Gamma)$ of $h$ with a nested CLE$_4$ $\overline \Gamma$ on $U$ such that the following is true. For each loop $\ell \in \overline \Gamma$, define $U_\ell \subset U$ to be the subdomain enclosed by $\ell$ and $Z_Q(\ell) = \nu_h^\inn(\ell)$ to be the inner boundary length of the loop $\ell$ measured using $h|_{U_\ell}$ as defined in \eqref{eq:inner-length}. Let $\overline \Gamma(\ell):= \{\tilde \ell \in \overline \Gamma: \tilde \ell \subset U_\ell\}$ be the subcollection of CLE$_4$ loops $\overline \Gamma$ which are inside the loop $\ell$.
    \begin{enumerate}[(a)]
        \item For each $n\in \NN$, if we condition on the inner boundary lengths $\{Z_Q(\ell)\}_{\ell \in \Gamma^n}$ of the $n$th level loops, then the $Q$-LQG surfaces $\{(U_\ell, h|_{U_\ell},\overline \Gamma(\ell))/\!\sim_Q\}_{\ell \in \Gamma^n}$ are conditionally independent nested-CLE$_4$-decorated $Q$-LQG disks with given boundary lengths.

        \item In particular, conditioned on the inner boundary lengths $\{Z_Q(\ell)\}_{\ell \in \Gamma^n}$ of the $n$th level loops, those of $(n+1)$th level loops in each $\ell \in \Gamma^n$ --- i.e., $\{Z_Q(\tilde \ell)\}_{\tilde \ell \in \Gamma_\ell}$ ---  are conditionally independent with distribution $\rho_Q^{(Z_Q(\ell))}$ defined in Proposition~\ref{prop:gasket-coupling}. 
    \end{enumerate}
    In this coupling, $\overline \Gamma$ is neither independent from nor a.s.\ determined by $h$.
\end{proposition}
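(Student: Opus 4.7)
The plan is to prove Proposition~\ref{prop:nested-coupling} by induction on the generation $n$, using Proposition~\ref{prop:gasket-coupling} (the non-nested coupling) at each step, and exploiting the independent sampling structure of nested CLE$_4$ together with the spatial Markov property of the zero-boundary GFF encoded in Proposition~\ref{prop:level-line-coupling}. Throughout, I keep the decomposition $h = \frac{Q}{2}h^C + \frac{\sqrt{4-Q^2}}{2}h^D$ from Definition~\ref{def:Q-disk}, and I construct the nested loop ensemble $\overline\Gamma$ by iterating the level-line coupling of CLE$_4$ with the Dirichlet field $h^D$.

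First, I construct the coupling explicitly. Sample $\Gamma^1$ as a measurable function of $h^D$ using Proposition~\ref{prop:level-line-coupling}, which simultaneously produces Rademacher variables $\{Y_\ell\}_{\ell\in\Gamma^1}$ such that $\{h^D|_{U_\ell}-\pi Y_\ell\}_{\ell\in\Gamma^1}$ are conditionally independent zero-boundary GFFs. Having obtained $\Gamma^1,\ldots,\Gamma^n$ and the corresponding height shifts, for each $\ell\in\Gamma^n$ I apply Proposition~\ref{prop:level-line-coupling} to the conditionally independent Dirichlet field $h^D|_{U_\ell}-\pi\cdot(\text{cumulative shift})$ to produce the CLE$_4$ $\Gamma_\ell$ inside $U_\ell$, and set $\Gamma^{n+1}=\bigcup_{\ell\in\Gamma^n}\Gamma_\ell$. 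By construction $\overline\Gamma=\bigcup_n \Gamma^n$ has the law of a nested CLE$_4$ on $U$, and all the $\{\Gamma^k\}_{k\leq n+1}$ are measurable functions of $h^D$ alone.

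The inductive statement is: conditional on $\Gamma^1,\ldots,\Gamma^n$ and on the inner boundary lengths $\{Z_Q(\ell)\}_{\ell\in\Gamma^n}$, the decorated surfaces $\{(U_\ell,h|_{U_\ell},\overline\Gamma(\ell))/\!\sim_Q\}_{\ell\in\Gamma^n}$ are conditionally independent nested-CLE$_4$-decorated $Q$-LQG disks with the given boundary lengths. The base case $n=0$ (where $\Gamma^0=\{\partial U\}$ and $Z_Q(\partial U)=L$) is the hypothesis of the proposition. For the inductive step, write $h|_{U_\ell}=\frac{Q}{2}h^C|_{U_\ell}+\frac{\sqrt{4-Q^2}}{2}h^D|_{U_\ell}$. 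Since $\Gamma^1,\ldots,\Gamma^n$ are functions of $h^D$ only and $h^C$ is independent of $h^D$, conditioning on these loops does not alter the law of $h^C$; hence Proposition~\ref{prop:critical-gasket-coupling} may be applied iteratively to obtain that $\{h^C|_{U_\ell}\}_{\ell\in\Gamma^n}$ are conditionally independent critical LQG disk fields given their critical LQG boundary lengths $\{\nu_{h^C}(\ell)\}_{\ell\in\Gamma^n}$. Combined with the iterated Markov property of $h^D$ via Proposition~\ref{prop:level-line-coupling}, this yields conditional independence of the supercritical fields $\{h|_{U_\ell}\}_{\ell\in\Gamma^n}$ given the pairs $(\nu_{h^C}(\ell),Y_{\ell'})$ that determine $Z_Q(\ell)=\nu_{h^C+\beta_Q Y_\ell}(\ell)$. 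A measurability and shift-invariance check — together with the observation that $\overline\Gamma(\ell)$ is built from $h^D|_{U_\ell}$ shifted by a constant — identifies the conditional law in each $U_\ell$ as that of a nested-CLE$_4$-decorated $Q$-LQG disk of the required boundary length, closing the induction and giving part (a). Part (b) is then immediate: apply part (a) at level $n$ and then the non-nested coupling Proposition~\ref{prop:gasket-coupling} within each $(U_\ell,h|_{U_\ell})/\!\sim_Q$ to obtain the distribution $\rho_Q^{(Z_Q(\ell))}$ for the lengths of its outermost CLE$_4$ loops, which are precisely the $(n+1)$th generation loops of $\overline\Gamma$ contained in $\ell$.

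The main obstacle is the bookkeeping that ensures the constant shifts $\pi Y_\ell$ appearing in the iterated level-line coupling line up correctly with the shifts $\beta_Q Y_\ell$ defining $Z_Q(\ell)$ in \eqref{eq:inner-length}, so that the field embedding of $(U_\ell,h|_{U_\ell})$ really does produce a $Q$-LQG disk of boundary length $Z_Q(\ell)$ rather than a multiple thereof. This amounts to checking that adding a constant $c$ to the critical LQG field $h^C|_{U_\ell}$ scales $\nu_{h^C|_{U_\ell}}$ by $e^c$ (for $c=\beta_Q Y_\ell$), matching the normalization of Definition~\ref{def:2-disk} where one subtracts $\log\nu_h(\partial\mathcal S)$; equivalently, the $Q$-LQG coordinate change formula \eqref{eq:170} together with the rule that shifting $h$ by $\frac{2}{Q}\log(\tilde L/L)$ rescales the boundary length from $L$ to $\tilde L$. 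Once these normalizations are tracked, the inductive argument closes cleanly.
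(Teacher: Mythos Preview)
The paper does not give its own proof of this proposition: it is quoted verbatim from \cite[Theorem~2.13]{AG23}, with only the preceding paragraph sketching the construction of the coupling (iterate the level-line coupling on $h^D$ and keep $h^C$ independent). Your proposal is exactly an expansion of that sketch, and the argument is correct. In particular, your handling of the shift bookkeeping is the right computation: writing $\tilde h^D := h^D|_{U_\ell}-\pi Y_\ell$ and $\tilde h^C := h^C|_{U_\ell}+\beta_Q Y_\ell$ gives $h|_{U_\ell}=\frac{Q}{2}\tilde h^C+\frac{\sqrt{4-Q^2}}{2}\tilde h^D$ with $\tilde h^D$ a zero-boundary GFF and $\nu_{\tilde h^C}(\ell)=e^{\beta_Q Y_\ell}\nu_{h^C}(\ell)=Z_Q(\ell)$, so $(U_\ell,h|_{U_\ell})/\!\sim_Q$ is a $Q$-LQG disk of boundary length $Z_Q(\ell)$ as required.

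Two small points worth tightening. First, when you say ``conditioning on these loops does not alter the law of $h^C$; hence Proposition~\ref{prop:critical-gasket-coupling} may be applied iteratively,'' the cleanest justification is to condition on $\Gamma^1,\ldots,\Gamma^n$ \emph{and} the Rademacher variables: then $h^C$ and the fresh Dirichlet fields $\{h^D|_{U_\ell}-\pi Y_\ell\}_{\ell\in\Gamma^n}$ are jointly independent of the conditioning and of each other, so the next-generation CLE$_4$ is genuinely independent of $h^C|_{U_\ell}$ in each piece, which is what Proposition~\ref{prop:critical-gasket-coupling} requires. Second, you do not address the final sentence (that $\overline\Gamma$ is neither independent from nor a.s.\ determined by $h$); this follows immediately from the corresponding assertion for the outermost generation in Proposition~\ref{prop:gasket-coupling}.
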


Consequently, the inner boundary lengths $\{Z_Q(\ell)\}_{\ell\in \overline \Gamma}$ form a multiplicative cascade. To specify its law, let us index the loops of nested CLE$_4$ by the Ulam tree 
\begin{equation}\mathcal U := \bigcup_{n\in \NN^\#} \NN^n \qquad \text{where} \quad \NN^0:= \{\root\}. \end{equation} 
That is, each element of $\mathcal U$ other than $\root$ is a word consisting of finitely many positive integers. For $u\in \mathcal U$, let $|u|$ denote the length of the word (``generation"): e.g., if $u = u_1u_2\dots u_n$, then $|u|=n$. Define $|\root|=0$. We assign a partial order $\prec$ on $\mathcal U$ where $u \prec v$ if $|u|<|v|$ and the first $|u|$ letters of $v$ is equal to $u$. If $u \prec v$, then we call $u$ an \textbf{ancestor} of $v$ and $v$ a \textbf{descendant} of $u$. If $u\prec v$ and $|u| + 1 = |v|$, then $u$ is the \textbf{parent} of $v$ and $v$ is a \textbf{child} of $u$. We define $\root$ to be the parent of every element of $\mathcal U$ with length 1. Now, given the coupling $(h,\overline \Gamma)$ as in Proposition~\ref{prop:nested-coupling}, let us inductively build a bijection between $\overline \Gamma$ and $\mathcal U$ so that $\Gamma^n$ is mapped to $\mathcal U^n:=\{u\in \mathcal U: |u|=n\}$. In particular, if $\ell \in \Gamma^n$ is indexed by $u = u_1u_2\dots u_n\in \mathcal U$, then the loops in $\Gamma_\ell$ (i.e., the non-nested CLE$_4$ in $U_\ell$ consisting of the outermost loops of $\overline\Gamma(\ell)$) are indexed by the children of $u$. Moreover, for $k\in \NN$, let $uk = u_1u_2\dots u_n k \in \mathcal U$ be the index of the loop $\ell_k \in \Gamma_\ell$ with the $k$th largest value of the \textit{critical} LQG length $\nu_{h^C}(\ell_k)$ among such loops. The following statement is a rephrasing of the law of $\{Z_Q(\ell)\}_{\ell \in \overline \Gamma}$ described in Proposition~\ref{prop:nested-coupling} given this setup.

\begin{corollary}\label{cor:supercritical-cascade}
    Fix $Q\in (0,2]$ and recall $\beta_Q:= \pi \sqrt{4-Q^2}/Q$.
    Let $\{X^u_k\}_{u\in \mathcal U,k\in\NN}$ be a collection of random variables such that for each fixed $u\in \mathcal U$, the sequence $\{X^u_k\}_{k\in \NN}$ has the law $\rho^{(1)}$ defined in Proposition~\ref{prop:critical-gasket-coupling}. Furthermore, as $u$ is varied in $\cU$, the sequences $\{X^u_k\}_{k\in \NN}$ are mutually independent. Let $\{Y_u\}_{u\in \mathcal U}$ be i.i.d.\ Rademacher random variables sampled independently of $\{X^u_k\}_{u\in \mathcal U,k\in \NN}$. Define $Z(\root)=Z_Q(\root)=1$ and, for $u=u_1u_2\dots u_n \in \mathcal U$,
    \begin{equation}\label{eq:172}
        Z(u) = \prod_{i=1}^{n} X^{u_1 \dots u_{i-1}}_{u_i} \quad \text{and} \quad
        Z_Q(u) = Z(u) \prod_{i=1}^n e^{\beta_Q Y_{u_1\dots u_i}} = \prod_{i=1}^n \big( X^{u_1 \dots u_{i-1}}_{u_i} e^{\beta_Q Y_{u_1\dots u_i}} \big).
    \end{equation}
    Then, under the coupling of a unit boundary length $Q$-LQG disk with nested CLE$_4$ in Proposition~\ref{prop:nested-coupling}, $\{Z_Q(\ell)\}_{\ell \in \overline \Gamma}$ has the same law as $\{Z_Q(u)\}_{u\in \mathcal U}$ with the above bijection between $\overline \Gamma$ and $\mathcal U$. 
\end{corollary}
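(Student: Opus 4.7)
The plan is to prove this by induction on the generation $n$, showing that the joint law of the inner boundary lengths $\{Z_Q(\ell)\}_{\ell \in \Gamma^k,\, 0\le k\le n}$ produced by the coupling in Proposition~\ref{prop:nested-coupling} coincides, under the bijection $\overline{\Gamma}\leftrightarrow \mathcal{U}$ described just before the corollary, with the cascade values $\{Z_Q(u)\}_{|u|\le n}$ defined by \eqref{eq:172}. The base case $n=0$ is trivial: the disk has unit boundary length, so $Z_Q(\emptyset)=1$ on both sides.

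For the inductive step, assume the joint distributions agree through generation $n$, and fix the $n$th-generation values. By Proposition~\ref{prop:nested-coupling}(b), conditionally on $\{Z_Q(\ell)\}_{\ell\in \Gamma^n}$, the child sequences $\{Z_Q(\tilde\ell)\}_{\tilde\ell\in \Gamma_\ell}$ are independent across $\ell\in\Gamma^n$ and each distributed as $\rho_Q^{(Z_Q(\ell))}$. By Proposition~\ref{prop:gasket-coupling}(b), $\rho_Q^{(L)}$ is the law (after decreasing rearrangement in a sense made precise below) of the sequence $\{W_k\, e^{\beta_Q Y_k}\}_{k\in\NN}$, where $\{W_k\}$ has law $\rho^{(L)}$ and $\{Y_k\}$ are i.i.d.\ Rademacher, independent of $\{W_k\}$. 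The stable scaling recorded in Proposition~\ref{prop:critical-gasket-coupling}(b) then gives $W_k = L\cdot X^u_k$ for a sequence $\{X^u_k\}_{k\in\NN}$ of law $\rho^{(1)}$ (where $u\in\mathcal{U}^n$ is the index of $\ell$), and these sequences can be chosen mutually independent as $u$ varies. Together with the recursive relation $Z_Q(u) = Z_Q(\ell)$ from the inductive hypothesis, this shows that the $(n+1)$th generation inner lengths inside $\ell$ agree in distribution with $\{Z_Q(u) \cdot X^u_k\cdot e^{\beta_Q Y_{uk}}\}_{k\in\NN}$, which is exactly the expression obtained by applying \eqref{eq:172} one level further.

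The only subtle point is matching the indexing convention. In \eqref{eq:172} the children $uk$ carry the sequence $\{X^u_k\}_{k\in\NN}$ sampled from $\rho^{(1)}$, which by the definition of $\rho^{(L)}$ in \eqref{eq:1} records the \emph{decreasing} rearrangement of the upward jumps of the driving stable process, i.e.\ the decreasing rearrangement of the \emph{outer} (critical) LQG lengths. On the continuum side, the bijection described just before the corollary is defined precisely by assigning the index $uk$ to the child loop with the $k$th largest critical length $\nu_{h^C}(\ell_k)$, which by definition equals the outer length. Hence the Rademacher factors $e^{\beta_Q Y_{uk}}$ are attached to the correct loops, and the independence of the collections $\{X^u_k\}_{k\in\NN}$ across $u$ and of the Rademacher variables $\{Y_u\}_{u\in\mathcal{U}}$ corresponds exactly to the conditional independence supplied by Propositions~\ref{prop:gasket-coupling}(b) and~\ref{prop:level-line-coupling}. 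No step is particularly hard here; the main care is in bookkeeping the orderings and verifying that the $Y_u$'s produced by the level-line coupling are genuinely independent both across loops and of the critical LQG lengths, which holds because in the construction of the coupling the independent fields $h^C$ and $h^D$ respectively generate the outer lengths and the Rademacher heights.
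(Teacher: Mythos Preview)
Your proof is correct and is essentially the same as the paper's (implicit) approach: the paper presents the corollary simply as ``a rephrasing of the law of $\{Z_Q(\ell)\}_{\ell\in\overline\Gamma}$ described in Proposition~\ref{prop:nested-coupling} given this setup'' and gives no separate proof, whereas you have written out explicitly the induction on generations together with the scaling relation $\rho^{(L)}\stackrel{d}{=}L\cdot\rho^{(1)}$ and the bookkeeping of the indexing-by-critical-length convention. Your discussion of why the Rademacher factors are independent of the outer lengths (via the independence of $h^C$ and $h^D$) is a useful clarification that the paper leaves to the reader.
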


\section{Non-existence of the supercritical volume measure}\label{sec:3}

In this section, we analyze the multiplicative cascade $Z_Q=(Z_Q(u))_{u\in \cU}$ of inner boundary lengths in the coupling of supercritical LQG with CLE$_4$ presented in Section~\ref{sec:2}. The key observation is Proposition~\ref{prop:many-macroscopic-loops}, which states that there almost surely exist infinitely many quantum-macroscopic CLE loops in this coupling. This shall eventually lead to a proof of Theorem \ref{thm:3}, and this is the main content of this section. 

Another consequence of Proposition~\ref{prop:many-macroscopic-loops} is that the probability $F(p) = \PP_\infty^{(p)}(M \textrm{ is finite})$ tends to 0 as the outermost boundary length $p$ increases  to $\infty$ (Lemma~\ref{lem:7}), which is proved in Section~\ref{sec:prob-dis}. This lemma is the first step in the proof of Proposition~\ref{thm:2}, where we strengthen this first estimate for $F(p)$ to an exponential decay in $p$.

\subsection{The perimeter process contains infinitely many macroscopic loops}\label{sec:macro-loops}

Recall the multiplicative cascade $Z_Q = (Z_Q(u))_{u\in \cU}$ given in \eqref{eq:172}. In Corollary~\ref{cor:supercritical-cascade}, we saw that its law agrees with that of the inner boundary lengths of the CLE$_4$ loops coupled with a unit boundary length $Q$-LQG disk. The main property of the multiplicative cascade $Z_Q$ used in this paper is the following.

\begin{proposition}\label{prop:many-macroscopic-loops}
    For a Borel subset $A\subset (0,\infty)$, denote by $N_n(A;Z_Q)$ the cardinality of the set $\{u\in \mathcal U: |u|=n, Z_Q(u) \in A\}$. If the Lebesgue measure of $A$ is positive, then $N_n(A;Z_Q) \to \infty$ almost surely as $n\to\infty$.
\end{proposition}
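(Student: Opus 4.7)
I plan to view $V(u) := \log Z_Q(u)$, $u\in\mathcal U$, as a branching random walk (BRW) on the Ulam tree with one-generation displacement point process $\Xi := \{\log X^{\root}_k + \beta_Q Y_k\}_{k\in\mathbb N}$, and to prove the stronger statement that $\mathcal M_n(A') := \#\{u\in \mathcal U : |u|=n,\, V(u)\in A'\} = \infty$ almost surely for every bounded Borel $A'\subset \mathbb R$ of positive Lebesgue measure and every $n\geq 2$. The proposition then follows, since $\log A$ has positive Lebesgue measure whenever $A\subset(0,\infty)$ does, and hence contains a bounded subset of positive Lebesgue measure.

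The first step is to compute the intensity measure $\Lambda := \mathbb E[\Xi]$ on $\mathbb R$ explicitly. Using the description of $\rho^{(1)}$ via the jumps of a $3/2$-stable spectrally positive L\'evy process with L\'evy measure $\nu(dx)=Cx^{-5/2}dx$, combined with the change of variables $u = \log x$ (which turns $x^{-5/2}dx$ into $e^{-3u/2}du$) and the averaging over the Rademacher shifts $Y_k$ (which contributes a $\cosh(3\beta_Q/2)$ factor), I obtain
\begin{equation*}
\Lambda(du) \;=\; c_Q\, e^{-3u/2}\, du, \qquad u\in\mathbb R,
\end{equation*}
for an explicit $c_Q > 0$ (the size-biasing by $\tau_1^{-1}$ only contributes a multiplicative constant). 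In particular, $\Lambda(A')\in(0,\infty)$ for every bounded $A'$ of positive Lebesgue measure, while $\Lambda(\mathbb R) = \infty$, reflecting that each parent has infinitely many offspring accumulating at $-\infty$.

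For the base case $n = 2$ I proceed via a conditional Borel--Cantelli argument. Given the generation-1 configuration, the contributions $C_u := \#\{\text{gen-2 children of }u\text{ in }A'\}$ for $|u|=1$ are conditionally independent mixed-Poisson random variables (distinct parents drive independent L\'evy processes under the coupling of Proposition~\ref{prop:nested-coupling}), and a direct estimate gives $\Pr(C_u\geq 1 \mid V(u))\gtrsim 1\wedge \Lambda(A'-V(u))\asymp 1\wedge e^{3V(u)/2}$. A Campbell-type computation on the Cox process $\{V(u)\}_{|u|=1}$ then yields
\begin{equation*}
\sum_{|u|=1}\bigl(1\wedge e^{3V(u)/2}\bigr) \;=\; \infty \quad \text{almost surely},
\end{equation*}
since the corresponding expectation $\int(1\wedge e^{3v/2})\cdot c_Q e^{-3v/2}\,dv \geq c_Q\int_{-\infty}^0 dv$ diverges. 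Conditional Borel--Cantelli applied to the conditionally independent events $\{C_u \geq 1\}$ then gives $\mathcal M_2(A') = \infty$ a.s.

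The induction from $\mathcal M_n(A') = \infty$ to $\mathcal M_{n+1}(A') = \infty$ runs the same argument one generation deeper: applying the inductive hypothesis with $A'$ replaced by $(-M-1,-M)$ for a large fixed $M$, the gen-$n$ configuration has infinitely many particles in this window a.s., and each such parent independently has probability $\gtrsim e^{-3M/2}>0$ of spawning a gen-$(n+1)$ child in the target $A'$; classical Borel--Cantelli then yields $\mathcal M_{n+1}(A')=\infty$ a.s. The main technical obstacle I anticipate is rigorously justifying the Campbell-style identity in the base case: the point process $\{V(u)\}_{|u|=1}$ is not a Poisson process but a size-biased mixed-Poisson arising from $\rho^{(1)}$, and one must exploit the explicit form of the size-biasing by $\tau_1^{-1}$ (together with the conditional-Poisson structure of the jumps given $\tau_1$) to pass from an infinite first moment to an almost surely infinite sum.
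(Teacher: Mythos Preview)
Your proposed strengthening---that $\mathcal M_n(A') = \infty$ a.s.\ for every $n\ge 2$---is false. The intensity measure of the generation-$n$ point process $\{V(u)\}_{|u|=n}$ is the $n$-fold convolution $\Lambda^{*n}$, and since the Biggins transform
\[
\phi_Q(\theta)=\int e^{\theta u}\,\Lambda(du)=\cosh(\beta_Q\theta)\sec(\pi\theta)
\]
is finite on the open strip $\theta\in(3/2,5/2)$ (see \eqref{eq:190}), so is $\phi_Q(\theta)^n=\int e^{\theta u}\,\Lambda^{*n}(du)$. Hence $\Lambda^{*n}$ has two-sided exponential tails, and in particular $\mathbb E[\mathcal M_n(A')]=\Lambda^{*n}(A')<\infty$ for every bounded $A'$, forcing $\mathcal M_n(A')<\infty$ a.s.

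The error enters at your intensity computation. The claim that ``the size-biasing by $\tau_1^{-1}$ only contributes a multiplicative constant'' is incorrect: a large jump forces $\tau_1$ to be large, so the tilt by $\tau_1^{-1}$ suppresses large jumps and changes the right tail of $\Lambda$ from $e^{-3u/2}$ to $e^{-5u/2}$ (this is exactly what $\phi(\theta)<\infty$ up to $\theta\uparrow 5/2$ encodes; your formula $\Lambda(du)=c_Q e^{-3u/2}du$ would make $\phi_Q(\theta)=\infty$ for all $\theta$). With the correct right tail, your estimate $\Pr(C_u\ge 1\mid V(u))\gtrsim e^{3V(u)/2}$ becomes $\lesssim e^{5V(u)/2}$, and the sum $\sum_{|u|=1}e^{5V(u)/2}$ has finite expectation (again because $\phi_Q(\theta)<\infty$ for $\theta$ near $5/2$), so your Borel--Cantelli step collapses. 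A further red flag is that your argument never uses $Q<2$: for $Q=2$ one has $\beta_Q=0$ and $\phi_Q(2)=1$, and the conclusion of the proposition is not expected to hold.

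The paper's proof is genuinely different. It first shows (Lemma~\ref{lem:max-location}, via the Biggins transform and the strict inequality $\mu_Q>0$, which is where $Q<2$ enters) that $\sup_{|u|=n}Z_Q(u)$ grows exponentially in $n$; it then shows (Lemma~\ref{lem:next-generation-loops}) that a single loop of size $L$ produces at least $k$ children in $L^{-1}A$ except with probability $O(e^{-cL^{15/16}})$; a Borel--Cantelli argument combining these two facts gives $N_n(A;Z_Q)\to\infty$.
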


Our analysis of the multiplicative cascade $Z_Q$ relies on rephrasing it in terms of a branching random walk. Recall its law given in Corollary~\ref{cor:supercritical-cascade} and observe that the process $S_Q = (S_Q(u))_{u\in\mathcal U}$ given by $S_Q(u) = -\log Z_Q(u)$ is a branching random walk, which almost surely does not go extinct since each element in the multiplicative cascade $Z_Q$ has infinitely many descendants. We rephrase here the results from the branching random walk literature in terms of the multiplicative cascade $Z_Q$. 

The main property of the branching random walk $S_Q$ that we use is the location of its extreme positions. This is a well-studied topic with precise asymptotic estimates \cite{BR09, HS09, AS10, Aid13}. For our purposes, however, the following cruder result on the velocity of the extreme position suffices.

\begin{lemma}\label{lem:max-location}
    For $Q\in (0,2)$, 
    define the \textbf{Biggins transform} of the multiplicative cascade $Z_Q$ as the function
\begin{equation} \label{eq:biggins-transform}
    \phi_Q(\theta) := \EE \Bigg[\sum_{|u|=1} \big(Z_Q(u)\big)^\theta \Bigg].
\end{equation}
Then,
    \begin{equation}\label{eq:max-location-speed}
        \lim_{n\to\infty} \frac{1}{n} \log \bigg(\sup_{|u|=n}Z_Q(u)\bigg) = \mu_Q := \inf_{\theta>0} \frac{1}{\theta} \log \phi_Q(\theta)>0 \quad \text{almost surely}.
    \end{equation}
\end{lemma}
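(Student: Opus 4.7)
The plan is to recognize \eqref{eq:max-location-speed} as an instance of the classical Biggins velocity theorem for branching random walks. By Corollary~\ref{cor:supercritical-cascade}, setting $V(u) := \log Z_Q(u)$ makes $(V(u))_{u \in \mathcal U}$ a branching random walk on the Ulam tree whose offspring displacement point process at every vertex is an i.i.d.\ copy of $\{\log X^\emptyset_k + \beta_Q Y_k\}_{k \in \NN}$, and $\phi_Q(\theta)$ from \eqref{eq:biggins-transform} is exactly its Laplace transform $\EE[\sum_{|u|=1} e^{\theta V(u)}]$. Biggins' velocity theorem (applied to $-V$ to convert the max into the min of a BRW) then yields $n^{-1} \max_{|u|=n} V(u) \to \inf_{\theta > 0} \theta^{-1} \log \phi_Q(\theta) = \mu_Q$ almost surely on the survival event, provided $\phi_Q$ is finite on a nonempty open subinterval of $(0, \infty)$. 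Since every vertex has infinitely many offspring a.s., survival is automatic.

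To verify the integrability hypothesis, I would factor $\phi_Q(\theta) = \cosh(\theta \beta_Q)\, \Psi(\theta)$, where $\Psi(\theta) := \EE[\sum_k (X^\emptyset_k)^\theta]$ is the Biggins transform of the critical ($Q = 2$) cascade of CLE$_4$ loop lengths on a critical LQG disk. Using the biased $3/2$-stable L\'evy description from Proposition~\ref{prop:critical-gasket-coupling}(b) --- whose L\'evy measure has density proportional to $x^{-5/2}$ on $(0,\infty)$ --- the small-jump contribution to $\sum_k (X^\emptyset_k)^\theta$ is a.s.\ finite when $\theta > 3/2$, while the large-jump contribution has finite expectation when $\theta < 3/2$; the $\tau_L^{-1}$-weighting in \eqref{eq:1} preserves these finiteness properties. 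Combined with $\cosh(\theta \beta_Q) < \infty$ for all $\theta \in \RR$, this shows $\phi_Q$ is finite on an open subinterval of $(0, 3/2)$.

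The main obstacle is establishing strict positivity $\mu_Q > 0$. The key observation is that for $Q \in (0, 2)$ one has $\beta_Q > 0$, so the Rademacher factor $\cosh(\theta \beta_Q) > 1$ strictly for $\theta > 0$, and hence $\log \phi_Q(\theta) = \log \cosh(\theta \beta_Q) + \log \Psi(\theta)$ with the first summand strictly positive. The critical case $Q = 2$ should correspond to a critical BRW --- meaning $\Psi(\theta) \geq 1$ on its domain, equivalently $\inf_{\theta > 0} \theta^{-1} \log \Psi(\theta) = 0$ --- by the self-similarity of the critical LQG disk with nested CLE$_4$, although making this rigorous is the delicate point and may require an appeal to known LQG/CLE$_4$ results or a direct biased stable-L\'evy moment computation. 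Granting this, $\log \phi_Q(\theta) > 0$ strictly on the whole domain. Moreover, $\theta^{-1}\log \phi_Q(\theta) \to +\infty$ at both endpoints --- at $\theta = 0^+$ because $\Psi \to \infty$ from the infinite offspring count, and at the upper endpoint of the domain because of the $3/2$-stable tail --- so the infimum is attained at some interior $\theta^* > 0$, yielding $\mu_Q = (\theta^*)^{-1} \log \phi_Q(\theta^*) > 0$.
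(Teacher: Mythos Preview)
Your overall architecture matches the paper's: invoke Biggins' velocity theorem, factor $\phi_Q(\theta)=\cosh(\beta_Q\theta)\,\Psi(\theta)$ with $\Psi$ the Biggins transform of the critical cascade, and then argue $\phi_Q>1$ on its domain of finiteness. But two concrete problems arise.

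First, you have identified the domain of finiteness of $\Psi$ incorrectly. You correctly observe that the small-jump contribution to $\sum_k (X^\emptyset_k)^\theta$ converges a.s.\ only when $\theta>3/2$ (since the L\'evy measure is $\propto x^{-5/2}\,dx$), yet you then conclude that $\phi_Q$ is finite on an open subinterval of $(0,3/2)$ --- these two statements are incompatible. Indeed, for any $\theta\le 3/2$ one has $\sum_k (X^\emptyset_k)^\theta=+\infty$ a.s.\ since $\tau_L>0$ a.s., so $\Psi(\theta)=+\infty$ there. The actual domain is $(3/2,5/2)$: the $\tau_L^{-1}$-bias suppresses long excursions enough to make the large-jump expectation finite up to $\theta<5/2$. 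The paper sidesteps this analysis entirely by quoting the explicit formula $\Psi(\theta)=\sec(\pi\theta)$ for $\theta\in(3/2,5/2)$ from \cite{CCM20}.

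Second, and more seriously, your argument for $\mu_Q>0$ is incomplete. You need $\phi_Q(\theta)>1$ on its domain, and you propose to get $\Psi(\theta)\ge 1$ from a ``self-similarity'' heuristic that you yourself flag as delicate. The paper's route is much cleaner: from the explicit formula $\Psi(\theta)=\sec(\pi\theta)$ one reads off $\Psi(\theta)\ge 1$ on $(3/2,5/2)$ (with equality only at $\theta=2$), and since $\cosh(\beta_Q\theta)>1$ for $\theta>0$ and $Q\in(0,2)$, one gets $\phi_Q(\theta)>1$ everywhere on the domain, hence $\mu_Q>0$. Without that explicit formula your positivity argument does not close; appealing to the known critical-case behavior would in effect be citing the same computation anyway.
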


\begin{proof}
    This lemma is an immediate consequence of \cite{Big76} (see also \cite[Theorem~1.3]{Shi15}). All we need to show is $\mu_Q \in (0,\infty)$.

We can compute $\phi_Q(\theta)$ explicitly using the Biggins transform of the multiplicative cascade $Z$ from Corollary~\ref{cor:supercritical-cascade}, which was calculated\footnote{\label{footnote:ccm}The analysis in \cite{CCM20} for the multiplicative cascade generated by the jumps of a spectrally positive $\alpha$-stable process applies to any $\alpha = a-1/2 \in (1,2)$.} in equation (17) of \cite{CCM20} to be
\begin{equation} 
    \phi(\theta) := \EE \Bigg[\sum_{|u|=1} \big(Z(u)\big)^\theta \Bigg] =  \begin{cases} {\sec(\pi \theta)} & \text{if } \theta \in (\frac{3}{2},\frac{5}{2}), \\ +\infty & \text{otherwise}. \end{cases} 
\end{equation}
Recall from \eqref{eq:172} that if $\{Y_i\}_{i\in\NN}$ are i.i.d.\ Rademacher random variables which are independent of $Z$, then for each $i\in \NN$, we have $Z_Q(i) = Z(i) \exp(\beta_Q Y_i)$. Hence, 
\begin{equation} \label{eq:190}
    \phi_Q(\theta) =\EE \Bigg[\sum_{i\in \NN} \big(Z(i)\big)^\theta e^{\beta_Q Y_i \theta} \Bigg] = \phi(\theta) \cdot \frac{e^{\beta_Q\theta}+e^{-\beta_Q\theta}}{2} = \begin{cases} \cosh(\beta_Q \theta)/\cos(\pi \theta) & \text{if } \theta \in (\frac{3}{2},\frac{5}{2}), \\ +\infty & \text{otherwise}. \end{cases}  
\end{equation}
Hence, $\mu_Q<\infty$. 
Note that $\phi_Q(\theta) \to \infty$ as $\theta \downarrow 3/2$ and $\theta\uparrow 5/2$, so $\theta^{-1} \log \phi_Q(\theta)$ attains its minimum $\mu_Q$ in $(3/2,5/2)$. Moreover, $\phi_Q(\theta)>1$ for all $\theta\in(3/2,5/2)$, so $\mu_Q>0$.
\end{proof}

Lemma~\ref{lem:max-location} tells us that, almost surely, $\sup_{|u|=n} Z\inner(u)$ grows exponentially fast. We extrapolate Proposition~\ref{prop:many-macroscopic-loops} from this fact by showing that these large loops have many macroscopic children. This requires the following estimate on the offspring distribution of the cascade $Z_Q$.

\begin{lemma}\label{lem:next-generation-loops}
Let $A\subset (0,\infty)$ be a Borel subset with positive Lebesgue measure. Recall that we denote by  
\begin{equation}
\label{eq:N1}
    N_1(A;Z_Q) = \sum_{|u|=1} \ind\{Z_Q(u) \in A \}  
\end{equation}
the number of first-generation elements of $Z_Q$ whose sizes are in $A$. Then, for any fixed $k\in \NN$, there exists a constant $c>0$ such that 
\begin{equation}
    \PP\big( N_1(L^{-1} A;Z_Q) \leq k \big) = O\big(e^{-cL^{15/16}}\big)
\end{equation}
as $L\to\infty$.
\end{lemma}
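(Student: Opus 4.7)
The plan is to exploit a Poisson-point-process description of the first-generation atoms $\{Z_Q(i)\}_{i\in\NN}$ together with the small-time tail of the 2/3-stable random variable $\tau_1$. By Corollary~\ref{cor:supercritical-cascade} and Proposition~\ref{prop:critical-gasket-coupling}, under the biased law $\rho_Q^{(1)}$ one has $Z_Q(i) = Z(i)\,e^{\beta_Q Y_i}$, where $\{Z(i)\}$ are the sizes of upward jumps of the spectrally positive $3/2$-stable L\'evy process $\zeta$ on $[0,\tau_1]$, the $Y_i$ are i.i.d.\ Rademacher, and the joint law is biased by $\tau_1^{-1}$. Without loss of generality I may assume $A\subset[\epsilon,M]$ for some $0<\epsilon<M<\infty$, since shrinking $A$ only strengthens the conclusion. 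Conditional on $\tau_1$, a standard L\'evy computation shows that $\{Z_Q(i)\}$ is (up to corrections negligible at our level of precision) a Poisson point process whose intensity on $L^{-1}A$ equals $c(A,Q)\,\tau_1 L^{3/2}$, where the factor $L^{3/2}$ comes from $\int_{L^{-1}A}x^{-5/2}\,dx = L^{3/2}\int_A y^{-5/2}\,dy$ and the Rademacher twist produces a $\cosh(3\beta_Q/2)$ factor.

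Since $(\tau_L)_{L\geq 0}$ is the first-passage process of $\zeta$, it is a $2/3$-stable subordinator, and hence $\EE[e^{-\mu\tau_1}] = e^{-C_0\mu^{2/3}}$ for some $C_0>0$; the Chernoff bound yields $\PP(\tau_1\leq t) \leq \exp(-c_0/t^2)$, which by a routine integration-by-parts upgrades to $\EE[\tau_1^{-1}\ind\{\tau_1\leq t\}] \lesssim t^{-1}\exp(-c_0/t^2)$ for small $t$. I now split at the threshold $t^\ast := L^{-1/2}$. On $\{\tau_1\geq t^\ast\}$, the Poisson intensity $c(A,Q)\tau_1 L^{3/2}$ is at least $c(A,Q)L$, so the Poisson lower-tail inequality gives $\PP(N_1(L^{-1}A;Z_Q)\leq k \mid \tau_1) \leq C_k\,e^{-c(A,Q)L/2}$. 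On $\{\tau_1<t^\ast\}$ I use the trivial bound $\PP(N_1\leq k\mid\tau_1)\leq 1$ combined with the above biased tail estimate, contributing at most $L^{1/2}\exp(-c_0 L)$. Altogether,
\[
\PP_{\rho_Q^{(1)}}\bigl(N_1(L^{-1}A;Z_Q)\leq k\bigr) \lesssim_k L^{1/2}\exp(-cL),
\]
which is comfortably stronger than the claimed $O(\exp(-cL^{15/16}))$: the two exponents arising from the split equalise at the threshold $L^{-1/2}$, producing exponent $1$, so there is ample slack for the value $15/16$, which accommodates any technical corrections.

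The principal technical obstacle is rigorously justifying the Poisson description of $\{Z_Q(i)\}$ conditional on $\tau_1$, since the event $\{\tau_1=t\}$ strictly speaking introduces correlations between the jumps of $\zeta$. One can circumvent this either by decomposing the L\'evy measure into small jumps (size $\leq M/L$, containing all those in $L^{-1}A$) and large jumps -- arguing that for large $L$ the conditioning on $\tau_1$ essentially acts through the large jumps only and leaves the small jumps approximately Poissonian -- or by computing the Laplace functional $\EE_{\rho_Q^{(1)}}[e^{-\theta N_1}]$ directly using the exponential martingale of $\zeta$ together with It\^o's formula, thereby bypassing the conditioning issue altogether.
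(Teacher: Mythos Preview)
Your approach is essentially the paper's: split on whether $\tau_1$ lies below or above a threshold $t_0$, exploit the Poisson structure of the jumps on a fixed time interval when $\tau_1>t_0$, and invoke the small-time tail of the positive $2/3$-stable law when $\tau_1\le t_0$. Two points of comparison are worth recording.

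First, the conditioning issue you flag as the ``principal technical obstacle'' is resolved in the paper by a one-line trick that avoids both of your proposed workarounds: on $\{\tau_1>t_0\}$ one bounds $N_1$ from below by the count of relevant jumps in the \emph{deterministic} interval $[0,t_0]$, and then simply drops the indicator $\ind\{\tau_1>t_0\}$ to obtain an unconditional Poisson variable. No It\^o formula or small/large-jump decomposition is needed. (The paper also first peels off the Rademacher twist via the elementary inequality $\tfrac12(e^{-\ind_B}+e^{-\ind_{B'}})\le e^{-\tilde c\,\ind_{B\cup B'}}$, passing to the un-twisted cascade $Z$; your alternative of absorbing the twist into a $\cosh(3\beta_Q/2)$ factor in the L\'evy intensity is equally valid once the conditioning is handled as above.)

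Second, your Chernoff bound gives $\PP(\tau_1\le t)\le e^{-c_0/t^2}$, which is the correct exponent $\alpha/(1-\alpha)=2$ for a positive $2/3$-stable law. The paper instead quotes an exponent $t^{-5/3}$; with that value the balance point is $t_0=L^{-9/16}$ and one arrives at $15/16$, whereas with your exponent the balance point is $t_0=L^{-1/2}$ and one obtains $O(e^{-cL})$. So your numerics are sharper than the paper's, and the stated bound holds with room to spare; the only genuine gap in your argument is the conditioning step, and it is closed by the lower-bound-and-drop trick above.
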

\begin{proof}
    From the law of $Z_Q$ in \eqref{eq:172}, if $(Z(i))_{i\in \NN}$ are sampled from the law $\rho_Q^{(1)}$ given in Proposition~\ref{prop:gasket-coupling} and $\{Y_i\}_{i\in \NN}$ are i.i.d.\ Rademacher random variables independent from $\{Z(i)\}_{i\in \NN}$, then $(Z_Q(i))_{i\in \NN} \stackrel{d}{=} (Z(i)\exp(\beta_Q Y_i))_{i\in \NN}$. Hence, with  \begin{displaymath}
        \tilde c = \log (2/(1+e^{-1})),
    \end{displaymath} 
    we have
     \begin{equation}\label{eq:175}
     \begin{split}
        \EE\Big[e^{-N_1(L^{-1} A;Z_Q)}\Big] &= \EE\bigg[\prod_{i\in \NN} \EE\Big[e^{-\ind\{Z(i)e^{\beta_Q Y_i} \in L^{-1}A\}}\Big|Z\Big]\bigg] \\&= \EE\bigg[ \prod_{i\in \NN} \frac{e^{-\ind\{Z(i) \in L^{-1}e^{\beta_Q}A \}} + e^{-\ind\{Z(i) \in L^{-1}e^{-\beta_Q}A \}} }{2} \bigg] \\&\leq \EE \bigg[\prod_{i\in \NN} e^{-\tilde c\ind \{Z(i) \in L^{-1}(e^{\beta_Q} A \cup e^{-\beta_Q}A)\}}\bigg] = \EE\Big[e^{-\tilde c N_1(L^{-1}(e^{\beta_Q} A \cup e^{-\beta_Q} A);Z)}\Big],
    \end{split}\end{equation}
    where $N_1(\cdot;Z)$ used in the last term above is defined by simply replacing $Z_Q$ by $Z$ in \eqref{eq:N1}. For convenience, let us define
    \begin{displaymath}
     \widetilde A = e^{\beta_Q} A \cup e^{-\beta_Q}A
    \end{displaymath}
    for the remainder of this proof.

    Let us now recall the law of $(Z(i))_{i\in \NN}$ from Proposition~\ref{prop:critical-gasket-coupling}. 
    Let $\zeta$ be a $3/2$-stable L\'evy process with no negative jumps started at 0, and let $\tau$ denote the hitting time of $-1$ of this process. That is, for some scale parameter $C>0$, with $\Gamma(\cdot)$ denoting the Gamma function, the L\'evy measure of $\zeta$ is $(C/\Gamma(-3/2))\ind_{\{x>0\}}x^{-5/2}dx$ and $\EE[e^{-\lambda \tau}] = \exp(-(\lambda/C)^{2/3})$ for $\lambda\geq 0$. As mentioned in Footnote~\ref{footnote:scale-par}, the scale parameter $C$ does not change the law of the multiplicative cascade $Z$; hence, we choose $C=1$ for simplicity.  Let $({\Delta \zeta})^\downarrow$ denote the sequence consisting of the sizes of the jumps of $\zeta$ up to the stopping time $\tau$. Then,
     \begin{equation}
        \EE\Big[ e^{-\tilde cN_1(L^{-1}\widetilde A;Z)}\Big] = \frac{\EE\big[ \tau^{-1} \exp\big(-\tilde c\sum_{x \in (\Delta \zeta)^{\downarrow}} \ind \{x \in L^{-1}\widetilde A \} \big)\big]}{\EE\big[\tau^{-1}]}.
    \end{equation}
    For fixed $t_0>0$, the number of jumps of $\zeta$ in the time interval $[0,t_0]$ whose sizes are in $L^{-1} \widetilde A$ is a Poisson random variable with rate given by $t_0$ times the L\'evy measure
    \begin{equation}
         \Pi(L^{-1}\widetilde A) := \frac{1}{\Gamma(-3/2)}\int_{L^{-1}\widetilde A} s^{-5/2}\,ds =  \frac{L^{3/2}}{\Gamma(-3/2)}\int_{\widetilde A} s^{-5/2}\,ds = L^{3/2}\Pi(\widetilde A) .
    \end{equation}
    On the event $\{\tau > t_0\}$, this gives a lower bound for $\sum_{x \in (\Delta \zeta)^{\downarrow}} \ind \{x \in L^{-1}\widetilde A \}$.
    Hence, 
    \begin{equation}
        \EE\Big[\tau^{-1} e^{-\tilde c\sum_{x \in (\Delta \zeta)^{\downarrow}} \ind \{x \in L^{-1}\widetilde A \} } \cdot \ind\{ \tau  > t_0\} \Big] \leq  (t_0)^{-1} e^{-(1-e^{-\tilde c}) \Pi(\widetilde A) L^{3/2} t_0}.
    \end{equation}
    It is well known that $\tau$ is a positive $2/3$-stable random variable. We also have the tail estimate
    \begin{equation}
        \PP(\tau \in dt) = \exp(-(1+o(1))t^{-5/3})\,dt \quad \text{as} \quad t\downarrow 0
    \end{equation}
    from, e.g., \cite[Chapter~2.5]{Zol86}.
    Letting $c_A = (1-e^{-\tilde c})\Pi(\widetilde A) $, for each fixed $\alpha\in (0,3/2)$, we thus have
    \begin{equation}\begin{split}
            \EE \Big[ \tau^{-1} e^{-\tilde c\sum_{x \in (\Delta \zeta)^{\downarrow}} \ind \{x \in L^{-1}\widetilde A \}}  \Big] & \leq \EE\big[ \tau^{-1} \cdot \ind\{\tau \leq L^{-\alpha}\} \big] + \EE\Big[\tau^{-1} e^{-\tilde c\sum_{x \in (\Delta \zeta)^{\downarrow}} \ind \{x \in L^{-1}\widetilde A \} } \cdot \ind\{ \tau  > L^{-\alpha}\} \Big] \\
            &\leq  \int_0^{L^{-\alpha}} t^{-1}\exp(-(1+o(1))t^{-5/3})\,dt + L^\alpha \exp(-c_AL^{\frac{3}{2}-\alpha})\\
            &= \frac{5}{3}\int_{L^{5\alpha/3}}^\infty \frac{1}{s}e^{-(1+o(1))s}\,ds + L^\alpha \exp(-c_AL^{\frac{3}{2}-\alpha}) \\
            &= O\big(e^{-c(L^{5\alpha/3} \wedge L^{3/2 - \alpha})}\big)
            \end{split}
    \end{equation}
    as $L\to \infty$. Optimizing over $\alpha$, we have a constant $c>0$ depending on $A$ such that 
    \begin{equation}\label{eq:176}
        \EE\Big[ e^{-N_1(L^{-1}\widetilde A;Z)}\Big] = O(e^{-cL^{15/16}}) \quad \text{as} \quad L\to\infty.
    \end{equation}
    The lemma now follows by applying the Markov inequality to \eqref{eq:175} and \eqref{eq:176}. 
\end{proof}

\begin{proof}[Proof of Proposition~\ref{prop:many-macroscopic-loops}]
    Fix $\epsilon\in (0,\mu_Q)$, where $\mu_Q$ is the constant defined in \eqref{eq:max-location-speed}.    By Lemma~\ref{lem:next-generation-loops},
    \[ \PP( N_{n+1}(A;Z_Q)\leq k | N_n([e^{n(\mu_Q-\epsilon)},\infty);Z_Q)\geq 1 ) \leq O(e^{-ce^{(15/16)(\mu_Q-\epsilon)n}}). \]
    Moreover, we saw in Lemma~\ref{lem:max-location} that with probability one, $N_n([e^{n(\mu_Q-\epsilon)},\infty);Z_Q)\geq 1$ for all sufficiently large $n$. Since the above upper bound is summable, we conclude that for every positive integer $k$, we have $\PP(N_{n}(A;Z_Q)\leq k \text{ for infinitely many } n ) = 0$. The positive integer $k$ here can be chosen arbitrarily large, so we conclude $\lim_{n\to\infty} N_n(A;Z_Q) = \infty$ almost surely.
\end{proof}

\subsection{Proof of non-existence}\label{sec:proof-nonexistence}
The proof of Theorem~\ref{thm:3} is straightforward given our analysis of the multiplicative cascade $Z_Q$ in the previous section.

\begin{proof}[Proof of Theorem~\ref{thm:3}]
    Suppose $\mathfrak m$ is a functional satisfying the conditions given in the theorem.
    For each $L>0$, let $P^{(L)}$ denote the law of the total volume $\mathfrak{m}_h(\mathbb{D})$ where $(\mathbb D, h)$ is an embedding of the $Q$-LQG disk with boundary length $L$.

    Let $(\mathbb{D},h,\overline \Gamma)$ be an embedding of a unit boundary length $Q$-LQG disk coupled with nested CLE$_4$ as in Proposition~\ref{prop:nested-coupling}. Let $(\ell_u)_{u\in \cU}$ be the indexing of the nested CLE$_4$ $\overline \Gamma$ as described immediately above Corollary~\ref{cor:supercritical-cascade}.   For $u\in\mathcal{U}$, let $D_u$ denote the open set enclosed by $\ell_u$, so that $D_u$ contains $\ell_v$ for every $v\succ u$. Then,
    \begin{equation}
        \mathfrak{m}_h(\mathbb{D}) = \mathfrak{m}_h\bigg(\mathbb{D} \setminus \bigcup_{|u|=1} D_u\bigg) + \sum_{|u|=1} \mathfrak{m}_h(D_u)
    \end{equation}
    almost surely. 
    Recall that conditioned on the inner boundary length $\mathfrak n_h^\inn(\ell_u)$ as defined in \eqref{eq:inner-length}, the equivalence class $(D_u, h|_{D_u})/\!\sim_Q$ has the law of a $Q$-LQG disk with boundary length $\mathfrak n_h^\inn(\ell_u)$. 
    By the assumption on the coordinate change rule \eqref{eq:109}, conditioned on $\ell \in \overline \Gamma$ as well as a conformal map $f$ which takes the open disk $D_\ell$ enclosed by $\ell$ to the unit disk $\mathbb D$, we a.s.\ have $\mathfrak m_h(D_\ell) = \mathfrak m_{h \circ f + Q\log |f'|}(\mathbb D)$ since $h \circ f + Q\log |f'|$ is locally absolutely continuous to a GFF.
    Therefore, the conditional law of $\mathfrak{m}_h(D_u)$ given $\ell_u$ and $\mathfrak n_h^\inn(\ell_u)$ is given by $P^{(\mathfrak n_h^\inn(\ell_u))}$. Moreover, given the inner boundary lengths $\{\mathfrak n_h^\inn(\ell_u)\}_{|u|=1}$, the supercritical LQG surfaces $\{(D_u, h|_{D_u})/\sim_Q\}_{|u|=1}$ are conditionally independent and hence so are the volumes $\{\mathfrak{m}_h(D_u)\}_{|u|=1}$. This implies that the family of conditional laws $\{P^{(L)}\}_{L\in\RR_+}$ satisfies exactly one of the following scenarios.
    \begin{enumerate}[(1)]
        \item There exists a set $A\subset \RR_+$ of zero Lebesgue measure such that for every $L \in \RR_+ \setminus A$,  we have $P^{(L)}(\mathfrak{m}_h(\mathbb D) = 0) = 1$. Since the L\'evy measure of a spectrally positive stable process is absolutely continuous with respect to the Lebesgue measure on $\RR_+$, in this case, by Proposition~\ref{prop:gasket-coupling}, the inner boundary lengths $\{\mathfrak n_h^\inn(\ell_u)\}_{|u|=1}$ of the outermost CLE$_4$ loops are all contained in $A$ with probability one. Hence, this condition is equivalent to the case that, almost surely, the CLE$_4$ gasket $\mathbb{D}\setminus\bigcup_{|u|=1} D_u$ accounts for all of the $\mathfrak{m}_h$-volume of the supercritical LQG disk: i.e., $\mathfrak{m}_h(D_u)=0$ for every $|u|=1$.

        \item On the complement of the first case, there exists a set $A\subset \RR_+$ with a positive Lebesgue measure and a constant $\epsilon>0$ such that if $L\in A$, then $P^{(L)}(\mathfrak{m}_h(\mathbb D)>\epsilon)>\epsilon$.
    \end{enumerate}
    
    We claim that in the first case, $P^{(L)}(\mathfrak{m}_h(\mathbb{D})=0)=1$ for every $L\in(0,\infty)$ and not just on a subset of full Lebesgue measure. For this, let us choose a specific embedding $(\mathbb D, h^{(L)})$ of a $Q$-LQG disk with boundary length $L$. As in Definition~\ref{def:Q-disk}, we let 
    \begin{equation} h^{(L)} = \frac{Q}{2}(h^C + \log L) + \frac{\sqrt{4-Q^2}}{2}h^D =  \frac{Q}{2}h^C + \frac{\sqrt{4-Q^2}}{2}\left( h^D  + \frac{Q}{\sqrt{4-Q^2}} \log L\right) \end{equation}
    where $h^D$ is a zero boundary GFF on $\mathbb D$ independent from $h^C = \hat h \circ f + \log|f'|$ where $\hat h$ is the field on the strip $\mathcal S = \RR \times (0,2\pi)$ in Definition~\ref{def:2-disk} and $f$ is a conformal map from $\mathbb D$ to $\mathcal S$.
    Now choose any $\widetilde L \in (0,\infty)$ such that $P^{(\widetilde L)}(\mathfrak{m}_h(\mathbb{D})=0)=1$.
    Since $h^D  + (Q/\sqrt{4-Q^2}) \log (L/\widetilde L)$ is absolutely continuous with respect to $h^D$ away from the boundary $\partial \mathbb D$ \cite[Proposition~3.4]{ig1}, the two Gaussian fields are absolutely continuous when restricted to the open ball $B_r(0)$ for any $r\in (0,1)$.
    Then, since $\mathfrak{m}_{h^{(\widetilde L)}}(B_r(0)) = 0$ a.s., we must have $\mathfrak{m}_{h^{(L)}}(B_r(0))=0$ almost surely. Letting $r\uparrow 1$, we have $P^{(L)}(\mathfrak{m}_h(\mathbb{D})=0)=1$ for every $L \in (0,\infty)$ as claimed.
    
    In the second case, considering the disks $D_u$ cut out by the $n$th generation CLE$_4$ loops $\ell_u \in \Gamma^n$, we have
    \begin{equation}
        \mathfrak{m}_h(\mathbb{D}) \geq \sum_{|u|=n} \mathfrak{m}_h(D_u)
    \end{equation}
    for every positive integer $n$. Proposition~\ref{prop:many-macroscopic-loops} implies that almost surely, by choosing a sufficiently large $n$, we can find an arbitrarily large number of disks $D_u$ with $|u|=n$ such that $\mathfrak n_h^\inn(\ell_u)\in A$. By Proposition~\ref{prop:nested-coupling}, conditioned on the boundary lengths $\{\mathfrak n_h^\inn(\ell_u)\}_{|u|=n}$, each of these disks with $\mathfrak n_h^\inn(\ell_u)\in A$ has an independent chance of size $\epsilon>0$ such that $\mathfrak{m}_h(D_u)> \epsilon$. Thus, $\mathfrak{m}_h(\mathbb{D}) = \infty$ almost surely. Moreover, since $(D_u, h_{D_u})/\!\sim_Q$ is a $Q$-LQG disk conditioned on its boundary length, we almost surely have $\mathfrak m_h(D_u) = \infty$ for every $u \in \cU$. Now, this implies $\mathfrak m_h(U)=\infty$ a.s.\ for any open subset $U$ of $\mathbb D$, since there almost surely exists $u \in \cU$ such that $D_u \subset U$ (this follows from the local finiteness of nested CLE$_4$ \cite[Lemma~2.1]{APP23}). Now, note that the field $\hat h$ in Definition~\ref{def:2-disk}, when restricted to compact subsets of $\mathcal S= \RR\times (0,2\pi)$ away from $\{0\}\times (0,2\pi)$, is mutually absolutely continuous with respect to a free-boundary GFF. Recalling the conformal map $f:\mathbb D\to \mathcal S$, we conclude the proof by noting that for any domain $U\subset \mathbb D$ at a positive distance away from $\partial \mathbb D \cup f^{-1}(\{0\}\times (0,2\pi))$, the field $h|_U$ is mutually absolutely continuous with respect to a free boundary GFF on $U$ since both $h^C|_U$ and $h^D|_U$ are. For general domains $U \subset \CC$, we apply an affine transformation paired with the coordinate change rule \eqref{eq:109}.
\end{proof}

\subsection{Measure constructed from the multiplicative cascade} \label{sec:cascade-measure}
An important condition in Theorem~\ref{thm:3} was that the supercritical LQG measure $\mathfrak m_h$ is locally determined by the field $h$. 
We conclude this section by describing in Proposition~\ref{prop:cascade-measure} a natural family of random measures on the CLE$_4$-coupled supercritical LQG disk $(\mathbb D,h, \overline \Gamma)/\!\sim_Q$ arising from its multiplicative cascade structure. While they do not satisfy the locality condition as the CLE$_4$ in this coupling is not determined by the field $h$, they satisfy the $Q$-LQG coordinate change formula \eqref{eq:109} and almost surely assign finite and strictly positive values to each Euclidean open set.

Recall from \eqref{eq:190} the notation $\phi_Q(\theta) = \EE[\sum_{|u|=1} (Z_Q(u))^\theta]$, which, for $\theta\in(3/2,5/2)$, is finite and takes the value $\cosh(\beta_Q \theta)/\cos(\pi \theta)$. For $\theta$ in this range, observe that
\begin{equation}
    M_n^{(\theta)}:= \frac{1}{\big(\phi_Q(\theta)\big)^n} \sum_{|u|=n} \big(Z_Q(u)\big)^\theta
\end{equation}
is a martingale with respect to the natural filtration of the multiplicative cascade $Z_Q$. In the branching random walk literature, $M_n^{(\theta)}$ is called the \textbf{additive martingale} of the branching random walk $S_Q(u) = -\log Z_Q(u)$, which was introduced in \cite{Kin75,Big77}. Since $M_n^{(\theta)}$ is nonnegative for all $n$, it converges almost surely to a nonnegative random variable $M_\infty^{(\theta)}$. A fundamental result regarding the additive martingale is the Biggins martingale convergence theorem, which describes the conditions under which the limit $M_\infty^{(\theta)}$ almost surely does not vanish. We describe this condition for the multiplicative cascade $Z_Q$. Recall the notation
\[ \mu_Q:= \inf_{\theta>0}\frac{1}{\theta}\log\phi_Q(\theta)  \]
from Lemma~\ref{lem:max-location}.

\begin{lemma} \label{lem:biggins-martingale-convergence}
For each $Q\in (0,2)$, there exists a unique $\theta^*>0$ such that $(1/\theta^*)\log\phi_Q(\theta^*)=\mu_Q$. Moreover, the following are equivalent.
\begin{enumerate}[(i)]
    \item $\theta \in (\frac{3}{2},\theta^*)$
    \item $\{M_n^{(\theta)}\}_{n\in \NN^\#}$ is uniformly integrable.
    \item $M_\infty^{(\theta)}>0$ a.s.
\end{enumerate}
\end{lemma}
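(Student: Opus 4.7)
The plan is to combine a direct analysis of the function $\psi(\theta) := \theta^{-1}\log\phi_Q(\theta)$ with the classical Biggins martingale convergence theorem \cite{Big77}.

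First, I will establish the existence and uniqueness of $\theta^*$. Because the explicit formula $\phi_Q(\theta) = \cosh(\beta_Q\theta)/\cos(\pi\theta)$ blows up as $\theta \to 3/2^+$ or $\theta \to 5/2^-$, the function $\psi$ is smooth and strictly positive on $(3/2, 5/2)$ and tends to $+\infty$ at both endpoints, so $\mu_Q = \inf\psi$ is attained at some interior $\theta^* \in (3/2, 5/2)$. For uniqueness, I will show that $\log\phi_Q$ is strictly convex on $(3/2, 5/2)$: convexity is H\"older's inequality applied to the offspring point process $(Z_Q(u))_{|u|=1}$, and strict convexity follows from the nondegeneracy of the offspring distribution (e.g., the Rademacher sign $Y_1$ alone prevents $\log Z_Q(1)$ from being a.s.\ constant). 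Given strict convexity of $f := \log\phi_Q$, the function $g(\theta) := \theta f'(\theta) - f(\theta)$ has derivative $\theta f''(\theta) > 0$ and thus vanishes at most once; hence $\psi'(\theta) = g(\theta)/\theta^2$ changes sign at a unique point, which is $\theta^*$.

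For the equivalence (i) $\iff$ (ii), I will invoke Biggins' theorem, which says $M_n^{(\theta)}$ is uniformly integrable if and only if both the \emph{derivative condition} $\theta\phi_Q'(\theta)/\phi_Q(\theta) < \log\phi_Q(\theta)$ and the $L\log L$ condition $\EE[M_1^{(\theta)}\log^+ M_1^{(\theta)}] < \infty$ hold. A direct differentiation gives
\begin{equation*}
    \psi'(\theta) = \frac{1}{\theta^2}\left(\theta\,\frac{\phi_Q'(\theta)}{\phi_Q(\theta)} - \log\phi_Q(\theta)\right),
\end{equation*}
so the derivative condition is precisely $\psi'(\theta) < 0$, which by the previous paragraph is equivalent to $\theta \in (3/2, \theta^*)$. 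For the $L\log L$ condition I will exploit that $\phi_Q$ is finite on the entire open interval $(3/2, 5/2)$: choosing $\theta' \in (\theta, 5/2)$ with $\phi_Q(\theta')<\infty$, a standard $L^p$-moment estimate for multiplicative cascades will yield $M_1^{(\theta)} \in L^{1+\delta}$ for some small $\delta > 0$, which is stronger than $L\log L$.

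Finally, the equivalence (ii) $\iff$ (iii) will follow from the Kesten--Stigum dichotomy: the recursive distributional equation
\begin{equation*}
    M_\infty^{(\theta)} \stackrel{d}{=} \phi_Q(\theta)^{-1}\sum_{|u|=1} Z_Q(u)^\theta M_\infty^{(\theta),(u)},
\end{equation*}
with $\{M_\infty^{(\theta),(u)}\}_{|u|=1}$ i.i.d.\ copies of $M_\infty^{(\theta)}$, implies via a $0$-$1$ law that $\PP(M_\infty^{(\theta)} = 0) \in \{0, 1\}$, and the nontrivial implication that $M_\infty^{(\theta)} > 0$ a.s.\ forces uniform integrability of $M_n^{(\theta)}$ is the content of Lyons' spine/size-biasing argument underlying the modern proof of Biggins' theorem. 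The step I expect to be most technically delicate is the verification of the $L\log L$ condition, as the offspring distribution inherits the heavy tails of the underlying $3/2$-stable L\'evy subordinator; however, the freedom to choose $\theta' > \theta$ within the finite-$\phi_Q$ regime reduces this to a routine moment calculation of the type already carried out (in a different context) in the proof of Lemma~\ref{lem:next-generation-loops}.
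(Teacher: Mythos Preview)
Your overall architecture matches the paper's: strict convexity of $\log\phi_Q$ to get uniqueness of $\theta^*$ and the sign of $\psi'$, then Biggins' theorem \cite{Big77} for the equivalence. The paper computes $(\log\phi_Q)''$ explicitly from $\phi_Q(\theta)=\cosh(\beta_Q\theta)/\cos(\pi\theta)$, whereas you argue convexity abstractly via H\"older; both are fine.

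The genuine gap is in the $L\log L$ step. The implication ``$\phi_Q(\theta')<\infty$ for some $\theta'>\theta$ $\Rightarrow$ $M_1^{(\theta)}\in L^{1+\delta}$'' is not valid in general: consider a branching random walk in which every particle sits at position $0$ and the number of offspring $N$ satisfies $\EE[N]<\infty$ but $\EE[N^p]=\infty$ for all $p>1$. Then $\phi(\theta)=\EE[N]<\infty$ for every $\theta$, yet $M_1=N/\EE[N]$ has no $L^{1+\delta}$ moment. The point is that $\phi_Q(\theta')=\EE\big[\sum_u Z_Q(u)^{\theta'}\big]$ controls $\sum_u Z_Q(u)^{\theta'}$ in $L^1$, but for $p>1$ one has $(\sum_u a_u)^p\geq \sum_u a_u^p$, so this goes the wrong way. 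There is no ``standard cascade estimate'' that closes this.

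What the paper actually does is invoke \cite[Lemma~13]{CCM20}, which exploits the explicit Poisson structure of the jumps of the spectrally positive $3/2$-stable process (under the $\tau^{-1}$ tilt defining $\rho^{(1)}$) to prove
\[
\EE\bigg[\Big(\sum_{|u|=1} Z(u)^\theta\Big)^{p}\bigg]<\infty \qquad\text{for every } p<\tfrac{5}{2\theta}.
\]
Since $Z_Q(u)=Z(u)e^{\beta_Q Y_u}\leq e^{\beta_Q}Z(u)$, this immediately gives $\EE[(M_1^{(\theta)})^p]<\infty$ for any $p\in(1,5/(2\theta))$, which is nonempty precisely because $\theta<5/2$. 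So your instinct that the heavy-tailed L\'evy input is the delicate point is right, but the resolution is a model-specific moment computation rather than a general principle, and the reference to Lemma~\ref{lem:next-generation-loops} does not supply it.
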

\begin{proof}
    We first show that $\mu_Q=(1/\theta^*)\log\phi_Q(\theta^*)$ is achieved at a unique value of  $\theta^*\in(\frac{3}{2},\frac{5}{2})$. At $\theta^*$, we have
    \begin{equation}\label{eq:177} \frac{d}{d\theta} \bigg(\frac{1}{\theta}\log\phi_Q(\theta)\bigg)\bigg|_{\theta=\theta^*}= \frac{1}{(\theta^*)^2}\bigg(\theta^*\frac{\phi_Q'(\theta^*)}{\phi_Q(\theta^*)}-\log\phi_Q(\theta^*)\bigg)=0. \end{equation}
    Using the explicit formula $\phi_Q(\theta) = \cosh(\beta_Q\theta)/\cos(\pi\theta)$ from \eqref{eq:190}, 
    we furthermore have
    \begin{equation} \frac{d}{d\theta} \bigg(\theta\frac{\phi_Q'(\theta)}{\phi_Q(\theta)}-\log\phi_Q(\theta)\bigg) = \theta\bigg(\frac{\pi^2}{\cos^2(\pi\theta)} + \frac{(\beta_Q)^2}{\cosh^2(\beta_Q\theta)}\bigg)>0 \end{equation}
    for all $\theta \in (\frac{3}{2},\frac{5}{2})$. Hence, there can be at most one solution to the equation \eqref{eq:177}. Moreover, we have $\theta\phi_Q'(\theta)/\phi_Q(\theta)<\log\phi_Q(\theta)$ if $\theta\in (\frac{3}{2},\theta^*)$.

    The Biggins martingale convergence theorem \cite[Theorem~1]{Big77} states that the following three conditions are equivalent:
    \begin{enumerate}[(i)]
    \item $M_\infty^{(\theta)}>0$ a.s. 
    \item $\EE M_\infty^{(\theta)}=1$
    \item $\theta  (\phi_Q)'(\theta)/\phi_Q(\theta) < \log \phi_Q(\theta)$ and $\EE\big[M_1^{(\theta)} \log_+ M_1^{(\theta)}\big]<\infty$.
    \end{enumerate}
    We verify the third condition for each $\theta\in(3/2,\theta^*)$ using the moment estimates for the critical multiplicative cascade $Z$ in \cite[Lemma~13]{CCM20} (see Footnote~\ref{footnote:ccm}). This result states that $\EE\big[(\sum_{|u|=1}Z(u)^\theta)^p\big]<\infty$ for every $p<\frac{5}{2\theta}$. Recall from \eqref{eq:172} that $Z_Q(i) = Z(i) \exp(\beta_Q Y_i)$ for $i\in \NN$ where $\{Y_u\}_{i \in \NN}$ are i.i.d.\ Rademacher random variables that are also independent from the cascade $Z$. Therefore,
    \begin{equation}
        \EE\big[M_1^{(\theta)} \log_+ M_1^{(\theta)}\big] \leq \EE\big[\big(M_1^{(\theta)}\big)^p\big] = \EE\bigg[\bigg(\sum_{|u|=1} (Z(u))^\theta e^{\theta \beta_Q Y_u}\bigg)^p\bigg] \leq e^{p\theta \beta_Q}\, \EE\bigg[\bigg(\sum_{|u|=1} Z(u)^\theta\bigg)^p\bigg] <\infty
    \end{equation}
    choosing any $p\in (1,\frac{5}{2\theta})$.
\end{proof}

Using the explicit formula \eqref{eq:190} for $\phi_Q$, the equation \eqref{eq:177} satisfied by $\theta^*$ is equivalent to
    \begin{equation}\label{eq:194}
        \pi\tan(\pi \theta^*) + \beta_Q\tanh(\beta_Q \theta^*) = \frac{1}{\theta^*} \log \frac{\cosh(\beta_Q \theta^*) }{ \cos(\pi \theta^*)}.
    \end{equation}
The numerical solution of this equation for the range $Q\in (0,2)$ is displayed in Figure~\ref{fig:theta-star}.
    \begin{figure}
        \centering
        \includegraphics[width=0.5\linewidth]{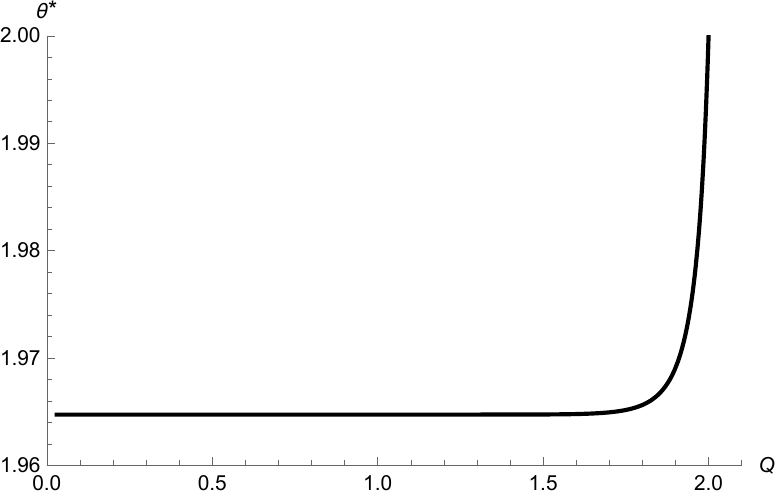}
        \caption{The unique value of $\theta^*$ satisfying $\mu_Q=(1/\theta^*)\log \phi_Q(\theta^*)$, obtained by solving \eqref{eq:194} numerically. We observe $\theta^*\approx 1.9647$ as $Q\to 0$. }
        \label{fig:theta-star}
    \end{figure}

For $\theta\in(\frac{3}{2},\theta^*)$ and $u\in \cU$, define
\begin{equation}
    M_n^{(\theta)}(u) := \frac{1}{\big(\phi_Q(\theta)\big)^n}\sum_{|v|=n, v\succ u} \big(Z_Q(v)\big)^\theta \quad \text{and} \quad W_u^{(\theta)} := \lim_{n\to\infty} M_n^{(\theta)}(u),
\end{equation}
where $W_u^{(\theta)}$ is the a.s.\ limit of the uniformly integrable martingale $\{M_n^{(\theta)}(u)\}_{n\geq |u|}$. Note that, for every $u\in \cU$ and $n\geq k> |u|$, we have
\begin{equation}
    M_n^{(\theta)}(u) = \frac{\sum_{|v'|=n, v'\succ u} \big(Z_Q(v')\big)^\theta}{\big(\phi_Q(\theta)\big)^n} =  \sum_{|v|=k, v\succ u} \frac{\sum_{|v'|=n, v'\succ v} \big(Z_Q(v')\big)^\theta }{\big(\phi_Q(\theta)\big)^n}   = \sum_{|v|=k, v\succ u} M_n^{(\theta)}(v).
\end{equation}
Thus, we have 
\begin{equation}W_u^{(\theta)} = \lim_{n\to\infty} \sum_{|v|=k, v\succ u}  M_k^{(\theta)}(v) \geq \sum_{|v|=k, v\succ u} \lim_{n\to\infty}  M_k^{(\theta)}(v) = \sum_{|v|=k, v\succ u} W_v^{(\theta)}\end{equation} 
almost surely by Fatou's lemma. On the other hand, since $\{M_n^{(\theta)}(v)\}_{n\geq |v|}$ is uniformly integrable for each $v\in \NN$, 
\begin{equation}
    \EE \big[W_u^{(\theta)}\big] = \EE \big[M_{k}^{(\theta)}(u)\big] = \sum_{|v|=k, v\succ u} \EE\big[ M_k^{(\theta)}(v)] =\sum_{|v|=k, v\succ u} \EE\big[W_v^{(\theta)}\big] = \EE \bigg[ \sum_{|v|=k, v\succ u} W_v^{(\theta)} \bigg].
\end{equation}
Hence,
\begin{equation}
    W_u^{(\theta)} = \sum_{|v|=k, v\succ u} W_v^{(\theta)} \quad \text{a.s.}
\end{equation}
and thus $W_u^{(\theta)}$ defines a random measure on $\cU$ with respect to the product $\sigma$-algebra.

The analog of Lemma~\ref{lem:biggins-martingale-convergence} holds for the \textbf{derivative martingale} 
\begin{equation}
     M_n^* := \sum_{|u|=n} \frac{(Z_Q(u))^{\theta^*}}{(\phi_Q(\theta^*))^n}\bigg( n \frac{\phi_Q'(\theta^*)}{\phi_Q(\theta^*)} - \log Z_Q(u)\bigg) = -\frac{dM_n^{(\theta)}}{d\theta}\bigg|_{\theta=\theta^*}
\end{equation}
defined in \cite{BK04}.
This fact can be verified by checking the conditions in \cite{AS14} again using the formulas in \cite{CCM20}. Hence, 
\begin{equation}
    W_u^* := \lim_{n\to\infty} \sum_{|v|=n, v\succ n} \frac{Z_Q(v)^{\theta^*}}{\phi_Q(\theta^*)^n} \bigg(n \frac{\phi_Q'(\theta^*)}{\phi_Q(\theta^*)} - \log Z_Q(v)\bigg)
\end{equation}
also defines a random measure for the product $\sigma$-algebra on $\cU$. 

The following proposition gives a construction for a random finite measure $\xi$ which is almost surely determined by the CLE$_4$-coupled supercritical LQG disk $(\mathbb D, h, \overline \Gamma)/\!\sim_Q$ through the multiplicative cascade $Z_Q$.

\begin{proposition} \label{prop:cascade-measure}
    Let $(\ell_u)_{u\in\cU}$ be the loops of a nested CLE$_4$ in $\mathbb{D}$ indexed so that $u \prec v$ if and only if $\ell_u$ encloses $\ell_v$. Denote the open set enclosed by $\ell_u$ as $D_u$. Suppose $\{W_u\}_{u\in \cU}$ is a collection of nonnegative random variables which satisfy $W_u = \sum_{k\in \NN} W_{uk}$ for every $u\in\cU$. Then, there exists an almost surely unique random Borel measure $\xi$ on $\overline{\mathbb{D}}$ such that $\xi(\overline{D_u}) = W_u$. Moreover, if $\xi^{(n)}$ is any sequence of random Borel measures on $\overline{\mathbb{D}}$ such that $\xi^{(n)}(\overline{D_u}) = W_u$ almost surely for every $|u|\leq n$, then, almost surely, $\xi^{(n)}$ converges weakly to $\xi$ as $n\to\infty$.
\end{proposition}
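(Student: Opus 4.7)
The plan is to construct $\xi$ as a weak limit of atomic approximations on the $n$th-generation loops, then establish uniqueness via Dynkin's $\pi$-$\lambda$ theorem. For each $u\in\cU$, fix (measurably with respect to $\overline\Gamma$) a point $z_u \in D_u$---for instance, $z_u = f_u(0)$ for a canonical conformal map $f_u:\mathbb D \to D_u$---and define the atomic random measures
\[\xi^{(n)} := \sum_{|u|=n} W_u\,\delta_{z_u}.\]
Iterating the assumed consistency $W_v = \sum_k W_{vk}$ yields $\xi^{(n)}(\overline{D_v}) = W_v$ for every $|v|\leq n$. To show weak Cauchy convergence, invoke the local finiteness of nested CLE$_4$ from \cite[Lemma~2.1]{APP23}: almost surely, $\max_{|v|=N}\mathrm{diam}(D_v)\to 0$ as $N\to\infty$. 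Given $f\in C(\overline{\mathbb D})$ and $\epsilon>0$, choose $N$ so that the oscillation of $f$ on each $\overline{D_v}$ with $|v|=N$ is at most $\epsilon/W_\root$. For all $m\geq N$, every $z_u$ with $|u|=m$, $u\succ v$ lies in $D_v$, and $\sum_{|u|=m,\,u\succ v} W_u=W_v$, so
\[\bigg|\int f\,d\xi^{(m)} - \sum_{|v|=N}W_v f(z_v)\bigg| \leq \epsilon \qquad \text{for all } m\geq N.\]
Hence $\int f\,d\xi^{(n)}$ is Cauchy, and the Riesz--Markov theorem produces a limiting Borel measure $\xi$.

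To verify $\xi(\overline{D_u}) = W_u$, I combine the Portmanteau bounds $\xi(D_u)\leq W_u \leq \xi(\overline{D_u})$ (the former applied to the open $D_u$, the latter to the closed $\overline{D_u}$) with the consistency $W_u=\sum_k W_{uk}$ iterated to all depths. The key point is that this iterated consistency forces each ``shell'' $\overline{D_v} \setminus \bigcup_k D_{vk}$ (which includes $\ell_v$) to carry no $\xi^{(n)}$-mass for $n$ large, and via local finiteness this rigidity passes to the weak limit, giving $\xi(\ell_u)=0$ so the two Portmanteau bounds match. For uniqueness, apply Dynkin's $\pi$-$\lambda$ theorem to the family $\{\overline{D_u}\}_{u\in\cU}\cup \{\overline{\mathbb D}\}$, which forms a $\pi$-system modulo loop-touching sets (of zero measure by the same shell-vanishing argument) and, by local finiteness of nested CLE$_4$, generates the Borel $\sigma$-algebra on $\overline{\mathbb D}$.

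Finally, for the convergence of any sequence of Borel measures $\xi^{(n)}$ satisfying $\xi^{(n)}(\overline{D_u}) = W_u$ for $|u|\leq n$, note that the Cauchy estimate above relied only on the matching values on the level-$N$ closed sets and not on the atomic structure of $\xi^{(n)}$. Hence $\int f\,d\xi^{(n)}$ is Cauchy for every continuous $f$, so $\xi^{(n)}$ converges weakly to a Borel measure which must equal $\xi$ by the uniqueness conclusion. The main obstacle will be rigorously justifying that $\xi$ assigns zero mass to the CLE$_4$ loops $\ell_u$, since these loops may touch each other and the outer boundary; overcoming this requires carefully propagating the consistency condition $W_u = \sum_k W_{uk}$ through all levels of the tree and combining it with the uniform loop-diameter decay from local finiteness to show that no mass can concentrate on the residual ``shells''.
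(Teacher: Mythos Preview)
Your approach is essentially correct and closely parallels the paper's. Both rely on the same key ingredient---local finiteness of nested CLE$_4$ (the diameter decay $\max_{|u|=n}\mathrm{diam}(\overline{D_u})\to 0$)---together with the observation that the consistency $W_u=\sum_k W_{uk}$ forces any candidate measure to be supported on $\bigcup_{|v|=N}\overline{D_v}$ for every $N$, whence an oscillation bound controls $\int f\,d\xi^{(n)}$. The paper uses Lebesgue-density approximations and the squeeze $W_{f-}^{(n)}\leq\int f\,d\xi\leq W_{f+}^{(n)}$ with $W_{f\pm}^{(n)}:=\sum_{|u|=n}W_u\cdot(\inf/\sup)_{\overline{D_u}}f$; you use atomic approximations and point evaluations $\sum_{|v|=N}W_v f(z_v)$. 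These are cosmetic differences.

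There is one genuine inaccuracy. Your claim that $\{\overline{D_u}\}_{u\in\cU}$ generates the Borel $\sigma$-algebra on $\overline{\mathbb D}$ is false: any two points lying in the outermost CLE$_4$ gasket $\overline{\mathbb D}\setminus\bigcup_{|u|=1}\overline{D_u}$ belong to exactly the same sets of the family (namely just $\overline{D_\root}=\overline{\mathbb D}$), so singletons there are not in the generated $\sigma$-algebra. The Dynkin argument can be repaired by first showing (from consistency, using that the closures $\overline{D_{uk}}$ are a.s.\ pairwise disjoint for CLE$_4$) that any measure with $\xi(\overline{D_u})=W_u$ is supported on $S:=\bigcap_n\bigcup_{|u|=n}\overline{D_u}$, and then checking that on $S$ the sets $\{\overline{D_u}\cap S\}$ do generate the trace Borel $\sigma$-algebra (every point of $S$ is the intersection of a shrinking chain of $\overline{D_u}$'s, so every relatively open set is a countable union of such sets). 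But this detour is unnecessary: your own Cauchy estimate already proves uniqueness directly, and this is exactly what the paper does. If $\xi$ is \emph{any} measure satisfying $\xi(\overline{D_u})=W_u$ for all $u$, the same support and oscillation argument gives $\big|\int f\,d\xi-\sum_{|v|=N}W_v f(z_v)\big|\leq\epsilon$ for all large $N$, so $\int f\,d\xi$ is determined for every continuous $f$, and $\xi$ is unique by Riesz.

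Finally, the obstacle you flag---showing $\xi(\ell_u)=0$---is not the hard part. Once $\xi(\overline{D_u})=W_u$ is established (which your Portmanteau-plus-squeeze argument handles cleanly, since $\sum_{|u|=n}\xi(\overline{D_u})\leq\xi(\overline{\mathbb D})=W_\root=\sum_{|u|=n}W_u\leq\sum_{|u|=n}\xi(\overline{D_u})$ forces equality termwise), disjointness of the $\overline{D_{uk}}$ gives $\xi(\overline{D_u})=\sum_k\xi(\overline{D_{uk}})=\xi\big(\bigcup_k\overline{D_{uk}}\big)\leq\xi(D_u)$, so $\xi(\ell_u)=\xi(\overline{D_u}\setminus D_u)=0$ immediately.
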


\begin{proof}
    Let $\mathcal C$ be a countable collection of nonnegative continuous functions on $\overline{\mathbb{D}}$ such that two (deterministic) Borel measures $\mu$ and $\tilde \mu$ on $\overline{\mathbb{D}}$ are the same if $\int_{\overline{\mathbb D}} f\,d\mu = \int_{\overline{\mathbb D}} f\,d\tilde \mu$ for every $f\in \mathcal C$. 

    \textit{Uniqueness.} Suppose $\xi$ is a random Borel measure on $\overline{\mathbb{D}}$ that almost surely satisfies $\xi(\overline{D_u}) = W_u$ for every $u\in\cU$. For each $f\in \mathcal C$ and $n\in \NN$, define 
    \begin{equation}
        W_{f-}^{(n)} := \sum_{|u|=n} W_u \cdot \inf_{z\in \overline{D_u}} f(z) \quad \text{and} \quad W_{f+}^{(n)}:= \sum_{|u|=n} W_u \cdot \sup_{z\in \overline{D_u}} f(z).
    \end{equation}
    Note that since $\xi(\overline{D_u}) = \sum_{k\in\NN}\xi(\overline{D_{uk}})$, we have $\xi(\overline{D_u} \setminus (\bigcup_{k\in \NN} \overline{D_{uk}})) = 0$ a.s for every $u\in \cU$. Hence,
    \begin{equation}\label{eq:biggins-measure-ineq}
      W_{f-}^{(n)} = \sum_{|u|=n} \xi(\overline{D_u}) \cdot \inf_{z\in \overline{D_u}} f(z) \leq \int_{\overline{\mathbb D}} f\,d\xi \leq \sum_{|u|=n} \xi(\overline{D_u}) \cdot \sup_{z\in \overline{D_u}} f(z) = W_{f+}^{(n)} \quad \text{a.s.}
    \end{equation}
    Also, since $\overline{D_{uk}} \subset \overline{D_u}$ for every $k\in\NN$,
    \begin{equation}
        W_{f-}^{(n)} = \sum_{|u|=n} \xi(\overline{D_u}) \cdot \inf_{z\in \overline{D_u}} f(z) \leq \sum_{|u|=n} \sum_{k\in\NN} \xi(\overline{D_{uk}}) \cdot \inf_{z\in \overline{D_{uk}}} f(z) = W_{f-}^{(n+1)} \quad \text{a.s.}
    \end{equation}
    and similarly $W_{f+}^{(n)} \geq W_{f+}^{(n+1)}$ a.s.\ for every $n$. 
    By \cite[Lemma~2.1]{APP23}, 
    \begin{equation}\lim_{n\to\infty}\sup_{|u|=n} \mathrm{diam}(\overline{D_u}) \to 0 \quad \text{a.s.}\end{equation}
    Hence, for each $\epsilon>0$, there exists a positive integer $N$ such that 
    \begin{equation}\PP\bigg(\sup_{|u|\geq N} \sup_{x,y\in\overline{D_u}} |f(x)- f(y)| \leq \epsilon\bigg) \geq 1-\epsilon .\end{equation} 
    On this event, $0 \leq W_{f+}^{(n)} - W_{f-}^{(n)} \leq \epsilon \sum_{|u|=n} W_u = \epsilon W_\root$ for every $n\geq N$. Taking $\epsilon \downarrow 0$, we have that $\lim_{n\to\infty} W_{f\pm}^{(n)} = \int_{\overline{\mathbb D}} f\,d\xi$ almost surely. Since $W_{f\pm}^{(n)}$ does not depend on the choice of $\xi$, we conclude that this random measure is almost surely unique.
    
    \textit{Existence.} Suppose $\xi^{(n)}$ is a sequence of random Borel measures on $\overline{\mathbb{D}}$ such that $\xi^{(n)}(\overline{D_u}) = W_u$ almost surely for every $|u|\leq n$. For instance, we may choose $\xi^{(n)}$ where, if $\mathrm{Leb}$ is the Lebesgue measure on $\overline{\mathbb{D}}$, then
    \begin{equation}
        \frac{d\xi^{(n)}}{d\mathrm{Leb}}(z) = \sum_{|u|=n} \frac{W_u}{\mathrm{Leb}(D_u)} \ind\{z\in \overline{D_u}\}.
    \end{equation}
    We have $W_{f-}^{(n)} \leq \int f\,d\xi^{(n)} \leq W_{f+}^{(n)}$ a.s.\ for continuous $f:\overline{\mathbb{D}}\to [0,\infty)$, in an analogous way to \eqref{eq:biggins-measure-ineq}. Hence, 
\begin{equation}
    \lim_{n\to\infty} \int_{\overline{\mathbb D}} f\,d\xi^{(n)} = \lim_{n\to\infty} W_{f\pm}^{(n)} =: W_f \quad \text{a.s.}
\end{equation}
The convergence $\int_{\overline{\mathbb D}}  f\,d\xi^{(n)} \xrightarrow{d} W_f$ for all nonnegative continuous $f$ on $\overline{\mathbb D}$ implies the existence of a random measure $\xi$ such that $(\int_{\overline{\mathbb D}}  f\,d\xi)_{f\in\mathcal C} \stackrel{d}{=} (W_f)_{f\in \mathcal C}$ jointly \cite[Lemma~5.1]{Kal86}. Recall that $W_f$ is a measurable function of $(W_u,\ell_u)_{u\in\cU}$. Since $(\int_{\overline{\mathbb D}}  f\,d\xi)_{f\in \mathcal C}$ determines $\xi$ uniquely at each point in the probability space, there exists a coupling of $\xi$ with $(W_u,\ell_u)_{u\in\cU}$ such that $\int_{\overline{\mathbb D}}  f\,d\xi = W_f$ a.s.\ for every $f\in \mathcal C$, and hence for every continuous $f$ on $\overline{\mathbb{D}}$. 

Let us now check that $\xi(\overline{D_u}) = W_u$ a.s.\ for every $u\in \cU$. Fixing $u\in \cU$ and choosing a sequence of random continuous functions $f_k$ approximating $\ind_{D_u}$ from below, we have 
\begin{equation}
    \xi(\overline{D_u}) \geq \sup_k W_{f_k} = \sup_k \sup_n W_{f_k -}^{(n)} \quad \text{a.s.}
\end{equation}
Since $\xi^{(n)}$ is supported on $\bigcup_{|v|=n} \overline{D_v}$, which almost surely does not intersect $\eta_u = \partial {D_u}$ when $n>|u|$, we have $\sup_k W_{f_k-}^{(n)} = \xi^{(n)}(\overline{D_u}) = W_u$ almost surely. Hence, $\xi(\overline{D_u}) \geq W_u$ a.s.\
 for every $u\in \cU$. Since the loops of nested CLE$_4$ are a.s.\ disjoint, we have 
 \begin{equation}
     \sum_{|u|=n} W_u \leq \sum_{|u|=n} \xi(\overline{D_u}) \leq \xi(\overline{\mathbb{D}}) = \sum_{|u|=n} W_u \quad \text{a.s.}
 \end{equation}
Therefore, $\xi(\overline{D_u}) = W_u$ almost surely for every $u\in \cU$.
\end{proof}
A natural choice for the process $W=(W_u)_{u\in \cU}$ is to set either $W = W^{(\theta)}$ for $\theta\in(\frac{3}{2},\theta^*)$ or $W = W^*$. For any of these choices, since the $Q$-LQG lengths $(\mathfrak n_h(\ell_u))_{u\in \cU}$ are invariant under different choices of embedding the nested-CLE$_4$-decorated supercritical LQG disk, the measure $\xi$ that we constructed also satisfies the $Q$-LQG coordinate change rule \eqref{eq:109}. We do not have a suggestion for a value of $\theta$ that has a special meaning within the context of Liouville theory with central charge $\mathbf{c}_{\mathrm L} \in (1,25)$.

\begin{remark}
The measure $\xi$ violates the locality condition of Theorem~\ref{thm:3}; that is, $\xi(U)$ is not determined by the domain $U\subset \mathbb D$ and the local field $h|_U$. Indeed, given the CLE$_4$ loop $\ell_u \in \overline \Gamma$ and the field $h|_{D_u}$ inside the loop $\ell_u$, the value of $\xi(D_u)$ still depends on $|u|$ --- i.e., the generation of the loop $\ell_u$ within the nested CLE$_4$ $\overline \Gamma$ --- since $\phi_Q(\theta)>1$ for any $Q\in (0,2)$ and $\theta \in (3/2,\theta^*)$.

Note that $\xi$ is almost surely supported on the set of points which are surrounded by infinitely many CLE$_4$ loops. Using the many-to-one formula for the branching random walk (see, e.g., \cite[Theorem~1.1]{Shi15}), one can check that if $W = W^{(\theta)}$ (resp.\ $W^*$), then the corresponding random measure $\xi$ is a.s.\ supported on the points $z\in \mathbb D$ such that if $\ell_{u_n}$ is the $n$th generation CLE$_4$ loop enclosing $z$, then $\frac{1}{n}\log Z_Q(u_n) \to \phi_Q'(\theta)/\phi_Q(\theta)$ (resp. $\phi_Q'(\theta^*)/\phi_Q(\theta^*)$) as $n\to\infty$. It would be interesting to investigate how these points are related with the thick points of the field.
\end{remark}

\section{Background on random planar maps}
\label{sec:boltzmann}

We now move on to the second part of this article, which studies the random planar map model of a supercritical LQG disk introduced in Section~\ref{sec:discrete-model}.
We begin this section by giving the precise definitions for the laws of planar maps involved in the construction of this discrete model. Its connection to the continuum model described in Section~\ref{sec:2} is given in Proposition~\ref{prop:26}, which states that the perimeters of marked faces in our discrete model converge in the scaling limit to the lengths of CLE$_4$ loops on the supercritical LQG disk.

We also introduce the random walk representation of the perimeters of faces in a Boltzmann map (Proposition~\ref{prop:10}), which is a key tool in the proof of Proposition~\ref{thm:2} in Section~\ref{sec:prob-dis}. We finally survey the convergence results for subcritical Boltzmann maps (Propositions~\ref{prop:4} and \ref{prop:6}) that form the key input for the proof of Theorem~\ref{thm:1} in Section~\ref{sec:conv}.

\subsection{Definitions of Boltzmann maps and ring distributions} \label{sec:maps-def}

There were two kinds of planar maps needed in the construction of Definition~\ref{def:model}: Boltzmann maps and rings.
We first give the definition of a Boltzmann map with a fixed perimeter for the root face. Recall that $\per(f)$ refers to the half-perimeter of the face $f$.
\begin{definition}\label{def:boltzmann-map}
For $p\in \NN$, let $\mathcal M^{(p)}$ denote the set of all finite bipartite rooted planar maps with the degree of the root face equal to $2p$.
  Given a \textbf{weight sequence} $\bfq=(q_i)_{i\in \NN}$ of nonnegative real numbers, define the \textbf{Boltzmann weight} of a map $M \in \bigcup_{p\in \NN} \mathcal M^{(p)}$ as
  \begin{equation}
    \label{eq:73}
    w_{\bfq}(M)=\prod_{f\in \fF(M)} q_{\per(f)},
  \end{equation}
  where $\fF(M)$ refers to the collection of inner (non-root) faces of $M$.
 We say that the weight sequence $\bfq$ is \textbf{admissible} if $W_\bfq^{(p)}:=\sum_{M\in \mathcal M^{(p)}} w_\bfq(M) < \infty $ for all $p \in \NN$. In this case, we define for each $p\in \NN$ the corresponding probability measure 
 \begin{equation}
     \PP_\bfq^{(p)}(M) = \frac{1}{W_\bfq^{(p)}} w_\bfq(M), \quad M \in \mathcal M^{(p)}
 \end{equation}
 of Boltzmann maps with boundary length $2p$.
\end{definition}

Given a weight sequence $\bfq$, define
\begin{equation}
    f_\bfq(x):= \sum_{k=1}^\infty \binom{2k-1}{k-1}q_kx^{k-1}, \quad x\geq 0.
\end{equation}
By \cite[Proposition 1]{MM07}, $\bfq$ is admissible if and only if the equation $f_\bfq(x)=1-1/x$ has a positive solution. Let $Z_\bfq$ denote the smallest such solution, so that the measure $\mu_\bfq$ in Definition~\ref{def:classification} below is a probability measure. It arises as the offspring distribution of a Galton--Watson tree associated to a Boltzmann map with weight sequence $\bfq$ via the Bouttier--Di Francesco--Guitter bijection \cite{BDG04} with the Janson--Stef\'ansson trick \cite{JS15}; see \cite[Proposition~4]{CCM20} for further details.

\begin{definition}\label{def:classification}
  If $\bfq$ is an admissible weight sequence, we associate to it a probability measure $\mu_{\bfq}$ on nonnegative integers given by
  \begin{equation}
    \label{eq:81}
     \quad \mu_{\bfq}(k)=(Z_{\bfq})^{k-1} \binom{2k-1}{k-1} q_k, \quad k\geq 1
  \end{equation}
  and $\mu_{\bfq}(0) = 1/Z_{\bfq}$, where $Z_\bfq$ is the smallest positive solution to the equation $f_\bfq(x) = 1-1/x$.
\end{definition}

The measure $\mu_\bfq$ allows us to classify the weight sequence $\bfq$ depending on the large-scale behavior of Boltzmann maps sampled from $\PP_\bfq^{(p)}$. The following classification is drawn from \cite[Definition~1]{CCM20} and \cite[Section~5.3]{Cur19}.

\begin{definition}
  \label{def:nongen}
    An admissible weight sequence $\bfq = (q_i)_{i\in \NN}$ is said to be \textbf{critical} if the associated probability measure $\mu_\bfq$ in \eqref{eq:81} has mean 1 and \textbf{subcritical} if its mean is strictly less than 1.
    
    A critical weight sequence $\bfq$ is said to be \textbf{generic critical} if $\mu_\bfq$ has a finite variance. A critical weight sequence $\bfq$ is \textbf{non-generic critical of type $a\in (3/2,5/2)$} if $k^{a+1/2}\mu_\bfq(k)$ converges to a positive constant as $k\to\infty$.
\end{definition}

In the critical generic case, there are no macroscopic faces and the scaling limit of the Boltzmann maps as the number of total vertices tends to infinity is the celebrated \textit{Brownian map} \cite{MM07,LeGal11,Mar18}. The analogous scaling limit in the critical non-generic case depends on the value of $a$ and is called a \textit{stable map} \cite{LM11}. The scaling limit in the subcritical case is the continuum random tree; see Section~\ref{sec:crt} for further details. 

For the remainder of this work, we fix $\bfq = (q_i)_{i\in \NN^\#}$ to be a \textbf{non-generic critical weight sequence of type $\boldsymbol{a=2}$}. An important example of such a weight sequence is one corresponding to the gasket of a rigid $O(2)$ loop model-decorated quadrangulation.\footnote{The works \cite{ADH24,Kam24} show that the gasket of a rigid $O(2)$ loop-decorated bipartite map is a non-generic critical Boltzmann map of type $a=2$ in a more general sense than Definition~\ref{def:nongen}, where a slowly varying correction is allowed for the tail of $\mu_\bfq$ as in \cite[Definition~2.2]{Ric18} and \cite[Definition~1]{CR20}. 
For the gaskets of rigid $O(2)$ loop-decorated quadrangulations, A\"id\'ekon, Da Silva, and Hu verify that $k^{5/2}\mu_\bfq(k)$ either asymptotic to a constant or grows logarithmically as $k\rightarrow \infty$ \cite[Equation~B.8]{ADH24}, and gives an explicit condition for the former case.

We believe that Theorem~\ref{thm:1} would continue to hold for random planar map models of supercritical LQG constructed using this more general class of non-generic critical Boltzmann maps of type $a=2$.
In this case, with slowly varying corrections to the estimates for $T_S^{(p)}$ and $L_S^{(p)}$ in Lemma~\ref{lem:215}, our results continue to hold prior to Lemma~\ref{lem:18}. However, we then no longer have the exponential decay in \eqref{eq:lowtail}, so our proof of Lemma~\ref{lem:18} is not valid with this generalization.}
More generally, a Boltzmann map $M$ sampled from the law $\PP_{\bfq}^{(p)}$ associated with such a weight sequence can be thought of as a discrete version of the $\mathrm{CLE}_4$ gasket, in the sense that the set of perimeters $\{\per(f_i)\}_{i\in \fF(M)}$ of the non-boundary faces converges in the scaling limit to the corresponding set of lengths of loops in a critical LQG disk decorated with independent $\CLE_4$ (see Lemma~\ref{lem:13}). 

We now define the distribution of rings which puts our discrete model in the universality class of supercritical LQG with given central charge. As promised earlier in the introduction, we allow the following more general distribution of rings in the rest of this article.
\begin{definition}
  \label{def:ring}
  Let $\big(\PP^{(p)}_{\ring}\big)_{p \in \NN}$ be a sequence of probability measures on planar maps $R$ with two distinguished faces --- an outer face $f_{\mathrm{out}}$ and an inner face $f_{\mathrm{in}}$ --- such that $\per(f_{\mathrm{out}}) = p$ and $\per(f_\inn) \in \NN^\#$. Let $\Rat(R)$ denote the ratio $\per(f_{\mathrm{in}})/\per(f_{\mathrm{out}})$ of the ring $R$. 
  We assume that this sequence of distributions satisfy the following conditions.
  
  \begin{enumerate}[(1)]
  \item \textit{Background charge:} Recall the background charge $Q\in (0,2)$ associated with supercritical LQG. The law of $\Rat(R)$ under $\PP_\ring^{(p)}$ converges in distribution to $\exp(\beta_Q Y)$ as $p\to\infty$, where $Y$ is a Rademacher random variable and
  \begin{equation}\label{eq:beta}
      \beta_Q :=\frac{\pi \sqrt{4-Q^2}}{Q}.
    \end{equation}

 \item \textit{Non-thickness:} Let $\Vol(R)$ denote the total number of vertices in $R$. There is a constant $C>0$ such that, for any $p\in \NN$, a ring $R$ sampled from $\PP_{\ring}^{(p)}$ satisfies $\Vol(R)\leq C(\per(f_\mathrm{out})+ \per(f_{\mathrm{in}}))$ almost surely. 

 \item \textit{Non-triviality:} $\PP_\ring^{(p)}(\Rat(R)>0) > 0$ for all $p \in \NN$.
  
  \item \textit{Lower tail: }For each $\lambda>0$, there exists a constant $c>0$ such that $\EE_\ring^{(p)} [ e^{-\lambda p\,\Rat (R)}] \leq e^{-cp}$ for all $p\in \NN$.
  
  \item \textit{Upper tail:} There exists a constant $\delta>0$ such that $\sup_{p\in\NN} \EE_\ring^{(p)}[(\Rat(R))^{(2+\delta)}]<\infty$. 

  \item \textit{Gluing of boundaries:} There exists a constant $K<\infty$ such that, for any $p\in \NN$, a ring $R$ sampled from $\PP_\ring^{(p)}$ satisfies the following property almost surely:
  each vertex on the outer boundary of $R$ is at most distance $K$ from its inner boundary, and vice versa.\footnote{This condition is not needed in our proofs. However, it is imposed so that planar maps sampled from the \emph{unconditioned} law $\PP_\infty^{(p)}$ can be reasonably expected to converge in the scaling limit to the supercritical LQG disk decorated by CLE$_4$. For instance, this condition excludes a ring where the inner and outer boundaries are connected through a single edge, since this would yield an outsize impact to the distances between the vertices to the inside and the outside of the ring.}
  
  \end{enumerate}
\end{definition}

The main condition that we require is (1). We emphasize that the dependence of our planar map model on the central charge $\cL$ is precisely through the constant $\beta_Q$ appearing in the law of $\Rat(R)$. In terms of the continuum picture in \cite{AG23}, the limiting law in the first condition corresponds to the ratio between the inner and outer perimeters of a CLE$_4$ loop in a CLE$_4$-decorated supercritical LQG disk (see \eqref{eq:length-ratio}). 
The remaining conditions are technical hypotheses needed in our proofs which we do not claim to be optimal. For instance, with a more restrictive law for the rings (e.g., $\per(f_\inn)\stackrel{d}{=} \lfloor p\exp(\beta_Q Y)\rfloor$ for all $p\in \NN$ as in \cite{AG23}), many of the estimates in our work can be improved. We refer the reader to Remark~\ref{rem:gen} for a further discussion of this point. 

\begin{remark}
  \label{rem:model}
  Our construction of the planar map in Definition~\ref{def:model} has two main differences from that in \cite{AG23}. First, in place of gaskets of the fully-packed $O(2)$ loop-decorated triangulations, we use critical non-generic Boltzmann maps of type $a=2$. This change allows us to extend and apply the results of \cite{CCM20}, which provided an exact solvability of the perimeter process of the loop $O(n)$ model for $n\in (0,2)$ via an analysis of the gaskets in terms of the corresponding Boltzmann maps. We note that, for $n\in (0,2)$, the distribution of the gasket of a loop $O(n)$ model-decorated planar map has been identified in \cite{BBG11} with a critical non-generic Boltzmann map with $a=2+(1/\pi)\arccos(n/2)$. Furthermore, the above was extended to the case $n=2$ independently in the works \cite{ADH24} and  \cite{Kam24}.

  The other difference in our model from that of \cite{AG23} is the general class of rings that we allow in Definition~\ref{def:ring}. Since Definition~\ref{def:model} outputs an infinite map with positive probability (see Lemma~\ref{lem:1}), it is a priori conceivable that minor changes in the sequence $\big(\PP_\ring^{(p)}\big)_{p\in\NN}$ may drastically affect the law $\PP_\ttF^{(p)}$. Our main result Theorem~\ref{thm:1} implies that for any reasonable choice of ring distribution, its effect on the global geometry of the planar map, when conditioned to be finite, becomes negligible as $p\to\infty$.  
\end{remark}

\subsection{The connection between Boltzmann maps and random walks}
\label{sec:perproc}
An important ingredient in this work is the understanding of the distribution of the perimeters of the faces of a Boltzmann map sampled from $\PP_{\overline \bfq}^{(p)}$ --- i.e., the probability measure on rooted bipartite maps with boundary perimeter $2p$ which is proportional to the Boltzmann weight \eqref{eq:73}. Given a Boltzmann map $M$ sampled from $\PP_{\overline \bfq}^{(p)}$, let $\bchi_M=(\chi_M(i))_{i\in \NN}$ encode the half-perimeters $\per(f)$ for $f\in \fF(M)$ listed in a non-increasing order with duplicity, padded by $\chi_M(i)=0$ for $i>|\fF(M)|$. A key tool in our proof is the encoding of the law of $\bchi_M$ is the following random walk.

\begin{definition}
  \label{def:4}
  Let $\overline \bfq$ be an admissible weight sequence. Consider the random walk $S_n=\sum_{i=1}^n X_i$ starting at $0$ and with steps $X_i$ sampled independently from the distribution 
  \begin{equation}\label{eq:step-dist}
    \PP\{X_i=k\} = \mu_{\overline \bfq}(k+1),\quad k\in \{-1,0,1,\dots\},
\end{equation}
where $\mu_{\overline \bfq}$ is the probability measure associated with the weight sequence $\bfq$ defined in \eqref{eq:81}.
  Let $T^{(p)}_S=\inf\{n\geq 0 : S_n=-p\}$ be the first time that this walk hits $-p$. Let $L^{(p)}_S=\sum_{i=1}^{T^{(p)}_S}\ind_{\{X_i=-1\}}$ be the total number of negative steps (always $-1$) taken by the walk up to time $T^{(p)}_S$. Let $\bchi_S^{(p)}=(\chi_S^{(p)}(i))_{i\in \NN}$ be the non-increasing sequence of integers satisfying 
  \begin{equation}
    \label{eq:120}
  \{\chi_S^{(p)}(i)\}_{i\in [\![1,T_S^{(p)}]\!]}=\{X_n+1\}_{n\in[\![1,T_S^{(p)}]\!]}
  \end{equation}
  as multi-sets and $\chi_S^{(p)}(i)=0$ for $i>T^{(p)}_S$.
\end{definition}

 The following identity is given in \cite[Section 2.3.1]{CCM20} based on the bijections of \cite{BDG04,JS15}.
\begin{proposition}
  \label{prop:10}
   Let $\bfq$ be an admissible weight sequence and let $M$ be a Boltzmann map sampled from $\PP_{\bfq}^{(p)}$ for any $p\in \NN$. Then, the law of $\bchi_M$ is equal to that of $\bchi_S^{(p)}$ weighted by $(L_S^{(p)}+1)^{-1}$. That is, for any positive measurable function $F\colon \RR^\NN\rightarrow \RR$, we have
  \begin{equation}
    \label{eq:85}
    \EE_{\overline \bfq}^{(p)}[F(\bchi_M)]=\frac{\EE[(L_S^{(p)}+1)^{-1}F(\bchi_S^{(p)})]}{\EE[(L_S^{(p)}+1)^{-1}]}.
  \end{equation}
\end{proposition}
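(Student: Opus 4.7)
The plan is to derive \eqref{eq:85} by combining the Bouttier--Di Francesco--Guitter (BDG) bijection \cite{BDG04} with the Janson--Stef\'ansson (JS) trick \cite{JS15}, following the framework of \cite[Section~2.3.1]{CCM20}. First, I would use BDG to convert a rooted bipartite planar map $M\in\mathcal M^{(p)}$ together with a distinguished vertex into a labeled mobile: a plane tree with alternating black and white vertices, in which each non-root face $f$ of $M$ is encoded by a black vertex whose number of white children equals $\per(f)$. Next, the JS trick reshuffles the mobile into an ordinary plane tree so that each black vertex with $k$ white children becomes a vertex of offspring $k$, with the factor $\binom{2k-1}{k-1}$ in \eqref{eq:81} accounting for the number of possible reshufflings. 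Under the Boltzmann weight $w_\bfq$ in \eqref{eq:73}, the resulting tree has the law of a Galton--Watson forest with offspring distribution $\mu_\bfq$, constrained to contain exactly $p$ trees by the condition that the root face of $M$ has perimeter $2p$.

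I would then encode this Galton--Watson forest by its \L{}ukasiewicz path, which is exactly the random walk $S$ of Definition~\ref{def:4} run until the first hitting time $T_S^{(p)}$ of $-p$: each step $X_i \geq -1$ corresponds to a vertex of the forest with $X_i+1$ children. The multiset $\{X_n+1 : 1 \leq n \leq T_S^{(p)}\}$ is thus exactly the collection of offspring counts of forest vertices, which under BDG+JS matches the collection of half-perimeters of non-root faces of $M$ (padded by zeros coming from leaves, equivalently from $-1$ steps). Combined with the non-increasing ordering convention in \eqref{eq:120}, this yields $\bchi_M = \bchi_S^{(p)}$ in the bijection, but at the level of \emph{pointed} maps.

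Finally, the weighting factor $(L_S^{(p)}+1)^{-1}$ arises from unpointing. Under BDG+JS, the vertices of $M$ correspond to the white vertices of the mobile, which are precisely the leaves of the Galton--Watson forest together with a distinguished root vertex inherited from the BDG boundary convention. Since the leaves of the forest correspond to the $-1$ steps of the walk, we have $\Vol(M) = L_S^{(p)}+1$ under the bijection. Because the bijection produces pointed maps uniformly over the $\Vol(M)$ possible choices of distinguished vertex, summing the $\bfq$-weights over unpointed maps is equivalent to summing the walk probabilities with the extra factor $(L_S^{(p)}+1)^{-1}$; after normalization by $\EE[(L_S^{(p)}+1)^{-1}]$, this produces exactly \eqref{eq:85}. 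The main delicate point of the argument is the precise identification $\Vol(M) = L_S^{(p)}+1$, and in particular the ``$+1$'' offset coming from the BDG convention; this combinatorial bookkeeping must be tracked carefully but is by now standard and is already worked out in \cite[Section~2.3.1]{CCM20}.
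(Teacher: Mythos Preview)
Your proposal is correct and follows exactly the approach the paper invokes: the paper does not supply its own proof but simply cites \cite[Section~2.3.1]{CCM20}, where the identity is derived via the BDG bijection \cite{BDG04} combined with the Janson--Stef\'ansson trick \cite{JS15}, precisely as you outline. Your sketch of the unpointing step and the identification $\Vol(M)=L_S^{(p)}+1$ matches that reference, including the acknowledged bookkeeping subtlety.
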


Our proofs of Theorems \ref{thm:2} and \ref{thm:1} in Sections~\ref{sec:prob-dis} and \ref{sec:estimates} make heavy uses of this random walk encoding of Boltzmann maps and its basic properties that we now collect. We first consider properties of the random walk associated with our critical non-generic weight sequence $\bfq$ of type $a=2$. These results can be stated with slight modification for other values of $a\in(3/2,5/2)$ as well, but we do not stray away from the $a=2$ case that we consider exclusively in our work.

\begin{lemma}
  \label{lem:215}
  Let $\bfq$ be a critical non-generic weight sequence of type $a=2$. Let $S_n$ be the associated random walk and $T_S^{(p)}$ and $L_S^{(p)}$ be the random variables defined as in Definition~\ref{def:4}. 
 Then, $p^{-3/2} T_S^{(p)}$ converges in distribution to a positive $2/3$-stable random variable. Moreover, there is a constant $c>0$ depending only on $\bfq$ such that for all $\epsilon>0$ and $p\in \NN$, we have
  \begin{equation}\label{eq:171}
\PP(T_S^{(p)}\leq \epsilon p^{3/2})\leq \PP(L_S^{(p)}\leq \epsilon p^{3/2})\leq e^{-c\epsilon^{-2}}.
\end{equation}
Consequently, there exists a constant $C>0$ such that $\EE[ (L_S^{(p)}+1)^{-1}]\geq C p^{-3/2}$ for all $p\in \NN$.
\end{lemma}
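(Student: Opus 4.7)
The plan is to use the random walk encoding to reduce each claim to well-known properties of heavy-tailed random walks. Because $\bfq$ is non-generic critical of type $a=2$, the step distribution \eqref{eq:step-dist} has mean zero (by criticality of $\mu_{\bfq}$) and right-tail $\PP(X_1 \geq k) \sim c_0 k^{-3/2}$ as $k\to\infty$. Thus the standard functional central limit theorem for heavy-tailed walks gives
\[ \big(p^{-1} S_{\lfloor p^{3/2}t\rfloor}\big)_{t\geq 0} \xrightarrow{d} (\zeta_t)_{t\geq 0} \]
in the Skorokhod topology, where $\zeta$ is a spectrally positive $3/2$-stable L\'evy process. By continuity of hitting times for such processes, this upgrades to $p^{-3/2} T_S^{(p)} \xrightarrow{d} \tau := \inf\{t\geq 0 : \zeta_t = -1\}$, and $\tau$ is a positive $2/3$-stable random variable (its Laplace transform has the explicit form $\EE[e^{-\lambda \tau}] = e^{-C\lambda^{2/3}}$ coming from the Laplace exponent of $\zeta$), which proves the first assertion.

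For the two-sided bound~\eqref{eq:171}, the first inequality is immediate from $L_S^{(p)} \leq T_S^{(p)}$. For the upper bound, observe that since $S$ can only descend by single $-1$ steps, the strong Markov property decomposes
\[ L_S^{(p)} \stackrel{d}{=} \sum_{k=1}^p L_k \]
into $p$ i.i.d.\ copies of $L_S^{(1)}$. A first-step analysis gives that the generating function $\tilde\psi(s) := \EE[s^{L_S^{(1)}}]$ satisfies the functional equation $\tilde\psi(s) - F_{\bfq}(\tilde\psi(s)) = \mu_{\bfq}(0)(s-1)$, where $F_{\bfq}$ is the probability generating function of $\mu_{\bfq}$. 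Criticality together with the type-$a=2$ tail yields (by a Karamata-type computation) the expansion $F_{\bfq}(t) - t = c_1(1-t)^{3/2}(1 + o(1))$ as $t\uparrow 1$ with $c_1>0$. Plugging this into the functional equation gives $1 - \tilde\psi(e^{-\lambda}) \geq c_2\,\lambda^{2/3}$ for all sufficiently small $\lambda > 0$. Applying the exponential Chebyshev inequality then yields
\[ \PP\big(L_S^{(p)} \leq \epsilon p^{3/2}\big) \leq e^{\lambda \epsilon p^{3/2}} \tilde\psi(e^{-\lambda})^p \leq \exp\!\big(\lambda \epsilon p^{3/2} - c_2\, p\, \lambda^{2/3}\big), \]
and optimizing over $\lambda$ (the optimizer is of order $\epsilon^{-3} p^{-3/2}$) produces the desired $e^{-c\epsilon^{-2}}$ bound, provided the optimizer stays in the range of validity of the lower bound on $1-\tilde\psi$; this is precisely the range $\epsilon \geq C_0/\sqrt{p}$. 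For smaller $\epsilon$, the deterministic inequality $L_S^{(p)} \geq p$ (which follows from the step distribution being bounded below by $-1$) makes the event empty.

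For the last assertion, since $L_S^{(p)} \leq T_S^{(p)}$, tightness of $p^{-3/2} T_S^{(p)}$ from the convergence established in the first paragraph allows me to pick $A$ so large that $\PP(L_S^{(p)} \leq A p^{3/2}) \geq \PP(T_S^{(p)} \leq A p^{3/2}) \geq \tfrac{1}{2}$ for all sufficiently large $p$, whence
\[ \EE\big[(L_S^{(p)} + 1)^{-1}\big] \geq \frac{1}{A p^{3/2} + 1}\,\PP\big(L_S^{(p)} \leq A p^{3/2}\big) \geq \frac{C}{p^{3/2}}. \]
The finitely many small values of $p$ are handled by adjusting the constant $C$, using positivity of $\EE[(L_S^{(p)}+1)^{-1}]$ for each fixed $p$.

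The main obstacle is producing the uniform (non-asymptotic) bound $1 - \tilde\psi(e^{-\lambda}) \geq c_2 \lambda^{2/3}$ on a fixed interval $\lambda \in (0, \lambda_0]$ with explicit constants, which requires carefully turning the asymptotic expansion of $F_{\bfq}$ near $1$ into a one-sided inequality valid throughout a neighborhood. This is standard for non-generic critical Boltzmann weight sequences of type $a = 2$ but must be done with a bit of care because the functional equation for $\tilde\psi$ is implicit.
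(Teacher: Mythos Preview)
Your proof is correct and essentially reproduces the arguments that the paper cites from \cite{CCM20} for the first two assertions; the decomposition $L_S^{(p)}=\sum_{k=1}^p L_k$, the functional equation for $\tilde\psi$, the singular expansion of $F_{\bfq}$ near $1$, and the Chernoff optimization are exactly the ingredients of \cite[Lemma~5]{CCM20}, and the hitting-time argument via the invariance principle is \cite[Proposition~3]{CCM20}. The one place you diverge from the paper is the final lower bound on $\EE[(L_S^{(p)}+1)^{-1}]$: the paper uses \eqref{eq:171} to deduce uniform integrability of $\{p^{3/2}(T_S^{(p)})^{-1}\}_p$ and then passes to the limit $p^{3/2}\EE[(T_S^{(p)})^{-1}]\to\EE[\tau^{-1}]\in(0,\infty)$, whereas you only use tightness of $p^{-3/2}T_S^{(p)}$ to find a large $A$ with $\PP(T_S^{(p)}\leq Ap^{3/2})\geq\tfrac12$ and bound directly. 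Your route is slightly more elementary (it avoids checking uniform integrability and the finiteness of $\EE[\tau^{-1}]$), while the paper's route yields the sharper statement that $p^{3/2}\EE[(T_S^{(p)})^{-1}]$ actually converges.
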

\begin{proof}
The convergence of $p^{-3/2}T_S^{(p)}$ is established within the proof of \cite[Proposition~3]{CCM20} and \eqref{eq:171} is proved in Lemma~5 of the same article.
    To prove the last estimate, it suffices to show $\EE[(T_S^{(p)})^{-1}]\geq Cp^{-3/2}$ since $L_S^{(p)}\leq T_S^{(p)}$. We know from \eqref{eq:171} that $\{p^{3/2}(T_S^{(p)})^{-1}\}_{p\in \NN}$ is uniformly integrable. Denoting by $\tau$ the positive 2/3-stable random variable that $p^{-3/2}T_S^{(p)}$ converges in law to, we conclude $p^{3/2}\EE[1/T_S^{(p)}]\rightarrow \EE[1/\tau]$. This limit is finite, as can be checked from well-known estimates on the density of $\tau$ (see, e.g., \cite[Chapter~2.5]{Zol86}).
\end{proof}

We now move on to estimates for $T_S^{(p)}$ and $L_S^{(p)}$ corresponding to a subcritical weight sequence. Let 
\begin{equation}
    m_{\overline\bfq} := \sum_{k=-1}^\infty k\mu_{\overline\bfq}(k+1) \in (-1, 0)
\end{equation}
be the mean of the step distribution \eqref{eq:step-dist} of the associated random walk $S$.
\begin{lemma}
  \label{lem:33}
  Let $\overline \bfq$ be a subcritical admissible weight sequence. Then,
  \begin{equation}
    \label{eq:128}
    \lim_{p\to\infty} \PP\big(T_S^{(p)} > -2p/m_{\overline \bfq}\big)= 0.
  \end{equation}
  Furthermore, for all $p\in \NN$, we have $p^{-1}\EE T_S^{(p)}=\EE T_S^{(1)}<\infty$ and
  \begin{equation}
    \label{eq:154}
    \EE \big[(L_S^{(p)}+1)^{-1}\big]\geq \EE \big[(T_S^{(p)}+1)^{-1}\big] \geq \big(p \,\EE [T_S^{(1)}] +1\big)^{-1}.
  \end{equation}
\end{lemma}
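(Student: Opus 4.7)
The plan is to exploit the fact that the step distribution of $S$ is supported on $\{-1, 0, 1, 2, \dots\}$, so $S$ is \emph{downward skip-free}: to reach $-p$ it must hit $-1$, then $-2$, \dots, $-p$ in order. Letting $\sigma_k = \inf\{n : S_n = -k\}$ and applying the strong Markov property at each $\sigma_k$, the increments $\sigma_k - \sigma_{k-1}$ are i.i.d.\ copies of $T_S^{(1)}$, so
\[ T_S^{(p)} \stackrel{d}{=} \tau_1 + \tau_2 + \cdots + \tau_p, \qquad \tau_i \text{ i.i.d., } \tau_i \stackrel{d}{=} T_S^{(1)}. \]
This identity is the engine of all three claims.

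Next I would verify $\EE T_S^{(1)} < \infty$. Writing $\tau = T_S^{(1)}$, apply the optional stopping theorem to the martingale $M_n := S_n - m_{\overline{\bfq}} n$ at the bounded stopping time $\tau \wedge N$. Since $S_n \geq 0$ for $n < \tau$ and $S_\tau = -1$, we have $S_{\tau \wedge N} \geq -1$, whence
\[ -1 \;\leq\; \EE S_{\tau \wedge N} \;=\; m_{\overline{\bfq}} \, \EE[\tau \wedge N]. \]
Because $m_{\overline{\bfq}} < 0$, this yields $\EE[\tau \wedge N] \leq -1/m_{\overline{\bfq}}$, and monotone convergence gives $\EE \tau \leq -1/m_{\overline{\bfq}} < \infty$. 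Combined with the i.i.d.\ decomposition, this proves $\EE T_S^{(p)} = p\,\EE T_S^{(1)} < \infty$, the middle assertion of the lemma. (An application of Wald's identity in fact forces equality $\EE \tau = -1/m_{\overline{\bfq}}$, though only the finite upper bound is strictly needed.)

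For the limit \eqref{eq:128}, the decomposition together with the weak law of large numbers gives $T_S^{(p)}/p \to \EE T_S^{(1)} = -1/m_{\overline{\bfq}}$ in probability. Since $-1/m_{\overline{\bfq}} < -2/m_{\overline{\bfq}}$, the probability $\PP(T_S^{(p)} > -2p/m_{\overline{\bfq}})$ tends to $0$ as $p \to \infty$. For the inequalities in \eqref{eq:154}: $L_S^{(p)} \leq T_S^{(p)}$, since $L_S^{(p)}$ counts only the $-1$ steps among the $T_S^{(p)}$ total, so $(L_S^{(p)}+1)^{-1} \geq (T_S^{(p)}+1)^{-1}$; and the lower bound $\EE[(T_S^{(p)}+1)^{-1}] \geq (p\,\EE T_S^{(1)} + 1)^{-1}$ is immediate from Jensen's inequality applied to the convex function $x \mapsto 1/(x+1)$ on $[0,\infty)$, using $\EE T_S^{(p)} = p\,\EE T_S^{(1)}$.

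No step poses a serious obstacle; the only mild subtlety is proving $\EE T_S^{(1)} < \infty$ without circularly invoking Wald's identity, which the truncated optional-stopping argument above handles cleanly.
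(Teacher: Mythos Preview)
Your proof is correct and follows essentially the same strategy as the paper's: the skip-free decomposition $T_S^{(p)} \stackrel{d}{=} \tau_1+\cdots+\tau_p$, finiteness of $\EE T_S^{(1)}$, and Jensen's inequality for \eqref{eq:154} all appear in the paper's argument as well. The only differences are cosmetic: the paper cites a reference for $\EE T_S^{(1)}<\infty$ rather than giving your truncated optional-stopping argument, and for \eqref{eq:128} the paper applies the law of large numbers directly to $S_{\lfloor -2p/m_{\overline\bfq}\rfloor}$ via the containment $\{T_S^{(p)}>-2p/m_{\overline\bfq}\}\subset\{S_{\lfloor -2p/m_{\overline\bfq}\rfloor}>-p\}$, whereas you apply it to $T_S^{(p)}/p$ through the i.i.d.\ decomposition.
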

\begin{proof}
  Note that $\{T_S^{(p)} > -2p/m_{\overline \bfq}\}\subseteq \{S_{\lfloor-2p/m_{\overline \bfq}\rfloor} >-p\}$. Since $\EE [S_{\lfloor-2p/m_{\overline \bfq}\rfloor}] \sim -2p$, by the law of large numbers, the probability of the latter event decreases to $0$ as $p\rightarrow \infty$.

  To obtain \eqref{eq:154}, note that $\EE T_S^{(p)}=p\cdot \EE T_S^{(1)}$ since the only possible negative jumps of $S$ are $-1$. We have $\EE T_S^{(1)}<\infty$ from, e.g., \cite[Theorem 2.1]{Gut74}. The claim then follows by Jensen's inequality since the function $x\mapsto (x+1)^{-1}$ is convex.
\end{proof}

As a simple instance of extracting information about Boltzmann maps from the corresponding random walk, we show that \eqref{eq:128} gives an upper bound on the number of faces in a subcritical Boltzmann map. Recall that for a rooted planar map $M$, we use $\fF(M)$ to denote the set of interior (or non-root) faces of $M$.

\begin{lemma}
  \label{lem:38}
  If $\overline \bfq$ is a subcritical weight sequence, then 
  $\PP_{\overline \bfq}^{(p)}(|\fF(M)|\leq -2p/m_{\overline \bfq})\to 1$ as $p\to\infty$.
\end{lemma}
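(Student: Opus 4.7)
The strategy is to transfer the tail estimate \eqref{eq:128} on $T_S^{(p)}$ from the unweighted random walk to the Boltzmann-weighted side via Proposition~\ref{prop:10}, using the deterministic lower bound $L_S^{(p)} \geq p$ to control the $(L_S^{(p)}+1)^{-1}$ reweighting.

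Under the random walk representation of Proposition~\ref{prop:10}, the non-zero entries of $\bchi_S^{(p)}$ are exactly the $T_S^{(p)} - L_S^{(p)}$ non-negative steps $X_i + 1 > 0$ taken up to time $T_S^{(p)}$. Correspondingly, since every inner face of a bipartite map has half-perimeter at least one, the non-zero entries of $\bchi_M$ count $|\fF(M)|$. Applying Proposition~\ref{prop:10} to the functional $F(\bchi) = \ind\{\#\{i : \chi(i) > 0\} > K\}$ with $K = -2p/m_{\overline{\bfq}}$, and using the trivial bound $T_S^{(p)} - L_S^{(p)} \leq T_S^{(p)}$, would give
\begin{equation}
\PP_{\overline{\bfq}}^{(p)}\bigl(|\fF(M)| > K\bigr) = \frac{\EE\bigl[(L_S^{(p)}+1)^{-1}\ind\{T_S^{(p)} - L_S^{(p)} > K\}\bigr]}{\EE\bigl[(L_S^{(p)}+1)^{-1}\bigr]} \leq \frac{\EE\bigl[(L_S^{(p)}+1)^{-1}\ind\{T_S^{(p)} > K\}\bigr]}{\EE\bigl[(L_S^{(p)}+1)^{-1}\bigr]}.
\end{equation}

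The main observation is the deterministic inequality $L_S^{(p)} \geq p$: since every negative step of $S$ equals $-1$ while every other step is nonnegative, from
\begin{equation}
-p = S_{T_S^{(p)}} = -L_S^{(p)} + \sum_{i=1}^{T_S^{(p)}} X_i\,\ind\{X_i \geq 0\}
\end{equation}
one immediately deduces $L_S^{(p)} \geq p$. Consequently $(L_S^{(p)}+1)^{-1} \leq (p+1)^{-1}$, so the numerator above is at most $(p+1)^{-1}\,\PP(T_S^{(p)} > K)$. Combining this with the lower bound $\EE[(L_S^{(p)}+1)^{-1}] \geq (p\,\EE[T_S^{(1)}] + 1)^{-1}$ from \eqref{eq:154}, one obtains
\begin{equation}
\PP_{\overline{\bfq}}^{(p)}\bigl(|\fF(M)| > -2p/m_{\overline{\bfq}}\bigr) \leq \frac{p\,\EE[T_S^{(1)}] + 1}{p+1}\,\PP\bigl(T_S^{(p)} > -2p/m_{\overline{\bfq}}\bigr),
\end{equation}
which tends to zero as $p \to \infty$ by \eqref{eq:128} and the finiteness of $\EE[T_S^{(1)}]$.

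No serious obstacle is anticipated. The only nontrivial ingredient is the deterministic lower bound $L_S^{(p)} \geq p$, which precisely compensates for the $O(1/p)$ lower bound on $\EE[(L_S^{(p)}+1)^{-1}]$ and reduces the claim to the tail estimate already furnished by Lemma~\ref{lem:33}.
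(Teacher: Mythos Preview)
Your proof is correct and takes a genuinely different route from the paper's. Both arguments reduce the claim via Proposition~\ref{prop:10} to showing
\[
\frac{\EE\bigl[(L_S^{(p)}+1)^{-1}\ind\{T_S^{(p)} > -2p/m_{\overline\bfq}\}\bigr]}{\EE\bigl[(L_S^{(p)}+1)^{-1}\bigr]} \longrightarrow 0,
\]
but they handle the reweighting differently. The paper observes (Lemma~\ref{lem:fkg}) that $(L_S^{(p)}+1)^{-1}$ is a decreasing and $\ind\{T_S^{(p)} > K\}$ an increasing function of the step sequence $(X_1,X_2,\dots)$, then invokes the FKG inequality to bound the ratio directly by $\PP(T_S^{(p)} > -2p/m_{\overline\bfq})$. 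You instead use the deterministic bound $L_S^{(p)}\geq p$ on the numerator and the Jensen bound \eqref{eq:154} on the denominator, which costs only a bounded factor $(p\,\EE[T_S^{(1)}]+1)/(p+1)\to \EE[T_S^{(1)}]<\infty$. Your argument is more elementary---it avoids FKG and the monotonicity lemma entirely---while the paper's approach yields the sharper bound $\PP(T_S^{(p)}>K)$ and reuses the same FKG/monotonicity machinery later (e.g., in the proof of Proposition~\ref{prop:6}).
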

\begin{proof}
By Proposition~\ref{prop:10}, it suffices to show that
  \begin{equation}
    \label{eq:147}
    \lim_{p\to\infty} \frac{\EE[ (L_S^{(p)}+1)^{-1} \ind\{T_S^{(p)} > -2p/m_{\overline \bfq}\}]}{\EE[(L_S^{(p)}+1)^{-1}]}= 0.
  \end{equation}
  In Lemma~\ref{lem:fkg} below, we show that $(L^{(p)}_S+1)^{-1}$ is a bounded and decreasing function of the sequence $(X_1, X_2,\dots)$, whence $\ind\{T_S^{(p)}\geq -2p/m_{\overline \bfq}\}$ is an increasing function of the same sequence. Thus, the FKG inequality implies that the ratio in \eqref{eq:147} is bounded above by $\PP(T_S^{(p)} > -2p/m_{\overline \bfq})$, which tends to $0$ as $p\rightarrow \infty$ by Lemma \ref{lem:33}.
\end{proof}

\begin{lemma}\label{lem:fkg}
    Let $(X_1,X_2,\dots)$ be a sequence of integers taking values in $\{-1\} \cup \NN^\#$ and consider the walk $S_n = X_1+\cdots + X_n$ for $n\in \NN$. Let $T_S^{(p)}$ be the first time that the walk hits $-p$ and     $L_S^{(p)}$ be the total number of negative steps until this hitting time. Then, $T_S^{(p)}$ and $L_S^{(p)}$ are increasing functions of the sequence $(X_1,X_2,\dots)$ for every $p\in \NN$.
\end{lemma}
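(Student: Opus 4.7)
The plan is to prove both monotonicity claims by a direct pointwise comparison, exploiting the fact that the steps live in $\{-1\}\cup\NN^\#$, so the walk $S$ cannot decrease by more than $1$ per step. First I would observe that if $(X_i')$ dominates $(X_i)$ coordinatewise, then $S_n' = \sum_{i=1}^n X_i' \geq \sum_{i=1}^n X_i = S_n$ for every $n$. The monotonicity of $T_S^{(p)}$ then follows at once: at time $T_{S'}^{(p)}$ the walk $S'$ equals $-p$, so $S$ sits at or below $-p$, and since $S$ starts at $0$ and cannot skip past $-p$ going downward, it must already have hit $-p$ earlier, i.e.\ $T_S^{(p)} \leq T_{S'}^{(p)}$.

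For $L_S^{(p)}$, I would use the decomposition $X_i = X_i^+ - \ind_{\{X_i=-1\}}$, which is valid precisely because $X_i \in \{-1\}\cup\NN^\#$. Summing up to the stopping time yields the identity
\begin{equation*}
-p \;=\; S_{T_S^{(p)}} \;=\; P_S^{(p)} - L_S^{(p)}, \qquad \text{where } P_S^{(p)} := \sum_{i=1}^{T_S^{(p)}} X_i^+,
\end{equation*}
so that $L_S^{(p)} = p + P_S^{(p)}$. Monotonicity of $L_S^{(p)}$ therefore reduces to monotonicity of $P_S^{(p)}$ in the sequence $(X_i)$. To compare $P_{S'}^{(p)}$ with $P_S^{(p)}$, I note that $x \mapsto x^+$ is nondecreasing on $\{-1\}\cup\NN^\#$, giving $(X_i')^+ \geq X_i^+$ termwise, and then combine this with $T_{S'}^{(p)} \geq T_S^{(p)}$ from the previous paragraph and the monotonicity of $n \mapsto \sum_{i \leq n}(X_i')^+$ to obtain $P_{S'}^{(p)} \geq P_{S'}^{(p)}\bigl|_{n=T_S^{(p)}} \geq P_S^{(p)}$.

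I do not anticipate a substantive obstacle here. The only subtlety is recognizing that the restriction on step sizes prevents $S$ from leaping past $-p$ from above, so that domination $S_n' \geq S_n$ really does imply $T_S^{(p)} \leq T_{S'}^{(p)}$; once that is in hand, the identity $L_S^{(p)} = p + P_S^{(p)}$ reduces the second claim to a termwise positivity check.
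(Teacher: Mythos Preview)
Your proof is correct and takes a genuinely different route from the paper. The paper argues for $L_S^{(p)}$ by reducing to the case where the two sequences differ at a single index $j\in[\![1,T_{\widetilde S}^{(p)}]\!]$, and then splits into cases according to whether $X_j=-1$; when $X_j=-1$ and $\widetilde X_j\geq 0$, it explicitly finds a compensating negative step of $\widetilde S$ in the interval $(T_S^{(p)},T_{\widetilde S}^{(p)}]$. Your argument bypasses this casework via the identity $L_S^{(p)} = p + P_S^{(p)}$ with $P_S^{(p)}=\sum_{i\le T_S^{(p)}} X_i^+$, which is valid precisely because the walk hits $-p$ exactly (no overshoot). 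Monotonicity of $P_S^{(p)}$ then follows in two strokes from $T_{S'}^{(p)}\geq T_S^{(p)}$ and the termwise inequality $(X_i')^+\geq X_i^+$. This is cleaner and more conceptual: it isolates the single algebraic fact (the exact-hitting identity) that makes the lemma work, whereas the paper's case analysis rediscovers this fact implicitly. Both arguments share the same tacit assumption that $T_{S'}^{(p)}<\infty$ (the paper restricts attention to indices in $[\![1,T_{\widetilde S}^{(p)}]\!]$; you use $S'_{T_{S'}^{(p)}}=-p$), which is harmless in every application since the step distributions there have nonpositive mean.
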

\begin{proof}
    Fix $p\in \NN$. Suppose $(\widetilde X_1,\widetilde X_2,\dots)$ is another sequence of integers such that $X_i \leq \widetilde X_i$ for every $i\in \NN$. Let $\widetilde S_n = \widetilde X_1 + \cdots + \widetilde X_n$ and define $T_{\widetilde S}^{(p)}$ and $L_{\widetilde S}^{(p)}$ analogously. Since $S_i \leq \widetilde S_i$ for all $i$, we have $T_S^{(p)} \leq  T_{\widetilde S}^{(p)}$ almost surely. 

    Let us show $L_S^{(p)} \leq L_{\widetilde S}^{(p)}$. It suffices to show this for the case that $X_j < \widetilde X_j$ at a unique index $j \in [\![1, T_{\widetilde S}^{(p)}]\!]$ and $X_i = \widetilde X_i$ for all other $i\leq T_{\widetilde S}^{(p)}$, since we can make comparisons while changing $(X_i)_{i\leq T_{\widetilde S}^{(p)}}$ to $(\widetilde X_i)_{i\leq T_{\widetilde S}^{(p)}}$ index-by-index. In fact, since $T_S^{(p)}=T_{\widetilde S}^{(p)}$ if $X_i = \widetilde X_i$ for every $i \in [\![1, T_S^{(p)}]\!]$, so    
    we only need to consider the case that $X_j < \widetilde X_j$ for some $j \in [\![1, T_S^{(p)}]\!]$ and $X_i = \widetilde X_i$ for all $i\neq j$ in this interval. If $X_j \neq -1$, then $X_j$ is not counted in $L_S^{(p)}$, so $T_S^{(p)} \leq T_{\widetilde S}^{(p)}$ implies $L_S^{(p)} \leq L_{\widetilde S}^{(p)}$. If $X_j = -1$ and $\widetilde X_j \geq 0$, then $\widetilde S_{T_S^{(p)}} > S_{T_S^{(p)}}=-p$, so there exists at least one negative $X_i$ among $T_S^{(p)} < i \leq T_{\widetilde S}^{(p)}$. Hence,
    \begin{equation}
        L_{\widetilde S}^{(p)} = \sum_{i=1}^{T_S^{(p)}} (\ind\{ X_i=-1\} - \ind_{i=j}) + \sum_{i=T_{S}^{(p)}+1}^{T_{\widetilde S}^{(p)}} \ind \{\widetilde X_i=-1\} \geq (L_S^{(p)}-1) + 1 = L_S^{(p)}.
    \end{equation}   
     Hence, $L_S^{(p)} \leq L_{\widetilde S}^{(p)}$ whenever $X_i\leq \widetilde X_i$ for all $i$; this completes the proof.
\end{proof}

\subsection{Perimeter cascade of supercritical planar maps}
\label{sec:cascade}
In this subsection, we investigate the law of the perimeters of faces that appear in the iterative construction of supercritical maps in Definition~\ref{def:model}. Let us first define a process indexed by the Ulam tree $\mathcal U = \bigcup_{n\in {\NN^\#}}\NN^n$ which describes these perimeters along with their genealogy. For $u\in \NN^n\subset \mathcal U$, denote $|u| = n$.

\begin{definition}
  \label{def:ulam}
  Given $p \in \NN$, let $\{(M_i,\fF_i)\}_{i\in \NN^\#}$ be the Markov chain in Definition~\ref{def:model} with the law $\PP_\infty^{(p)}$. Let $M$ be the random planar map resulting from this chain. We define the processes $\bchi_M^\inn = (\chi_M^\inn(u))_{u\in \mathcal U}$ and the indexing $f:\mathcal U \to \{\partial M,\varnothing\} \cup ( \bigcup_{i\in \NN^\#} \fF_i)$ through the following inductive procedure.
  \begin{itemize}
      \item Let $f_\root = \partial M$ and $\chi_M^\inn(\root) = p$.
      \item Suppose we have $\{(f_u,\chi_M^\inn(u))\}_{|u|= i}$. Here is how we define $\{(f_u,\chi_M^\inn(u))\}_{|u|=i+1}$.
      \begin{itemize}
      \item For each $u\in \mathcal U$ with $|u|=i$ and $f_u \neq \varnothing$, recall that $\fF(M(f_u))$ are the faces of the Boltzmann map glued into the face $f_u \in \fF_i$. With $m = |\fF(M(f_u))|$, let $f_{u1},f_{u2},\dots,f_{um}$ be an enumeration of the faces $\fF(M(f_u))$ such that $\per(f_{u1}),\dots,\per(f_{um})$ is in a non-increasing order. Let $\chi_M^\inn(uk)$ be the inner half-perimeter $\per_\inn(f_{uk})$ of the ring attached to the face $f_{uk}$.

        \item If $f_u =\varnothing$ or $k>|\fF(M(f_u))|$, let $f_{uk} = \varnothing$ and $\chi_M^\inn(uk) = 0$.
        \end{itemize}
  \end{itemize}  
\end{definition}

The main result of this subsection is that the above perimeter process $\bchi_M^\inn$ converges in distribution as $p\to\infty$ to the multiplicative cascade $Z_Q$ of the inner boundary lengths of CLE$_4$ loops in a unit boundary length supercritical LQG disk.

\begin{proposition}
  \label{prop:26}
  For each $p\in \NN$, let $\bchi_{M^{(p)}}^\inn$ denote the perimeter process of Definition~\ref{def:ulam} sampled from $\PP_\infty^{(p)}$ corresponding to the background charge $Q\in(0,2)$. Also, let $(Z_Q(u))_{u\in \mathcal U}$ be the multiplicative cascade given in Corollary~\ref{cor:supercritical-cascade}. Then, as $p\to\infty$,
    \begin{equation}
    \label{eq:86}
    \big( p^{-1}\chi_{M^{(p)}}^\inn(u) \big)_{u\in \cU} \stackrel{d}{\rightarrow} \big(Z_Q(u)\big)_{u\in \cU}
  \end{equation}
  with respect to the product topology on $\RR^\cU$.
\end{proposition}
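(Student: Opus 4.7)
The plan is to prove, by induction on $N\in\NN$, the finite-dimensional distributional convergence $(p^{-1}\chi_{M^{(p)}}^\inn(u))_{|u|\leq N}\stackrel{d}{\to}(Z_Q(u))_{|u|\leq N}$; since $\cU$ is countable, this is equivalent to weak convergence in the product topology on $\RR^\cU$.

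For the base case $N=1$, I would start from the outermost gasket $M_0^{(p)}\sim\PP_\bfq^{(p)}$ and combine the random walk encoding of Proposition~\ref{prop:10} with the invariance principle for the step distribution $\mu_\bfq$. Because $\bfq$ is non-generic critical of type $a=2$, $\mu_\bfq$ lies in the domain of attraction of a spectrally positive $3/2$-stable L\'evy process $\zeta$, so $p^{-3/2}S_{\lfloor p^{3/2}\cdot\rfloor}\to\zeta$ jointly with $p^{-3/2}T_S^{(p)}\to\tau_1$ and $p^{-1}\bchi_S^{(p)}\to(\Delta\zeta)_1^\downarrow$. The weighting $(L_S^{(p)}+1)^{-1}$ appearing in \eqref{eq:85} becomes, after rescaling and using the tail bound \eqref{eq:171} for uniform integrability, precisely the $(\tau_1)^{-1}$ reweighting in the definition \eqref{eq:1} of $\rho^{(1)}$; this is the substance of Lemma~13 from Section~\ref{sec:2}. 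Conditional on $M_0^{(p)}$, the attached rings $\{R(f)\}_f$ are independent with ratios $\Rat(R(f))\to\exp(\beta_Q Y)$ in law whenever $\per(f)\to\infty$ (Ring condition~(1) of Definition~\ref{def:ring}). Since the limiting ring ratio is bounded (taking only values $e^{\pm\beta_Q}$) and Ring condition~(5) supplies uniform moment bounds, a Slutsky-type argument upgrades this to the joint convergence $(p^{-1}\chi_{M^{(p)}}^\inn(k))_{k\in\NN}\stackrel{d}{\to}(X_k^\root e^{\beta_Q Y_k})_{k\in\NN}$.

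For the inductive step, assuming the claim at level $N\geq 1$, I would use Skorokhod's representation to couple so that $(p^{-1}\chi_{M^{(p)}}^\inn(u))_{|u|\leq N}\to(Z_Q(u))_{|u|\leq N}$ almost surely. Since every $X_k^u$ and $e^{\beta_Q Y_u}$ in \eqref{eq:172} is a.s.\ strictly positive, $Z_Q(u)>0$ a.s.\ for each fixed $u\in\cU$, and hence $\chi_{M^{(p)}}^\inn(u)\to\infty$ a.s.\ for each $|u|\leq N$. By the Markov structure of Definition~\ref{def:model}, conditional on everything produced through generation $N$, the sub-planar-maps constructed inside the rings attached at generation $N+1$ are independent across $u\in\fF_N$, each produced by an independent instance of the same iterative rule starting from outer half-perimeter $\chi_{M^{(p)}}^\inn(u)$. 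Applying the base case to each of these branches with $\chi_{M^{(p)}}^\inn(u)$ in place of $p$ yields
\begin{equation*}
  \bigl(\chi_{M^{(p)}}^\inn(u)^{-1}\chi_{M^{(p)}}^\inn(uk)\bigr)_{k\in\NN}\stackrel{d}{\to}\bigl(X_k^u e^{\beta_Q Y_{uk}}\bigr)_{k\in\NN},
\end{equation*}
conditionally on the generation-$N$ data and independently across $u$. Multiplying by $p^{-1}\chi_{M^{(p)}}^\inn(u)\to Z_Q(u)$ matches the inductive definition of $Z_Q(uk)$ in \eqref{eq:172}, closing the induction.

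The main technical obstacle is making the base case apply conditionally at the inductive step, where the ``boundary length'' parameter is itself random. This is resolved by the Skorokhod coupling, which lets us feed a.s.\ convergent (but random) diverging scales into a convergence statement valid along any deterministic sequence of scales tending to infinity, together with the conditional independence of branches across a fixed generation. A secondary subtlety is combining the rescaled gasket perimeter convergence with the ring ratio convergence in the base case; this is rendered harmless by the boundedness of the limiting ring ratio together with the uniform moment bound of Ring condition~(5).
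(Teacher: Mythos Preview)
Your proposal is correct and follows essentially the same approach as the paper. The paper's proof is very terse: it asserts in one sentence that ``comparing Definition~\ref{def:ulam} with the iterative construction in Definition~\ref{def:model}, it suffices to show convergence of the first generation,'' and then proves that first-generation convergence directly from Lemma~\ref{lem:13} (the CCM20 result you sketch) together with the ring-ratio convergence. Your induction with Skorokhod coupling simply makes that one-sentence reduction explicit, and your base case is the paper's entire argument.

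Two minor remarks. First, the result you call ``Lemma~13 from Section~2'' is Lemma~\ref{lem:13} in Section~\ref{sec:cascade}, not Section~\ref{sec:2}; the paper imports it from \cite{CCM20,ADH24} rather than reproving it via the invariance principle as you outline. Second, calling the combination step a ``Slutsky-type argument'' is slightly loose, since the limiting ring ratio $e^{\beta_Q Y}$ is random rather than constant; the clean justification is the one the paper uses implicitly, namely that conditional on the gasket perimeters the ring ratios are independent with laws depending only on $\per(f_i)\to\infty$, and the limiting Rademacher factors are independent of $(Z(i))_i$. Ring condition~(5) is not actually needed here.
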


This result is the supercritical analog of \cite[Theorem~1]{CCM20}, which gave the convergence of the perimeter process in the loop $O(n)$ model for $n\in (0,2)$ to a multiplicative cascade. Its proof is based on the following description of the law of perimeters of faces in the Boltzmann maps that constitute our planar map. Note the similarity between its law \eqref{eq:85} and the law \eqref{eq:1} of the lengths of (non-nested) CLE$_4$ loops on an independent unit boundary length critical LQG disk, which we denoted $\rho^{(1)}$. It was proved in \cite{CCM20} that, in fact, the former converges to the latter in the scaling limit.\footnote{Though \cite{CCM20} assumes $a\in (\frac{3}{2},\frac{5}{2})\setminus\{2\}$ throughout, their proof of this proposition only requires that the step distribution of the random walk is centered and is in the domain of attraction of the totally asymmetric stable law of parameter $\alpha = a-1/2 \in (1,2)$. The proof for this general case can by found in \cite[Proposition~C.2]{ADH24}.}
\begin{lemma}[{\cite[Proposition~3]{CCM20}, \cite[Proposition~C.2]{ADH24}}]
  \label{lem:13}
  Let $\bfq$ be a critical non-generic weight sequence of type $a=2$. For each $p\in \NN$, let $M_0^{(p)}$ be a Boltzmann map sampled from the law $\PP_\bfq^{(p)}$ given in Definition~\ref{def:boltzmann-map} and recall that $\bchi_{M_0^{(p)}}$ refers to the decreasing sequence of half-perimeters of internal faces of $M_0^{(p)}$. 
  Then, as $p\to\infty$, the law of the sequence $(p^{-1}\chi_{M_0^{(p)}}(i))_{i\in \NN}$ converges to $\rho^{(1)}$ with with respect to the product topology on $\RR^\NN$.
\end{lemma}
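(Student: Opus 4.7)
The plan is to use the random-walk encoding in Proposition \ref{prop:10} to recast the question in terms of the walk $S$ from Definition \ref{def:4}: writing
\[ \EE_\bfq^{(p)}\bigl[F(p^{-1}\bchi_{M_0^{(p)}})\bigr] = \frac{\EE\bigl[(L_S^{(p)}+1)^{-1} F(p^{-1}\bchi_S^{(p)})\bigr]}{\EE\bigl[(L_S^{(p)}+1)^{-1}\bigr]} \]
for any bounded continuous $F\colon\RR^\NN\to\RR$, it suffices to determine the scaling limit of the right-hand side as $p\to\infty$. Since $\bfq$ is critical non-generic of type $a=2$, the step distribution \eqref{eq:step-dist} has mean zero (criticality of $\mu_\bfq$) and upward tail $\PP(X_1=k)\sim c\,k^{-5/2}$, so it lies in the domain of attraction of a spectrally positive $3/2$-stable law. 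The standard functional invariance principle for heavy-tailed random walks then yields
\[ \bigl(p^{-1} S_{\lfloor p^{3/2} t\rfloor}\bigr)_{t\geq 0} \xrightarrow{d} (\zeta(t))_{t\geq 0} \]
in Skorokhod $J_1$ topology, where $\zeta$ is a $3/2$-stable L\'evy process with no negative jumps, for some scale parameter $C$ whose precise value is irrelevant, as noted after \eqref{eq:1}.

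Because $\zeta$ has no negative jumps, it hits $-1$ continuously, which makes both the hitting-time functional $\omega\mapsto\inf\{t:\omega(t)\leq -1\}$ and the ordered-jump functional $\omega\mapsto(i\text{-th largest jump of }\omega\text{ in }[0,\tau_1(\omega)])_{i\in\NN}$ a.s.\ continuous at $\zeta$ in $J_1$. The continuous mapping theorem then upgrades the invariance principle to the joint convergence
\[ \bigl(p^{-3/2}T_S^{(p)},\,(p^{-1}\chi_S^{(p)}(i))_{i\in\NN}\bigr) \xrightarrow{d} \bigl(\tau_1,\,(\Delta\zeta)_1^{\downarrow}\bigr). \]
Here we use that, for each fixed $i$, the $i$-th largest value among $\{X_n+1\}_{n\leq T_S^{(p)}}$ is with probability tending to $1$ the $i$-th largest upward jump of $S$ in $[0,T_S^{(p)}]$, shifted by the negligible constant $+1$; after dividing by $p$ this matches the $i$-th largest jump of $\zeta$ in $[0,\tau_1]$. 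Moreover, the law of large numbers for the indicators $\ind\{X_n=-1\}$ gives $L_S^{(p)}/T_S^{(p)}\to\mu_\bfq(0)$ almost surely on the event $\{T_S^{(p)}\to\infty\}$, so Slutsky's theorem yields, jointly with the above convergence, $p^{-3/2}L_S^{(p)} \xrightarrow{d} \mu_\bfq(0)\,\tau_1$.

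To convert this weak convergence into convergence of the reweighted expectation, I appeal to uniform integrability supplied by Lemma \ref{lem:215}: the sharp lower-tail bound $\PP(L_S^{(p)}\leq\epsilon p^{3/2})\leq e^{-c\epsilon^{-2}}$ implies that for every $q>0$ the family $\{(p^{3/2}/(L_S^{(p)}+1))^q\}_{p\in\NN}$ is uniformly integrable. Combined with the joint weak convergence and the boundedness of $F$, this gives
\[ \EE\bigl[p^{3/2}(L_S^{(p)}+1)^{-1}F(p^{-1}\bchi_S^{(p)})\bigr] \to \EE\bigl[(\mu_\bfq(0)\tau_1)^{-1}F((\Delta\zeta)_1^{\downarrow})\bigr], \]
and the analogous limit for the denominator (taking $F\equiv 1$). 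The common factor $\mu_\bfq(0)^{-1}$ cancels in the ratio, leaving
\[ \EE_\bfq^{(p)}\bigl[F(p^{-1}\bchi_{M_0^{(p)}})\bigr] \to \frac{\EE[\tau_1^{-1}F((\Delta\zeta)_1^{\downarrow})]}{\EE[\tau_1^{-1}]} = \int F\,d\rho^{(1)}, \]
which is the desired convergence in distribution on $\RR^\NN$ with the product topology. The main technical obstacle is the a.s.\ continuity of the hitting-time and ordered-jump functionals at $\zeta$ (standard for spectrally positive stable processes but not continuous in general) combined with the uniform integrability step; both are needed to commute the limit with the $(L_S^{(p)}+1)^{-1}$ reweighting that distinguishes the Boltzmann law $\PP_\bfq^{(p)}$ from the raw random-walk law.
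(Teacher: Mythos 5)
Your argument is correct, and it is essentially the proof of the result that the paper cites rather than reproduces: Lemma~\ref{lem:13} is quoted from \cite[Proposition~3]{CCM20} and \cite[Proposition~C.2]{ADH24}, whose proofs proceed exactly as you do --- the random-walk encoding of Proposition~\ref{prop:10}, the functional invariance principle for the centered step distribution with $k^{-5/2}$ upward tail toward a spectrally positive $3/2$-stable process, a.s.\ continuity of the hitting-time and ordered-jump functionals (using spectral positivity), the law of large numbers identifying $L_S^{(p)}/T_S^{(p)}\to\mu_\bfq(0)$, and the uniform integrability of $p^{3/2}(L_S^{(p)}+1)^{-1}$ from the lower-tail bound of Lemma~\ref{lem:215} to pass the reweighting to the limit. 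No gaps; the cancellation of $\mu_\bfq(0)^{-1}$ in the ratio and the irrelevance of the stable scale parameter are handled correctly.
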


\begin{proof}[Proof of Proposition~\ref{prop:26}]

Comparing Definition~\ref{def:ulam} of the perimeter process $\bchi_{M^{(p)}}^\inn$ with the iterative construction of the map $M^{(p)}$ in Definition~\ref{def:model}, it suffices to show the convergence of the first generation: i.e.,
\begin{equation}
    \big(p^{-1} \chi_{M^{(p)}}^\inn(i)\big)_{i\in \NN} \stackrel{d}{\to} \big( Z_Q(i) \big)_{i\in \NN}
\end{equation}
as $p\to \infty$. 

Let $M_0^{(p)}$ be a Boltzmann map sampled from the law $\PP_\bfq^{(p)}$ and let $f_1,f_2,\dots,f_{|\fF(M_0^{(p)})|}$ be an enumeration of its interior faces so that $\chi_{M_0^{(p)}}(i) = \per(f_i)$ for each $i$. That is, $\per(f_1),\per(f_2),\dots$ is in a non-increasing order. For each face $f_i \in  \fF(M_0^{(p)})$, sample a ring $R_i$ conditionally independently from the law $\PP_\ring^{(\per(f_i))}$. By our choice of the ring distribution (Definition~\ref{def:ring}), the ratio $\Rat(R_i)$ of the inner and outer parameters of the ring $R_i$ converge jointly in distribution to $\exp(\beta_Q Y_i)$ where $Y_1,Y_2,\dots$ are i.i.d.\ Rademacher random variables. Hence, if we put $\per(f_i) \Rat(R_i):=0$ for $i>|\fF_{M_0^{(p)}}|$ and let $(Z(i))_{i\in \NN}$ be a sequence with the law $\rho^{(1)}$ sampled independently from $\{Y_i\}_{i\in \NN}$, then by Lemma~\ref{lem:13} we have
\begin{equation}
    \big(p^{-1} \chi_{M^{(p)}}^\inn(i)\big)_{i\in \NN} = \big( p^{-1} \per(f_i) \Rat(R_i) \big)_{i\in \NN} \stackrel{d}{\to} \big(Z(i)e^{\beta_Q Y_i}\big)_{i\in \NN} = \big(Z_Q(i)\big)_{n\in \NN}
\end{equation}
as $p\to\infty$ with respect to the product topology on $\RR^{\NN}$.
\end{proof}

\subsection{Convergence of subcritical Boltzmann maps to the continuum random tree}
\label{sec:crt}

We now recall the definitions of the continuum random tree (CRT) and the Gromov--Hausdorff distance that appear in our main result. Then, we survey the literature on convergence of subcritical Boltzmann maps to the CRT, which shall provide key inputs to our proof of Theorem~\ref{thm:1}.

\subsubsection{Continuum random tree}

The Brownian CRT is a random real tree defined from a Brownian excursion that arises as the scaling limit of a large class, as defined and investigated by Aldous in the pioneering works \cite{Ald91, Ald91+, Ald93}. More recent results on the convergence of random discrete structures to the CRT are surveyed in \cite{Stu20}.
\begin{definition}
  Let $\bbe\colon [0,1]\rightarrow \RR_+$ be the normalized Brownian excursion. Consider the pseudo-distance defined on the interval $[0,1]$ by
  \begin{equation}
    \label{eq:97}
    d_{\bbe}(s,t)=\bbe_s+\bbe_t-2\min_{s\wedge t\leq u \leq s\vee t}\bbe_u
  \end{equation}
  for $s,t \in [0,1]$. Let $\sim_{\bbe}$ be an equivalence relation on $[0,1]$ given by $s\sim_{\bbe} t$ if and only if $d_{\bbe}(s,t) = 0$.
  The \textbf{continuum random tree} is the random metric space $(\cT_{\bbe},d_{\cT_{\bbe}}):= ([0,1],d_{\bbe})/\!\sim_{\bbe}$. The equivalence class containing 0 and 1 is the \textbf{root} $\root$ of the CRT $(\cT_{\mathbb e},d_{\mathbb e})$. With an abuse of notation, we also use the acronym CRT to denote the law of the above object.
\end{definition}

\subsubsection{Gromov--Hausdorff convergence} The Gromov--Hausdorff distance measures how close two metric spaces are being isometric to each other and has been used widely to describe convergence of random discrete structures to continuum metric spaces.

\begin{definition}
Let $\mathcal X = (X,d_X)$ and $\mathcal Y=(Y,d_Y)$ be compact metric spaces. The following are equivalent definitions for the \textbf{Gromov--Hausdorff distance} $d_{\mathrm{GH}}(\mathcal X, \mathcal Y)$ between $\mathcal{X}$ and $\mathcal{Y}$. 
\begin{enumerate}[(i)]
\item Given two compact sets $K_1,K_2$ in a metric space $\mathcal Z = (Z,d)$, recall that their \textbf{Hausdorff distance} is given by 
\begin{equation}
    d_{\mathrm H}(K_1,K_2) = \bigg(\sup_{z_1\in K_1} d(z_1, K_2) \bigg) \vee \bigg( \sup_{z_2\in K_2} d(z_2,K_1) \bigg).
\end{equation}
Then, 
\begin{equation}
    d_{\mathrm{GH}}(\mathcal X,\mathcal Y) = \inf_{\mathcal Z, \varphi_{ X},\varphi_{ Y}} d_{\mathrm H}( \varphi_X(X),\varphi_Y(Y))
\end{equation}
where the infimum is over all metric spaces $\mathcal Z= (Z,d)$ and isometric embeddings $\varphi_X:\mathcal X\to \mathcal Z$, $\varphi_Y:\mathcal Y \to \mathcal Z$.
\item A \textbf{correspondence} between $\mathcal X$ and $\mathcal Y$ is a subset $\mathcal R\subset X\times Y$ such that $\mathcal R \cap (\{x\}\times Y) \neq \varnothing$ for every $x\in X$ and $\mathcal R \cap (X \times \{y\}) \neq \varnothing$ for every $y\in Y$. The \textbf{distortion} of the correspondence $R$ is defined as
\begin{equation}
    \dis(\mathcal R) = \sup\{ |d_X(x_1,x_2) - d_Y(y_1,y_2)| : (x_1,y_1),(x_2,y_2)\in \mathcal R\}. 
\end{equation}
Then,
\begin{equation}
    d_{\mathrm{GH}}(\mathcal X,\mathcal Y) = \frac{1}{2}\inf_{\mathcal R} \dis(\mathcal R)
\end{equation}
where the infimum is taken over all correspondences $\mathcal R$ of $\mathcal X$ and $\mathcal Y$.
\end{enumerate}
\end{definition}

See, e.g., \cite[Section~7.3]{BBI01} for the equivalence of the two definitions. We shall often switch between the above two equivalent formulations depending on which one is more convenient for the particular application at hand. 
\subsubsection{Previous convergence results for subcritical maps}
As mentioned earlier, the behavior of Boltzmann maps sampled from $\PP_{\overline \bfq}^{(p)}$ depends strongly on the choice of the weight sequence $\overline \bfq$. If the weight sequence is critical non-generic, the corresponding Boltzmann maps have macroscopic faces, and upon renormalizing distances appropriately, these Boltzmann maps are expected to converge to a ``stable map with a boundary" as in \cite{LM11}. On the other hand, in the critical generic case, there are no macroscopic faces and the scaling limit is known to be the Brownian map \cite{MM07,LeGal11,Mar18}.

If the weight sequence is subcritical, there are again no macroscopic faces in the Boltzmann map, but in this case, the scaling limit turns out to be a $\CRT$. This was proved in \cite{JS15,Mar22} for the case of Boltzmann maps without boundary. We will need the analogous convergence for subcritical Boltzmann maps with boundary (i.e., sampled from $\PP_{\overline \bfq}^{(p)})$ in two different flavors.
First, we state a result from \cite[Corollary 5]{KR20} about the convergence of the outer boundary of the Boltzmann map to the $\CRT$. Recall that for a planar map $M$ with vertex set $V_M$ and graph distance $d_M$, we let $rM$ refer to the metric space $(V_M, rd_M)$ for $r\in \RR_+$.
\begin{proposition}
   \label{prop:4}
   Let $\overline \bfq$ be a subcritical weight sequence. For each $p\in \NN$, let $M_0^{(p)}$ be a Boltzmann map sampled from $\PP^{(p)}_{\overline \bfq}$. Then, there exists a constant $K_{\overline \bfq}$ such that 
   \begin{equation}
     \frac{K_{\overline \bfq}}{\sqrt{p}}\partial M_0^{(p)} \stackrel{d}{\rightarrow} \CRT
   \end{equation}
   as $p\to\infty$ with respect to the Gromov--Hausdorff distance.
 \end{proposition}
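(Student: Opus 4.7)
The plan is to combine the Bouttier--Di Francesco--Guitter / Janson--Stef\'ansson encoding (Proposition~\ref{prop:10}) with Aldous's Gromov--Hausdorff convergence for size-conditioned Bienaym\'e--Galton--Watson trees. The BDG+JS bijection identifies the Boltzmann map $M_0^{(p)}$ with a planted forest $\mathcal{F}^{(p)}$ of BGW trees with offspring distribution $\mu_{\overline{\bfq}}$, where the number of constituent trees is $p$ and the total number of vertices is $T_S^{(p)}$. Under this identification, each vertex $v\in\mathcal{F}^{(p)}$ with $k_v$ children corresponds to an interior face of half-perimeter $k_v$, and, crucially, the boundary $\partial M_0^{(p)}$ is obtained as a \textbf{discrete looptree} over $\mathcal{F}^{(p)}$: each vertex $v$ is blown up into a cycle of length $k_v+1$, with child subtrees attached along the cycle (this is precisely the looptree viewpoint of \cite{CK15,Ric18}, which will reappear in Section~\ref{sec:conv}). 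The law on forests induced by $\PP_{\overline{\bfq}}^{(p)}$ is the unconditioned forest law tilted by $(L_S^{(p)}+1)^{-1}$, as in \eqref{eq:85}.

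The second step is a CRT scaling limit for $\mathcal{F}^{(p)}$. Since $\overline{\bfq}$ is subcritical, $\mu_{\overline{\bfq}}$ has negative drift $m_{\overline{\bfq}}<0$, and admissibility gives polynomial tail decay sufficient for $\mu_{\overline{\bfq}}$ to have finite variance $\sigma^2_{\overline{\bfq}}>0$. Lemma~\ref{lem:33} yields $T_S^{(p)}/p \to 1/|m_{\overline{\bfq}}|$ in probability, so the forest has $\Theta(p)$ vertices distributed among $p$ trees. The tilt $(L_S^{(p)}+1)^{-1}$ is a bounded, decreasing functional of the step sequence (Lemma~\ref{lem:fkg}), and by \eqref{eq:154} its expectation is of order $1/p$, so one can reduce the tilted statement to the unconditioned one via an FKG-style argument. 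Applying Aldous's theorem in the forest form of Marzouk \cite{Mar22} (or directly to each of the $p$ trees normalized by their total size) gives
\begin{equation}
\frac{\sigma_{\overline{\bfq}}\sqrt{|m_{\overline{\bfq}}|}}{2\sqrt{p}}\,\mathcal{F}^{(p)} \xrightarrow{d} \mathrm{CRT}
\end{equation}
in the Gromov--Hausdorff sense; the constant $K_{\overline{\bfq}}$ in the statement is then read off as $\tfrac{1}{2}\sigma_{\overline{\bfq}}\sqrt{|m_{\overline{\bfq}}|}$.

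The final step is the looptree-to-tree comparison. By definition, the graph distance from any point on the cycle replacing $v$ to the tree-vertex $v$ itself is at most $\lceil k_v/2\rceil$, so
\begin{equation}
d_{\mathrm{GH}}\bigl(\partial M_0^{(p)},\,\mathcal{F}^{(p)}\bigr)\;\leq\;\max_{v\in\mathcal{F}^{(p)}} k_v .
\end{equation}
In the subcritical regime, $\mu_{\overline{\bfq}}$ has a geometric (or stretched-exponential) tail, so $\max_{v} k_v = O_{\PP}(\log p)$, which is $o(\sqrt{p})$. Rescaling by $1/\sqrt{p}$ kills this defect, and the Gromov--Hausdorff limit of $\partial M_0^{(p)}$ agrees with that of $\mathcal{F}^{(p)}$.

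The main obstacle I anticipate is the rigorous treatment of the size-biasing by $(L_S^{(p)}+1)^{-1}$: naive application of Aldous requires an unbiased conditioned walk, so one must either absorb the tilt into an appropriate local limit theorem for $T_S^{(p)}$ and $L_S^{(p)}$ jointly, or bypass it via the FKG monotonicity of Lemma~\ref{lem:fkg} to transfer a Gromov--Hausdorff convergence statement from the unconditioned forest to the tilted one. Once that is handled, the remaining pieces are standard, which is why the result can be quoted cleanly from \cite{KR20}.
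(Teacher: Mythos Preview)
Your argument rests on a misidentification of the tree that encodes the boundary. The BDG/JS forest $\mathcal{F}^{(p)}$---$p$ trees with the \emph{subcritical} offspring law $\mu_{\overline{\bfq}}$, tilted by $(L_S^{(p)}+1)^{-1}$---encodes the whole map (its internal vertices record interior faces, its leaves record map vertices), and $\partial M_0^{(p)}$ is \emph{not} a looptree over this forest. Consequently your second step fails outright: each of the $p$ trees in a subcritical BGW forest has bounded expected height, so $\mathcal{F}^{(p)}$ has diameter of order at most $\log p$ and collapses to a point after rescaling by $1/\sqrt{p}$; there is no CRT limit here, and no choice of scaling constant produces one.

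The approach actually used (from \cite{KR20}, outlined in Section~\ref{sec:looptree}) goes through a different bijection due to \cite{CK15,Ric18}: the scooped-out boundary $\scoop(M_0^{(p)})$ is the contracted looptree of a \emph{single} BGW tree $\cT^{(p)}$ with a different, \emph{critical}, finite-variance offspring law $\nu_{\overline{\bfq}}$, conditioned to have exactly $2p+1$ vertices (cf.~Proposition~\ref{prop:41}). It is this size-conditioned critical tree---not any subcritical forest---that has a CRT limit via Aldous's theorem. Even with the correct tree, your looptree-to-tree comparison is too crude: the distortion between $d_{\cT}$ and $d_{\overline{\Loop}(\cT)}$ does not satisfy a max-degree bound, since loop-traversal costs accumulate along paths and the two metrics in fact differ by a nontrivial multiplicative constant in the limit (which is precisely where $K_{\overline{\bfq}}$ comes from). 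The argument in \cite{KR20} handles this via a spine decomposition and a law of large numbers for the height process, exactly the machinery reproduced in Section~\ref{sec:spine}.
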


    The proof of this result in \cite{KR20} is based on a bijection between the outer boundary $\partial M_0^{(p)}$ to a looptree associated with a critical Bienaym\'e--Galton--Watson process, which we follow closely for our proof of Theorem~\ref{thm:1}. See Section~\ref{sec:looptree} for further details.

 With additional assumptions on the decay rate of $\mu_{\overline \bfq}$, the entire map $M^{(p)}$ converges to a $\CRT$.
 \begin{proposition}
  \label{prop:6}
   Let $\overline \bfq$ be a subcritical  weight sequence such that $\mu_{\overline \bfq}([k,\infty))=o(k^{-1})$ as $k\to\infty$. If $M^{(p)}$ has the law $\PP^{(p)}_{\overline \bfq}$ for each $p\in \NN$, then
  \begin{equation}\label{eq:180}
    \frac{1}{\sqrt{2p}} M^{(p)} \stackrel{d}{\rightarrow} \CRT
  \end{equation}
  as $p\to\infty$ with respect to the Gromov--Hausdorff distance. 
  Moreover, 
  \begin{equation}\label{eq:181}
      \frac{1}{\sqrt p}\max_{v \in V_{M^{(p)}}} d_{M^{(p)}}(v, \partial M^{(p)}) \stackrel{d}{\to} 0.
  \end{equation}
\end{proposition}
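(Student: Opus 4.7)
The plan is to reduce the convergence of the full map $M^{(p)}$ to the CRT to two ingredients: (i) the boundary convergence in Proposition~\ref{prop:4}, and (ii) the thinness estimate~\eqref{eq:181} asserting that every interior vertex of $M^{(p)}$ lies close to $\partial M^{(p)}$ after rescaling by $1/\sqrt p$. I would prove these two ingredients separately and then combine them via the correspondence characterization of Gromov--Hausdorff distance; the specific constant $1/\sqrt{2p}$ emerges by tracking how the various normalizations compose.

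To establish~\eqref{eq:181}, I would apply the Bouttier--Di Francesco--Guitter bijection together with the Janson--Stef\'ansson trick (the same encoding underlying Proposition~\ref{prop:10}) to view $M^{(p)}$ as a map built over a planted tree $\mathcal T^{(p)}$ whose ``spine'' corresponds to $\partial M^{(p)}$ and whose pendant subtrees rooted at spine vertices are essentially independent Galton--Watson trees with offspring distribution $\mu_{\overline \bfq}$. Each interior vertex of $M^{(p)}$ lies at $d_{M^{(p)}}$-distance at most a universal constant times the height of its enclosing pendant subtree from $\partial M^{(p)}$. Since $\mu_{\overline \bfq}$ is subcritical with mean $m<1$, the standard recursion $\PP(H \geq k+1) = 1 - \phi(1 - \PP(H \geq k))$ combined with the expansion $\phi(s) = 1 - m(1-s) + o(1-s)$ as $s \uparrow 1$ yields $\PP(H \geq k) \leq C\rho^k$ for some $\rho \in (m,1)$. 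A union bound over the $O(p)$ pendant subtrees then gives $\max_v d_{M^{(p)}}(v,\partial M^{(p)}) = O(\log p) = o(\sqrt p)$ with high probability, which is precisely~\eqref{eq:181}.

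To deduce~\eqref{eq:180}, I would combine three facts. First, Proposition~\ref{prop:4} provides $(K_{\overline \bfq}/\sqrt p)\, \partial M^{(p)} \to \CRT$ in Gromov--Hausdorff distance when $\partial M^{(p)}$ is equipped with its intrinsic graph metric $d_{\partial M^{(p)}}$. Second, the restricted map metric $d_{M^{(p)}}\big|_{\partial M^{(p)}}$ is asymptotically equivalent to $d_{\partial M^{(p)}}$ up to a deterministic multiplicative constant $c_{\overline \bfq}$ depending only on $\overline \bfq$; this is a law-of-large-numbers statement along the spine of $\mathcal T^{(p)}$, reflecting the fact that interior shortcuts appear at a constant rate per unit boundary length. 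Third,~\eqref{eq:181} yields a correspondence between $(M^{(p)}, d_{M^{(p)}})$ and $(\partial M^{(p)}, d_{M^{(p)}}|_{\partial M^{(p)}})$ with distortion $o(\sqrt p)$, obtained by mapping each interior vertex to a nearest boundary vertex. Combining these three ingredients yields~\eqref{eq:180}, and the constant $1/\sqrt{2p}$ arises from absorbing $K_{\overline \bfq}$, $c_{\overline \bfq}$, and the factor of $\sqrt 2$ from $\partial M^{(p)}$ having $2p$ vertices.

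The main obstacle is the identification of $c_{\overline \bfq}$ and the asymptotic equivalence of the two boundary metrics: one must perform a spine decomposition of the BDG--JS tree $\mathcal T^{(p)}$ and use an ergodic argument to show that interior shortcuts shorten boundary distances by a deterministic multiplicative factor. The tail hypothesis $\mu_{\overline \bfq}([k,\infty)) = o(k^{-1})$ is crucial here, since without it a single pendant substructure of linear size could create a macroscopic shortcut that invalidates the law of large numbers.
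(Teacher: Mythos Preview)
Your approach differs substantially from the paper's, and while the broad outline is plausible, there are genuine gaps.

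\textbf{The paper's route.} The paper does not rebuild the CRT convergence from the boundary. Instead, it invokes \cite[Theorem~1.3]{Mar22}, which gives \eqref{eq:180} with the universal constant $1/\sqrt{2p}$ directly, once one checks the hypothesis
\[
\frac{1}{p^2}\sum_{f\in\fF(M^{(p)})}[\per(f)]^2\xrightarrow{d}0.
\]
This is verified via the random walk encoding (Proposition~\ref{prop:10}): one reduces to showing $\PP\big(\sum_{i\le T_S^{(p)}}(X_i+1)^2>\epsilon p^2\big)\to 0$, which follows from Lemma~\ref{lem:33} together with the law of large numbers for heavy-tailed i.i.d.\ variables (here is where the tail hypothesis $\mu_{\overline\bfq}([k,\infty))=o(k^{-1})$ enters). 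The FKG inequality (Lemma~\ref{lem:fkg}) handles the $(L_S^{(p)}+1)^{-1}$ weighting. Claim~\eqref{eq:181} is then read off from the label-process analysis inside \cite{Mar22}: distances to the boundary are bounded by label fluctuations $\max\widetilde L^{(p)}-\min\widetilde L^{(p)}$, which are $o(\sqrt p)$ once the perimeter condition holds.

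\textbf{Gaps in your approach.} First, for \eqref{eq:181}: the height of a pendant subtree in the BDG--JS forest does not directly control $d_{M^{(p)}}(v,\partial M^{(p)})$. What bounds this distance is the range of the \emph{labels} along that subtree; you would need to add the observation that label increments are bounded so that label range is at most a constant times tree height. This is fixable, but your write-up conflates the tree metric with the map metric.

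Second, and more seriously, for \eqref{eq:180}: your argument yields $(c/\sqrt p)M^{(p)}\to\CRT$ for \emph{some} constant $c=c(\overline\bfq)$, obtained as a product of $K_{\overline\bfq}$ from Proposition~\ref{prop:4} and your shortcut factor $c_{\overline\bfq}$. You assert these combine to the universal $1/\sqrt 2$, but you give no mechanism for this cancellation; it is a nontrivial identity that does not follow from anything you wrote. Moreover, the metric-equivalence step (that $d_{M^{(p)}}|_{\partial M^{(p)}}$ and $d_{\partial M^{(p)}}$ differ asymptotically by a deterministic multiplicative constant) is itself a substantial result: it is essentially what the paper proves in Section~\ref{sec:conv} for the conditioned supercritical map via the spine decomposition and Proposition~\ref{prop:129}, and reproving it here for a generic subcritical Boltzmann map would be circular or at least redundant. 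The paper avoids all of this by going through \cite{Mar22}, where the constant $1/\sqrt{2p}$ comes out of the label-process scaling directly.
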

\begin{proof}
For the proof of \eqref{eq:180}, in view of \cite[Theorem~1.3]{Mar22}, it suffices to show that 
  \begin{equation}
    \label{eq:131}
    \frac{1}{p^2}\sum_{f\in \fF(M^{(p)})}[\per(f)]^2 \to 0
  \end{equation}
  in distribution as $p\to\infty$.
  We use the connection between random walks and Boltzmann maps as explained in Section \ref{sec:perproc}. Let $S_n=X_1+\dots +X_n$ be a random walk with step distribution \eqref{eq:step-dist}.   By Proposition \ref{prop:10}, the convergence \eqref{eq:131} is equivalent to
  \begin{equation}
    \label{eq:129}
    \lim_{p\to\infty} \frac{\EE\Big[\frac{1}{L^{(p)}_S+1}\ind\big\{\sum_{i=1}^{T_S^{(p)}}(X_i+1)^2>\epsilon p^{2}\big\}\Big]}{\EE\Big[\frac{1}{L^{(p)}_S+1}\Big]}= 0 \qquad \text{for all } \epsilon>0.
  \end{equation}
  By Lemma~\ref{lem:fkg}, the indicator $\ind\{\sum_{i=1}^{T_S^{(p)}}(X_i+1)^2>\epsilon p^2\}$ is an increasing function of the sequence $(X_1,X_2,\dots)$ while $(L^{(p)}_S+1)^{-1}$ is a decreasing function of the same sequence. Using the FKG inequality, the problem at hand reduces to checking
  \begin{equation}
    \label{eq:125}
    \PP\bigg(\sum_{i=1}^{T_S^{(p)}}(X_i+1)^2>\epsilon p^2\bigg)\rightarrow 0.
  \end{equation}
   Since $\overline \bfq$ is subcritical, in view of Lemma \ref{lem:33}, it suffices to show that
  \begin{equation}
    \label{eq:130}
     \PP\bigg(\sum_{i=1}^{\lfloor-2p/m_{\overline\bfq}\rfloor}(X_i+1)^2>\epsilon p^2\bigg)\rightarrow 0.
  \end{equation}
  This follows by the law of large numbers for i.i.d.\ heavy tailed random variables (e.g., \cite[Theorem~6.17]{Kal21} with $p=1/2$ therein).

  In fact, the convergence \eqref{eq:180} holds because the boundary of $(2p)^{-1/2}M^{(p)}$ converges to the CRT, whereas its interior faces disappear under the scaling limit as expressed in \eqref{eq:181}. This reasoning, first given heuristically in \cite{Bet15}, was stated within the proof of \cite{Mar22} in terms of the ``key bijection" between (a uniformly chosen negative pointed map version of) $M^{(p)}$ and a labelled forest combining those of \cite{BDG04} and \cite{JS15}. In this bijection, the leaves of the forest are in bijection with the non-distinguished vertices of $M^{(p)}$. Moreover, we can check directly from its construction (see \cite[Section~2.3]{Mar18-alea}) that each tree in the forest contains a leaf that is mapped to a vertex on $\partial M^{(p)}$. Since the labels on the leaves correspond to the graph distance to the distinguished vertex in $M^{(p)}$, the maximum distance from any vertex in $M^{(p)}$ to $\partial M^{(p)}$ is bounded above by the maximum difference of the labels in each tree. The latter quantity is equal to $\max \widetilde L^{(p)} - \min \widetilde L^{(p)}$ in the notation of \cite{Mar22}. By Theorem~2.6(1) in that paper, it follows from \eqref{eq:131} that $p^{-1/2}(\max \widetilde L^{(p)} - \min \widetilde L^{(p)}) \to 0$ in distribution as $p\to\infty$. This proves \eqref{eq:181}.
\end{proof}

\section{The probability that a supercritical map is finite}
\label{sec:prob-dis}
The goal of this section is to prove Proposition \ref{thm:2}, which will be done via a subadditivity argument. A priori, it is not even clear if $F(p) = \PP_\infty^{(p)}(M \text{ is finite})<1$ for any value of $p$, so we turn to this first. We saw in Proposition~\ref{prop:many-macroscopic-loops} that in the continuum limit, macroscopic loops are abundant at all stages in the supercritical multiplicative cascade $Z_Q$.
We now employ a comparison to a supercritical Bienaym\'e--Galton--Watson (BGW) tree to deduce the non-triviality of $F$.
\begin{lemma}
  \label{lem:7}
  We have the convergence $F(p)\rightarrow 0$ as $p\rightarrow \infty$.
\end{lemma}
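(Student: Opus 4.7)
The plan is to show that under $\PP_\infty^{(p)}$, the alive tree $\cT := \{u \in \cU : \chi^\inn_M(u) > 0\}$ (with the convention $\chi^\inn_M(\root) := p$) is infinite with probability tending to $1$ as $p \to \infty$. Since by Definition~\ref{def:model} the map $M$ is finite if and only if $\cT$ is finite, this yields $F(p) \to 0$. The strategy is to embed a supercritical Galton--Watson tree into $\cT$ whose extinction probability can be made arbitrarily small by taking $p$ large.

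The key input is the following uniform estimate: for each $\eta > 0$ and integer $K \geq 2$, there exist $n \in \NN$ and $s_0 \in \NN$ such that
\[
    \PP_\infty^{(s)}\Bigl(\bigl|\{u \in \NN^n : \chi^\inn_M(u) > s\}\bigr| \geq K\Bigr) \geq 1 - \eta \qquad \text{for all } s \geq s_0.
\]
To establish it, apply Proposition~\ref{prop:many-macroscopic-loops} with $A = (1, \infty)$ (which has positive Lebesgue measure) to conclude $N_n((1, \infty); Z_Q) \to \infty$ almost surely, and fix $n$ so that $\PP(N_n((1, \infty); Z_Q) \geq K) \geq 1 - \eta/2$. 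The event
\[
    \bigl\{(z_u)_{u \in \NN^n} : |\{u : z_u > 1\}| \geq K\bigr\} = \bigcup_{\substack{S \subset \NN^n \\ |S| = K}} \bigcap_{u \in S} \{z_u > 1\}
\]
is a countable union of finite intersections of open half-spaces, hence open in the product topology on $\RR^{\NN^n}$. The weak convergence from Proposition~\ref{prop:26} combined with the Portmanteau theorem then yields the discrete lower bound for all $s$ sufficiently large (using also that each $Z_Q(u)$ is continuously distributed and hence has no atom at $1$). This Portmanteau transfer is the main technical subtlety of the argument.

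Given this estimate, I inductively construct a random subtree $\mathcal B \subseteq \cT$ as follows. Place $\root \in \mathcal B$; for each $u \in \mathcal B$, inspect the generation-$n$ descendants of $u$ in $\cT$, and if at least $K$ of them satisfy $\chi^\inn_M > \chi^\inn_M(u)$, include the first $K$ such (in some canonical order) as macro-children of $u$ in $\mathcal B$; otherwise $u$ becomes a leaf of $\mathcal B$. Since $\chi^\inn_M$ is strictly increasing along paths of $\mathcal B$, every $u \in \mathcal B$ satisfies $\chi^\inn_M(u) \geq p$, so for $p \geq s_0$ the uniform estimate applies at every $u \in \mathcal B$. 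The Markov property in Definition~\ref{def:model} ensures that the ``include $K$ macro-children'' events at different $u$'s of the same macro-generation are conditionally independent given their perimeters; hence $\mathcal B$ stochastically dominates a Galton--Watson tree with offspring law $(1-\eta)\delta_K + \eta \delta_0$. Specializing to $K = 2$ gives extinction probability $q(\eta) = \eta/(1-\eta) \to 0$ as $\eta \to 0$, yielding $F(p) \leq \PP(\mathcal B \text{ finite}) \leq q(\eta)$ for all $p \geq s_0(\eta)$, and hence $F(p) \to 0$.
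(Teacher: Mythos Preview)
Your proof is correct and follows essentially the same strategy as the paper: both use Proposition~\ref{prop:many-macroscopic-loops} together with the cascade convergence of Proposition~\ref{prop:26} to show that, for large initial perimeter, each vertex with large $\chi^\inn_M$ has many large descendants at some fixed depth $n$ with high probability, and then embed a supercritical Galton--Watson tree whose extinction probability can be made arbitrarily small. The paper uses a fixed threshold $p$ throughout and offspring type $(q\delta_{100} + (1-q)\delta_0)$, whereas you use a strictly increasing threshold along spines and offspring type $((1-\eta)\delta_2 + \eta\delta_0)$; these are cosmetic variations of the same argument.
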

\begin{proof}
For each positive integer $p$, let $n_q = n_q(p)$ be a constant to be determined later.
Let $M$ be a map sampled from $\PP_\ttF^{(p)}$ and recall the associated perimeter cascade $\bchi_M^\inn = (\chi_M^\inn(u))_{u\in \cU}$ from Definition~\ref{def:ulam}. Consider the random variable
  \begin{equation}
    \label{eq:10}
    N_{m}= \#\{u\in \cU: |u|=mn_q, \chi_M^\inn(u)>p\}.
  \end{equation}
Then,
\begin{equation}
    F(p)\leq \PP_\infty^{(p)}(N_m=0 \textrm{ for some } m\in \NN).
\end{equation}
The goal now is to show that the  right-hand side of this inequality tends to 0 as $p\to \infty$. We do so by constructing a supercritical $\BGW$ process whose size at each generation $m$ is stochastically dominated by $N_m$ and has a sufficiently small extinction probability.

  As a consequence of Proposition~\ref{prop:many-macroscopic-loops}, for each $q\in (0,1)$, we can find $n_q\in \NN$ such that
  \begin{equation}
    \label{eq:8}
    \PP( \#\{u\in \cU: |u|=n_q, Z_Q(u)>2\}\geq 100) \geq q.
  \end{equation}
 Combining this with the distributional convergence of Proposition~\ref{prop:26}, we see that for each sufficiently large $p\in\NN$, we can choose $q = q(p)\in (0,1)$ with $q$ increasing to 1 as $p\to \infty$ such that for every integer $\tilde p \geq p$, we have
  \begin{equation}
    \label{eq:9}
    \PP_\infty^{(\tilde p)}( \#\{u\in \cU: |u|=n_q, \chi^\inn_M(u)>p\}\geq 100) \geq q.
  \end{equation}
  Now, consider a $\BGW$ process with offspring distribution $\mu$ such that $\mu(100)=q$ and $\mu(0)=1-q$. Let $Z_m$ denote the number of offspring of this $\BGW$ process in generation $m$ and define
  \begin{equation}\begin{split}
  \widetilde N_m = \#\{u\in\cU: |u|= mn_q, \chi^\inn_M(u_k)> p\; & \text{for $1\leq k\leq m$}\\& \text{where $u_k$ is the ancestor of $u$ with $|u_k|=kn_q$}\}.\end{split}
  \end{equation}
  Then, $(\widetilde N_1, \widetilde N_2,\dots)$ sampled from $\PP_\infty^{(p)}$ is a BGW process whose offspring distribution $\tilde \mu$ satisfies $\tilde \mu(k) \geq \mu(k)$ for all $k>0$, whence $(Z_1,Z_2,\dots)$ is stochastically dominated by $(\widetilde N_1, \widetilde N_2,\dots)$. Since $\widetilde N_m \leq N_m$ for every $m$, 
   \begin{equation}
     \label{eq:11}
     (N_1,N_2,\dots)\geq_{\mathrm{SD}} (Z_1,Z_2,\dots).
   \end{equation}
  On the other hand, for the corresponding supercritical $\BGW$ process, we know that the extinction probability $\PP(Z_m = 0 \text{ for some }m\in\NN)$ tends to 0 as $q\to 1$. 
  By stochastic domination, we conclude $\PP_\infty^{(p)}(N_m = 0 \text{ for some $m$})\to 0$ as $p\to \infty$ as claimed.
\end{proof}
\begin{lemma}
  \label{lem:1}
  For every $p\in \NN$, we have $0<F(p)<1$.
\end{lemma}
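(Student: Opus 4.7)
The two inequalities will be proved separately.

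\textbf{Lower bound $F(p) > 0$.} It suffices to exhibit a positive-probability event on which the iterative construction of Definition~\ref{def:model} terminates. The cleanest such event is that the outermost gasket $M_0 \sim \PP_\bfq^{(p)}$ is a tree, so that $\fF_0 = \varnothing$ and $M = M_0$ is finite. By Proposition~\ref{prop:10}, the law of the multiset of face half-perimeters of $M_0$ is the law of $\bchi_S^{(p)}$ reweighted by $(L_S^{(p)}+1)^{-1}$. The event that $M_0$ is a tree corresponds to the walk $S$ taking $p$ consecutive $-1$-steps before hitting $-p$, which has probability $\mu_\bfq(0)^p = Z_\bfq^{-p} > 0$ under the walk law; on this event $L_S^{(p)} = p$, so the reweighting does not kill it, and hence $F(p) \geq \PP_\bfq^{(p)}(M_0 \text{ is a tree}) > 0$.

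\textbf{Upper bound $F(p) < 1$.} My plan is to argue by contradiction, combining Lemma~\ref{lem:7} with the ring asymptotics from Definition~\ref{def:ring}(1). Suppose $F(p) = 1$ for some $p$, and let $B := \{p' \in \NN : F(p') = 1\}$. Since $F(p') \to 0$ by Lemma~\ref{lem:7}, $B$ is finite; set $p_* := \max B$. The Markov property of the iterative construction yields the identity
\[
F(p_*) = \EE\!\left[\prod_{f \in \fF(M_0)} F(\per_\inn(f))\right],
\]
where the expectation is over $M_0 \sim \PP_\bfq^{(p_*)}$ together with the conditionally independent rings. Since each factor is bounded by $1$, the assumption $F(p_*) = 1$ forces $F(\per_\inn(f)) = 1$, and hence $\per_\inn(f) \in B \subseteq \{1,\dots,p_*\}$, for every $f \in \fF(M_0)$ almost surely.

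To contradict this, I would produce a positive-probability event on which some face of $M_0$ has so large a perimeter that even the worst ring ratio still leaves its inner perimeter strictly above $p_*$. For any integer $K$ with $\mu_\bfq(K) > 0$, Proposition~\ref{prop:10} combined with the recurrence of the centered walk $S$ (take the first step to be $K-1$ and let the walk run to $-p_*$) gives $\PP_\bfq^{(p_*)}(\exists f \in \fF(M_0),\ \per(f) = K) > 0$; note that $\mu_\bfq(K) > 0$ holds for every sufficiently large $K$ by the non-generic critical hypothesis of type $a = 2$. Moreover, Definition~\ref{def:ring}(1) gives $\Rat(R) \stackrel{d}{\to} e^{\beta_Q Y}$ under $\PP_\ring^{(K)}$ as $K \to \infty$, so $\PP_\ring^{(K)}\!\bigl(\per_\inn \geq \tfrac{1}{2} K e^{-\beta_Q}\bigr) \to 1$. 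Choosing $K$ large enough that $\mu_\bfq(K) > 0$ and $\tfrac12 K e^{-\beta_Q} > p_*$, and then conditioning on the existence of a face of perimeter $K$ in $M_0$ and applying the independent ring at that face, we obtain $\PP_\infty^{(p_*)}(\exists f \in \fF(M_0),\ \per_\inn(f) > p_*) > 0$, the desired contradiction.

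The main subtlety is that $F$ is not known to be monotone in $p$, so we cannot propagate $F(p') < 1$ for large $p'$ directly down to arbitrary $p$; this is why the argument works with the full set $B$ and then uses that $M_0$ can contain faces of arbitrarily large perimeter, combined with the asymptotic form of the ring ratio, to drive the inner perimeter above $p_*$.
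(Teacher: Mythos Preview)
Your proof is correct and uses the same essential ingredients as the paper's proof: for $F(p)>0$, the event that the outermost gasket is a tree (equivalently, that the associated walk takes $p$ consecutive $-1$ steps); for $F(p)<1$, Lemma~\ref{lem:7} together with the fact that Boltzmann maps with weight $\bfq$ can have faces of arbitrarily large perimeter, combined with the asymptotic law of the ring ratio from Definition~\ref{def:ring}(1).

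The only structural difference is in the upper bound. The paper argues directly: it fixes $k$ large enough that $\sup_{m>k}F(m)<1/2$, lets $f_1$ be the face of largest perimeter in $M_0$, and bounds
\[
F(p)\leq \EE_\infty^{(p)}\bigl[F(\per_\inn(f_1))\bigr]\leq 1-\tfrac12\,\PP_\infty^{(p)}\bigl(\per_\inn(f_1)>k\bigr),
\]
then shows the last probability is positive using $\mu_\bfq([k,\infty))>0$ and $\inf_{m>k}\PP_\ring^{(m)}(\Rat(R)\geq 1)>0$. Your contradiction framing (introducing $B$ and $p_*$) reaches the same conclusion through the same mechanism; the paper's version is slightly shorter because it avoids setting up the contradiction. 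One small point: in your argument you should allow $\per_\inn(f)=0$ as well (since $F(0)=1$ trivially), so the correct conclusion from $F(p_*)=1$ is $\per_\inn(f)\in B\cup\{0\}$ a.s., but this does not affect the rest of your reasoning.
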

\begin{proof}
    Fix $p\in \NN$ and let $M$ be a map sampled from $\PP_\infty^{(p)}$. Let us first show $F(p)>0$. The construction of the map $M$ in Definition~\ref{def:model} terminates right away if the gasket $M_0$ has no interior faces. Since $M_0$ has the marginal law of a Boltzmann map sampled from $\PP_\bfq^{(p)}$, in terms of the random walk representation of the perimeter process $\chi_{M_0}$ (Proposition~\ref{prop:10}), this corresponds to the event that $X_i=-1$ for all $i\in [\![1,p]\!]$.
Using the trivial bound $L_S^{(p)}\geq p$, we thus have
\begin{equation}
  \label{eq:133}
  F(p)\geq \frac{\EE\big[(L_S^{(p)}+1)^{-1}\ind\{X_i=-1\;\; \forall i\in [\![1,p]\!]\}\big]}{\EE\big[(L_S^{(p)}+1)^{-1}\big]}=\frac{(p+1)^{-1}(\mu_{\bfq}(-1))^p}{\EE\big[(L_S^{(p)}+1)^{-1}\big]}\geq (\mu_{\bfq}(-1))^p = (Z_\bfq)^{-p}>0.
\end{equation}
    
    We now prove $F(p)<1$. Choose a face $f_1 \in \fF_0 = \fF(M_0)$ with the longest perimeter: that is, $\per(f_1)= \max_{f\in \fF_0}\per(f)=\chi_{M_0}(1)$. Note that for each $k\in \NN$, we have $\PP_\bfq^{(p)}(\per(f_1)>k) = \mu_{\bfq}([k,\infty))>0$ by the definition of the non-generic critical weight sequence $\bfq$. 
  We now fix $k$ to be large enough that $\sup_{m> k}F(m)<1/2$.
  Then,
  \begin{equation}
    \label{eq:12}\begin{split}
    F(p)\leq \EE_\infty^{(p)}\big[F(\per_\inn(f_1))\big]
    &\leq  \PP_\infty^{(p)}(\per_{\inn}(f_1)\leq k)+\PP_\infty^{(p)}(\per_{\inn}(f_1)>k)\sup_{m>k}F(m)\\&\leq 1-\frac{1}{2}\PP_\infty^{(p)}(\per_{\inn}(f_1)>k).
  \end{split}\end{equation}
  Recall from Definition \ref{def:ring} that $\Rat(R)$ for a ring $R$ sampled from $\PP_\ring^{(m)}$ converges in distribution to $\exp(\beta_Q Y)$ where $Y$ is a Rademacher random variable. Hence, we may assume (by choosing a larger $k$ if necessary) that $\inf_{m> k}\PP_\ring^{(m)}(\Rat(R)\geq 1) >0$.
  Since $\per_\inn(f_1)$ is the inner half-perimeter of the ring sampled from $\PP_\ring^{(\per(f_1))}$, we have
  \begin{equation}\label{eq:135}
    \PP_\infty^{(p)}(\per_\inn(f_1)>k) \geq \PP_{\bfq}^{(p)}(\chi_{M_0}(1)>k) \cdot \inf_{m> k}\PP_\ring^{(m)}(\Rat(R)\geq 1)>0.
  \end{equation}
  This combined with \eqref{eq:12} completes the proof.
\end{proof}

\begin{proof}[Proof of Proposition \ref{thm:2}]
  We first show the existence of $\alpha\in [0,\infty)$ such that $-p^{-1}\log F(p) \to \alpha$ as $p\rightarrow \infty$.
By a standard subadditivity argument (see \cite[Theorem~23]{DE52}), it suffices to find a rational function $g(p,q)$ and $p_0>0$ such that if $p,q\geq p_0$, then
  \begin{equation}
    \label{eq:2}
F(p+q)\geq g(p,q)F(p)F(q).
\end{equation}
 Note that $\alpha = \inf_p (-p^{-1} \log F(p))<\infty$ from Lemma~\ref{lem:1}.

By the construction of supercritical planar maps from Definition \ref{def:model}, we have the recursive relation
\begin{equation}
  \label{eq:119}
  F(p+q) = \EE_\infty^{(p+q)}\bigg[\prod_{f\in \fF_0 }F(\per_\inn(f))\bigg] = \EE_\bfq^{(p+q)}\bigg[\prod_{i\in \NN }\EE_\ring^{(\chi_{M_0}(i))} \big[F(\chi_{M_0}(i)\Rat(R))\big]\bigg].
\end{equation}
Now, we transfer the above expression to one involving the corresponding random walk in Definition~\ref{def:4}. Let $\{X_i+1\}_{i\in\NN}$ be i.i.d.\ random variables with the law $\mu_\bfq$ and let $S_n = X_1 + \cdots + X_n$ be the corresponding random walk. Given the walk $S$, sample $R_i$ conditionally independently for each $i\in \NN$ from the marginal law of $\Rat(R)$ under $\PP_\ring^{(X_i+1)}$.
By an application of Proposition \ref{prop:10} along with \eqref{eq:120}, we can rewrite \eqref{eq:119} as
\begin{equation}
  \label{eq:4}
  F(p+q)=\frac{\EE\big[(L_S^{(p+q)}+1)^{-1}\prod_{i=1}^{T_S^{(p+q)}}F((X_i+1)R_i)\big]}{\EE\big[(L_S^{(p+q)}+1)^{-1}\big]}.
\end{equation}
We now consider the modified walk $\hat S_j:=S_{j+T_{S}^{(p)}}+p$, which is the part of the walk $S$ from the first hitting time of $-p$ up to the hitting time of $-(p+q)$, translated upwards by $p$. Also denote $\hat X_i=X_{i+T_S^{(p)}}$ and $\hat R_i=R_{i+T_S^{(p)}}$ for simplicity. Then, since the only possible negative steps of $S$ are of unit size, $T^{(p+q)}_S=T^{(p)}_S+T^{(q)}_{\hat S}$ and $L^{(p+q)}_S=L^{(p)}_S+L^{(q)}_{\hat S}$.
Moreover, rearranging $(X_{T_S^{(p)}+1}+1,\dots ,X_{T_S^{(p+q)}}+1)$ in a decreasing order and then padding zeroes to its end  yields $\bchi^{(q)}_{\hat S}$, which is independent from $\bchi_S^{(p)}$ by the strong Markov property of the walk $S$.
  As a consequence, from \eqref{eq:4}, we obtain that for some constant $C_1$, 
\begin{equation}
  \label{eq:5}\begin{split}
  F(p+q)
        &\geq \frac{\EE\big[(L^{(p)}_S+1)^{-1}\prod_{i=1}^{T_S^{(p)}}F((X_i+1)R_i) \cdot (L^{(q)}_{\hat S}+1)^{-1}\prod_{j=1}^{T_{\hat S}^{(q)}}F((\hat X_j+1)\hat R_j)\big]}{\EE\big[(L_S^{(p+q)}+1)^{-1}\big]}\\
        &= \frac{\EE\big[(L^{(p)}_S+1)^{-1}\prod_{i=1}^{T_S^{(p)}}F((X_i+1)R_i) \big]\cdot \EE\big[(L^{(q)}_{\hat S}+1)^{-1}\prod_{j=1}^{T_{\hat S}^{(q)}}F((\hat X_j+1)\hat R_j)\big]}{\EE\big[(L_S^{(p+q)}+1)^{-1}\big]}\\
  &=\frac{\EE\big[(L_S^{(p)}+1)^{-1}\big]\EE\big[(L_{\hat S}^{(q)}+1)^{-1}\big]}{\EE\big[(L_S^{(p+q)}+1)^{-1}\big]}F(p)F(q)
  \geq \frac{C_1(p+q+1)}{p^{3/2}q^{3/2}}F(p)F(q)
\end{split}\end{equation}
where, in the last line, we used $L_S^{(p+q)}\geq p+q$ along with Lemma~\ref{lem:215}. We have thus shown \eqref{eq:2} with $g(p,q)=(p+q+1)/(pq)^2$.

We now show that, in fact, $\alpha>0$. Starting again from from \eqref{eq:4}, since $T_S^{(p)} \geq L_S^{(p)}\geq p$, we have 
  \begin{equation}
    \label{eq:7}\begin{split}
    F(p)&= \frac{\EE\big[(L_S^{(p)}+1)^{-1}\prod_{i=1}^{T_S^{(p)}}F((X_i+1)R_i))\big]}{\EE\big[(L_S^{(p)}+1)^{-1}\big]} \leq \frac{\EE\big[\prod_{i=1}^p F((X_i+1)R_i))\big]}{\EE\big[(L_S^{(p)}+1)^{-1}\big]}
    \\&\leq Cp^{3/2}\big(\EE[F((X_1+1)R_1)]\big)^p
  \end{split}\end{equation}
  for some constant $C$; for the second inequality, we used Lemma~\ref{lem:215} and that 
   $\{(X_i+1)R_i\}_{i\in [\![1,p]\!]}$ are i.i.d.
  Since $\mu_\bfq([k,\infty))>0$ for all $k$ and $\PP_\ring^{(m)}(\Rat(R)\geq 1) \to 1/2$ as $m\to\infty$, we have $\PP((X_1+1)R_1>0)>0$. Thus,
  $\EE[F((X_1+1)R_1)]<1$ by Lemma~\ref{lem:1}. This completes the proof.
\end{proof}

\section{Estimates on the size of a supercritical map conditioned to be finite}
\label{sec:estimates}
The broad intuition of this section is that supercritical LQG conditioned to be finite should in some sense look ``subcritical," similar to what happens for $\BGW$ trees. 

Recall from Definition~\ref{def:model} the Markov chain $(M_i, \fF_i)$ with the law $\PP_\infty^{(p)}$, where $M_i$ are the partial maps which increase to the whole map $M$ and $\fF_i$ are the set of ``active" faces inside with the next layer of rings and critical Boltzmann maps are inserted.
The law $\PP_\ttF^{(p)}$ on finite planar maps appearing in Theorem \ref{thm:1} can then be naturally considered as the marginal law on $M$ when we condition the chain $(M_i,\fF_i)_{i\in\NN^\#}$ to terminate at a finite time. That is, there exists $n\in\NN$ such that $M_i = M$ and $\fF_i = \varnothing$ for all $i\geq n$. We thus use $\PP_\ttF^{(p)}$ to also denote this conditional law of the chain terminated at a finite time.

The plan is to first prove that the marginal law of the outermost gasket $M_0$ for a map $M$ sampled from $\PP_\ttF^{(p)}$ is that of a subcritical Boltzmann map (Proposition~\ref{prop:2}). From here, we extrapolate that the expected total sum of the inner perimeter lengths decreases exponentially (Corollary~\ref{lem:21}). Using this estimate, we conclude that the map $M$ sampled from $\PP_\ttF^{(p)}$ looks essentially like its outermost gasket $M_0$ in the sense that the volumes of submaps $M|_f$ within the faces $f\in \fF_0 = \fF(M_0)$ grow slower than $p^{1/2}$ (Proposition~\ref{lem:23}).

\subsection{Law of the gasket sampled from \texorpdfstring{$\PP_\ttF^{(p)}$}{the measure on planar maps conditioned to be finite}}

We first establish that the outermost gasket $M_0$ for a map $M$ sampled from $\PP_\ttF^{(p)}$ is a subcritical Boltzmann map. Recall the notation $\per_{\inn}(f)$ for the inner half-perimeter of the ring attached to each face $f\in \fF_0 = \fF(M_0)$. Here is an explicit description of the corresponding weight sequence, which is subcritical.

\begin{lemma}
  \label{lem:9}
  Let $\bfq$ be the critical non-generic weight sequence with $a=2$ as in Definition~\ref{def:nongen}. Consider the weight sequence $\bfq'$ given by
  \begin{equation}
    \label{eq:87}
    q'_k= \EE_\ring^{(k)}[ F(\per_\inn(f)) ]\cdot q_k
  \end{equation}
where $\per_\inn(f)$ is the half-perimeter of the inner face of the ring sampled from the distribution $\PP_\ring^{(k)}$ in Definition \ref{def:ring}. (In particular, the outer half-perimeter of the ring is $k$.) Then, the weight sequence $\bfq'$ is admissible and subcritical.
\end{lemma}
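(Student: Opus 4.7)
The plan is to first identify $\PP_{\bfq'}^{(p)}$ as the marginal law of the outermost gasket $M_0$ under $\PP_\ttF^{(p)}$; this will instantly yield admissibility of $\bfq'$. Subcriticality will then follow from a geometric comparison of the graph of $f_{\bfq'}$ with that of $f_\bfq$, which in the critical case is tangent from below to the curve $y = 1-1/x$.

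For admissibility, I would use the Markov structure in Definition~\ref{def:model}: conditionally on $M_0 = \widetilde M_0$, the iterative constructions inside different faces $f \in \fF(\widetilde M_0)$ are independent, and each terminates with probability $g(\per(f))$, where $g(k) := \EE_\ring^{(k)}[F(\per_\inn(f))]$. Multiplying by $w_\bfq(\widetilde M_0)/W_\bfq^{(p)}$ and summing over $\widetilde M_0 \in \cM^{(p)}$ will give $W_{\bfq'}^{(p)} = F(p)\, W_\bfq^{(p)} < \infty$, so that $\bfq'$ is admissible and the marginal of $M_0$ under $\PP_\ttF^{(p)}$ is indeed $\PP_{\bfq'}^{(p)}$.

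For subcriticality, I would use that the mean of $\mu_{\bfq'}$ equals $f_{\bfq'}(Z_{\bfq'}) + Z_{\bfq'} f'_{\bfq'}(Z_{\bfq'})$, so subcriticality is equivalent to $f'_{\bfq'}(Z_{\bfq'}) < 1/Z_{\bfq'}^2$, i.e.\ to the graph $y = f_{\bfq'}(x)$ crossing $y = 1-1/x$ transversally at $x = Z_{\bfq'}$. Criticality of $\bfq$ says $f_\bfq$ is tangent to $1-1/x$ at $Z_\bfq$; together with convexity of $f_\bfq$ and concavity of $1-1/x$, this forces $D_\bfq(x) := f_\bfq(x) - (1-1/x) \geq 0$ with equality only at $Z_\bfq$. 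I would next use Lemma~\ref{lem:1} (which gives $F(p) < 1$ for every $p \geq 1$) together with the non-triviality condition of Definition~\ref{def:ring} to conclude $g(k) < 1$ strictly for every $k$, so that $f_{\bfq'} < f_\bfq$ on $(0,\infty)$, and in particular $D_{\bfq'}(Z_\bfq) < 0$. Since $D_{\bfq'}(x) \to +\infty$ as $x \downarrow 0$ and $D_{\bfq'}$ is strictly convex on $(0,Z_\bfq)$, this will force a unique zero $Z_{\bfq'} \in (0, Z_\bfq)$ at which $D_{\bfq'}$ is strictly decreasing, yielding the desired bound $f'_{\bfq'}(Z_{\bfq'}) < 1/Z_{\bfq'}^2$.

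The main obstacle — mild but crucial — is to rule out the possibility that the perturbation $\bfq \mapsto \bfq'$ merely slides the tangency to a new location $Z_{\bfq'}$, leaving $\bfq'$ critical rather than strictly subcritical. This is precisely where the strict inequality $g(k) < 1$ enters: it forces $f_{\bfq'}(Z_\bfq) < 1-1/Z_\bfq$, so that $D_{\bfq'}$ must dip strictly below zero and hence cross it transversally at $Z_{\bfq'}$ rather than merely touching it.
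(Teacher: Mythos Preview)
Your proposal is correct, and the subcriticality portion is essentially a geometric rephrasing of the paper's argument: both hinge on the strict inequality $q'_k < q_k$ (from Lemma~\ref{lem:1} and the non-triviality condition of Definition~\ref{def:ring}), both deduce $Z_{\bfq'} < Z_\bfq$, and both then use convexity to bound the mean of $\mu_{\bfq'}$ below $1$. The paper works with $g_{\bfq'}(x) = 1 - x + x f_{\bfq'}(x)$ and compares $(g_{\bfq'})'(Z_{\bfq'}) \leq (g_{\bfq'})'(Z_\bfq) < (g_\bfq)'(Z_\bfq) = 0$ directly, which is the same content as your transversal-crossing argument for $D_{\bfq'}$.

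The one genuine difference is admissibility. The paper argues purely analytically, observing $g_{\bfq'}(0) = 1 > 0 > g_{\bfq'}(Z_\bfq)$ and invoking the intermediate value theorem. You instead establish the probabilistic identity $W_{\bfq'}^{(p)} = F(p)\, W_\bfq^{(p)}$, which is the computation underlying Proposition~\ref{prop:2}. This buys you that proposition for free; in the paper's organization, Lemma~\ref{lem:9} is proved first and Proposition~\ref{prop:2} is derived afterward, so your route simply merges the two. Either order works, and your subcriticality argument already contains a second proof of admissibility anyway (the zero of $D_{\bfq'}$ you locate in $(0, Z_\bfq)$).
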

\begin{proof}
Let 
\begin{equation}
    g_\bfq(x) := 1 - x + xf_\bfq(x) = 1 - x + \sum_{k=1}^\infty \binom{2k-1}{k-1} q_k x^{k}
\end{equation}
and define $g_{\bfq'}$ analogously. Recall from Definition~\ref{def:classification} that $\bfq$ is admissible if and only if $f_\bfq(x) = 1-1/x$ has a positive solution and the smallest such solution is denoted $Z_\bfq$. Then, $g_\bfq(Z_\bfq)=0$.
For every $k\in \NN$, since $\PP_\ring^{(k)}(\per_\inn(f) > 0)>0$ by the non-triviality condition in Definition~\ref{def:ring}, we have $q_k' < q_k$ by Lemma~\ref{lem:1} and thus $g_{\bfq'}(Z_\bfq) < g_\bfq(Z_\bfq)=0$. Since $g_{\bfq'}(0)=1$, we conclude that $\bfq'$ is admissible and $Z_{\bfq'}$, the smallest positive solution of $f_{\bfq'}(x) = 1-1/x$, is less than $Z_\bfq$.

Note from \eqref{eq:81} that the mean of the measure $\mu_{\bfq'}$ is given by $1+(g_{\bfq'})'(Z_{\bfq'})$. Since $0\leq q_k'<q_k$ for all $k$, we have $(g_{\bfq'})'(Z_{\bfq'}) \leq (g_{\bfq'})'(Z_\bfq) < (g_\bfq)'(Z_\bfq)$. Since $\bfq$ is critical, $1+(g_{\bfq'})'(Z_{\bfq'}) < 1+(g_{\bfq})'(Z_{\bfq}) = 1$ and thus $\bfq'$ is subcritical.
\end{proof}

\begin{proposition}
  \label{prop:2}
  For every $p\in \NN$, the marginal law of the outermost gasket $M_0$ under $\PP_\ttF^{(p)}$ is equal to $\PP_{\bfq'}^{(p)}$.
\end{proposition}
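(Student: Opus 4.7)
The plan is a direct Bayes-type computation using the Markov structure from Definition~\ref{def:model}. Fix a rooted bipartite planar map $m_0 \in \mathcal M^{(p)}$. By definition,
\begin{equation}
    \PP_\ttF^{(p)}(M_0 = m_0) = \frac{\PP_\infty^{(p)}(M_0 = m_0,\ M \text{ finite})}{F(p)}.
\end{equation}
Using the fact that under $\PP_\infty^{(p)}$ the marginal of $M_0$ is $\PP_\bfq^{(p)}$, this equals $\PP_\bfq^{(p)}(M_0 = m_0)$ times the conditional probability that $M$ is finite given $M_0 = m_0$.

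For the conditional probability, I will use the recursive structure directly. Given $M_0 = m_0$, the rings $\{R(f)\}_{f \in \fF(m_0)}$ are conditionally independent with $R(f) \sim \PP_\ring^{(\per(f))}$; given in addition these rings, the submaps $M|_f$ are conditionally independent, and $M|_f$ has the law $\PP_\infty^{(\per_\inn(f))}$ by the recursive nature of Definition~\ref{def:model}. Since $M$ is finite if and only if every $M|_f$ is finite, we obtain
\begin{equation}
    \PP_\infty^{(p)}(M \text{ finite} \mid M_0 = m_0) = \EE\bigg[\prod_{f\in \fF(m_0)} F(\per_\inn(f))\bigg] = \prod_{f \in \fF(m_0)} \EE_\ring^{(\per(f))}\big[F(\per_\inn(f))\big],
\end{equation}
where the product could be taken out of the expectation by the conditional independence of the rings.

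Substituting back and using $\PP_\bfq^{(p)}(M_0 = m_0) = w_\bfq(m_0)/W_\bfq^{(p)}$ together with the definition $q_k' = \EE_\ring^{(k)}[F(\per_\inn(f))] \cdot q_k$, the numerator becomes
\begin{equation}
    \frac{1}{W_\bfq^{(p)}} \prod_{f\in \fF(m_0)} q_{\per(f)} \cdot \EE_\ring^{(\per(f))}\big[F(\per_\inn(f))\big] = \frac{1}{W_\bfq^{(p)}} \prod_{f \in \fF(m_0)} q'_{\per(f)} = \frac{w_{\bfq'}(m_0)}{W_\bfq^{(p)}}.
\end{equation}
Therefore $\PP_\ttF^{(p)}(M_0 = m_0) = w_{\bfq'}(m_0) / (W_\bfq^{(p)} F(p))$.

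Finally, summing over $m_0 \in \mathcal M^{(p)}$ on the left gives $1$, so $W_\bfq^{(p)} F(p) = W_{\bfq'}^{(p)}$; equivalently, this identity is the definition of $F(p)$ averaged against the $\PP_\bfq^{(p)}$-law of $M_0$. Either way, we conclude $\PP_\ttF^{(p)}(M_0 = m_0) = w_{\bfq'}(m_0)/W_{\bfq'}^{(p)} = \PP_{\bfq'}^{(p)}(M_0 = m_0)$, as required. There is no real obstacle here: the argument is essentially a bookkeeping exercise, and its content is entirely in verifying that the event $\{M \text{ finite}\}$ factorizes across the faces of $M_0$ in precisely the way that matches the reweighting $q_k \mapsto q'_k$; admissibility of $\bfq'$ (Lemma~\ref{lem:9}) is what guarantees that $W_{\bfq'}^{(p)} < \infty$ so $\PP_{\bfq'}^{(p)}$ is well defined.
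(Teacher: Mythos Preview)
Your proof is correct and follows essentially the same approach as the paper: both arguments use Bayes's rule together with the conditional independence of the rings and submaps across the faces of $M_0$, yielding the reweighting $q_k \mapsto q_k'$. The only cosmetic difference is that the paper writes the computation as a ratio of expectations (so the normalization is automatic), whereas you identify the normalizing constant $W_\bfq^{(p)} F(p) = W_{\bfq'}^{(p)}$ by summing over $m_0$; both are equivalent.
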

\begin{proof}
Consider the chain $(M_i,\fF_i)$ sampled from $\PP_\infty^{(p)}$. By construction, the random variables $\{\per_\inn(f)\}_{f\in \fF_{0}}$ are conditionally independent given the outermost gasket $M_0$. Now, we know that $\PP_\infty^{(p)}(M \textrm{ is finite} \,\lvert\, M_0, \{\per_\inn(f)\}_{f\in \fF_0} ) = \prod_{f\in \fF_0}F(\per_\inn(f))$. Hence, for any measurable set $A$ on the space of finite planar maps with outer boundary length $p$,
  \begin{equation}
    \label{eq:13}\begin{split}
    \PP_\ttF^{(p)}(M_0\in A) &= \frac{\PP_\infty^{(p)}(M_0\in A,\, M \textrm{ is finite} )}{\PP_\infty^{(p)}(M \textrm{ is finite} )}\\    &=\frac{\EE_\infty^{(p)}\big[\ind\{M_0\in A\}\cdot \prod_{f\in \fF_0}F(\per_\inn(f))\big]}{\EE_\infty^{(p)}\big[\prod_{f\in \fF_{0}}F(\per_\inn(f))\big]}\\
    &=\frac{\EE_{\bfq}^{(p)}[\ind\{M_0 \in A\} \cdot \prod_{f\in \fF_0}\EE_\ring^{(\per(f))}[F(\per_\inn(f))]]}{\EE_{\bfq}^{(p)}[\prod_{f\in \fF}\EE[F(\per_\inn(f))]]}\\
    &= \PP_{\bfq'}^{(p)}(M_0 \in A)
        \end{split}
  \end{equation}
  as claimed.
\end{proof}

We now show that conditioning $M$ to be finite leads to the removal of all macroscopic loops.

\begin{lemma} 
  \label{lem:11}
    There exists a constant $\Delta>2$ such that for any fixed $\epsilon>0$, we have 
    \begin{equation}
    \label{eq:136}
    \lim_{p\to\infty} \PP_{\ttF}^{(p)}\bigg( \sup_{f\in \fF_{0}}\per_\inn(f)  \leq \epsilon p^{1/\Delta}  \bigg) = 1.
  \end{equation}
\end{lemma}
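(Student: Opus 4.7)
The plan is to apply Proposition~\ref{prop:2}, which gives the marginal law of the outermost gasket $M_0$ under $\PP_\ttF^{(p)}$ as the subcritical Boltzmann map law $\PP_{\bfq'}^{(p)}$, and then analyze the $F$-weighted conditional law of the rings $\{R(f)\}_{f\in\fF_0}$ given $M_0$ induced by the conditioning $\{M\text{ finite}\}$. The exponential decay of $F(n)$ from Proposition~\ref{thm:2} plays a dual role: it forces $\mu_{\bfq'}$ to have exponential tail, so that the outer perimeters $\per(f)$ are all $O(\log p)$ with high probability, and it makes large inner perimeters extremely unlikely under the $F$-weighted conditional distribution.

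I would proceed in three steps. First, I would establish the exponential tail of $\mu_{\bfq'}$. Writing $\per_\inn = k\,\Rat(R)$ under $\PP_\ring^{(k)}$, the bound $F(n)\leq Ce^{-(\alpha-\eta)n}$ from Proposition~\ref{thm:2} combined with the lower tail condition of Definition~\ref{def:ring}(4) gives $\EE_\ring^{(k)}[F(\per_\inn)]\leq C\EE_\ring^{(k)}[e^{-(\alpha-\eta)k\,\Rat(R)}]\leq Ce^{-ck}$ for some $c>0$. Combined with $\mu_{\bfq'}(k)=(Z_{\bfq'}/Z_\bfq)^{k-1}\mu_\bfq(k)\EE_\ring^{(k)}[F(\per_\inn)]$, $\mu_\bfq(k)=O(k^{-5/2})$, and $Z_{\bfq'}\leq Z_\bfq$, this yields $\mu_{\bfq'}([K,\infty))=O(e^{-c'K})$. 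Second, the FKG argument from the proof of Lemma~\ref{lem:38} (applied to the increasing event $\{\max_{i\leq T_S^{(p)}}(X_i+1)>K\}$ and the decreasing weight $(L_S^{(p)}+1)^{-1}$ from Lemma~\ref{lem:fkg}), combined with Lemma~\ref{lem:33} and the exponential tail of $\mu_{\bfq'}$, gives $\max_{f\in\fF_0}\per(f)\leq C_1\log p$ w.h.p.\ under $\PP_{\bfq'}^{(p)}$; Lemma~\ref{lem:38} also gives $|\fF_0|=O(p)$ w.h.p. Third, given $M_0$, the rings are conditionally independent under $\PP_\ttF^{(p)}$, with each $R(f)$ having law proportional to $F(\per_\inn)\,d\PP_\ring^{(\per(f))}$. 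Hence for a face $f$ with $\per(f)=k$,
\begin{equation}
\PP_\ttF^{(p)}(\per_\inn(f)>\epsilon p^{1/\Delta}\mid M_0)=\frac{\EE_\ring^{(k)}[F(\per_\inn)\ind\{\per_\inn>\epsilon p^{1/\Delta}\}]}{\EE_\ring^{(k)}[F(\per_\inn)]}\leq Ce^{-(\alpha-\eta)\epsilon p^{1/\Delta}+C_2 k},
\end{equation}
and a union bound over the $O(p)$ faces on the high-probability event $\max\per(f)\leq C_1\log p$ gives the conclusion for any $\Delta>0$, hence in particular for some $\Delta>2$.

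The main obstacle is obtaining the exponential-in-$k$ lower bound $\EE_\ring^{(k)}[F(\per_\inn)]\geq c_2 e^{-C_2 k}$ used in the last display. This requires combining the lower bound $F(n)\geq c\,e^{-(\alpha+\eta)n}$ from Proposition~\ref{thm:2} with Jensen's inequality applied to $\EE_\ring^{(k)}[e^{-(\alpha+\eta)k\,\Rat(R)}]$, together with the uniform bound $\sup_k\EE_\ring^{(k)}[\Rat(R)]<\infty$ coming from the $(2+\delta)$-th moment condition in Definition~\ref{def:ring}(5). The rest of the argument is routine given Proposition~\ref{prop:2}, the random walk representation of Proposition~\ref{prop:10}, and the FKG machinery.
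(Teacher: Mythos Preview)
Your proposal is correct and takes a genuinely different route from the paper's own proof.

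Both arguments begin the same way: Proposition~\ref{prop:2} identifies the outermost gasket as a subcritical Boltzmann map with weight sequence $\bfq'$, and both establish (your Step~1, the paper's Lemma~\ref{lem:15}) that $\mu_{\bfq'}$ has exponential tail via Proposition~\ref{thm:2} and the lower-tail condition on the rings. From there the two proofs diverge. The paper passes to the random walk representation and, at~\eqref{eq:18}, writes the inner perimeters as $(X_i+1)R_i$ with the $R_i$ drawn from the \emph{unconditioned} ring law $\PP_\ring^{(X_i+1)}$; it then splits on $\{X_i+1>\epsilon(\log p)^2\}$ versus $\{R_i>p^{1/(2+\delta/2)}\}$ and appeals only to the $(2+\delta)$-moment bound of Definition~\ref{def:ring}(5). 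This is why the paper obtains the specific exponent $\Delta=2+\delta/3$. Your route instead observes that under $\PP_\ttF^{(p)}$ the conditional ring law given $M_0$ is tilted by $F(\per_\inn)$, and exploits the exponential decay of $F$ directly in the numerator of that tilt. Combined with your lower bound $\EE_\ring^{(k)}[F(\per_\inn)]\geq c_2e^{-C_2k}$ (which is the genuinely new ingredient you correctly flag, and which your Jensen argument handles) and the control $\max_{f}\per(f)=O(\log p)$ from Step~2, you get a stretched-exponential bound $\exp(-c\,p^{1/\Delta})$ per face, so the union bound over $O(p)$ faces succeeds for \emph{every} $\Delta>0$. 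This is strictly stronger than what the paper states, and matches the improvement discussed in Remark~\ref{rem:gen} but without any additional assumption on the ring distribution. Your treatment of the $F$-tilted conditional ring law is also more careful than the paper's; the identity~\eqref{eq:18} as written uses the untilted ring law, which does not literally coincide with the conditional law under $\PP_\ttF^{(p)}$.
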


The following lemma is needed for the proof of Lemma~\ref{lem:11}.
\begin{lemma}
  \label{lem:15}
  Let $\bfq'$ be the weight sequence defined by \eqref{eq:87}. Then, there exists constants $c>0$ such that $\mu_{\bfq'}([k,\infty))= O(e^{-ck})$ as $k\to\infty$.
\end{lemma}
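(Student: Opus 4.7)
The plan is to leverage the closed-form expression for $\mu_{\bfq'}$ together with the strict inequality $Z_{\bfq'} < Z_{\bfq}$ already established in the proof of Lemma~\ref{lem:9}. By Definition~\ref{def:classification}, for $k\geq 1$ we have
\begin{equation*}
 \mu_{\bfq'}(k) = (Z_{\bfq'})^{k-1}\binom{2k-1}{k-1} q_k' \qquad\text{and}\qquad \mu_{\bfq}(k) = (Z_{\bfq})^{k-1}\binom{2k-1}{k-1} q_k.
\end{equation*}
Since $q_k' = \EE_\ring^{(k)}[F(\per_\inn(f))]\cdot q_k$ and $F\leq 1$, it follows that $q_k' \leq q_k$, and hence
\begin{equation*}
 \mu_{\bfq'}(k) \;\leq\; \left(\frac{Z_{\bfq'}}{Z_{\bfq}}\right)^{k-1} \mu_{\bfq}(k).
\end{equation*}

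In the proof of Lemma~\ref{lem:9} it was shown that $Z_{\bfq'} < Z_{\bfq}$ strictly, so $r := Z_{\bfq'}/Z_{\bfq} \in (0,1)$. Moreover, $\bfq$ being critical non-generic of type $a=2$ means that $k^{5/2}\mu_{\bfq}(k)$ converges to a positive constant; in particular, $\mu_{\bfq}(k)$ is bounded uniformly in $k$ by some constant $C$. Combining these facts yields
\begin{equation*}
 \mu_{\bfq'}(k) \;\leq\; C\, r^{k-1}
\end{equation*}
for every $k\geq 1$. Summing a geometric series then gives $\mu_{\bfq'}([k,\infty)) = O(e^{-ck})$ with $c = -\log r > 0$, as desired.

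The argument is essentially elementary once the strict inequality $Z_{\bfq'} < Z_{\bfq}$ is in hand; the only ``obstacle'' is the verification that this strict gap really comes for free from the non-triviality condition on the ring distribution (so that $q_k' < q_k$ on a set of $k$ witnessed in the defining equation $f_{\bfq'}(Z_{\bfq}) < f_{\bfq}(Z_{\bfq}) = 1 - 1/Z_{\bfq}$), which is precisely the content of the first paragraph of the proof of Lemma~\ref{lem:9}. Note that one could in principle sharpen the rate $c$ by using Proposition~\ref{thm:2} to exploit the additional exponential decay of $\EE_\ring^{(k)}[F(\per_\inn(f))]$, but since we only need \emph{some} exponential tail, the crude bound $F\leq 1$ suffices.
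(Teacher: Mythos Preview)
Your proof is correct and in fact more economical than the paper's. The paper also starts from the identity $\mu_{\bfq'}(k) = (Z_{\bfq'})^{k-1}\binom{2k-1}{k-1}q_k'$ and the inequality $Z_{\bfq'}<Z_{\bfq}$, but then \emph{discards} the ratio $(Z_{\bfq'}/Z_{\bfq})^{k-1}$ by bounding $(Z_{\bfq'})^{k-1}\le (Z_{\bfq})^{k-1}$, arriving at $\mu_{\bfq'}(k)\le \EE_\ring^{(k)}[F(k\,\Rat(R))]\,\mu_{\bfq}(k)$. It then recovers exponential decay from the factor $\EE_\ring^{(k)}[F(k\,\Rat(R))]$ by invoking Proposition~\ref{thm:2} (the exponential decay of $F$) together with the lower-tail condition~(4) of Definition~\ref{def:ring}. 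Your route keeps the geometric factor $(Z_{\bfq'}/Z_{\bfq})^{k-1}$ and uses only the crude bound $q_k'\le q_k$, so you need neither Proposition~\ref{thm:2} nor any tail condition on the rings. The trade-off is that your decay rate $c=-\log(Z_{\bfq'}/Z_{\bfq})$ is implicit and potentially smaller than the one the paper obtains, but as you note, this is irrelevant for the lemma as stated.
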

\begin{proof}
  As we saw in the proof of Lemma~\ref{lem:9}, 
  $Z_{\bfq'}< Z_{\bfq}$. Hence,
  \begin{equation}\begin{split}
      \mu_{\bfq'}(k) = (Z_{\bfq'})^{k-1} \binom{2k-1}{k-1} q_{k}'  &\leq (Z_{\bfq})^{k-1} \binom{2k-1}{k-1} \,\EE_\ring^{(k)}[F(\per_\inn(f))] \,q_{k} \\&=  \EE_\ring^{(k)}[F(k\, \Rat(R))] \,\mu_{\bfq}(k) \end{split}
  \end{equation}
  where $\per_\inn(f)$ is the inner half-perimeter of the ring $R$ sampled from $\PP_\ring^{(k)}$.
  
  The lower tail condition in Definition~\ref{def:ring} implies that there exist constants $\epsilon,c >0$ satisfying
  \begin{equation}
  \label{eq:rat}
  \PP_\ring^{(k)}(\Rat(R)< \epsilon)= O(e^{-ck})
  \end{equation}
  as $k\to\infty$.
  Hence, by the exponential decay of $F(p)$ proved in Proposition~\ref{thm:2}, choosing a smaller constant $c$ if necessary, we have that for all $k$ large enough,
    \begin{equation}\label{eq:F(kratk)}
        \EE_\ring^{(k)}[F(k\,\Rat(R))] \leq \sup_{p\geq k \epsilon} F(p) +\PP_\ring^{(k)}(\Rat(R)< \epsilon ) = O(e^{-ck}).
    \end{equation}
    Here, we used the trivial bound $F\leq 1$.
  We conclude that as $k\to \infty$,
  \begin{equation}
    \label{eq:16}
    \mu_{\bfq'}([k,\infty)]) \leq \mu_\bfq([k,\infty)) \cdot \sup_{p\geq k}\EE_\ring^{(p)}[F(p\, \Rat(R))]
    = O(e^{-ck})
\end{equation}
    since $\mu_\bfq(k)$ decays polynomially in $k$ by Definition~\ref{def:nongen} of a non-generic critical weight sequence.
\end{proof}

We now use above results to bound $\sup_{f\in \fF_0}\per_\inn(f)$ as claimed.

\begin{proof}[Proof of Lemma~\ref{lem:11}]
  Let $\{X_i+1\}_{i\in\NN}$ be i.i.d.\ random variables with the law $\mu_{\bfq'}$ and, given these, sample $R_i$ conditionally independently for each $i\in \NN$ from the marginal law of $\Rat(R)$ under $\PP_\ring^{(X_i+1)}$. 
  By Propositions~\ref{prop:10} and \ref{prop:2}, if $L_S^{(p)}$ is the number of negative steps in the random walk $S_n = X_1 + \cdots + X_n$ up to the first time $T_S^{(p)}$ that this walk visits $-p$, then
  \begin{equation}
    \label{eq:18}
    \PP_{\ttF}^{(p)}\bigg(\sup_{f\in \fF_0} \per_\inn(f)> \epsilon p^{1/\Delta} \bigg)=\frac{\EE\big[(L_S^{(p)}+1)^{-1}\ind\big\{\sup_{i\in [\![1,T^{(p)}_S]\!]}(X_i+1)R_i>\epsilon p^{1/\Delta}\big\}\big]}{\EE\big[(L_S^{(p)}+1)^{-1}\big]}.
  \end{equation}
    Using the trivial bound $(L_S^{(p)}+1)^{-1}\leq (p+1)^{-1}$ and the estimate $1/\EE[(L_S^{(p)}+1)^{-1}] \leq p\,\EE[T_S^{(1)}]+1$ from Lemma \ref{lem:33}, we can find a constant $C>0$ such that for all $p$,
  \begin{equation}
      \frac{\EE\big[(L_S^{(p)}+1)^{-1}\ind\big\{\sup_{i\in [\![1,T^{(p)}_S]\!]}(X_i+1)R_i>\epsilon p^{1/\Delta}\big\}\big]}{\EE[(L_S^{(p)}+1)^{-1}]} \leq C \,\PP\bigg(\sup_{i\in [\![1,T^{(p)}_S]\!]}(X_i+1)R_i>\epsilon p^{1/\Delta}\bigg) .
  \end{equation}
  By Lemma \ref{lem:38}, it suffices find $\Delta>2$ such that
  \begin{equation}
    \label{eq:149}
    \lim_{p\to\infty} \PP\bigg(\sup_{i\in [\![1,-2p/m_{\bfq'}]\!]}(X_i+1)R_i>\epsilon p^{1/\Delta}\bigg) = 0
  \end{equation}
  to complete the proof. We can pick $\Delta = 2 + \delta/3$ where $\delta>0$ is the constant appearing in the upper bound condition of Definition~\ref{def:ring}, since then, for sufficiently large $p$,
  \begin{equation}
    \label{eq:150}\begin{split}
    \PP((X_1+1)R_1> \epsilon p^{1/\Delta}) &\leq  \PP(X_1+1> {\epsilon} (\log p)^2)+\PP(R_1>  p^{1/(2+\delta/2)}) \\&\leq \mu_{\bfq'}(({\epsilon}(\log p)^2-1,\infty)) + \sup_{k\in \NN} \PP_\ring^{(k)}(\Rat(R)> p^{1/(2+\delta/2)}) = o(p^{-1}) \end{split}
  \end{equation}
  by applying Lemma~\ref{lem:15} to the first term and the upper tail condition of Definition~\ref{def:ring} to the second term. Since $(X_i+1)R_i$ are identically distributed, taking the union bound over $i\in [\![1,-2p/m_{\bfq'}]\!]$ gives \eqref{eq:149}.
\end{proof}

\begin{remark}
  \label{rem:gen}
Lemma \ref{lem:11} can be improved if we impose further restrictions on the distribution of rings than in Definition~\ref{def:ring}. For instance, the random planar model for supercritical LQG disk suggested in \cite[Section~3.2]{AG23} has the property that the inner half-parameter of a ring sampled from $\PP_\ring^{(k)}$ is exactly equal to $\lfloor k\exp(\pm \beta_Q)\rfloor$ with probability 1/2. 
In this case, we can replace $\epsilon p^{1/\Delta}$ with $\epsilon(\log p)^{2}$ in \eqref{eq:136} since $\sup_k \PP_\ring^{(k)}(\Rat(R) > c) = 0$ for sufficiently large $c$.
\end{remark}

\subsection{Maps sampled from \texorpdfstring{$\PP_\ttF^{(p)}$}{the measure on planar maps conditioned to be finite} are subcritical}
\label{sec:subcrit}

Now that we have identified that the marginal law of the outermost gasket $M_0$ sampled from $\PP_\ttF^{(p)}$ is that of a subcritical Boltzmann map, we extrapolate from here an exponential decay in $i$ for the total perimeter of the maps added at step $i$ of the iterative construction.

We find it useful to consider
\begin{equation}
  \label{eq:17}
  h(p):=-\log F(p)
\end{equation}
with $h(0) = 0$ as a proxy for $p$, since $h(p) \asymp p$ by Proposition \ref{thm:2}.
The goal of this subsection is to obtain the following estimate.
The fact that $s_0<1$ in the following proposition is the sense in which the iterative construction of a map sampled from $\PP_\ttF^{(p)}$ behaves as a subcritical branching process.
\begin{proposition}
  \label{prop:14}
  There exists a constant $s_0\in (0,1)$ such that for all $p\in \NN$, we have
  \begin{equation}
    \EE^{(p)}_{\ttF}\bigg[\sum_{f\in \fF_{0}}h(\per_{\inn}(f))\bigg]\leq s_0 h(p).
  \end{equation}
\end{proposition}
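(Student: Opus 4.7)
The intuition is that the iterative construction in Definition~\ref{def:model}, when conditioned to terminate (i.e., under $\PP_\ttF^{(p)}$), behaves like a subcritical branching process. This proposition is the analog of the classical fact that a supercritical Galton--Watson tree conditioned to go extinct has offspring mean strictly less than~$1$. Note also that by Proposition~\ref{thm:2}, the inequality is equivalent to the relative entropy statement $H(\PP_\ttF^{(p)}|_{\mathcal F_1}\,|\,\PP_\infty^{(p)}|_{\mathcal F_1}) \geq (1-s_0)\,h(p)$ where $\mathcal F_1$ denotes the $\sigma$-algebra generated by the first-level data $(M_0, \{\per_\inn(f)\}_{f\in\fF_0})$.

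Proposition~\ref{prop:2} shows that under $\PP_\ttF^{(p)}$, the gasket $M_0$ has the subcritical Boltzmann law $\PP_{\bfq'}^{(p)}$ and, conditionally on $M_0$, the inner perimeters $\{\per_\inn(f)\}_{f\in\fF_0}$ are independent, with $\per_\inn(f)$ distributed as $\PP_\ring^{(\per(f))}$ reweighted by the density $F(\per_\inn(f))/\EE_\ring^{(\per(f))}[F(\per_\inn(f))]$. Setting
\[
g(k) := \frac{\EE_\ring^{(k)}[h(\per_\inn(f))\,F(\per_\inn(f))]}{\EE_\ring^{(k)}[F(\per_\inn(f))]},
\]
this yields the identity
\[
\EE_\ttF^{(p)}\bigg[\sum_{f\in\fF_0} h(\per_\inn(f))\bigg] = \EE_{\bfq'}^{(p)}\bigg[\sum_{f\in\fF_0} g(\per(f))\bigg],
\]
so the task reduces to bounding a subcritical-Boltzmann-map expectation of the functional $\sum_f g(\per(f))$.

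The next step is to establish the asymptotic pointwise estimate $g(k)/k \to \alpha\, e^{-\beta_Q}$ as $k\to\infty$, together with a uniform bound $g(k) \leq C k$. The heuristic is that $F(\per_\inn) = e^{-h(\per_\inn)}$ decays as $e^{-\alpha \per_\inn}$ by Proposition~\ref{thm:2}, and since $\per_\inn/\per_\out$ converges in distribution to $e^{\pm \beta_Q Y}$ by Definition~\ref{def:ring}(1), the $F$-tilt concentrates the conditional law of $\per_\inn$ near the smaller branch $\per_\out \cdot e^{-\beta_Q}$. The lower-tail condition of Definition~\ref{def:ring}(4) together with the subadditivity-based bound $h(k) \leq \alpha k + O(\log^2 k)$ from the proof of Proposition~\ref{thm:2} promotes this to the required uniform estimate. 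Plugging this into the random walk representation of $\PP_{\bfq'}^{(p)}$ (Proposition~\ref{prop:10}) and using the subcriticality estimate $\EE_{\bfq'}^{(p)}[\sum_f \per(f)] \leq p(-1/m_{\bfq'} - 1)$ from Lemma~\ref{lem:33} (via the FKG argument of Lemma~\ref{lem:fkg}), one bounds the right-hand side linearly in $p$.

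Comparing the linear estimate with the lower bound $h(p) \geq \alpha p$ of Proposition~\ref{thm:2} gives the desired ratio bound. \emph{The main obstacle} is to verify that the resulting constant $s_0$ is uniformly less than $1$ across all $p$. For large $p$, this should reduce to a concrete inequality involving $\beta_Q$, the subcritical drift $m_{\bfq'}$, and $\alpha$, and to close the estimate one may have to exploit the sharper concentration of the $F$-tilted $\per_\inn$ at the smaller branch $ke^{-\beta_Q}$ beyond the leading-order asymptotic. For the finitely many small values $p \leq p_0$, one instead invokes the strict concavity of the operator $\mathcal{A}(u)(p) := -\log\EE_\infty^{(p)}[\exp(-\sum_f u(\per_\inn(f)))]$, whose fixed point is $h$: since $\per_\inn(f)$ is non-constant, the function $t \mapsto \mathcal{A}(th)(p)$ is strictly concave on $[0,1]$ with endpoint values $0$ and $h(p)$, so $\EE_\ttF^{(p)}[\sum_f h(\per_\inn(f))] = (D\mathcal{A}_h\,[h])(p) < h(p)$ strictly. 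A compactness argument over these finitely many $p$, together with the asymptotic analysis, then produces the uniform $s_0 < 1$.
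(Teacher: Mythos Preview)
Your treatment of small $p$ via strict concavity of $x \mapsto x\log(1/x)$ (equivalently, of your operator $\mathcal A$) is correct and coincides with the paper's Lemma~\ref{prop:5}.

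For large $p$ your approach has a genuine gap, which you yourself flag. Your linear estimate yields at best
\[
\EE_\ttF^{(p)}\Big[\sum_{f\in\fF_0} h(\per_\inn(f))\Big] \;\lesssim\; \alpha\, e^{-\beta_Q}\Big(\!-\tfrac{1}{m_{\bfq'}} - 1\Big) p,
\]
and comparing with $h(p)\sim\alpha p$ you would need $e^{-\beta_Q}\big(\!-1/m_{\bfq'} - 1\big) < 1$. You give no argument for this, and none is apparent: the drift $m_{\bfq'}$ depends implicitly on $F$, on $\bfq$, and on the ring distribution, and can in principle be arbitrarily close to $0$ (e.g.\ as $Q\uparrow 2$). ``Sharper concentration at the smaller branch'' refines only the constant in $g(k)\sim \alpha e^{-\beta_Q} k$, not the bound on $\EE_{\bfq'}^{(p)}\big[\sum_f \per(f)\big]$, so it cannot close the gap.

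The paper avoids computing the expectation directly and works with tail bounds instead. The missing ingredient is Lemma~\ref{lem:18}: under the \emph{unconditioned} law $\PP_\infty^{(p)}$ the gasket $M_0$ is a \emph{critical} non-generic Boltzmann map, hence has order $p^{3/2}$ faces (Lemma~\ref{lem:215}), so $\sum_{f\in\fF_0} h(\per_\inn(f))$ is typically of order $p^{3/2}\gg h(p)$ and a Chernoff bound gives $\PP_\infty^{(p)}\big(\sum_f h(\per_\inn(f)) < r\,h(p)\big) \le e^{-c_r p}$ for every fixed $r>0$. Feeding this into Bayes' rule (Lemma~\ref{lem:19}),
\[
\PP_\ttF^{(p)}\Big(\sum_f h(\per_\inn(f)) \in [s\,h(p), r\,h(p)]\Big) \;\le\; e^{h(p)}\cdot e^{-c_r p}\cdot e^{-s\,h(p)},
\]
and choosing $s<1$ close enough to $1$ that $(1-s)\,h(p) < \tfrac{1}{2}c_r p$ makes the right-hand side exponentially small; combined with the easy upper tail (Lemma~\ref{lem:16}) and integrated, this gives the large-$p$ estimate (Lemma~\ref{lem:50}). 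The mechanism producing $s_0<1$ is thus the criticality of $\bfq$ under $\PP_\infty^{(p)}$, not the subcriticality of $\bfq'$ under $\PP_\ttF^{(p)}$.
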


 By a simple iteration argument, we obtain from Proposition~\ref{prop:14} the following exponential decay for the expected total perimeter of each generation for a supercritical map conditioned to be finite.
  \begin{corollary}
    \label{lem:21}
    There exist constants $c>0$ and $s_0\in (0,1)$ such that for all $p\in \NN$ and all $i\in \NN$, we have
    \begin{equation}
      \label{eq:39}
       \EE_\ttF^{(p)}\bigg[\sum_{f\in \fF_{i}}h(\per_{\inn}(f))\bigg]\leq c(s_0)^i p.
    \end{equation}
  \end{corollary}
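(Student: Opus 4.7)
The plan is to combine Proposition~\ref{prop:14} with a Markov property of $\PP_\ttF^{(p)}$ to obtain an inductive bound in $i$, and then convert from $h(p)$ to $p$ using Proposition~\ref{thm:2}.

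First, I would establish the following branching/Markov property for $\PP_\ttF^{(p)}$: conditional on the partial data $(M_j,\fF_j)_{0\leq j\leq i}$ together with the rings $\{R(f)\}_{f\in \fF_i}$ (hence on all the inner perimeters $\per_\inn(f)$ for $f\in \fF_i$), the submaps $\{M|_f\}_{f\in \fF_i}$ are independent, with $M|_f$ distributed as $\PP_\ttF^{(\per_\inn(f))}$. This is immediate from Definition~\ref{def:model}: under $\PP_\infty^{(p)}$ the submaps are conditionally independent with law $\PP_\infty^{(\per_\inn(f))}$ (this is just the Markov structure of the iterative construction), and the event $\{M\textnormal{ is finite}\}$ factorizes as $\bigcap_{f\in \fF_i}\{M|_f\textnormal{ is finite}\}$ whenever $\fF_i\neq\varnothing$, so the conditioning factorizes too.

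Next, using this property, I apply Proposition~\ref{prop:14} one generation at a time. Note that $\fF_{i+1}$ is the disjoint union of the level-$0$ marked faces of each submap $M|_f$ for $f\in \fF_i$, and the value $\per_\inn(g)$ for $g\in \fF(M(f))\subseteq\fF_{i+1}$ matches what Proposition~\ref{prop:14} controls for a fresh chain started at perimeter $\per_\inn(f)$. Conditioning first on $(M_j,\fF_j)_{j\leq i}$ and the rings,
\begin{equation*}
\EE_\ttF^{(p)}\!\Bigg[\sum_{g\in \fF_{i+1}}\!h(\per_\inn(g))\Bigg]
=\EE_\ttF^{(p)}\!\Bigg[\sum_{f\in \fF_i}\EE_\ttF^{(\per_\inn(f))}\!\Bigg[\sum_{g'\in \fF_0}\!h(\per_\inn(g'))\Bigg]\Bigg]
\leq s_0\,\EE_\ttF^{(p)}\!\Bigg[\sum_{f\in \fF_i}\!h(\per_\inn(f))\Bigg].
\end{equation*}
Iterating starting from Proposition~\ref{prop:14} (which is exactly the base case $i=0$), I obtain
\begin{equation*}
\EE_\ttF^{(p)}\Bigg[\sum_{f\in \fF_i}h(\per_\inn(f))\Bigg]\leq (s_0)^{i+1}h(p)
\end{equation*}
for every $i\in \NN\cup\{0\}$.

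Finally, I convert $h(p)$ to $p$. By Lemma~\ref{lem:1} we have $F(p)>0$ for every $p\in \NN$, so $h$ is finite on $\NN$; combined with Proposition~\ref{thm:2}, which gives $h(p)/p\to \alpha\in(0,\infty)$, we conclude that $h(p)\leq C_1 p$ for some constant $C_1$ and all $p\in\NN$. Substituting and absorbing $C_1 s_0$ into the constant $c$ yields \eqref{eq:39}. There is no serious obstacle here; the only point that needs a brief justification is the factorization of the conditioning on $\{M\textnormal{ is finite}\}$, which is a direct consequence of the iterative Markovian construction and is the mechanism by which the supercritical process becomes ``subcritical'' (encoded by $s_0<1$) once conditioned to terminate.
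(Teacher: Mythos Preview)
Your proposal is correct and follows essentially the same approach as the paper: both use the Markov/factorization property of $\PP_\ttF^{(p)}$ to reduce the $(i+1)$th generation to a sum of independent first-generation problems, then apply Proposition~\ref{prop:14} and induct. Your version spells out the final conversion $h(p)\leq C_1 p$ (via Proposition~\ref{thm:2} and Lemma~\ref{lem:1}) more explicitly than the paper, which simply cites Proposition~\ref{prop:14} and says the claim follows by induction.
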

  \begin{proof}
      By the iterative construction of $\PP_\infty^{(p)}$ in Definition~\ref{def:model}, conditioning the whole planar map $M$ to be finite given $\{\per_\inn(f)\}_{f\in \fF_i}$ is equivalent to conditioning each submap $M(f)$ glued into the inner face of the ring $R(f)$ attached in $f \in \fF_i$ to be finite. In particular, this implies 
\begin{equation}\begin{split}
    \EE_\ttF^{(p)} \bigg[ \sum_{\tilde f \in \fF_{i+1}} h(\per_\inn(\tilde f)) \bigg| \{\per_\inn(f)\}_{f\in \fF_i} \bigg] &= \sum_{f\in \fF_{i} }\EE_\ttF^{(\per_\inn(f))} \bigg[ \sum_{\tilde f \in \fF(M(f))} h(\per_\inn(\tilde f)) \bigg]\\ &\leq s_0 \sum_{f\in \fF_{i}} h(\per_\inn(f)).
\end{split}\end{equation}
The claim follows immediately from Proposition~\ref{prop:14} by an induction on $i$.
  \end{proof}

The rest of this subsection deals with the proof of Proposition~\ref{prop:14}. The argument is somewhat convoluted due to the lack of a precise estimate for $F(p)$. The following simple lemma illustrates that the main challenge in the proof of Proposition \ref{prop:14} is the requirement that $s_0<1$.
\begin{lemma}
  \label{lem:16}
  There exist constants $C,c>0$ such that for all $p$ and all $s>1$, we have
  \begin{equation}
    \label{eq:19}
    \PP^{(p)}_{\ttF}\bigg(\sum_{f\in \fF_{0}} h(\per_{\inn}(f))>s h(p) \bigg)\leq Ce^{-c(s-1)p}.
  \end{equation}
\end{lemma}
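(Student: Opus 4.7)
The plan is to apply the exponential Markov (Chernoff) inequality after establishing the identity
\[
\EE_\ttF^{(p)}\bigg[\exp\bigg(\sum_{f\in\fF_0} h(\per_\inn(f))\bigg)\bigg] = e^{h(p)}.
\]
This identity will follow from a change-of-measure formula between $\PP_\infty^{(p)}$ and $\PP_\ttF^{(p)}$ on the $\sigma$-algebra generated by the outermost gasket $M_0$ and the rings $\{R(f)\}_{f\in\fF_0}$ attached to its interior faces.

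To derive this change of measure, I would exploit the recursive structure of Definition~\ref{def:model}: conditionally on $M_0$ and $\{R(f)\}_{f\in\fF_0}$, the submaps $\{M(f)\}_{f\in\fF_0}$ glued inside the respective inner faces are independent, with each $M(f)$ having law $\PP_\infty^{(\per_\inn(f))}$. Therefore $M$ is finite if and only if every $M(f)$ is finite, which yields
\[
\PP_\infty^{(p)}\big(M \text{ is finite}\,\big|\,M_0,\{R(f)\}_{f\in\fF_0}\big) = \prod_{f\in\fF_0} F(\per_\inn(f)).
\]
Since $\PP_\ttF^{(p)}=\PP_\infty^{(p)}(\,\cdot\,\mid M \text{ is finite})$, this gives the Radon--Nikodym derivative
\[
\frac{d\PP_\ttF^{(p)}}{d\PP_\infty^{(p)}}\bigg|_{\sigma(M_0,\{R(f)\}_{f\in\fF_0})} = \frac{1}{F(p)}\prod_{f\in\fF_0} F(\per_\inn(f)).
\]
Integrating the reciprocal $\prod_{f\in\fF_0} F(\per_\inn(f))^{-1}=\exp(\sum_{f\in\fF_0}h(\per_\inn(f)))$ against $\PP_\ttF^{(p)}$ then produces $1/F(p)=e^{h(p)}$, which is the claimed exponential moment identity.

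With this identity in hand, Markov's inequality applied to the nonnegative random variable $\exp(\sum_{f\in\fF_0}h(\per_\inn(f)))$ gives
\[
\PP_\ttF^{(p)}\bigg(\sum_{f\in\fF_0} h(\per_\inn(f)) > s h(p)\bigg) \leq e^{-s h(p)}\cdot e^{h(p)} = e^{-(s-1)h(p)}.
\]
To turn the exponent $h(p)$ into $cp$, I would invoke Proposition~\ref{thm:2}, giving $h(p)/p\to\alpha>0$ as $p\to\infty$, together with Lemma~\ref{lem:1}, giving $h(p)>0$ for every $p\in\NN$. Together these imply that there exists $c>0$ with $h(p)\geq cp$ for all $p\in\NN$, from which the bound $Ce^{-c(s-1)p}$ follows immediately. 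I do not expect any serious obstacle: the entire argument reduces to the observation that the multiplicative/branching structure of the iterative construction produces a clean factorization of the survival probability $F(p)$, which translates into the exponential moment identity above through a one-line change of measure.
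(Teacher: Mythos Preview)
Your proposal is correct and is essentially the same argument as the paper's, just phrased dually: the paper writes $\PP_\ttF^{(p)}(E_s)=F(p)^{-1}\PP_\infty^{(p)}(E_s)\,\PP_\infty^{(p)}(M\text{ finite}\mid E_s)$, bounds the last factor by $e^{-sh(p)}$ using the same factorization $\PP_\infty^{(p)}(M\text{ finite}\mid M_0,\{R(f)\})=\prod_f F(\per_\inn(f))$, and drops $\PP_\infty^{(p)}(E_s)\leq 1$; your exponential moment identity plus Markov's inequality is the same computation, repackaged. Both routes give the identical bound $e^{-(s-1)h(p)}$ and finish with the same appeal to Proposition~\ref{thm:2} and Lemma~\ref{lem:1} to get $h(p)\geq cp$.
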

\begin{proof}
Let $E_s$ denote the event $\{\sum_{f\in \fF_{0}}h(\per_{\inn}(f))>s h(p)\}$.  By Bayes' rule, we have 
  \begin{equation}
    \label{eq:20}
    \PP^{(p)}_{\ttF}(E_s )=F(p)^{-1}\PP^{(p)}_{\infty}(E_s )\PP^{(p)}_{\infty}(M \textrm{ is finite}\lvert E_s ).
  \end{equation}
  By Definition~\ref{def:model} of $\PP_\infty^{(p)}$ along with the relation $F(p)=e^{-h(p)}$, we have
  \begin{equation}
    \label{eq:21}
    \PP^{(p)}_{\infty}(M \textrm{ is finite}\lvert E_s )\leq e^{-s h(p)}.
  \end{equation}
  Using the trivial bound $\PP_{\infty}^{(p)}(E_s)\leq 1$, we obtain $\PP_{\ttF}^{(p)}(E_s)\leq e^{-(s-1) h(p)}$ from \eqref{eq:20}. Note that by Proposition \ref{thm:2} and Lemma~\ref{lem:1}, there is a constant $c>0$ such that $h(p)\geq cp$ for all $p$.
\end{proof}
Note that \eqref{eq:21} holds for any $s>0$. For the proof of Proposition \ref{prop:14}, we improve upon the trivial bound $\PP_{\infty}^{(p)}(E_s)\leq 1$ so that we can take $s \in (0,1)$. 
More precisely, we show the following estimate, which allows us to later replace $E_s$ with the event $\{\sum_{f\in \fF_0}h(\per_{\inn}(f)) \in [sh(p),rh(p)]\}$ for an appropriate choice of parameters $0<s<1<r$.

\begin{lemma}
  \label{lem:18}
 For each $r>0$, there is a constant $c_r>0$ depending on $r$ such that for all $p$, we have
  \begin{equation}
    \label{eq:24}
    \PP_{\infty}^{(p)}\bigg(\sum_{f\in \fF_0} h(\per_{\inn}(f)) <r h(p) \bigg) \leq e^{-c_rp}.
  \end{equation}
\end{lemma}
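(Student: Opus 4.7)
Let $Y:=\sum_{f\in\fF_0}h(\per_\inn(f))$. The strategy is to prove the stronger statement: for every $A>0$ there is $c_A>0$ such that $\PP_\infty^{(p)}(Y<Ap)\le e^{-c_A p}$ for all $p\in\NN$. By Proposition~\ref{thm:2} this implies the lemma, since $rh(p)\le 2r\alpha p$ for $p$ large, while for $p$ in any bounded range $\PP_\infty^{(p)}(Y\ge Ap)$ is bounded away from $0$, making the bound trivial there. The heuristic is that under $\PP_\infty^{(p)}$ the outermost gasket $M_0$ has of order $p^{3/2}$ internal faces, so $Y$ is typically of order $p^{3/2}$, far above the linear threshold $Ap$, and one should be able to concentrate $Y$ exponentially.

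The first step is to reduce to the iid random walk. With $(X_i)_{i\ge 1}$ iid satisfying $X_i+1\sim\mu_\bfq$ and $R_i$ a conditionally independent ring ratio of outer perimeter $X_i+1$, set $\widetilde Y:=\sum_{i=1}^{T_S^{(p)}}h((X_i+1)R_i)$. Proposition~\ref{prop:10}, together with the bounds $L_S^{(p)}\ge p$ and $\EE[(L_S^{(p)}+1)^{-1}]\ge Cp^{-3/2}$ from Lemma~\ref{lem:215}, gives
\[
\PP_\infty^{(p)}(Y<Ap)=\frac{\EE\bigl[(L_S^{(p)}+1)^{-1}\ind\{\widetilde Y<Ap\}\bigr]}{\EE\bigl[(L_S^{(p)}+1)^{-1}\bigr]}\le Cp^{1/2}\,\PP(\widetilde Y<Ap),
\]
reducing the problem to showing $\PP(\widetilde Y<Ap)\le e^{-c_A' p}$ under the iid measure.

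The iid estimate combines a hitting-time large deviation with a Cram\'er bound. Since $\mu_\bfq$ is critical non-generic of type $a=2$, we have $\mu_\bfq(k)\sim c_0 k^{-5/2}$ and $\EE[X_1]=0$, so Karamata--Tauberian asymptotics give $\log\EE[e^{-\lambda X_1}]\sim c_1\lambda^{3/2}$ as $\lambda\to 0^+$ for some $c_1>0$. Optional stopping of the martingale $\exp(-\lambda S_n-n\log\EE[e^{-\lambda X_1}])$ at $T_S^{(p)}\wedge Kp$ then gives, for each $K>0$,
\[
\PP(T_S^{(p)}\le Kp)\le\inf_{\lambda>0}\exp\bigl(-\lambda p+O(Kp\lambda^{3/2})\bigr)\le\exp(-c_3 p/K^2)
\]
upon optimizing at $\lambda\asymp K^{-2}$. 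On the complementary event $\{T_S^{(p)}\ge Kp\}$, the summands $Z_i:=h((X_i+1)R_i)\ge 0$ are iid with positive mean $\mu_Z\ge h(1)\mu_\bfq(1)\PP_\ring^{(1)}(\per_\inn\ge 1)>0$, positivity following from the non-triviality condition of Definition~\ref{def:ring}; Cram\'er's lower-tail bound, immediate here since $\EE[e^{-tZ_1}]\le 1$ for all $t\ge 0$, yields $\PP\bigl(\sum_{i=1}^{Kp}Z_i<Kp\mu_Z/2\bigr)\le e^{-KpI}$ for some $I=I(\mu_Z/2)>0$. Choosing $K=K(A)$ so that $K\mu_Z/2>A$, the two events together force $\widetilde Y\ge Ap$, giving $\PP(\widetilde Y<Ap)\le e^{-c_3p/K^2}+e^{-KpI}\le e^{-c_A p}$; the $p^{1/2}$ factor from the reduction is absorbed for $p$ large, and the bound for small $p$ follows from the triviality noted in the first paragraph.

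The main obstacle is the hitting-time large deviation: it crucially uses the $\lambda^{3/2}$ behaviour of the Laplace exponent coming from the $a=2$ non-generic tail of $\mu_\bfq$. One cannot avoid passing through the random-walk side: a direct Chernoff bound on $\EE_\infty^{(p)}[e^{-sY}]\le F(p)^s$ for $s\in(0,1]$ only produces $\PP_\infty^{(p)}(Y<rh(p))\le e^{-(1-r)h(p)}$, which is vacuous once $r\ge 1$, so exploiting $T_S^{(p)}\sim p^{3/2}$ via the iid reduction is essential to cover all $r>0$.
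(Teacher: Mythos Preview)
Your proof is correct and follows essentially the same route as the paper's: reduce to the random walk via Proposition~\ref{prop:10}, show that $T_S^{(p)}\le Kp$ has probability $\le e^{-cp/K^2}$, and then apply a Chernoff lower-tail bound to the i.i.d.\ summands. The only substantive difference is that you re-derive the hitting-time large deviation from the $\lambda^{3/2}$ Laplace exponent, whereas the paper simply quotes it as~\eqref{eq:lowtail} from Lemma~\ref{lem:215}; and you work directly with $Z_i=h((X_i+1)R_i)$ rather than first passing to $(X_i+1)R_i$ via $h(p)\asymp p$. One small nit: your explicit lower bound $\mu_Z\ge h(1)\mu_\bfq(1)\PP_\ring^{(1)}(\per_\inn\ge 1)$ need not be strictly positive since nothing forces $q_1>0$, but the conclusion $\mu_Z>0$ is still correct (pick any $k$ with $\mu_\bfq(k)>0$, which exists by the tail assumption, and use the non-triviality of $\PP_\ring^{(k)}$).
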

\begin{proof}
  As in the proof of Lemma~\ref{lem:11}, we use the random walk encoding (Proposition~\ref{prop:10}) of $M_0$ sampled from $\PP_\infty^{(p)}$, which by construction is a critical non-generic Boltzmann map sampled from $\PP_\bfq^{(p)}$. Let $S_n = X_1 + \cdots + X_n$ be a random walk with i.i.d.\ step distribution $\PP(X_i=k-1)=\mu_{\bfq}(k)$ and, given $S$, let $R_i$ be sampled conditionally independently  for each $i\in \NN$ from the marginal law of $\Rat(R)$ under $\PP_\ring^{(X_i+1)}$. Since $h(p)=-\log F(p)\asymp p$ (Proposition \ref{thm:2}), there exists a constant $a>0$ such that 
  \begin{equation}
    \label{eq:25}
    \PP_{\infty}^{(p)}\bigg(\sum_{ f\in \fF_0} h(\per_{\inn}(f)) <r h(p) \bigg) \leq \PP_{\infty}^{(p)}\bigg(\sum_{f\in \fF_0} \per_{\inn}(f) < arp \bigg).
    \end{equation}
    By Proposition~\ref{prop:10} as well as the bound $1/\EE [(L^{(p)}_S+1)^{-1}]=O(p^{3/2})$ coming from Lemma~\ref{lem:215}, we have
    \begin{equation}\begin{split}
 \PP_{\infty}^{(p)}\bigg(\sum_{f\in \fF_0} \per_{\inn}(f) < arp \bigg)&=\frac{\EE\big[(L^{(p)}_S+1)^{-1}\ind\big\{\sum_{i=1}^{T_S^{(p)}}(X_i+1)R_i < arp\big\}\big]}{\EE [(L^{(p)}_S+1)^{-1}]}\\
 &\leq C_0 p^{3/2} \, \PP\bigg(\sum_{i=1}^{T_S^{(p)}}(X_i+1)R_i < arp \bigg) 
  \end{split}\end{equation}
  for some constant $C_0>0$.
  
To complete the proof, it suffices to show that for some constant $c_r$ depending on $r$, we have
\begin{equation}
  \label{eq:122}
  \PP\bigg(\sum_{i=1}^{T_S^{(p)}}(X_i+1)R_i < arp\bigg) \leq e^{-c_r p}.
\end{equation}
  The key ingredient is Lemma~\ref{lem:215}, which states that that $T_S^{(p)}$ is very unlikely to be much smaller than $p^{3/2}$. Consequently, for any constant $c>0$, there exist positive constants $C_2$ and $C_3$ such that 
  \begin{equation}
  \label{eq:lowtail}
   \PP(T_S^{(p)}\leq cp)\leq C_2e^{-C_3p}   
  \end{equation}
   for all $p$. 
Each of the i.i.d.\ nonnegative random variables $\{(X_i+1)R_i\}_{i\in \NN}$ have a positive probability of being nonzero. By the standard Chernoff bound, we know that there exist positive constants $C_4$ and $C_5$ such that for all $\ell\in \NN$, we have
\begin{equation}
  \label{eq:123}
  \PP\bigg(\sum_{i=1}^n (X_i+1)R_i \leq C_4 n \bigg)\leq e^{-C_5 n}.
\end{equation}
Thus,
\begin{equation}
  \label{eq:124}
  \PP\bigg(\sum_{i=1}^{T_S^{(p)}} (X_i+1)R_i \leq arp\bigg)\leq \PP\big(T_S^{(p)}< arp/C_4\big)+ \sum_{n = \lceil arp/C_4\rceil }^\infty \PP\bigg(\sum_{i=1}^n (X_i+1)R_i \leq C_4 n \bigg) .
\end{equation}
We complete the proof by substituting \eqref{eq:lowtail} and \eqref{eq:123} to the right-hand side.
\end{proof}

We now improve upon the result of Lemma~\ref{lem:16}.
\begin{lemma}
  \label{lem:19}
  For each $r>1$, there is a constant $c_r>0$ such that for any $s\in (0,1)$ and for all $p$ large enough, we have
  \begin{equation}
    \PP_\ttF^{(p)}\bigg(\sum_{f\in \fF_{0}} h(\per_{\inn}(f)) \in [sh(p),rh(p)] \bigg)\leq e^{(1-s)h(p)-c_rp}. 
  \end{equation}
\end{lemma}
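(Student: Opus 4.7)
The plan is to refine the Bayes' rule argument used in Lemma~\ref{lem:16}, upgrading the trivial bound $\PP^{(p)}_\infty(E_s)\leq 1$ employed there to the exponential estimate provided by Lemma~\ref{lem:18}. The key observation is that the event $E_{s,r} := \{\sum_{f\in \fF_0} h(\per_{\inn}(f)) \in [sh(p),rh(p)]\}$ gives us two sources of smallness at once: from $\PP^{(p)}_\ttF$'s perspective, the lower bound $sh(p)$ yields the gain $e^{-sh(p)}$ via the conditioning, while from $\PP^{(p)}_\infty$'s perspective, the upper bound $rh(p)$ triggers the rare-event estimate of Lemma~\ref{lem:18}.

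First, I would rewrite $\PP^{(p)}_\ttF(E_{s,r})$ using Bayes' rule, as in the proof of Lemma~\ref{lem:16}:
\begin{equation}
    \PP^{(p)}_\ttF(E_{s,r}) = F(p)^{-1} \,\PP^{(p)}_\infty(E_{s,r}) \,\PP^{(p)}_\infty(M \text{ is finite} \mid E_{s,r}).
\end{equation}
Next, conditional on the collection $\{\per_{\inn}(f)\}_{f\in \fF_0}$, the iterative construction in Definition~\ref{def:model} shows that the probability that $M$ is finite equals $\prod_{f\in \fF_0} F(\per_{\inn}(f)) = \exp\bigl(-\sum_{f\in \fF_0} h(\per_{\inn}(f))\bigr)$. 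On $E_{s,r}$ this exponent is at most $-sh(p)$, so $\PP^{(p)}_\infty(M \text{ is finite} \mid E_{s,r}) \leq e^{-sh(p)}$. Combined with $F(p)^{-1} = e^{h(p)}$, this yields
\begin{equation}
    \PP^{(p)}_\ttF(E_{s,r}) \leq e^{(1-s)h(p)} \,\PP^{(p)}_\infty(E_{s,r}).
\end{equation}

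Finally, I would absorb the extra factor using Lemma~\ref{lem:18}. Since $h(p)>0$ for $p\in\NN$, the event $E_{s,r}$ is contained in the event $\{\sum_{f\in \fF_0} h(\per_{\inn}(f)) < 2r\,h(p)\}$. Applying Lemma~\ref{lem:18} with parameter $2r$ in place of $r$ gives a constant $c_r := c_{2r} > 0$ (depending only on $r$) such that $\PP^{(p)}_\infty(E_{s,r}) \leq e^{-c_r p}$ for all sufficiently large $p$. Substituting this bound into the previous display yields the claimed estimate $\PP^{(p)}_\ttF(E_{s,r}) \leq e^{(1-s)h(p) - c_r p}$.

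There is no real obstacle here beyond bookkeeping; the structural input (that conditioning on $E_{s,r}$ under $\PP_\infty$ is highly suppressed once $r$ is fixed) was the hard part, and that is precisely the content of Lemma~\ref{lem:18}, whose proof relied on the random walk encoding of Proposition~\ref{prop:10} together with the subcriticality $\mu_{\bfq'}([k,\infty)) = O(e^{-ck})$ from Lemma~\ref{lem:15}. With Lemma~\ref{lem:18} in hand, the present statement is essentially a packaging of the Bayes' rule trick that already appeared in Lemma~\ref{lem:16}.
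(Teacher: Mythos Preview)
Your proof is correct and follows essentially the same Bayes' rule argument as the paper: decompose $\PP_\ttF^{(p)}(E_{s,r})$ via $F(p)^{-1}\PP_\infty^{(p)}(E_{s,r})\PP_\infty^{(p)}(M\text{ finite}\mid E_{s,r})$, bound the last factor by $e^{-sh(p)}$ using the lower cutoff, and bound $\PP_\infty^{(p)}(E_{s,r})$ by Lemma~\ref{lem:18} using the upper cutoff. One small inaccuracy in your closing commentary: the proof of Lemma~\ref{lem:18} works with the \emph{critical} walk associated to $\bfq$ (via Proposition~\ref{prop:10} and Lemma~\ref{lem:215}), not the subcritical tail estimate of Lemma~\ref{lem:15}; this does not affect your argument here.
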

\begin{proof}
  Let $E$ denote the event $\{\sum_{f\in \fF_0} h(\per_{\inn}(f)) \in [sh(p),rh(p)]\}$. By using Bayes's rule as in \eqref{eq:20}, we have
  \begin{equation}
    \label{eq:30}
     \PP^{(p)}_{\ttF}(E)=F(p)^{-1}\PP^{(p)}_{\infty}(E)\PP^{(p)}_{\infty}(M \textrm{ is finite}\lvert E ).
   \end{equation}
   From the iterative construction of $M$ under $\PP_\infty^{(p)}$, we have the inequality $\PP_\infty^{(p)} ( M \textnormal{ is finite} |E) \leq e^{-s h(p)}$ just as in \eqref{eq:21}.
   Recalling $F(p)=e^{-h(p)}$ and applying Lemma \ref{lem:18}, we complete the proof by choosing a slighter smaller $c_r$ if necessary.
 \end{proof}

 We are now ready to complete the proof of Proposition \ref{prop:14} for large $p$.

 \begin{lemma}
  \label{lem:50}
  There exists constants $s_0\in (0,1)$ and $p_0\in \NN$ such that for all $p\geq p_0$, we have
  \begin{equation}\label{eq:193}
    \EE^{(p)}_{\ttF}\bigg[\sum_{f\in \fF_{0}}h(\per_{\inn}(f))\bigg]\leq s_0 h(p).
  \end{equation}
\end{lemma}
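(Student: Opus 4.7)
The plan is to normalize and write the expectation as a tail integral. Let $X := h(p)^{-1}\sum_{f \in \fF_0} h(\per_\inn(f))$, so the target inequality becomes $\EE_\ttF^{(p)}[X] \leq s_0$. Then
\begin{equation*}
  \EE_\ttF^{(p)}[X] = \int_0^\infty \PP_\ttF^{(p)}(X > t)\,dt
  = \int_0^{s_\ast} + \int_{s_\ast}^r + \int_r^\infty,
\end{equation*}
where the thresholds $s_\ast \in (0,1)$ and $r > 1$ will be chosen in terms of the constants $\alpha$ from Proposition~\ref{thm:2} and $c_r$ from Lemma~\ref{lem:19}. The first piece is bounded trivially by $s_\ast$, and the idea is to show that the two remaining pieces tend to $0$ as $p \to \infty$ provided $s_\ast$ is taken close enough to $1$.

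For the middle piece, I would use monotonicity to dominate $\PP_\ttF^{(p)}(X > t) \leq \PP_\ttF^{(p)}(X \in [s_\ast,r]) + \PP_\ttF^{(p)}(X > r)$ uniformly in $t \in [s_\ast, r]$, and then invoke Lemma~\ref{lem:19} (with $s = s_\ast$) for the first term and Lemma~\ref{lem:16} (with $s = r$) for the second:
\begin{equation*}
  \int_{s_\ast}^r \PP_\ttF^{(p)}(X > t)\,dt \leq (r - s_\ast)\Bigl( e^{(1-s_\ast) h(p) - c_r p} + C e^{-c(r-1)p} \Bigr).
\end{equation*}
For the tail, Lemma~\ref{lem:16} gives $\PP_\ttF^{(p)}(X > t) \leq C e^{-c(t-1)p}$, so $\int_r^\infty \PP_\ttF^{(p)}(X > t)\,dt \leq Ce^{-c(r-1)p}/(cp)$.

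The main obstacle, and the reason the argument is delicate, is that the bound from Lemma~\ref{lem:19} is only useful when the exponent $(1-s_\ast)h(p) - c_r p$ is genuinely negative. Here I would use Proposition~\ref{thm:2} to fix a constant $C_h$ with $h(p) \leq C_h\, p$ for all $p \geq p_1$, fix an arbitrary $r > 1$ (so $c_r > 0$ is determined), and then pick $s_\ast \in (0,1)$ close enough to $1$ that $(1-s_\ast) C_h < c_r / 2$. With this choice, the middle and tail pieces are both $O(e^{-c' p})$ for some $c' > 0$ and thus vanish as $p \to \infty$.

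Putting this together, $\EE_\ttF^{(p)}[X] \leq s_\ast + o(1)$ as $p \to \infty$. Setting $s_0 := (s_\ast + 1)/2 \in (s_\ast, 1)$ and choosing $p_0$ large enough that the $o(1)$ contribution is at most $(1 - s_\ast)/2$, we obtain $\EE_\ttF^{(p)}[X] \leq s_0$ for all $p \geq p_0$, which is \eqref{eq:193}. The only constraints entering the choice of $s_0$ are the asymptotic linear growth rate of $h$ and the constant $c_r$ produced by Lemma~\ref{lem:18}; no sharper information about $F(p)$ is needed.
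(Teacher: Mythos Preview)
Your proposal is correct and follows essentially the same route as the paper's proof. The paper phrases the computation as a bound on $\EE_\ttF^{(p)}[\sum_f h(\per_\inn(f)) - s_0 h(p)]_+$ rather than the full tail integral, but the decomposition is identical: Lemma~\ref{lem:19} (combined with the linear upper bound $h(p)\le C_h p$ from Proposition~\ref{thm:2}) controls the middle range $[s_0 h(p),(1+\epsilon)h(p)]$, and Lemma~\ref{lem:16} controls the upper tail, with the threshold $s_0$ pushed close enough to $1$ that the exponent $(1-s_0)h(p)-c_r p$ is negative.
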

 \begin{proof}
  Fix $\epsilon>0$. By combining Proposition~\ref{thm:2} with Lemma~\ref{lem:19} for $s\in (0,1)$ sufficiently close to $1$, we find that there exist constants $p_0\in \NN$, $c,C>0$, and $s_0\in (0,1)$ such that for all $p\geq p_0$,
   \begin{equation}
     \label{eq:31}
     \PP_\ttF^{(p)} \bigg(\sum_{f\in \fF_0} h(\per_{\inn}(f)) \in [s_0 h(p),(1+\epsilon)h(p)] \bigg)\leq  Ce^{-cp}.
   \end{equation}
   By Lemma \ref{lem:16}, the constants $p_0$, $C$, and $c$ can be chosen such that for all $p\geq p_0$ and $\alpha \geq  (1+\epsilon)$, 
   \begin{equation}
     \label{eq:32}
     \PP_\ttF^{(p)} \bigg(\sum_{f\in \fF_0} h(\per_{\inn}(f)) > \alpha h(p) \bigg)\leq  Ce^{-c(\alpha-1)p}.
   \end{equation}
    Combining the two estimates, we that for all $p\geq p_0$,
   \begin{equation}
     \label{eq:33}\begin{split}
      &\EE_\ttF^{(p)}\bigg[\sum_{f\in \fF_0} h(\per_{\inn}(f)) - s_0h(p)\bigg]_+ \\
      & \leq (1+\epsilon-s_0)h(p)\cdot \PP_\ttF^{(p)} \bigg(\sum_{f\in \fF_0} h(\per_{\inn}(f)) \in [s_0 h(p), (1+\epsilon) h(p)] \bigg) \\ & \hspace{1.2in}+ \EE_\ttF^{(p)}\bigg[\bigg(\sum_{f\in \fF_0} h(\per_{\inn}(f)) \bigg) \cdot \ind \bigg\{\sum_{f\in \fF_0} h(\per_{\inn}(f)) > (1+\epsilon) h(p)\bigg\} \bigg]  \\
      &\leq (1+\epsilon-s_0)h(p) \cdot Ce^{-cp} + \int_{1+\epsilon}^\infty \PP_\ttF^{(p)} \bigg(\sum_{f\in \fF_0} h(\per_{\inn}(f)) > \alpha h(p)\bigg) h(p)\,d\alpha \\
      &\leq \widetilde C p(e^{-cp}+e^{-c\epsilon p})
    \end{split}\end{equation}
    for some constant $\widetilde C>0$. Since this bound tends to 0 as $p\to\infty$, we obtain \eqref{eq:193} by slightly increasing $p_0$ and $s_0$ if necessary.
  \end{proof}
   The proof of Proposition~\ref{prop:14} for smaller values of $p$ is achieved via the following lemma.
  \begin{lemma}
    \label{prop:5}
    For each $p_0\in \NN$, there is a constant $s_0\in (0,1)$ such that for all $p\in [\![1,p_0]\!]$, we have
    \begin{equation}
      \label{eq:35}
      \EE^{(p)}_{\ttF} \bigg[\sum_{f\in \fF_0}h(\per_{\inn}(f))\bigg]\leq s_0 h(p).
    \end{equation}
  \end{lemma}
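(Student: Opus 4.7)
The plan is to apply a strict form of Jensen's inequality to the strictly concave function $\phi(x)=-x\log x$ on $(0,1]$, after using Bayes' rule to rewrite the left-hand side as an expectation under $\PP_\infty^{(p)}$.

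First, I would recast the quantity. Setting $Y := \prod_{f\in \fF_0} F(\per_\inn(f)) \in (0,1]$, the recursive relation $F(p) = \EE_\infty^{(p)}[Y]$ and Bayes' rule give
\begin{equation*}
    \EE_\ttF^{(p)}\bigg[\sum_{f\in \fF_0} h(\per_\inn(f))\bigg] \;=\; \EE_\ttF^{(p)}[-\log Y] \;=\; \frac{\EE_\infty^{(p)}[-Y\log Y]}{F(p)}.
\end{equation*}
Since $\phi(Y)\leq 1/e$ pointwise on $(0,1]$, the numerator is bounded by $1/e$, and Lemma~\ref{lem:1} ($F(p)>0$) guarantees the left-hand side is finite.

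Second, I would apply Jensen to the strictly concave $\phi$:
\begin{equation*}
    \EE_\infty^{(p)}[-Y\log Y] \;\leq\; -\EE_\infty^{(p)}[Y]\log \EE_\infty^{(p)}[Y] \;=\; F(p)\,h(p),
\end{equation*}
with equality if and only if $Y$ is $\PP_\infty^{(p)}$-almost surely constant. This already gives the non-strict bound $\EE_\ttF^{(p)}[\sum_f h(\per_\inn(f))] \leq h(p)$.

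Third, I would verify non-constancy of $Y$ for each $p\in\NN$, which upgrades Jensen to a strict inequality. On the one hand, $Y=1$ on the event $\{\fF_0=\varnothing\}$; via the random walk encoding of Proposition~\ref{prop:10}, this event contains the case $X_1=\cdots =X_p=-1$ and thus has probability at least a positive multiple of $(\mu_\bfq(0))^p$, which is strictly positive since $\bfq$ is admissible. On the other hand, $\mu_\bfq$ is supported on all of $\NN^\#$ (as $\bfq$ is non-generic critical of type $a=2$), so $\PP_\bfq^{(p)}(\fF_0\neq \varnothing)>0$, and by the non-triviality condition in Definition~\ref{def:ring} at least one ring then has $\per_\inn>0$ with positive conditional probability, yielding $Y<1$ with positive probability. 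Strict Jensen therefore gives
\begin{equation*}
    s(p) := \frac{\EE_\ttF^{(p)}\big[\sum_{f\in\fF_0}h(\per_\inn(f))\big]}{h(p)} \;<\; 1
\end{equation*}
for each $p\in [\![1,p_0]\!]$. Setting $s_0:=\max_{p\in [\![1,p_0]\!]} s(p)$, the maximum of finitely many values strictly below $1$ is strictly below $1$, and the lemma follows.

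The main (mild) obstacle is simply ensuring that the Jensen argument is legal, i.e., that the numerator $\EE_\infty^{(p)}[-Y\log Y]$ and hence the left-hand side are finite; this is immediate from the uniform bound $\phi\leq 1/e$. There is no quantitative difficulty because finiteness of $p_0$ lets us absorb all $p$-dependent constants into a single $s_0<1$.
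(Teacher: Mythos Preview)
Your argument is correct and follows essentially the same route as the paper's proof: both rewrite the expectation under $\PP_\ttF^{(p)}$ via Bayes' rule as $F(p)^{-1}\EE_\infty^{(p)}[-Y\log Y]$ with $Y=\prod_{f\in\fF_0}F(\per_\inn(f))$, apply Jensen's inequality to the strictly concave function $x\mapsto -x\log x$, and then take the maximum over finitely many $p$. You supply a slightly more explicit justification for the strictness of Jensen (checking that $Y$ is non-constant), which the paper leaves implicit.
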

  \begin{proof}
    Let us denote
    \begin{equation}
      \label{eq:36}
      Z :=\PP_\infty^{(p)}\big(M \textnormal{ is finite} \big\lvert \{\per_{\inn}(f)\}_{f\in \fF_0} \big)=\prod_{f\in \fF_0}F(\per_{\inn}(f)).
    \end{equation}
    Note that $\EE_\infty^{(p)}[Z] = F(p)$. Moreover,
    \begin{equation}\begin{split}
      \label{eq:37}
      \EE_\ttF^{(p)}\bigg[\sum_{f\in \fF_0}h(\per_{\inn}(f))\bigg]
      &=\frac{\EE^{(p)}_\infty\big[\big(\sum_{f\in \fF_0}h(\per_{\inn}(f))\big) \,\PP^{(p)}_\infty\big(M \textnormal{ is finite}\big\lvert \sum_{f\in \fF_0}h(\per_{\inn}(f))\big) \big]}{F(p)}\\
      &=F(p)^{-1}\EE_\infty^{(p)}[Z \log (1/Z)].
    \end{split}\end{equation}
    Since the function $x\mapsto x\log(1/x)$ is strictly concave for $x>0$, by Jensen's inequality, 
    \begin{equation}
      \label{eq:38}
      \EE_\ttF^{(p)}\bigg[\sum_{f\in \fF_0}h(\per_{\inn}(f))\bigg]< F(p)^{-1}(\EE_\infty^{(p)}[Z]) \log(1/(\EE_\infty^{(p)}[Z])) =\log F(p)^{-1} = h(p).
    \end{equation}
    In particular, the inequality is strict, so we can find $s_0 \in (0,1)$ as in \eqref{eq:35} for any finite set of $p$.
  \end{proof}
  \begin{proof}[Proof of Proposition \ref{prop:14}]
      Combine Lemmas~\ref{lem:50} and \ref{prop:5}.
  \end{proof}

  \subsection{Estimates on the largest volume of submaps of finite supercritical maps} \label{sec:tail-bounds}
  The goal of this subsection is to obtain the following estimate on the maximum volume of submaps inserted into each face $f\in \fF_0$ of the gasket $M_0$, where the volume of a map refers to the total number of its vertices. 
  This result will be used in Lemma~\ref{prop:8} to show that the submaps $\{M|_f\}_{f\in \fF_0}$, which are parts of $M$ inside $f$, become negligible as $p\to\infty$ when we rescale  so that the gasket $M_0$ converges to the continuum random tree.
  \begin{proposition}
  \label{lem:23}
  For $M$ sampled from $\PP_\ttF^{(p)}$,
  \begin{equation}
    \label{eq:137}
    \frac{1}{\sqrt p} \sup_{f\in \fF_0} \Vol(M\lvert_f)\stackrel{d}{\rightarrow} 0
  \end{equation}
  as $p\rightarrow \infty$.
\end{proposition}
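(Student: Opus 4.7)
The plan is to combine Lemma~\ref{lem:11} (bounding $\sup_{f\in\fF_0}\per_\inn(f)$) with a sharp tail estimate for $\Vol(M)$ under $\PP_\ttF^{(q')}$, and then close via a union bound over the faces of $\fF_0$.

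By Lemma~\ref{lem:11}, we restrict to the event $\{\sup_{f\in\fF_0}\per_\inn(f)\leq p^{1/\Delta}\}$, which has probability $1-o(1)$ for some $\Delta>2$. By Definition~\ref{def:model}, conditionally on $\{\per_\inn(f)\}_{f\in\fF_0}$ the submaps $\{M|_f\}_{f\in\fF_0}$ are independent with $M|_f\sim\PP_\ttF^{(\per_\inn(f))}$, so a union bound gives
\[ \PP_\ttF^{(p)}\Big(\sup_{f\in\fF_0}\Vol(M|_f) > \delta\sqrt p\Big) \leq o(1) + \EE_\ttF^{(p)}\bigg[\sum_{f\in\fF_0}\PP_\ttF^{(\per_\inn(f))}\!\big(\Vol(M)>\delta\sqrt p\big)\,\ind\{\per_\inn(f)\leq p^{1/\Delta}\}\bigg]. \]

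The main step is a tail bound of the form $\PP_\ttF^{(q')}(\Vol(M) > t) \leq C q'/t^{2+\delta}$ for $t \geq C_0 q'$, where $\delta>0$ is the ring-moment exponent from Definition~\ref{def:ring}. Heuristically, $\Vol(M)$ is of order $q'$ typically (since $\EE_\ttF^{(q')}[\Vol(M)]=O(q')$ by iterating the decomposition $\Vol(M)\leq\Vol(M_0)+\sum_f[\Vol(R(f))+\Vol(M|_f)]$ and using the contraction $\EE_\ttF^{(q')}[\sum_f h(\per_\inn(f))]\leq s_0 h(q')$ from Proposition~\ref{prop:14}), and the dominant way for $\Vol(M)$ to be much larger than $q'$ is via a single anomalously large ring: the probability that some ring at the outermost level has $\per_\inn(f)\gtrsim t$ is at most $C\sum_f\per(f)^{2+\delta}/t^{2+\delta}$, and $\EE\sum_f\per(f)^{2+\delta}=O(q')$ since the outer perimeters of the subcritical gasket have exponential tails by Lemma~\ref{lem:15}. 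To make this rigorous, we iterate the decomposition, using (i) the exponential concentration of $\Vol(M_0)=1+L_S^{(q')}$ coming from Lemma~\ref{lem:15} and the bijection of Proposition~\ref{prop:10}; (ii) the non-thickness bound $\Vol(R(f))\leq C(\per(f)+\per_\inn(f))$ and the $(2+\delta)$-moment of $\Rat(R)$ from Definition~\ref{def:ring}; and (iii) the subcriticality contraction from Proposition~\ref{prop:14} together with $h(q')\asymp q'$ from Proposition~\ref{thm:2}.

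Substituting this tail bound, for $q'_f:=\per_\inn(f)\leq p^{1/\Delta}$ we have $\delta\sqrt p\geq C_0 q'_f$ once $p$ is large, so
\[ \sum_{f\in\fF_0}\PP_\ttF^{(q'_f)}\!\big(\Vol(M)>\delta\sqrt p\big)\,\ind\{q'_f\leq p^{1/\Delta}\} \leq \frac{C}{(\delta\sqrt p)^{2+\delta}}\sum_{f\in\fF_0} q'_f. \]
Taking expectation and using $\EE_\ttF^{(p)}[\sum_f\per_\inn(f)]=O(p)$ from Proposition~\ref{prop:14}, the right-hand side is $O(p^{1 - (2+\delta)/2})=O(p^{-\delta/2})\to 0$.

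The main obstacle will be the rigorous iterative proof of the tail bound $\PP_\ttF^{(q')}(\Vol(M)>t)\leq Cq'/t^{2+\delta}$. Although the heuristic is clear (a single anomalously large ring dominates the tail), formalizing it requires tracking how heavy-tailed ring perimeters propagate across generations of the construction and exploiting the contraction factor $s_0<1$ from Proposition~\ref{prop:14} to prevent amplification. Without this subcriticality, neither the first-moment bound $\EE_\ttF^{(q')}[\Vol(M)]=O(q')$ nor the desired tail bound would hold.
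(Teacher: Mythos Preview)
Your overall architecture matches the paper: restrict to $\{\sup_{f\in\fF_0}\per_\inn(f)\leq p^{1/\Delta}\}$ via Lemma~\ref{lem:11}, then union bound over the $O(p)$ faces of $\fF_0$ using a tail estimate for $\Vol(M)$ under $\PP_\ttF^{(k)}$ with $k\leq p^{1/\Delta}$. The gap is the tail estimate itself: you propose $\PP_\ttF^{(q')}(\Vol(M)>t)\leq Cq'/t^{2+\delta}$ but do not prove it, and your heuristic (``a single anomalously large ring dominates'') is misleading. Under $\PP_\ttF^{(q')}$ the law of $\per_\inn(f)$ given $\per(f)$ is the ring law tilted by $F(\per_\inn(f))\sim e^{-\alpha\per_\inn(f)}$, so large inner perimeters are \emph{exponentially} suppressed after conditioning, not merely polynomially via the $(2+\delta)$-moment of $\Rat(R)$. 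Thus the mechanism you identify is not the leading one, and the ``iterate the decomposition'' step---allocating the excess $t$ among independent submaps and summing across generations---has no clean recursive inequality to close.

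The paper sidesteps this entirely. First it replaces $\Vol(M)$ by the total ring perimeter $\TPerm(M)$ via the deterministic bound $\Vol(M)\leq C\cdot\TPerm(M)$ (Lemma~\ref{lem:4}). Then, rather than iterate, it proves directly via Bayes' rule that $\PP_\ttF^{(k)}(\sum_{f\in\fF_i}\per_\inn(f)\geq t\text{ for some }i)\leq e^{-ct}/F(k)$: on this event the conditional probability of finiteness is at most $e^{-ct}$, and dividing by $F(k)$ gives the bound (Lemma~\ref{lem:36}). Combined with the exponential tail on the number of generations $T_{\mathrm{ext}}(M)$ (Lemma~\ref{lem:35}, itself a one-line Markov application of Corollary~\ref{lem:21}), this yields $\PP_\ttF^{(k)}(\TPerm(M)\geq tk)\leq Ce^{-c\sqrt t}$ for $t\geq k^\delta$ (Lemma~\ref{lem:37}), which is far stronger than your polynomial bound and trivially beats the $O(p)$ union cost. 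The key idea you are missing is this Bayes' rule trick: it converts the conditioning on finiteness directly into an exponential penalty on large level-wise perimeter sums, with no need to track how tails propagate across generations.
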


The main idea for the proof of Proposition~\ref{lem:23} is that instead of the volume of the whole map, it suffices to keep track of the sum of the perimeters of the rings. 
More precisely, let us denote the total perimeter of a map $M$ sampled from $\PP_\ttF^{(p)}$ over all generations as
  \begin{equation}
      \label{eq:46}
     \TPerm(M) := p + \sum_{f\in \bigcup_{i\in \NN^\#}\fF_i}(\per_{\out}(f) + \per_{\inn}(f))
   \end{equation}
    where $\per_\out(f)$ and $\per_\inn(f)$ are, respectively, the outer and inner half-perimeters of the ring $R(f)$ inserted into the face $f \in \bigcup_{i\in \NN^\#}\fF_i$. 

\begin{lemma}  
    \label{lem:4}
    There is a constant $C>1$ such that for any $p\in \NN$, if $M$ is a map sampled from $\PP_\ttF^{(p)}$, then $\Vol(M)\leq C \cdot \TPerm(M)$ almost surely. 
  \end{lemma}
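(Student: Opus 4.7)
My plan is to bound $\Vol(M)$ by separately controlling the contributions of the Boltzmann maps (the outermost gasket $M_0$ and all inserted gaskets $M(f)$) and the rings $R(f)$, and then showing that both contributions are linear in $\TPerm(M)$. The key tools are Euler's formula (which controls the volume of a bipartite planar map by its boundary perimeter and the perimeters of its internal faces) and the non-thickness condition (2) of Definition~\ref{def:ring} (which controls the volume of a ring by its two perimeters).

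First, I would write
\begin{equation*}
    \Vol(M) \leq \Vol(M_0) + \sum_{f \in \bigcup_{i \in \NN^\#} \fF_i} \bigl( \Vol(R(f)) + \Vol(M(f)) \bigr),
\end{equation*}
which is valid since boundary vertices are at worst double-counted. For each ring, the non-thickness condition gives $\Vol(R(f)) \leq C_0 (\per_\out(f) + \per_\inn(f))$, hence summing yields a bound of $C_0 \cdot \TPerm(M)$ for the total ring contribution. For each Boltzmann map, applying Euler's formula $V - E + F = 2$ on the sphere, combined with the handshake identity $2E = (\text{boundary degree}) + \sum_{f'} 2\per(f')$, gives the deterministic bound
\begin{equation*}
    \Vol(\widetilde M) \leq \tilde p + \sum_{f' \in \fF(\widetilde M)} \per(f') + 1
\end{equation*}
for any bipartite planar map $\widetilde M$ of boundary half-perimeter $\tilde p$.

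Applying this to $M_0$ (with $\tilde p = p$) and to each $M(f)$ (with $\tilde p = \per_\inn(f)$) and summing, the boundary contributions yield $p + \sum_f \per_\inn(f)$, which is at most $\TPerm(M)$. The interior-face contributions telescope into $\sum_{f \in \bigcup_i \fF_i} \per_\out(f)$, which is again at most $\TPerm(M)$. Finally, the $+1$ terms add up to $1 + |\bigcup_i \fF_i|$; since $\per_\out(f) \geq 1$ for every marked face, this cardinality is itself bounded by $\sum_f \per_\out(f) \leq \TPerm(M)$. Combining, the total Boltzmann-map contribution is at most $3\,\TPerm(M) + 1$, and the conclusion follows with $C := C_0 + 4$.

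The only edge case I would flag is when $\per_\inn(f) = 0$, in which case $M(f)$ is declared to be a single point in Definition~\ref{def:model}; this contributes $1$ to $\Vol(M)$, but the count is absorbed into the $|\bigcup_i \fF_i|$ term above. The argument is entirely combinatorial and deterministic given the non-thickness condition, so no probabilistic estimates are needed at this step; that said, it is crucial that the bound on $\Vol(R(f))$ in Definition~\ref{def:ring}(2) is pathwise rather than in expectation, since otherwise we would only get a bound on $\EE[\Vol(M)]$, which is not what is needed downstream in Section~\ref{sec:conv}.
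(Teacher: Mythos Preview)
Your proof is correct and follows essentially the same approach as the paper: decompose $\Vol(M)$ into Boltzmann-map and ring contributions, bound the rings by the non-thickness condition, and bound each Boltzmann map linearly by its boundary and face perimeters. The only cosmetic difference is that the paper states the bound $\Vol(\widetilde M)\leq 2(\tilde p+\sum_{f'}\per(f'))$ as an observation (thereby absorbing the $+1$), while you derive the slightly sharper $\Vol(\widetilde M)\leq \tilde p+\sum_{f'}\per(f')+1$ via Euler's formula and then handle the accumulated $+1$'s separately; both routes give the same constant up to harmless numerics.
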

  \begin{proof}
Recalling the construction of supercritical maps from Definition \ref{def:model}, we have
    \begin{equation}
      \label{eq:91}
      \Vol(M)\leq \Vol(M_0) +\sum_{f\in \bigcup_{i\in \NN^\#}\fF_{i}} (\Vol(R(f)) + \Vol(M(f)))
    \end{equation}
    where $R(f)$ is the ring inserted into the face $f$ and $M(f)$ is the Boltzmann map inserted into the inner face of the ring $R(f)$. By the non-thickness condition in Definition \ref{def:ring}, there is a constant $C$ such that for any ring $R(f)$, we have $\Vol(R(f))\leq C(\per_\out(f) + \per_{\inn}(f))$. Observe also that the outermost gasket $M_0$ satisfies $\Vol(M_0) \leq 2(p + \sum_{f\in \fF_0}\per_\out(f))$, and similarly 
    \begin{equation}\Vol(M(f)) \leq 2\bigg(\per_\inn(f) + \sum_{\tilde f \in \fF(M(f))} \per_\out(\tilde f)\bigg)\end{equation}
    for every Boltzmann map $M(f)$ added during the construction. Hence, we obtain
    \begin{equation}
        \Vol(M) \leq 2p + (C+2) \sum_{f\in \bigcup_{i\in \NN^\#}\fF_{i}} (\per_\out(f) + \per_\inn(f)) \leq (C+2) \TPerm(M)
    \end{equation}
    by combining all of the above bounds.
  \end{proof}

  The first step in the proof of Proposition~\ref{lem:23} is to show that the gasket decomposition of a map sampled from $\PP_{\ttF}^{(p)}$ does not go on for many steps.
  \begin{lemma}
    \label{lem:35}
    Let $M$ be a map sampled from $\PP_\ttF^{(p)}$. Let $
      T_{\ext}(M):=\min\{i: \fF_i=\varnothing\}$
    be the total number of iterations in the construction of $M$.
    Then, there exist constants $c,C>0$ such that for every $p$ and $n$, we have 
    $\PP_\ttF^{(p)}(T_\ext(M)>n)\leq Cpe^{-cn}$.
  \end{lemma}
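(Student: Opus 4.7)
The plan is to derive this tail bound directly from Corollary~\ref{lem:21} via a Markov inequality, once we identify an appropriate scalar quantity that is bounded away from zero on the event $\{T_\ext(M) > n\}$.

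First I would observe the following implication: on $\{T_\ext(M) > n\}$ we have $\fF_n \neq \varnothing$, and since $\fF_n = \bigcup_{f \in \fF_{n-1}} \fF(M(f))$ (with $M(f)$ being a single point whenever $\per_\inn(f) = 0$), this forces at least one face $f \in \fF_{n-1}$ to have $\per_\inn(f) \geq 1$. Consequently, writing $h(p) = -\log F(p)$ and using that $h(1) > 0$ by Lemma~\ref{lem:1}, we have the pointwise inequality
\begin{equation*}
    \ind\{T_\ext(M) > n\} \leq \frac{1}{h(1)} \sum_{f \in \fF_{n-1}} h(\per_\inn(f))
\end{equation*}
on $\{n \geq 1\}$.

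Taking expectations under $\PP_\ttF^{(p)}$ and applying Corollary~\ref{lem:21}, we obtain for all $n \geq 1$,
\begin{equation*}
    \PP_\ttF^{(p)}(T_\ext(M) > n) \leq \frac{1}{h(1)} \EE_\ttF^{(p)}\bigg[\sum_{f \in \fF_{n-1}} h(\per_\inn(f))\bigg] \leq \frac{c}{h(1) s_0} \cdot p \cdot (s_0)^n,
\end{equation*}
where $c > 0$ and $s_0 \in (0,1)$ are the constants from Corollary~\ref{lem:21}. Setting $C := c/(h(1) s_0)$ and $c' := -\log s_0 > 0$ gives the desired bound $\PP_\ttF^{(p)}(T_\ext(M) > n) \leq Cpe^{-c'n}$ for $n \geq 1$, while the case $n = 0$ is trivial by enlarging $C$.

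There is no real obstacle here since all of the substantive work has been done in Proposition~\ref{prop:14} and its corollary. The only point requiring minor care is justifying that $\fF_n \neq \varnothing$ forces a face in $\fF_{n-1}$ with positive inner perimeter, which follows directly from the convention in Definition~\ref{def:model} that a Boltzmann map with perimeter zero is a single point (hence contributes no interior faces).
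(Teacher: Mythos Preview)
Your argument is essentially the paper's: bound $\PP_\ttF^{(p)}(T_\ext(M)>n)$ via Markov's inequality on $\sum_f h(\per_\inn(f))$ and invoke Corollary~\ref{lem:21}. One small correction: your pointwise bound $\ind\{T_\ext(M)>n\}\leq h(1)^{-1}\sum_{f\in\fF_{n-1}}h(\per_\inn(f))$ needs $h(\per_\inn(f))\geq h(1)$ for the face with $\per_\inn(f)\geq 1$, but $h$ is not known to be monotone---replace $h(1)$ by $\inf_{p'\in\NN}h(p')$, which is strictly positive by Lemma~\ref{lem:1} together with Proposition~\ref{thm:2} (this is precisely the constant the paper uses).
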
  
  \begin{proof}
    If $T_\ext(M)>n$, then $\sum_{f\in \fF_n}h(\per_{\inn}(f))>0$. Since $\inf_{p' \in \NN}h(p')>0$ by Proposition~\ref{thm:2} and Lemma~\ref{lem:1}, Markov's inequality implies
    \begin{equation}
    \label{eq:markov}\begin{split}
        \PP_\ttF^{(p)}\bigg(\sum_{f\in \fF_n}h(\per_{\inn}(f))>0\bigg)&= \PP_\ttF^{(p)}\bigg(\sum_{f\in \fF_{n}}h(\per_{\inn}(f))\geq \inf_{p' \in \NN}h(p')\bigg)\leq \frac{\EE_\ttF^{(p)}\big[\sum_{f\in \fF_{n}}h(\per_{\inn}(f))\big]}{\inf_{p' \in \NN}h(p')}.
    \end{split}\end{equation}
    The claim now follows from Corollary~\ref{lem:21}.
  \end{proof}
  
  Now, we show that with high probability, the total perimeter at is kept small at every level.
 \begin{lemma}
    \label{lem:36}
    Consider the events
    \begin{equation}
      \label{eq:142}
      G_t:=\bigg\{\sum_{f\in \fF_i} \per_\inn(f) \geq t \quad \textnormal{for some }i  \bigg\} \quad \text{and}  \quad \widetilde G_t:=\bigg\{\sum_{f\in \fF_i} \per_\out(f) \geq t \quad \textnormal{for some }i \bigg\} .
    \end{equation}
    Then, there is a constant $c>0$ such that $\PP_\ttF^{(p)}(G_t \cup \widetilde G_t) \leq e^{-ct}/F(p)$ for all $t$.
  \end{lemma}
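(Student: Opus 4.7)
The plan is to use Bayes' rule combined with a stopping-time argument at the first generation where the relevant perimeter sum exceeds $t$.

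First I will handle $G_t$. By Bayes' rule, $\PP_\ttF^{(p)}(G_t) = F(p)^{-1}\PP_\infty^{(p)}(G_t \cap \{M\text{ is finite}\})$. Let $\tau:=\inf\{i: \sum_{f\in\fF_i}\per_\inn(f)\geq t\}$, and let $\mathcal{G}_\tau$ be the $\sigma$-algebra generated by the chain through the \emph{ring-sampling} step at iteration $\tau$ (so $(M_\tau,\fF_\tau)$ and $\{\per_\inn(f)\}_{f\in\fF_\tau}$ are all $\mathcal{G}_\tau$-measurable, whereas the Boltzmann maps to be glued into the rings at stage $\tau$ are not yet sampled). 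From Definition~\ref{def:model}, conditioned on $\mathcal{G}_\tau$, the remainder of the construction inside each $f\in\fF_\tau$ is an independent copy of $\PP_\infty^{(\per_\inn(f))}$, so
\begin{equation*}
\PP_\infty^{(p)}(M\text{ is finite}\mid\mathcal{G}_\tau) = \prod_{f\in\fF_\tau}F(\per_\inn(f)) = \exp\bigg(-\sum_{f\in\fF_\tau}h(\per_\inn(f))\bigg).
\end{equation*}
By Proposition~\ref{thm:2} together with Lemma~\ref{lem:1}, $F(k)\to 0$ as $k\to\infty$ while $F(k)<1$ for each $k\geq 1$, so there exists a constant $c>0$ with $h(k)\geq ck$ for every $k\in\NN^\#$. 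On $G_t$ we have $\sum_{f\in\fF_\tau}\per_\inn(f)\geq t$, whence the conditional probability above is $\leq e^{-ct}$. Taking expectation and applying Bayes' rule yields $\PP_\ttF^{(p)}(G_t)\leq e^{-ct}/F(p)$.

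For $\widetilde G_t$ the argument is analogous but with the stopping performed \emph{before} the rings at the critical stage are sampled. Set $\tilde\tau:=\inf\{i:\sum_{f\in\fF_i}\per_\out(f)\geq t\}$, noting $\per_\out(f)=\per(f)$ in $M_i$, and let $\widetilde{\mathcal G}_{\tilde\tau}$ be the $\sigma$-algebra generated by $(M_i,\fF_i)$ for $i\leq\tilde\tau$ (the rings at stage $\tilde\tau$ are not yet sampled). Writing
\begin{equation*}
F_\out(k):=\EE_\ring^{(k)}\big[F(\per_\inn(f))\big],
\end{equation*}
the same reasoning gives $\PP_\infty^{(p)}(M\text{ is finite}\mid\widetilde{\mathcal{G}}_{\tilde\tau}) = \prod_{f\in\fF_{\tilde\tau}}F_\out(\per(f))$. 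The key input is a bound $F_\out(k)\leq e^{-c'k}$ for all $k\geq 1$, with some $c'>0$. For large $k$ this is already implicit in the proof of Lemma~\ref{lem:15}: the lower-tail hypothesis on $\Rat(R)$ in Definition~\ref{def:ring} combined with the exponential decay of $F$ from Proposition~\ref{thm:2} gives
\begin{equation*}
F_\out(k)\leq \PP_\ring^{(k)}(\Rat(R)<\epsilon) + \sup_{p\geq k\epsilon}F(p) = O(e^{-c'k}).
\end{equation*}
For $k$ in any bounded range, the non-triviality condition in Definition~\ref{def:ring} together with Lemma~\ref{lem:1} gives $F_\out(k)<1$ strictly, so adjusting $c'$ downward yields the uniform bound $F_\out(k)\leq e^{-c'k}$ for every $k\geq 1$. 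On $\widetilde G_t$, summing $\per(f)$ over $f\in\fF_{\tilde\tau}$ gives at least $t$, whence the conditional finite-probability is $\leq e^{-c't}$, and the Bayes-rule step concludes $\PP_\ttF^{(p)}(\widetilde G_t)\leq e^{-c't}/F(p)$. A union bound, after shrinking the constant and absorbing the factor of $2$ into the exponential (using that the claimed bound is trivial once $e^{-ct}/F(p)\geq 1$), gives the stated inequality.

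The argument is essentially routine once one has identified the correct stopping time and $\sigma$-algebra to condition on. The only mild technical point, and the one that requires the most care, is establishing the uniform exponential upper bound $F_\out(k)\leq e^{-c'k}$ for all $k\geq 1$ rather than just for large $k$; this is where the non-triviality hypothesis on the ring distribution together with the strict inequality in Lemma~\ref{lem:1} are used.
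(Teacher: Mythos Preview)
Your proof is correct and follows essentially the same approach as the paper: Bayes' rule plus a stopping-time argument at the first generation where the perimeter sum exceeds $t$, using $h(k)\geq ck$ from Proposition~\ref{thm:2} and Lemma~\ref{lem:1}. The only minor difference is in the $\widetilde G_t$ case: the paper bounds $\EE_\ring^{(k)}[F(\per_\inn(f))]\leq \EE_\ring^{(k)}[e^{-c\,\per_\inn(f)}]\leq e^{-\tilde c k}$ directly via the lower-tail condition~(4) of Definition~\ref{def:ring} (which holds for all $k\in\NN$), whereas you split into large $k$ (same estimate) and small $k$ (non-triviality plus $F<1$); both routes are valid and yield the same conclusion.
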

  \begin{proof}
    By Proposition~\ref{thm:2} and Lemma~\ref{lem:1}, there exists $c>0$ such that $h(p) \geq cp$ for all $p \in \NN$.
    The proof again uses the Bayes's rule argument from \eqref{eq:20}. Note that we have
    \begin{equation}
      \label{eq:143}
      \PP_\ttF^{(p)}(G_t)\leq F(p)^{-1}\PP_{\infty}^{(p)}(M \textrm{ is finite}\lvert G_t).
    \end{equation}
    On the event $G_t$, define $I_t := \inf\{i \in \NN^\#: \sum_{f\in \fF_{i}} \per_\inn(f) \geq t\}$, so that $\sum_{f\in \fF_{I_t}} h(\per_\inn(f))\geq ct$. Further conditioning on $I_t$, $\fF_{I_t}$, and $\{\per_\inn(f)\}_{f\in \fF_{I_t}}$, we have
    \begin{equation}
      \label{eq:144}\begin{split}
      \PP_{\infty}^{(p)}(M \textrm{ is finite}\lvert G_t)&=\EE_{\infty}^{(p)}\big[\PP_{\infty}^{(p)}(M \textrm{ is finite}\lvert I_t,\fF_{I_t},\{\per_\inn(f)\}_{f\in \fF_{I_t}})\big| G_t \big]\\
      &= \EE_{\infty}^{(p)}\bigg[ \prod_{f\in \fF_{I_t}}F(\per_\inn(f)) \bigg|G_t \bigg] =\EE_{\infty}^{(p)}\bigg[e^{-\sum_{f\in \fF_{I_t}}h(\per_{\inn}(f))} \bigg| G_t \bigg]\leq e^{-ct}.
    \end{split}\end{equation}
    Substituting the above into \eqref{eq:143} gives $\PP_\ttF^{(p)}(G_t) \leq e^{-ct}/F(p)$. 
    
    Define $\widetilde I_t$ analogously on the event $\widetilde G_t$. Then, similarly to \eqref{eq:144}, there is a constant $\tilde c>0$ such that 
    \begin{equation}
        \begin{split}
      \PP_{\infty}^{(p)}(M \textrm{ is finite}\lvert \widetilde G_t)&=\EE_{\infty}^{(p)}\big[\PP_{\infty}^{(p)}(M \textrm{ is finite}\lvert I_t,\fF_{I_t},\{\per_\out(f)\}_{f\in \fF_{I_t}})\big| \widetilde G_t \big]\\
      &= \EE_{\infty}^{(p)}\bigg[ \prod_{f\in \fF_{I_t}} \EE_{\ring}^{\per_\out(f)} \big[F(\per_\inn(f))\big] \bigg| \widetilde G_t \bigg]\\& \leq \EE_{\infty}^{(p)} \bigg[\prod_{f\in \fF_{I_t}} \EE_{\ring}^{\per_\out(f)} \big[e^{-c \cdot \per_{\inn}(f)} \big]\bigg| \widetilde G_t \bigg] \\& \leq \EE_{\infty}^{(p)}\bigg[e^{-\tilde c\sum_{f\in \fF_{I_t}}\per_{\out}(f)} \bigg| \widetilde G_t \bigg]\leq e^{-\tilde ct}
    \end{split}
    \end{equation}
    where we used the lower tail condition in Definition~\ref{def:ring} for the penultimate inequality. Using Bayes's rule as in \eqref{eq:143}, we obtain $\PP_\ttF^{(p)}(\widetilde G_t) \leq e^{-\tilde c t}/F(p)$.
  \end{proof}

Combining the above two lemmas, we obtain a bound on $\TPerm(M)$ for a finite supercritical map $M$.
\begin{lemma}
    \label{lem:37}
    Given $\delta>0$, there exist positive constants $C$ and $c$ such that for all $t\geq k^\delta$ and all $k\in \NN$, we have
\begin{equation}\PP_\ttF^{(k)}(\TPerm(M)\geq tk)\leq Ce^{-c\sqrt {t}}.\end{equation} 
 
   \end{lemma}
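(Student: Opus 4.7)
The plan is to combine Lemma~\ref{lem:35} (few iterations) and Lemma~\ref{lem:36} (small perimeter at every level) via a union bound with thresholds tuned to the target $tk$. On the event $\{T_\ext(M) \leq n\} \cap G_s^c \cap \widetilde G_s^c$, each of the (at most $n$) non-empty generations $\fF_i$ contributes at most $\sum_{f\in \fF_i}(\per_\out(f) + \per_\inn(f)) < 2s$ to the sum in \eqref{eq:46}, so $\TPerm(M) \leq k + 2ns$. I will choose $n = c_1\sqrt t$ and $s = c_2 k\sqrt t$ with constants $c_1, c_2$ small enough that $k + 2ns \leq tk$; this is valid as soon as $t$ exceeds a fixed constant, and for smaller $t$ the trivial bound $\PP_\ttF^{(k)}(\TPerm(M) \geq tk) \leq 1$ can be absorbed into $C$.

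Next, I will control the complementary event via a union bound. Lemma~\ref{lem:35} gives $\PP_\ttF^{(k)}(T_\ext(M) > c_1\sqrt t) \leq Ck\,e^{-cc_1\sqrt t}$, and the hypothesis $t \geq k^\delta$ means $k \leq t^{1/\delta}$, so the polynomial prefactor is dominated by half of the exponential $e^{cc_1\sqrt t}$ once $\sqrt t$ is large compared to $(1/\delta)\log t$; this yields a clean bound of the form $C' e^{-c'\sqrt t}$. Lemma~\ref{lem:36} gives $\PP_\ttF^{(k)}(G_s \cup \widetilde G_s) \leq 2\,e^{-cc_2 k\sqrt t}/F(k)$. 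By Proposition~\ref{thm:2} combined with Lemma~\ref{lem:1}, there is a constant $\alpha' > 0$ with $F(k) \geq e^{-\alpha' k}$ for every $k \in \NN$, so this upper bound becomes $2\exp\bigl(-(cc_2\sqrt t - \alpha')k\bigr)$; once $\sqrt t \geq 2\alpha'/(cc_2)$, this is at most $2\,e^{-(cc_2/2)k\sqrt t} \leq 2\,e^{-(cc_2/2)\sqrt t}$.

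The main obstacle is that the estimate from Lemma~\ref{lem:36} only becomes informative when the decay $e^{-cs}$ beats the blow-up $1/F(k) \sim e^{\alpha k}$, which forces $s$ to grow at least linearly in $k$. The hypothesis $t \geq k^\delta$ is precisely what ensures that $s = c_2 k\sqrt t \geq c_2\, k^{1+\delta/2}$ is large enough to win this race, and the scaling $s \asymp k\sqrt t$ (balanced with $n \asymp \sqrt t$ so that $ns \asymp tk$) is what produces the $\sqrt t$-rate in the final bound. No finer analysis of the perimeter cascade is needed beyond what has already been established in Lemmas~\ref{lem:35} and \ref{lem:36}.
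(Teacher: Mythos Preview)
Your proposal is correct and follows essentially the same approach as the paper: a union bound splitting into the events $\{T_\ext(M) > n\}$ and $G_s \cup \widetilde G_s$ with thresholds $n \asymp \sqrt t$ and $s \asymp k\sqrt t$, then applying Lemmas~\ref{lem:35} and~\ref{lem:36} together with the hypothesis $t\geq k^\delta$ to absorb the polynomial and $1/F(k)$ prefactors. Your write-up is in fact slightly more careful than the paper's in arranging the constants so that $k + 2ns \leq tk$ holds exactly.
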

   \begin{proof} Using Lemma \ref{lem:4} along with the notation of Lemmas \ref{lem:35} and \ref{lem:36}, we have
     \begin{equation}
       \label{eq:146}
       \PP_\ttF^{(k)}\big(\TPerm(M)\geq t k\big)\leq \PP_\ttF^{(k)}\big(T_\ext(M)\geq \sqrt{t}\big) +  \PP_\ttF^{(k)}\big(G_{k\sqrt t /2} \cup \widetilde G_{k\sqrt t/2}\big) .
     \end{equation}
    By Lemma~\ref{lem:35}, the first term on the right-hand side is bounded above by $C_1ke^{-c_1\sqrt t}\leq C_1'e^{-c_1'\sqrt{t}}$ for some constants $C_1,C_1',c_1,c_1'>0$, where we have used that $t\geq k^\delta$. By Lemma~\ref{lem:36}, the second term in \eqref{eq:146} is bounded above by $C_2e^{c_2k(1-\sqrt t)}\leq C_2'e^{-c_2'\sqrt{t}}$ for some positive constants $C_2,C_2',c_2,c_2'$. This completes the proof.
   \end{proof}

  We are now ready to complete the proof of Proposition \ref{lem:23}.
\begin{proof}[Proof of Proposition \ref{lem:23}]
  By Lemma \ref{lem:4}, it suffices to show that
  \begin{equation}
    \label{eq:145}
     \frac{1}{\sqrt p}\sup_{f\in \fF_0} \TPerm(M\lvert_f)\stackrel{d}{\rightarrow} 0.
   \end{equation}
   By Lemmas~\ref{lem:38} and \ref{lem:11}, we know that for some constant $\Delta>2$, we have
   \begin{equation}
     \label{eq:151}
    \lim_{p\to\infty}\PP_\ttF^{(p)}\bigg(\sup_{f\in \fF_0}\per_\inn(f)\leq p^{1/\Delta} \;\; \text{and}\;\; |\fF_0|\leq -\frac{2p}{m_{\bfq'}}\bigg) = 1.
   \end{equation}
    Denote the above event as $A_p$. Now, for any fixed $\epsilon>0$, 
   \begin{equation}
     \label{eq:152}\begin{split}
     \PP_{\ttF}^{(p)}\bigg(\sup_{f\in \fF_0} \TPerm(M\lvert_f)\geq \epsilon \sqrt{p}\bigg)&\leq \PP_\ttF^{(p)}\big(A_p^c\big)+\bigg(-\frac{2p}{m_{\bfq'}}\bigg) \sup_{k\leq p^{1/\Delta}}\PP_{\ttF}^{(k)}\big( \TPerm(M) \geq \epsilon \sqrt{p}\big)\\
     &=\PP_\ttF^{(p)}\big(A_p^c\big)+\bigg(-\frac{2p}{m_{\bfq'}}\bigg) \sup_{k\leq p^{1/\Delta}}\PP_{\ttF}^{(k)}\big( \TPerm(M) \geq k(\epsilon\sqrt{p}/k)\big).
   \end{split}\end{equation}
    The second term tends to 0 as $p\to\infty$ by Lemma \ref{lem:37} and the fact that $1/\Delta<1/2$, thus proving \eqref{eq:145}.
    \end{proof}

\section{Convergence to the CRT}
\label{sec:conv}

We finally prove Theorem~\ref{thm:1} in this section, showing that the map $M^{(p)}$ sampled from $\PP_\ttF^{(p)}$ converges to the continuum random tree after appropriate scaling. 
While we have so far used $M$ to denote a sample from $\PP_\ttF^{(p)}$, we will always use $M^{(p)}$ to denote such a sample from now so as to avoid confusion in the many convergence statements in this section.
We denote its outermost gasket ($M_0$ in the iterative construction of Definition~\ref{def:model}) as $M_0^{(p)}$. We recall the notation that if $f$ is an internal face of $M_0^{(p)}$ (or $\partial M^{(p)}$), then $M^{(p)}|_f$ denotes the edges and vertices that bound this face in $M_0^{(p)}$ as well as those of $M^{(p)}$ that are glued into this face.

In Proposition~\ref{prop:2}, we saw that the marginal law on the outermost gasket $M_0^{(p)}$ sampled from $\PP_\ttF^{(p)}$ is that of a subcritical Boltzmann map $\PP_{\bfq'}^{(p)}$. Then, by Proposition~\ref{prop:6}, there is a a constant $\theta_{\bfq'}>0$ such that $(\theta_{\bfq'}/\sqrt p)M_0^{(p)}$ converges to the $\CRT$. Our goal in this section is to upgrade this result to one for the entire map $M^{(p)}$ by showing that the submaps added into the faces of $M_0^{(p)}$ change distances by a constant factor.

To do so, instead of the outermost gasket $M_0^{(p)}$, we will work with its outer boundary $\partial M^{(p)}$, which consists of the vertices and edges that border the root face of $M^{(p)}$. The reason is that the scaling limit of $\partial M^{(p)}$ is also the $\CRT$ by Proposition \ref{prop:4}, and it has additional integrability through its connection with $\BGW$ trees studied in the works \cite{CK15,Ric18}.
For the first step, we reduce Theorem~\ref{thm:1} to a convergence statement for a ``projection" of the whole map $M^{(p)}$ to its boundary $\partial M^{(p)}$.

For the rest of the section, given a graph $M$, we denote its set of vertices and the graph distance on it by $V_M$ and $d_M$, respectively. We remind the reader that $rM$ denotes the metric space $(V_M, rd_M)$ for $r\in \RR_+$.

\begin{proposition}
   \label{prop:8}
   Let $M^{(p)}$ be sampled from $\PP_\ttF^{(p)}$
   and denote $d_{M^{(p)}}^\partial:= d_{M^{(p)}}|_{V_{\partial M^{(p)}}\times V_{\partial M^{(p)}}}$.
   Then, there exists a constant $\theta_{\ttF}$ such that 
   \begin{equation}
     \label{eq:61}
     \mathcal X^{(p)} := \bigg(V_{\partial M^{(p)}}, \frac{\theta_{\ttF}}{\sqrt{p}} d_{M^{(p)}}^\partial \bigg) \stackrel{d}{\rightarrow}\CRT
   \end{equation}
   as $p\to\infty$ in the Gromov--Hausdorff distance.
 \end{proposition}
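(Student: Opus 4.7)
The plan is to reduce Proposition~\ref{prop:8} to a comparison between the intrinsic boundary metric $d_{\partial M^{(p)}}$ and the restricted full-map metric $d_{M^{(p)}}^\partial$, so that the conclusion follows from known scaling limit results for subcritical Boltzmann map boundaries. First, by Proposition~\ref{prop:2}, the marginal law of $M_0^{(p)}$ under $\PP_\ttF^{(p)}$ is $\PP_{\bfq'}^{(p)}$, with $\bfq'$ subcritical. Because the iterative construction in Definition~\ref{def:model} only affects interior faces, $\partial M^{(p)} = \partial M_0^{(p)}$ as subgraphs; hence Proposition~\ref{prop:4} yields
\begin{equation*}
  \left(V_{\partial M^{(p)}},\, \tfrac{K_{\bfq'}}{\sqrt{p}}\, d_{\partial M^{(p)}}\right) \xrightarrow{d} \CRT
\end{equation*}
for some constant $K_{\bfq'} > 0$.

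The key step will be to show that there exists a constant $\kappa \in (0,1]$ such that
\begin{equation*}
  \frac{1}{\sqrt{p}} \sup_{x,y \in V_{\partial M^{(p)}}} \bigl|d_{M^{(p)}}^\partial(x,y) - \kappa\, d_{\partial M^{(p)}}(x,y)\bigr| \xrightarrow{\PP} 0.
\end{equation*}
Once this uniform comparison is in hand, the two rescaled metrics on $V_{\partial M^{(p)}}$ differ by a vanishing perturbation, so both Gromov--Hausdorff converge to the CRT under the appropriate rescaling, and Proposition~\ref{prop:8} follows with $\theta_\ttF := K_{\bfq'}/\kappa$. (The inequality $d_{M^{(p)}}^\partial \leq d_{\partial M^{(p)}}$ is immediate, which forces $\kappa \leq 1$; non-triviality of $\kappa$ will follow from the analysis below.)

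To establish the uniform comparison, we invoke the looptree encoding of the outer boundary from \cite{CK15,Ric18}. The boundary $\partial M_0^{(p)}$ is canonically identified with a looptree $\mathcal{L}(\tau^{(p)})$ built on a random tree $\tau^{(p)}$ whose vertices index the faces of $M_0^{(p)}$ adjacent to the boundary and whose loop at each vertex has length equal to the perimeter of the corresponding face. For boundary vertices $x, y$, the intrinsic distance $d_{\partial M^{(p)}}(x,y)$ decomposes as a sum $\sum_{f} \ell_f(x,y)$ of partial loop traversals along the spine between $x$ and $y$ in $\tau^{(p)}$. The geodesic in the full map passes through the same sequence of faces, but in each face $f$ may shortcut through the ring $R(f)$ and the submap $M^{(p)}|_f$; writing $D_f(x,y)$ for the in-face shortcut length, we have $D_f \leq \ell_f$ pointwise. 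Proposition~\ref{lem:23}, which gives $\max_f \Vol(M^{(p)}|_f) = o(\sqrt{p})$, controls each individual shortcut. Combined with a spine decomposition of $\tau^{(p)}$ --- under which the marked faces along the spine become asymptotically an i.i.d.\ sequence, with the fillings $R(f)$ and $M^{(p)}|_f$ conditionally independent given the face perimeters --- a law of large numbers yields $\sum_f D_f \approx \kappa \sum_f \ell_f$, where $\kappa$ is the mean of the shortcut-to-loop ratio under the size-biased law on spine faces.

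The main obstacle will be the rigorous execution of this law of large numbers. Three points deserve care: (i) identifying the size-biased spine distribution on $\tau^{(p)}$ and confirming that the in-spine face perimeters concentrate their empirical average with a sufficient rate; (ii) controlling the joint distribution of the shortcut lengths $D_f$, whose analysis requires both the subcritical structure of $M^{(p)}|_f$ from Section~\ref{sec:estimates} and the bound of Proposition~\ref{lem:23}; and (iii) upgrading pointwise concentration to a uniform estimate over all pairs $(x,y) \in V_{\partial M^{(p)}} \times V_{\partial M^{(p)}}$, which will require a chaining argument over a net of boundary vertices, using the tightness of $(\tfrac{1}{\sqrt p}\, V_{\partial M^{(p)}}, d_{\partial M^{(p)}})$ to keep the net size under control.
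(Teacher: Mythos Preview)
Your overall strategy---looptree encoding of the boundary, spine decomposition, and a law of large numbers along the spine---is exactly the approach the paper takes. The paper carries out your steps (i) and (ii) in Proposition~\ref{prop:129}, comparing $d_{M^{(p)}}(\pi(\root),\pi(V_{\unif}^{(p)}))$ to $\hat\theta_\ttF\, d_{\cT^{(p)}}(\root,V_{\unif}^{(p)})$ via the i.i.d.\ structure of the size-biased spine from Lemma~\ref{lem:219}. (Your comparison to $d_{\partial M^{(p)}}$ rather than $d_{\cT^{(p)}}$ is an inessential variation, since both reference metrics converge to the CRT.) Your invocation of Proposition~\ref{lem:23} for the shortcut lengths is misplaced: each shortcut $D_f$ is already bounded by the face perimeter, whose size-biased law $\nu^*_{\bfq'}$ has finite mean because $\nu_{\bfq'}$ has finite variance, and that is all the law of large numbers needs.

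The gap is in step (iii). A chaining argument over a net of boundary vertex pairs would require quantitative concentration (e.g., stretched-exponential tails) for the sum $\sum_f D_f$ at each fixed pair, so that a union bound over a net of polynomial size in $p$ survives; the bare law of large numbers does not deliver this, and you have not indicated how to obtain it. The paper sidesteps the issue entirely. Rather than a uniform metric comparison, it proves \emph{functional} convergence of the height process $\mathrm{H}_M^{(p)}(t):=d_{M^{(p)}}(\pi(\root),\pi(v_{\lfloor 2pt\rfloor}))$ to the Brownian excursion (Lemma~\ref{lem:31}): pointwise convergence is Proposition~\ref{prop:129}, and tightness is inherited for free from the dominating looptree height process $\mathrm{H}_{\Loop}^{(p)}$, whose tightness is already known from \cite{KR20}. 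The full metric is then recovered from the height function via the common-ancestor identity
\[
d_{M^{(p)}}(\pi(v_i),\pi(v_j))=\mathrm{H}_M^{(p)}\Big(\tfrac{i}{2p}\Big)+\mathrm{H}_M^{(p)}\Big(\tfrac{j}{2p}\Big)-2\,\mathrm{H}_M^{(p)}\Big(\tfrac{\mathrm{m}(i,j)}{2p}\Big)+O\big(\per(f_{\mathrm{m}(i,j)})\big),
\]
with $\max_i\per(f_i)=o(\sqrt p)$. This is the standard route for looptree-type scaling limits and avoids chaining altogether.
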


 \begin{proof}[Proof of Theorem \ref{thm:1} assuming Proposition \ref{prop:8}]  
   Let 
   \begin{equation}
     \label{eq:62}
      L^{(p)}:= \max_{v\in V_{M_0^{(p)}}} \min_{w\in V_{\partial M^{(p)}}}d_{M^{(p)}}(v,w)
   \end{equation}
   be the maximum distance from a vertex in $M_0^{(p)}$ to the boundary $\partial M^{(p)}$.
   Recalling that $M_0^{(p)}$ sampled from $\PP_\ttF^{(p)}$ is a subcritical Boltzmann map, we see from Proposition~\ref{prop:6} that 
   $L^{(p)}/\sqrt p \to 0$ in distribution as $p\to\infty$.
   Combining this with Proposition~\ref{lem:23}, 
   \begin{equation}\label{eq:156}
       \frac{1}{\sqrt p}\max_{f\in \fF(\partial M^{(p)})} \mathrm{diam}(M^{(p)}|_f) \leq \frac{1}{\sqrt p}\bigg( L^{(p)} + \max_{\tilde f\in \fF(M_{0}^{(p)})} \mathrm{diam}(M^{(p)}|_{\tilde f})\bigg)
       \end{equation}
       converges to 0 in distribution as $p\to\infty$.   
    
    Now consider ${\mathcal X}^{(p)}$ as naturally embedded within $(\theta_\ttF/\sqrt p) M^{(p)}$. Under this isometric embedding, the Hausdorff distance between the $V_{M^{(p)}}$ and $V_{\partial M^{(p)}}$ is bounded above by the left-hand side of \eqref{eq:156}. Hence, $d_{\mathrm{GH}}(\mathcal X^{(p)}, (\theta_\ttF/\sqrt p) M^{(p)})\to 0$ in distribution as $p\to\infty$.
 \end{proof}

 The rest of this section is thus dedicated to the proof of Proposition~\ref{prop:8}. The main idea is to study the looptree structure of $\partial M^{(p)}$ in association with Bienaym\'e--Galton--Watson trees. The argument proceeds along the same lines as the proof of Proposition~\ref{prop:4} in \cite{KR20}, which uses the spinal decomposition to prove the convergence of critical looptrees to the $\CRT$. The additional work we do is to show that the addition of vertices and edges going from $\partial M^{(p)}$ to $M^{(p)}$ affect the scaling limit merely by multiplying distances by a deterministic constant.

\subsection{Boundaries of Boltzmann maps and looptrees}
\label{sec:looptree}

We begin by introducing the connection between looptrees and boundaries of Boltzmann maps.  First, a few notations: $\cT$ refers to a rooted plane tree, by which we mean a tree with a distinguished vertex (called the root and denoted by $\root$) and an ordering specified among the children of any vertex. Thus, intuitively, we can think of the root of a rooted plane tree being at the bottom and the children of any vertex being located above the corresponding vertex. 

Here are the definitions of the looptree and the contracted looptree associated with a rooted planar tree as given in \cite{CK15}.\begin{definition}
 Let $\cT$ be a rooted plane tree. We define $\Loop(\cT)$ to be a planar map which has the same set of vertices as $\cT$, with an edge between $v_1,v_2\in V_{\Loop(\cT)}$ if and only if either of the following are true.
 \begin{enumerate}[(1)]
     \item $v_1$ and $v_2$ are the consecutive children of a common parent in $\cT$.
     \item $v_1$ is the leftmost/rightmost child of $v_2$ in $\cT$, or vice versa.
 \end{enumerate}
 The \textbf{contracted looptree} $\overline \Loop(\cT)$ is obtained from $\Loop(\cT)$ by contracting every edge $(u,v)$ for which $v$ is the rightmost child of $u$ in $\cT$.  
\end{definition}
\begin{figure}
    \centering
    \includegraphics[width=0.8\linewidth]{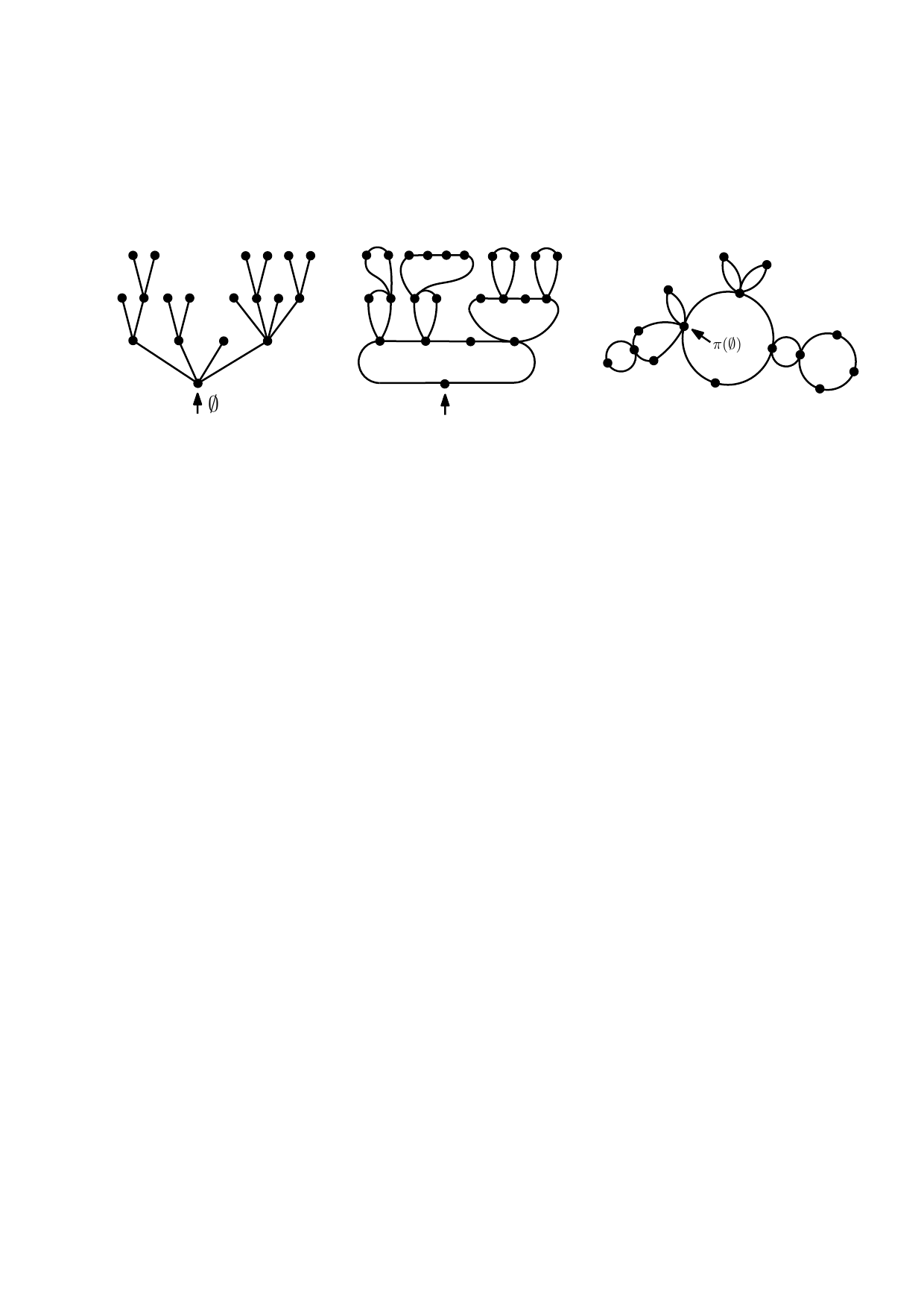}
    \caption{From left to right: a tree $\mathcal T$, the corresponding looptree $\Loop(\cT)$, and the contracted looptree $\overline{\Loop}(\cT)$. The root vertex $\root \in V_{\cT}$ and the corresponding vertices on $\Loop(\cT)$ and $\pi(\root)\in \overline{\Loop}(\cT)$ are marked with arrows.}
    \label{fig:looptree}
\end{figure}

See Figure~\ref{fig:looptree} for an illustration of this definition.
Let $\pi$ denote the natural projection from $V_{\cT}$ to $V_{\overline{\Loop}(\cT)}$. We note that $\pi$ is one-to-one on the leaves of $\cT$ (i.e., vertices with no children). 

For $v\in V_\cT$, let $f_{v} \in \fF(\overline{\Loop}(\cT))$ be the face enclosed by the vertices $\pi($children of $v$ in $\cT)\subset V_{\overline{\Loop}(\cT)}$. This forms a one-to-one correspondence between the faces of $\overline{\Loop(\cT)}$ and the non-leaf vertices of $\cT$. Let $f_v = \varnothing$ if $v$ is a leaf in $\cT$.

As observed in \cite[Lemma~4.3]{CK15} (see also \cite[Lemma~4.1]{Ric18} for the statement for non-triangulations), if $M$ is a rooted planar map (i.e., with a fixed outermost face on the right of the root edge, and the root vertex $\root$ given by the incident vertex of the root edge), then $\partial M$ is almost a looptree: we just need to duplicate each single edge connecting loops into a double edge. This procedure was described in detail in \cite[Section 2.3]{CK15}. 
\begin{definition}
  \label{def:scoop}
  Given a rooted planar map $M$, define the \textbf{scooped-out map} $\scoop(M)$ as the outer boundary $\partial M$ modified so that every edge $e$ in $\partial M$ which is adjacent to the outermost face on both sides is duplicated. 
\end{definition}
\begin{figure}
    \centering
    \includegraphics[width=0.6\linewidth]{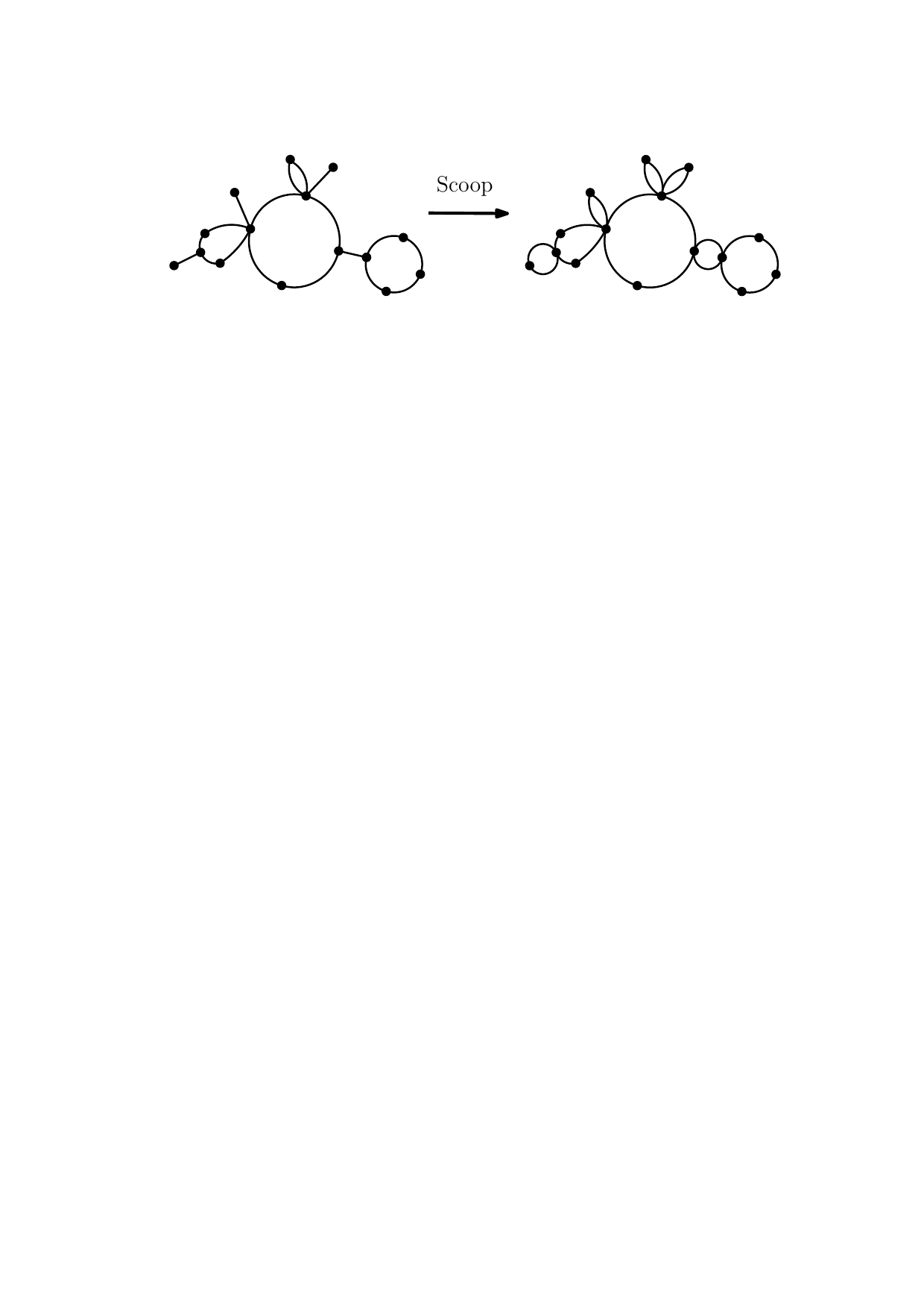}
    \caption{A planar map (left) and the corresponding scooped-out map (right).}
    \label{fig:scoop}
\end{figure}

Given a rooted planar map $M$, it is straightforward to see that there is a unique rooted plane tree $\cT_M$ such that $\scoop(M) = \overline{\Loop}(\cT_M)$ and $\pi($root vertex of $\cT_M)=$ root vertex of $\scoop(M)$. We use $\root$ to refer to both root vertices.
Note that $\scoop(M)$ and $\partial M$ can be different planar maps but define the same metric space. Hence, \eqref{eq:61} can be stated equivalently with $\scoop(M^{(p)}) = \overline{\Loop}(\cT_{M^{(p)}})$ in place of $\partial M^{(p)}$. 

For $M^{(p)}$ sampled from $\PP_\ttF^{(p)}$, note that $\cT_{M^{(p)}} = \cT_{M_0^{(p)}}$. The key fact is that since $M_0^{(p)}$ is a subcritical Boltzmann map, $\cT_{M_0^{(p)}}$ has the distribution of a BGW tree. The following proposition  is an immediate application of the relationship between the boundary of a subcritical Boltzmann map and a BGW tree described in \cite[Lemma 4.3]{CK15} and \cite[Proposition 3.6, Lemma~4.1]{Ric18} given Proposition~\ref{prop:2}.
\begin{proposition}\label{prop:41}
    There is a finite variance measure $\nu_{\bfq'}$ on the nonnegative even integers such that for every $p\in \NN$, the rooted tree $\cT^{(p)}:= \cT_{M^{(p)}}$ is a critical BGW tree with offspring distribution $\nu_{\bfq'}$ conditioned to have $2p+1$ total vertices.
\end{proposition}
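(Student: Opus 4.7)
The plan is to reduce Proposition~\ref{prop:41} to an application of known results on the boundary of subcritical Boltzmann maps, via a short chain of observations. First, I would note that the outer boundary of $M^{(p)}$ depends only on the outermost gasket $M_0^{(p)}$: the iterative construction in Definition~\ref{def:model} attaches rings and further Boltzmann maps only inside the interior faces of $M_0$, and these insertions leave the edges and vertices bordering the root face unchanged. Thus $\scoop(M^{(p)}) = \scoop(M_0^{(p)})$, and in particular $\cT^{(p)} = \cT_{M_0^{(p)}}$.

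Next, by Proposition~\ref{prop:2}, $M_0^{(p)}$ has the marginal law $\PP_{\bfq'}^{(p)}$, where $\bfq'$ is the subcritical weight sequence from Lemma~\ref{lem:9}. The problem therefore reduces to identifying the law of $\cT_M$ when $M \sim \PP_{\bfq'}^{(p)}$. For this I would invoke \cite[Lemma~4.3]{CK15}, and more precisely its extension to general bipartite Boltzmann maps \cite[Proposition~3.6, Lemma~4.1]{Ric18}. These results assert that $\cT_M$ is distributed as a BGW tree with a specific offspring distribution $\nu_{\bfq'}$, conditioned to have exactly $2p+1$ vertices; the distribution $\nu_{\bfq'}$ is read off explicitly from $\mu_{\bfq'}$ via the Bouttier--Di Francesco--Guitter bijection combined with the Janson--Stef\'ansson trick (both recalled in Section~\ref{sec:boltzmann}).

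It remains to verify the three asserted properties of $\nu_{\bfq'}$. Support on the even nonnegative integers is forced by the bipartiteness of the underlying map. Criticality (mean one) is built into the construction of $\nu_{\bfq'}$ in the cited references: it holds whenever $\bfq'$ is admissible and is essentially a consequence of the defining identity for $Z_{\bfq'}$. For finite variance, I would invoke Lemma~\ref{lem:15}, which gives the exponential tail bound $\mu_{\bfq'}([k,\infty)) = O(e^{-ck})$. Since the explicit formula for $\nu_{\bfq'}$ depends polynomially on $\mu_{\bfq'}(k)$ together with bounded $(Z_{\bfq'})^k$ factors, this exponential decay passes directly to $\nu_{\bfq'}$, yielding all exponential moments and in particular finite variance.

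The main (modest) technical work is thus tracing through the bijection-derived formula for $\nu_{\bfq'}$ and confirming the tail bound; everything else is a direct application of \cite{CK15, Ric18} combined with Proposition~\ref{prop:2} and Lemma~\ref{lem:15}. I expect no conceptual obstacle beyond careful bookkeeping.
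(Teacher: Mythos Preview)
Your proposal is correct and follows essentially the same route as the paper: the paper's proof is simply the one-sentence observation that Proposition~\ref{prop:41} follows immediately from \cite[Lemma~4.3]{CK15} and \cite[Proposition~3.6, Lemma~4.1]{Ric18} once Proposition~\ref{prop:2} identifies the law of $M_0^{(p)}$ as $\PP_{\bfq'}^{(p)}$. Your additional remarks (that $\cT^{(p)}=\cT_{M_0^{(p)}}$, that criticality of $\nu_{\bfq'}$ is automatic from the bijection, and that finite variance follows from the exponential tail of $\mu_{\bfq'}$ in Lemma~\ref{lem:15}) are accurate and simply make explicit what the paper leaves implicit in its citation.
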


We now describe how to sample $M^{(p)}$ given $\cT^{(p)}$. First, let $\widehat \PP^{( p)}_{\bfq'}$ denote the law of a Boltzmann map with weight sequence $\bfq'$ conditioned to have a simple boundary of perimeter $2 p$. For $ p=1$, this includes the map consisting of one edge and no interior faces. As usual, let $\fF(\partial M)$ be the collection of inner faces of $\partial M$ (i.e., excluding the root face of $M$).

\begin{lemma}[{\cite[Corollary 3.7]{Ric18}}]
  \label{lem:214}
      Suppose $M$ is a Boltzmann map with distribution $\PP_{\bfq'}^{(p)}$. Given $\partial M$, the submaps $\{M\lvert_f\}_{f\in \fF(\partial M)}$ are conditionally independent, with the conditional law of $M|_f$ given by $\widehat \PP_{\bfq'}^{(\per(f))}$ for each $f\in \fF(\partial M)$.
\end{lemma}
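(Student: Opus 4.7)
The plan is to exploit the factorization of Boltzmann weights under the canonical decomposition of a map along its outer boundary. Every bipartite rooted planar map $M \in \mathcal M^{(p)}$ admits a unique decomposition into its outer boundary (encoded, say, by the scooped-out map of Definition~\ref{def:scoop}, or equivalently by its associated tree $\cT_M$) together with, for each $f \in \fF(\partial M)$, a submap $M|_f$ glued into $f$. By construction, each $M|_f$ has a simple outer boundary of perimeter $2\per(f)$, and this decomposition yields a bijection between $\mathcal M^{(p)}$ and the set of pairs consisting of a boundary pattern $B$ together with a tuple $(N_f)_{f\in \fF(B)}$ of simple-boundary bipartite rooted planar maps with prescribed perimeters.

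The key combinatorial observation is that the interior faces of $M$ (the ones carrying Boltzmann weight) are exactly the disjoint union $\bigsqcup_{f\in \fF(\partial M)} \fF(M|_f)$: any interior face of $M$ lies in exactly one boundary face $f$ and is then an interior face of $M|_f$. Consequently, the Boltzmann weight factorizes,
\begin{equation*}
w_{\bfq'}(M) \;=\; \prod_{f'\in \fF(M)} q'_{\per(f')} \;=\; \prod_{f\in \fF(\partial M)}\prod_{f'\in \fF(M|_f)} q'_{\per(f')} \;=\; \prod_{f\in \fF(\partial M)} w_{\bfq'}(M|_f).
\end{equation*}

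To conclude, let $\widehat W^{(k)}_{\bfq'}$ denote the total Boltzmann weight of bipartite rooted planar maps with simple boundary of perimeter $2k$, which is finite for every $k$ since $\bfq'$ is admissible by Lemma~\ref{lem:9} (the simple-boundary partition functions are dominated by the general ones $W^{(k)}_{\bfq'}$). Fix a boundary pattern $B$ and tuples $(N_f)_{f\in \fF(B)}$. Under $\PP^{(p)}_{\bfq'}$,
\begin{equation*}
\PP^{(p)}_{\bfq'}\bigl(\partial M = B,\, M|_f = N_f\; \forall f\bigr) \;=\; \frac{1}{W^{(p)}_{\bfq'}}\prod_{f\in \fF(B)} w_{\bfq'}(N_f).
\end{equation*}
Summing over the $N_f$ gives the marginal $\PP^{(p)}_{\bfq'}(\partial M = B) = (W^{(p)}_{\bfq'})^{-1}\prod_{f\in \fF(B)} \widehat W^{(\per(f))}_{\bfq'}$, and dividing shows that the conditional law of $(M|_f)_{f\in \fF(B)}$ given $\partial M = B$ is the product measure $\bigotimes_{f\in \fF(B)} \widehat \PP^{(\per(f))}_{\bfq'}$, as desired.

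There is no real obstacle here beyond verifying the bijective decomposition and the admissibility of $\bfq'$; the argument is entirely combinatorial and mirrors the classical ``peeling-style'' decomposition of Boltzmann maps along their simple-boundary blocks. The only point requiring a brief justification is the disjointness of interior faces under the decomposition, which follows because $\partial M$ separates the plane into the root face and the interiors of its bounded faces.
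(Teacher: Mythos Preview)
Your argument is correct and is exactly the standard factorization argument underlying this result; the paper itself does not prove the lemma but merely cites \cite[Corollary~3.7]{Ric18}, where the same decomposition-along-the-boundary reasoning is used. The only minor point you elide is the rooting convention for each submap $M|_f$ (one fixes a canonical boundary edge of $f$ determined by the rooting of $\partial M$, to which the root edge of $M|_f$ is matched), but this is routine and does not affect the factorization.
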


Let $\widehat \PP_{\ttF}^{( p)}$ be the probability measure on the chain $(M_i^{(p)},\fF_i^{(p)})_{i\in \NN^\#}$ obtained from $\PP_\ttF^{( p)}$ by conditioning on $M_0^{(p)}$ (and thus $M^{(p)}$) having a simple boundary.
Since the marginal law of $\PP_\ttF^{(p)}$ on the outermost gasket $M_0^{(p)}$ is that of a Boltzmann map (Proposition~\ref{prop:2}), we obtain the following variation from Lemma~\ref{lem:214}.
\begin{lemma}
  \label{lem:39}
  Let $M^{(p)}$ be sampled from $\PP_{\ttF}^{(p)}$. The maps $\{M^{(p)}\lvert_f\}_{f\in \fF(\partial M^{(p)})}$ are conditionally independent given $\partial M^{(p)}$, and the conditional distributions are equal to $\widehat \PP_{\ttF}^{(\per(f))}$.
\end{lemma}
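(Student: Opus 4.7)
The plan is to combine Proposition~\ref{prop:2} (which identifies the marginal law of the outermost gasket $M_0^{(p)}$ under $\PP_\ttF^{(p)}$ with the subcritical Boltzmann law $\PP_{\bfq'}^{(p)}$) with Lemma~\ref{lem:214} (the boundary Markov property for $\PP_{\bfq'}^{(p)}$) and the branching structure of the iterative construction of Definition~\ref{def:model}. The first easy observation is that $\partial M^{(p)} = \partial M_0^{(p)}$, because every vertex and edge incident to the root face is created in the outermost gasket and none of the subsequently inserted rings or Boltzmann maps touches the root face; in particular $\fF(\partial M^{(p)}) = \fF(\partial M_0^{(p)})$.

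Next, I would unfold the iterative construction to establish the following key branching property: under $\PP_\infty^{(p)}$, once $M_0^{(p)}$ is fixed, all subsequent insertions of rings and Boltzmann maps are performed independently inside each interior face of $M_0^{(p)}$. Consequently, for each $f \in \fF(\partial M^{(p)})$, the piece $M^{(p)}|_f$ is obtained by running the procedure of Definition~\ref{def:model} starting from the simple-boundary gasket $M_0^{(p)}|_f$ of perimeter $2\per(f)$; moreover, conditionally on $M_0^{(p)}$, the maps $\{M^{(p)}|_f\}_{f \in \fF(\partial M^{(p)})}$ are independent, and the conditional law of each $M^{(p)}|_f$ depends only on $M_0^{(p)}|_f$.

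Third, I would use the decomposition $\{M^{(p)} \textrm{ is finite}\} = \bigcap_{f \in \fF(\partial M^{(p)})} \{M^{(p)}|_f \textrm{ is finite}\}$. The events on the right are conditionally independent given $M_0^{(p)}$, with conditional probabilities depending only on the inner perimeters of the faces of $M_0^{(p)}|_f$. Hence conditioning on $\{M^{(p)} \textrm{ is finite}\}$ preserves the conditional independence of the $\{M^{(p)}|_f\}$ given $M_0^{(p)}$, and simply reweights each piece by the indicator of its own finiteness.

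Finally, combining Proposition~\ref{prop:2} at level $p$ with Lemma~\ref{lem:214} yields that, under $\PP_\ttF^{(p)}$, the conditional law of the collection $\{M_0^{(p)}|_f\}_{f \in \fF(\partial M^{(p)})}$ given $\partial M^{(p)}$ is that of independent samples with respective laws $\widehat{\PP}_{\bfq'}^{(\per(f))}$. Averaging over these outermost-gasket pieces using the previous step, the conditional law of $M^{(p)}|_f$ given $\partial M^{(p)}$ becomes: sample the outermost gasket from $\widehat{\PP}_{\bfq'}^{(\per(f))}$, complete the iterative construction of Definition~\ref{def:model}, and condition on finiteness. By applying Proposition~\ref{prop:2} at level $\per(f)$ together with the definition of $\widehat{\PP}_\ttF^{(\per(f))}$ as $\PP_\ttF^{(\per(f))}$ restricted to simple-boundary maps, one checks that this is exactly $\widehat{\PP}_\ttF^{(\per(f))}$. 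The main subtlety to handle carefully is that the event $\{M^{(p)} \textrm{ is finite}\}$ couples all branches of the iterative construction, so independence of the $\{M^{(p)}|_f\}$ can only be asserted after conditioning on $M_0^{(p)}$ and then integrating out.
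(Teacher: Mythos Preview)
Your proposal is correct and follows essentially the same approach as the paper: the paper's proof is a single sentence invoking Proposition~\ref{prop:2} and Lemma~\ref{lem:214}, and you have spelled out precisely how those two ingredients combine with the branching structure of Definition~\ref{def:model} and the factorization of the finiteness event. The extra care you take in distinguishing conditioning on $M_0^{(p)}$ from conditioning on $\partial M^{(p)}$, and in verifying that the resulting conditional law really is $\widehat{\PP}_\ttF^{(\per(f))}$, is exactly the content that the paper leaves implicit.
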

We thus have a complete description of the conditional law of $M^{(p)}$ given $\cT^{(p)}$.

\subsection{The spine decomposition of the boundary map}
\label{sec:spine}

In this subsection, we introduce the spine decomposition of $\cT^{(p)}$ that we use to prove Proposition~\ref{prop:8}.
\begin{definition}
  Given a rooted plane tree $\cT$ with root $\root$ and vertex $u\in V_\cT$, we define the trunk $\Trunk(\cT,u)$ as the induced subgraph obtained by considering the unique path $\root, v_1, v_2, \dots, v_{k-1}, u$ from $\root$ to $u$ in the tree $\cT$ along with all the vertices that are the immediate children of $\root, v_1,\dots,v_{k-1}$.
\end{definition}

The goal is to construct $\Trunk(\cT, V_\unif)$ when $V_\unif$ is a uniformly chosen random vertex of a BGW tree $\cT$. This is done in two steps: first sample the random distance between $\root$ and $V_\unif$, and then construct the entire trunk given this distance. The latter law is given in terms of $\nu^*_{\bfq'}$, the sized biased version of $\nu_{\bfq'}$, defined as
  \begin{equation}
    \label{eq:47}
    \nu^*_{\bfq'}(i)= \frac{i\cdot \nu_{\bfq'}(i)}{\sum_{k\in\NN^\#} k\cdot \nu_{\bfq'}(k)}
  \end{equation}
  for $i\in \NN^\#$ as first given in \cite{Kes86}. Since $\nu_{\bfq'}$ has a finite variance, $\nu^*_{\bfq'}$ has a finite mean.
\begin{definition}
  \label{def:bias}
    Given $h\in \NN$, let $\Trunk^*_h$ be a random plane tree of height $h$ distributed as follows. Denoting its ``spine" as $v_0,\dots,v_{h-1}$ where $v_0$ is the root, each vertex $v_i$ has an independent number of children with distribution $\nu^*_{\bfq'}$. 
    Given the topology of the tree (that is, the tree without the orderings between its vertices), the location of $v_{i+1}$ is chosen uniformly among the children of $v_{i}$, with this choice being conditionally independent for all the values of $i$. 
\end{definition}

Again, we sample $M^{(p)}$ from $\PP_\ttF^{(p)}$ and define its root vertex to be the head of the root edge of the Boltzmann map $M_0^{(p)}$ (cf.\ Section~\ref{sec:boltzmann}). We denote by $\cT^{(p)}$ the unique plane tree such that $\scoop(M^{(p)}) = \overline{\Loop}(\cT^{(p)})$ where its root vertex inherited from $M^{(p)}$ is equal to $\pi(\root)$. We now record the spine decomposition of the $\BGW$ tree $\cT^{(p)}$ as described in Proposition~\ref{prop:41}.
\begin{lemma}[{\cite[Theorem 3]{KR20}}]
  \label{lem:219}
  Let $V_\unif^{(p)}$ be a uniform vertex chosen from $\cT^{(p)}$. There exists a constant $c>0$ such that if $X$ is a positive random variable with density $cxe^{-x^2}dx$, then 
  \begin{equation}
    d_{\mathrm{TV}}\big(\Trunk(\cT^{(p)}, V_\unif^{(p)}), \Trunk^*_{\lfloor X\sqrt{p}\rfloor}\big)\rightarrow 0
  \end{equation}
  as $p\to\infty$. Here, $d_{\mathrm{TV}}$ denotes the total variation distance. 
  \end{lemma}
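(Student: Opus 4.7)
The plan is to view this as a combination of two classical ingredients: (i) the scaling limit of the height of a uniformly chosen vertex in a critical finite-variance BGW tree conditioned on size, and (ii) a Kesten-type size-biased spinal decomposition. Since $\cT^{(p)}$ is, by Proposition~\ref{prop:41}, precisely a BGW tree with finite-variance offspring distribution $\nu_{\bfq'}$ conditioned to have $2p+1$ vertices, both ingredients apply directly, and the cleanest route is indeed to quote \cite[Theorem~3]{KR20}. Below I sketch how I would reprove it from scratch.

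First, let $H^{(p)}$ denote the graph distance from $\root$ to $V_\unif^{(p)}$ in $\cT^{(p)}$. Aldous's convergence of the rescaled contour/height function of a critical finite-variance BGW tree conditioned on its total size to the Brownian excursion, combined with the fact that the uniform measure on the vertices of $\cT^{(p)}$ converges to the mass measure on the CRT, implies that $H^{(p)}/\sqrt{p}$ converges in distribution to the distance in $\mathcal{T}_{\mathbb{e}}$ between its root and an independent uniform point. The density of this distance with respect to the mass measure on $\mathcal{T}_{\mathbb{e}}$ is a Rayleigh law, i.e., proportional to $x e^{-x^2/(2\sigma^2)}\,dx$, where $\sigma^2$ is the variance of $\nu_{\bfq'}$; this identifies the constant $c$ up to the diffusive scaling.

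Second, conditional on $H^{(p)}=h$, I would show that the law of $\Trunk(\cT^{(p)}, V_\unif^{(p)})$ is close in total variation to $\Trunk^*_h$, uniformly for $h$ in windows of order $\sqrt{p}$. The natural tool is a bijective decomposition of a tree with a marked vertex into its spine plus the family of subtrees hanging off the spine. Under the conditioned BGW law, these off-spine subtrees are approximately i.i.d.\ BGW trees, and the offspring counts along the spine have a density (with respect to $\nu_{\bfq'}^{\otimes h}$) given by an explicit ratio of partition functions of BGW forests with prescribed total size. A local central limit theorem for the random walk with step distribution $\nu_{\bfq'}-1$ (finite variance ensures the standard Gnedenko LLT) shows that this ratio factorizes, in the regime $h\asymp\sqrt{p}$, into the size-biasing factor $i/\bar{\nu}_{\bfq'}$ at each spine vertex up to multiplicative errors of order $o(1)$, which is precisely the Radon--Nikodym derivative defining $\nu^*_{\bfq'}$ in \eqref{eq:47}.

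The main obstacle is turning these pointwise asymptotics into a total variation bound uniform in $h$. To do this I would split the possible offspring configurations into a typical event where all spine offspring counts are $O(\log p)$ (covered by the LLT estimate above) and an atypical event whose probability is controlled by a tail bound on $\nu_{\bfq'}$. The finite variance of $\nu_{\bfq'}$ suffices to make the atypical contribution $o(1)$, and integrating the resulting conditional bound against the Rayleigh limit of $H^{(p)}/\sqrt{p}$ from the first step yields the claimed total variation convergence.
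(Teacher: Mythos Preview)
The paper does not prove this lemma at all: it is stated with the attribution \cite[Theorem~3]{KR20} and simply quoted as a black box, exactly as you yourself suggest is the cleanest route. Your sketch of how one would reprove it (Rayleigh limit for the height of a uniform vertex via Aldous's contour-function convergence, plus a Kesten-type spinal decomposition with an LLT to identify the size-biased offspring law along the spine) is a reasonable outline of the standard argument, but there is nothing in the paper to compare it against.
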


The following key technical lemma links the metric structure of $M^{(p)}$ to that of $\cT^{(p)}$, allowing us to rephrase the main task in the proof of Proposition~\ref{prop:8} in terms of the spine decomposition of the BGW tree $\cT^{(p)}$. It is similar in its statement and proof to \cite[Lemma~15(ii)]{KR20}, with compares distances in a subcritical Boltzmann map $M$ to that in the associated tree $\cT_M$. Recall that $\pi$ denotes the natural projection from $V_\cT^{(p)}$ to $V_{\overline{\Loop}(\cT^{(p)})}=V_{\scoop(M^{(p)})} = V_{\partial M^{(p)}} \subset V_{M^{(p)}}$. 

\begin{proposition}
  \label{prop:129}
  Let $V_\unif^{(p)}$ be a uniformly chosen vertex on $\cT^{(p)}$. Then, there exists a constant $\hat \theta_{\ttF}$ such that 
\begin{equation}
    \label{eq:69}
    \frac{1}{\sqrt{p}}\big(d_{M^{(p)}}(\pi(\root),\pi(V_\unif^{(p)}))- \hat \theta_\ttF \, d_{\cT^{(p)}}(\root,V_\unif^{(p)})\big)\stackrel{d}{\rightarrow}0
\end{equation}
  as $p\rightarrow \infty$.
\end{proposition}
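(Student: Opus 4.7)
The proof will follow closely the approach of the analogous statement \cite[Lemma~15(ii)]{KR20} for subcritical Boltzmann maps, adapted to incorporate the additional structure provided by the submaps $\{M^{(p)}|_f\}_{f\in \fF(\partial M^{(p)})}$ glued into the faces of the boundary. The first step is to use Lemma~\ref{lem:219} to reduce to the case where the trunk $\mathrm{Trunk}(\cT^{(p)}, V_\unif^{(p)})$ is replaced with $\mathrm{Trunk}^*_H$ for $H = \lfloor X\sqrt p\rfloor$. Let $\root = v_0, v_1, \ldots, v_H = V_\unif^{(p)}$ denote the spine vertices; by Definition~\ref{def:bias}, each $v_i$ has an independent $\nu^*_{\bfq'}$-distributed number of children and $v_{i+1}$ is uniform among them. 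For each $i$, let $f_i \in \fF(\partial M^{(p)})$ be the face in the contracted looptree $\overline{\Loop}(\cT^{(p)})$ corresponding to the children of $v_i$, so that $\per(f_i) = c_i/2$ where $c_i$ is the number of children of $v_i$.

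Given the spine, Lemma~\ref{lem:39} implies that the submaps $\{M^{(p)}|_{f_i}\}_{i=0}^{H-1}$ are conditionally independent with $M^{(p)}|_{f_i} \sim \widehat{\PP}_\ttF^{(\per(f_i))}$. Define the crossing distance
\begin{equation}
    D_i := d_{M^{(p)}|_{f_i}}\!\big(\pi(v_i),\, \pi(v_{i+1})\big),
\end{equation}
understood as the distance between the two spine vertices (projected to $\partial f_i$) measured within the submap filling $f_i$ together with $\partial f_i$ itself. By the spinal structure, the random variables $(D_i)_{i=0}^{H-1}$ are i.i.d. The upper bound $d_{M^{(p)}}(\pi(\root),\pi(V_\unif^{(p)})) \leq \sum_{i=0}^{H-1} D_i$ is immediate from the triangle inequality. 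For the matching lower bound, we use planarity: since $\partial M^{(p)}$ is (up to the edge-duplication of Definition~\ref{def:scoop}) the contracted looptree $\overline{\Loop}(\cT^{(p)})$, each loop $\partial f_i$ on the spine separates $\pi(\root)$ from $\pi(V_\unif^{(p)})$, and the submaps attached to distinct faces share only their boundary loops with $\partial M^{(p)}$. Thus any path in $M^{(p)}$ between the two endpoints decomposes into subpaths, one contained in each $M^{(p)}|_{f_i} \cup \partial f_i$, whose lengths are bounded below by $D_i$ (up to an error controlled by the distance between the actual entry/exit points on $\partial f_i$ and $\pi(v_i), \pi(v_{i+1})$, which can be absorbed in the $o(\sqrt p)$ term).

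With these bounds in place, the proof is reduced to a weak law of large numbers for the i.i.d.\ sum $\sum_{i=0}^{H-1} D_i$. The constant $\hat\theta_\ttF$ is then defined as $\EE[D_0]$, and the key requirement is $\EE[D_0] < \infty$. Since $D_0 \leq \mathrm{diam}(M^{(p)}|_{f_0}) \leq \Vol(M^{(p)}|_{f_0})$, we can bound $\EE[D_0]$ using Lemma~\ref{lem:4} by a constant times $\EE[\mathrm{TPerm}(M^{(p)}|_{f_0})]$. Conditionally on $c_0 \sim \nu^*_{\bfq'}$, the expected total perimeter of a map drawn from $\widehat{\PP}_\ttF^{(c_0/2)}$ is finite and grows at most linearly in $c_0$ thanks to the exponential decay in Corollary~\ref{lem:21}, and $\nu^*_{\bfq'}$ has finite mean. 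Combining these, $\EE[D_0]<\infty$, and the weak law of large numbers yields
\begin{equation}
    \sum_{i=0}^{H-1} D_i = H \cdot \hat\theta_\ttF + o_P(\sqrt p),
\end{equation}
which together with $H = d_{\cT^{(p)}}(\root,V_\unif^{(p)}) + O(1)$ gives \eqref{eq:69}.

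The main technical obstacles will be, first, verifying the lower bound via the planarity decomposition with sufficient care that the mismatch between spine projections $\pi(v_i)$ and the actual crossing points of a geodesic on $\partial f_i$ contributes only an $o(\sqrt p)$ error (which follows because each $\per(f_i)$ has finite mean and the individual loop diameters are tight), and second, obtaining the moment bound on $D_0$ uniformly in $p$, which requires combining the subcriticality estimates from Section~\ref{sec:subcrit} with the conditional law of submaps identified in Lemma~\ref{lem:39}.
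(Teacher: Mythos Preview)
Your approach is essentially the same as the paper's: spine decomposition via Lemma~\ref{lem:219}, i.i.d.\ crossing distances across the spine faces, law of large numbers with $\hat\theta_\ttF$ defined as the mean crossing distance. Two places where the paper is cleaner than your writeup:

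\textbf{Exactness, not bounds.} The projected spine vertices $\pi(v_i)$ are cut vertices of $M^{(p)}$: in the contracted looptree two consecutive face-loops $\partial f_{i-1}$ and $\partial f_i$ share only the single vertex $\pi(v_i)$, and the submaps glued into distinct faces meet only along their boundaries. Hence any path from $\pi(\root)$ to $\pi(V_\unif^{(p)})$ must pass through $\pi(v_1),\dots,\pi(v_{h-1})$ in order, and
\[
d_{M^{(p)}}\big(\pi(\root),\pi(V_\unif^{(p)})\big)=\sum_{i=0}^{h-1} D_i
\]
exactly. Your hedge about entry/exit points differing from $\pi(v_i),\pi(v_{i+1})$ is unnecessary, and in fact your proposed fix (``absorb in the $o(\sqrt p)$ term because each $\per(f_i)$ has finite mean'') would not work if there were a nonzero per-step error: with $H\asymp\sqrt p$ steps, finite-mean errors of order one would accumulate to $O(\sqrt p)$, not $o(\sqrt p)$.

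\textbf{The moment bound is trivial.} You bound $\EE[D_0]$ via $\Vol$ and $\TPerm$ and Corollary~\ref{lem:21}, which is much more than needed (and requires transferring those estimates to the simple-boundary law $\widehat\PP_\ttF^{(\cdot)}$). The paper simply observes $D_0\le \per(f_0)=c_0/2$ by going around the boundary loop, and $c_0\sim\nu_{\bfq'}^*$ has finite mean.
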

\begin{proof}
Observe that given the unique path $(v_0,v_1,v_2,\dots ,v_{h})$ from $v_0 = \root$ to $v_h = V_\unif^{(p)}$ in $\cT^{(p)}$ (hence $h = d_{\cT^{(p)}}(\root, V_\unif^{(p)})$), if we define
 \begin{equation}
    Y_i:=d_{{M}^{(p)}}(\pi(v_i),\pi(v_{i+1}))
\end{equation}    
for each $i=0,1,\dots,h-1$,
then we have 
\begin{equation}
    d_{M^{(p)}}(\pi(\root),\pi(V^{(p)}))= \sum_{i=0}^{h-1} Y_i.
\end{equation}
This is because any path from $\pi(\varnothing)$ to $\pi(v_h)$ in $M^{(p)}$ must pass through the faces of $\partial M^{(p)}$ which are conjoined by $\pi(v_1), \pi(v_2),\dots,\pi(v_{h-1})$ in this exact order.

Let $p_i$ denote half the number of children of $k_i$. 
The following description is an immediate consequence of Proposition~\ref{prop:41} and Lemma~\ref{lem:39}.
\begin{quote}
    Given $(p_0,p_1,\dots,p_{h-1})$, the $Y_i$ are conditionally independent. The conditional distribution of $Y_i$ agrees with that of $d_{M_i}(v,w)$ where $M_i$ is sampled from $\widehat{\PP}_\ttF^{(p_i)}$ and $v,w$ are vertices sampled conditionally independently given $M_i$ from the uniform distribution on $\partial M_i$.
\end{quote}
Let $\hat p$ be sampled from $\nu_{\bfq'}^*$ and $\widehat{M}$ from $\widehat \PP_\ttF^{(\hat p)}$. Let $\widehat Y = d_{\widehat M}(v,w)$, where $v$ and $w$ are vertices sampled conditionally independently given $\widehat M$ from the uniform distribution on $\partial \widehat M$. Then, since $\widehat Y \leq \hat p$ and $\nu^*_{\bfq'}$ has a finite mean, $\EE[\widehat Y]<\infty$. Let $\widehat Y_0, \widehat Y_1,\dots$ be i.i.d.\ random variables with the law of $\widehat Y$.

We claim that \eqref{eq:69} holds with $\hat \theta_\ttF := \EE[\widehat Y]$. 
By Lemma~\ref{lem:219}, there exists a random variable $X$ independent from $\{\widehat Y_i\}_{i\in \NN^\#}$ with density $cxe^{-x^2}dx$ such that the total variation distance between the two random-length sequences $(Y_0, Y_1,\dots,Y_{\ell-1})$ and $(\widehat Y_0, \widehat Y_1,\dots,\widehat Y_{\lfloor X\sqrt p\rfloor})$ tends to 0 as $p\to\infty$. Hence, it suffices to check that 
\begin{equation}
    \frac{1}{\sqrt p} \bigg( \sum_{i=1}^{\lfloor X\sqrt p\rfloor} \widehat Y_i - \lfloor X\sqrt p\rfloor \EE\big[\widehat Y\big] \bigg) \stackrel{d}{\to} 0
\end{equation}
as $p\to\infty$. This is an immediate consequence of the law of large numbers: note that for any $0<x_1<x_2 < \infty$ and $\epsilon>0$, we have 
\begin{equation}\sup_{x\in [x_1,x_2]} \PP\bigg(\frac{1}{ x\sqrt p} \sum_{i=1}^{\lfloor x\sqrt p\rfloor} (\widehat Y_i -  \EE[\widehat Y]) \geq \frac{\epsilon}{x}\bigg) \to 0\end{equation}
as $p\to\infty$.
\end{proof}

\subsection{Proof of Proposition~\ref{prop:8}}
\label{sec:implies}
 We now move on to defining the various height functions on $\cT^{(p)}$. Let $\root = v_0,v_1,\dots,v_{2p}$ be the enumeration of vertices of $\cT^{(p)}$ in lexicographic order (i.e., depth-first search). Given a metric $d^{(p)}$ on $\cT^{(p)}$, we define the corresponding \textbf{height function} $\mathrm{H}:[0,1]\to [0,\infty)$ as 
 \begin{equation}
     \mathrm{H}_{\cT^{(p)};d^{(p)}}\bigg(\frac{i}{2p}\bigg) = d^{(p)}(\root, v_i) \quad \text{for } i=0,1,\dots,2p
 \end{equation}
 and linearly interpolated in between. Recall that $\pi$ denotes the natural projection from $V_{\cT}$ to $V_{\overline{\Loop}(\cT)}$. There are three height functions that we will be particularly interested in.
 \begin{itemize}
     \item $\mathrm{H}_{\cT}^{(p)}$ defined from the graph metric $d_{\cT^{(p)}}$ on $\cT^{(p)}$.
     \item $\mathrm{H}_{\Loop}^{(p)}$ defined from the pull-back of the graph metric on ${\overline{\Loop}}(\cT^{(p)})$ by $\pi$ (that is, $d^{(p)}(v,w) = d_{{\overline{\Loop}}(\cT^{(p)})}(\pi(v),\pi(w))$).
     \item $\mathrm{H}^{(p)}_M$ defined from the pull-back of the graph metric on $M^{(p)}$ by $\pi$ (that is, $d^{(p)}(v,w) = d_{M^{(p)}}(\pi(v),\pi(w))$.
 \end{itemize}

We also need the \textbf{contour function} of $\cT^{(p)}$. This time, let $\root = w_0,w_1,\dots,w_{4p}=\root$ be the enumeration of the vertices with $\cT^{(p)}$ (with duplicity) as we follow along the contour of $\cT^{(p)}$. Define $\mathrm{C}_\cT^{(p)}:[0,1]\to[0,\infty)$ as 
\begin{equation}
    \mathrm{C}_\cT^{(p)}\bigg(\frac{i}{4p}\bigg) = d_{\cT^{(p)}}(\root, w_i) \quad \text{for } i = 0,1,\dots,4p
\end{equation}
and linearly interpolated in between.

\begin{lemma}
  \label{lem:31}
  Let $\sigma_{\bfq'}^2$ be the variance of the offspring distribution $\nu_{\bfq'}$ for the BGW tree $\cT^{(p)}$ found in Proposition~\ref{prop:41} and let $\hat \theta_\ttF$ refer to the constant found in Proposition~\ref{prop:129}. Denote $\theta_\ttF := \hat \theta_\ttF \sigma_{\bfq'} / (2\sqrt 2)$. Let $(\bbe_t:0\leq t\leq 1)$ be the normalized Brownian excursion. Then, as $p\to \infty$, the joint convergence
  \begin{equation}\label{eq:205}
      \bigg(\frac{\sigma_{\bfq'}}{2\sqrt{2p}}\mathrm{C}_\cT^{(p)}(t) , \frac{\sigma_{\bfq'}}{2\sqrt{2p}}\mathrm{H}_\cT^{(p)}(t) , \frac{\theta_\ttF }{\sqrt{p}}\mathrm{H}_M^{(p)}(t) \bigg)_{0 \leq t \leq 1}  \xrightarrow{d} \big(\bbe_t,\bbe_t, \bbe_t\big)_{0\leq t\leq 1} 
  \end{equation}
  holds in the space $C([0,1])^3$ endowed with the supremum norm in each coordinate.
\end{lemma}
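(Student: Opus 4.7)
The proof splits naturally into two parts: first, establishing the convergence of the tree-coded contour and height functions to $(\bbe,\bbe)$; second, adding the map-based height function $\mathrm{H}_M^{(p)}$ by showing it is, with high probability, uniformly close on $[0,1]$ to a deterministic multiple of $\mathrm{H}_\cT^{(p)}$.

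For the first part, Proposition~\ref{prop:41} identifies $\cT^{(p)}$ as a critical BGW tree with offspring distribution $\nu_{\bfq'}$ of finite variance $\sigma_{\bfq'}^2$, conditioned on having $2p+1$ vertices. The classical Aldous invariance principle for size-conditioned Galton--Watson trees then yields the joint convergence
\begin{equation*}
\bigg(\frac{\sigma_{\bfq'}}{2\sqrt{2p}}\, \mathrm{C}_\cT^{(p)}(t),\; \frac{\sigma_{\bfq'}}{2\sqrt{2p}}\, \mathrm{H}_\cT^{(p)}(t)\bigg)_{0\leq t\leq 1} \stackrel{d}{\longrightarrow} (\bbe_t,\bbe_t)_{0\leq t\leq 1}
\end{equation*}
in $C([0,1])^2$ with the supremum norm, since these tree-coded functions depend only on the combinatorial structure of $\cT^{(p)}$.

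The crux of the second part is the uniform estimate: for every $\epsilon>0$,
\begin{equation*}
\PP\bigg(\sup_{u\in V_{\cT^{(p)}}}\frac{1}{\sqrt{p}}\big|d_{M^{(p)}}(\pi(\root),\pi(u)) - \hat\theta_\ttF\, d_{\cT^{(p)}}(\root,u)\big| > \epsilon\bigg) \to 0
\end{equation*}
as $p\to\infty$. Combined with the first part and the algebraic relation between $\theta_\ttF$ and $\hat\theta_\ttF$, this yields the uniform closeness of $(\theta_\ttF/\sqrt p)\,\mathrm{H}_M^{(p)}$ to $(\sigma_{\bfq'}/(2\sqrt{2p}))\,\mathrm{H}_\cT^{(p)}$ on $[0,1]$, and the triple joint convergence then follows from Slutsky's theorem in $C([0,1])^3$. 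The strategy for the uniform estimate is to exploit the additive decomposition appearing in the proof of Proposition~\ref{prop:129}: along any root-to-$u$ path $\root=v_0,v_1,\ldots,v_h=u$ in $\cT^{(p)}$, one has $d_{M^{(p)}}(\pi(\root),\pi(u)) = \sum_{i=0}^{h-1} Y_i$, where the $Y_i$ are conditionally independent given the half-children counts $p_i$ of $v_0,\ldots,v_{h-1}$, each distributed as the graph distance between two uniformly chosen boundary vertices of an independent map sampled from $\widehat{\PP}_\ttF^{(p_i)}$, with conditional mean depending only on $p_i$ and with population average $\hat\theta_\ttF$ under the sized-biased law $\nu_{\bfq'}^*$.

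The main obstacle is upgrading the pointwise-at-a-random-vertex content of Proposition~\ref{prop:129} to a uniform bound over the $O(p)$ vertices of $\cT^{(p)}$. The plan is a union-bound plus concentration argument: (i) the exponential decay of $\mu_{\bfq'}$ from Lemma~\ref{lem:15} gives exponential tails for the $p_i$ and strong moment control for each $Y_i$; (ii) Bernstein-type concentration for sums of conditionally independent random variables then shows that, along any fixed spine of length $h\leq C\sqrt{p}\log p$, the probability that $\sum_i Y_i$ deviates from its conditional mean by $\epsilon\sqrt p$ decays faster than any polynomial in $p$; (iii) the Aldous convergence from the first part already supplies the high-probability bound $\max_{u\in V_{\cT^{(p)}}} d_{\cT^{(p)}}(\root,u) = O(\sqrt p\log p)$ needed to make the union bound over all root-to-$u$ paths feasible; (iv) a uniform spine law of large numbers, based on the same exponential-tail control, replaces $\sum_i \EE[Y_i\mid p_i]$ by $\hat\theta_\ttF\, h + o(\sqrt p)$ simultaneously along all spines. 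Putting these ingredients together yields the required uniform convergence and hence the triple joint convergence to $(\bbe,\bbe,\bbe)$.
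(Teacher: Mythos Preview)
Your approach differs substantially from the paper's, and the difference is instructive. The paper does not attempt the uniform estimate
\[
\sup_{u\in V_{\cT^{(p)}}}\frac{1}{\sqrt{p}}\big|d_{M^{(p)}}(\pi(\root),\pi(u)) - \hat\theta_\ttF\, d_{\cT^{(p)}}(\root,u)\big| \to 0
\]
at all. Instead, it argues via tightness plus identification of the limit: the family $\big((1/\sqrt p)\,\mathrm{H}_M^{(p)}\big)_p$ is tight in $C([0,1])$ because $d_{M^{(p)}}(\pi(v),\pi(w)) \le d_{\overline\Loop(\cT^{(p)})}(\pi(v),\pi(w))$ for all $v,w$, so the modulus of continuity of $\mathrm{H}_M^{(p)}$ is dominated by that of $\mathrm{H}_{\Loop}^{(p)}$, whose tightness is already known from \cite{KR20}. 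Given tightness, along any subsequence the triple converges to $(\bbe,\bbe,Z)$ for some continuous $Z$ coupled with $\bbe$; Proposition~\ref{prop:129} then forces $Z_U=\bbe_U$ for $U$ uniform on $[0,1]$, hence $Z=\bbe$ by continuity. No union bound over vertices is needed.

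Your route has two genuine gaps. First, Lemma~\ref{lem:15} gives exponential tails for $\mu_{\bfq'}$, the face--perimeter distribution of the Boltzmann gasket, not for $\nu_{\bfq'}$, the offspring law of the boundary BGW tree $\cT^{(p)}$ in Proposition~\ref{prop:41}. The paper only records that $\nu_{\bfq'}$ has finite variance, so your Bernstein step (ii) lacks the moment control you assume for the $p_i$ (and hence for the $Y_i$, which are only bounded by $p_i$). Second, and more seriously, step (iv) --- the ``uniform spine law of large numbers'' replacing $\sum_i \EE[Y_i\mid p_i]$ by $\hat\theta_\ttF\, h$ simultaneously along all root-to-$u$ paths --- is not supported by the tools available. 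Lemma~\ref{lem:219} is a total-variation statement about the trunk to a \emph{single uniform} vertex; it does not control the joint law of the children counts along the $O(p)$ distinct spines in the size-conditioned tree, and a naive union bound cannot be fed by a total-variation approximation of this kind. Absent a separate argument that the empirical distribution of the $p_i$ along \emph{every} spine is close to $\nu_{\bfq'}^*$, the replacement in (iv) is unjustified. The paper's tightness-plus-uniform-vertex argument sidesteps both issues entirely.
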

\begin{proof}
    The proof is nearly identical to that of \cite[Proposition~13]{KR20}. Comparing Lemma~15(ii) in \cite{KR20} with Proposition~\ref{prop:129}, all that needs to be shown is the tightness of the processes $((1/\sqrt p)\mathrm{H}_M^{(p)})_{p\in \NN}$. This follows from \cite[Lemma~15(i)]{KR20}, which states that $((1/\sqrt p) \mathrm{H}_{\Loop}^{(p)})_{p\in \NN}$ is tight,\footnote{To be precise, the statement in \cite[Lemma~15]{KR20} is for the height function associated with the non-contracted looptree. The proof for the contracted looptree also holds as well, as discussed in Section~6.1 of the same paper.} since we have $d_{M^{(p)}} (\pi(v),\pi(w))\leq d_{\Loop(\cT^{(p)})}(\pi(v),\pi(w))$ for every $v,w\in V_{\cT^{(p)}}$. 
\end{proof}

We are finally ready to complete the proof of Proposition \ref{prop:8} and thereby that of Theorem \ref{thm:1}. The proof proceeds in a similar manner as \cite[Section 5.4]{KR20}.
\begin{proof}[Proof of Proposition \ref{prop:8}]
  We begin by choosing a Skorokhod embedding of $(\cT^{(p)})_{p\in \NN}$ and $\bbe$ in the same probability space such that the convergence in \eqref{eq:205} holds almost surely. Again let $\root= v_0^{(p)},v_1^{(p)},\dots,v_{2p}^{(p)}$ be the enumeration of vertices of $\cT^{(p)}$ in lexicographic order and let $\pi$ be the projection from $\cT^{(p)}$ onto $\overline{\Loop}(\cT^{(p)})$. 
  Let $\cT_{\bbe}$ be the CRT defined from the Brownian excursion $\bbe$ and let $\mathbf{p}_{\bbe}$ denote the canonical projection map from $[0,1]$ to $\cT_{\bbe}$. We now define a correspondence $\cR^{(p)}$ between ${\mathcal X}^{(p)}$ and $\cT_{\bbe}$ by
  \begin{equation}
    \label{eq:99}
    \cR^{(p)}=
    \left\{
      \big(\pi(v_i^{(p)}),\mathbf{p}_{\bbe}(s)\big)\in \mathcal X^{(p)}\times \cT_{\bbe} : i=\lfloor 2ps\rfloor,  s\in [0,1],i\in [\![0,2p]\!].
    \right\},
  \end{equation}
  Recall that the distortion of this correspondence is defined as
  \begin{equation}
    \label{eq:148}
    \dis(\cR^{(p)})=\sup\bigg\{\left|\frac{\theta_\ttF}{\sqrt{p}}d_{M^{(p)}}(v,w)-d_{\bbe}(s,t)\right|: (v,\mathbf{p}_{\bbe}(s)),(w,\mathbf{p}_{\bbe}(t))\in \cR^{(p)}\bigg\}.
  \end{equation}
 We now show that $\dis(\cR^{(p)})\rightarrow 0$ almost surely as $p\rightarrow \infty$, arguing by contradiction. If this convergence were not true, then there would be an $\epsilon>0$ such that with positive probability, there exists a sequence of integers $p_n$ with corresponding indices $i_{p_n},j_{p_n}$ and $s_{p_n},t_{p_n}$ for which $(v_{i_{p_n}}^{(p_n)},\mathbf{p}_{\bbe}(s_{p_n})),(v_{j_{p_n}}^{(p_n)},\mathbf{p}_{\bbe}(t_{p_n}))\in \cR^{(p_n)}$ and
  \begin{equation}
    \label{eq:100}
    \left|\frac{\theta_\ttF}{\sqrt{p}}d_{M^{(p_n)}}(v^{(p_n)}_{i_{p_n}},v^{(p_n)}_{j_{p_n}})-d_{\bbe}(s_{p_n},t_{p_n})\right|>\epsilon.
  \end{equation}
  Passing to a subsequence if necessary, we may assume that \begin{equation} \lim_{n\to\infty}\frac{i_{p_n}}{2p_n}=\lim_{n\rightarrow \infty}s_{p_n}=s \quad \text{and} \quad \lim_{n\to\infty}\frac{j_{p_n}}{2p_n}=\lim_{n\rightarrow \infty}t_{p_n}=t\end{equation} for some constants $0\leq s<t\leq 1$. 
  
  For $0 \leq i < j \leq 2p$, let $\mathrm m^{(p)}(i,j)\in [\![0,2p]\!]$ be the smallest index so that $v_{\mathrm m^{(p)}(i,j)}^{(p)}$ is the most recent common ancestor of $v_i^{(p)}$ and $v_j^{(p)}$ in $\cT^{(p)}$. Let $f_i^{(p)}\in \fF(\partial M^{(p)})$ be the face bounded by $\pi($children of $v_i^{(p)}$). We claim that 
  \begin{equation}
      \left| d_{M^{(p)}}(\pi(v_i^{(p)}),\pi(v_j^{(p)})) - \bigg( \mathrm{H}_M^{(p)}\bigg(\frac{i}{2p}\bigg) + \mathrm{H}_M^{(p)}\bigg(\frac{j}{2p}\bigg) - 2\mathrm{H}_M^{(p)}\bigg( \frac{\mathrm{m}^{(p)}(i,j)}{2p} \bigg) \bigg) \right| \leq 2\per(f^{(p)}_{\mathrm{m}^{(p)}(i,j)}).
  \end{equation}
  This is because if $w_i$ and $w_j$ are the children of $\mathrm{m}^{(p)}(i,j)$ which are the ancestors of $v_i^{(p)}$ and $v_j^{(p)}$, respectively, then a geodesic from $\pi(v_i^{(p)})$ to $\pi(v_j^{(p)})$ in $M^{(p)}$ must pass through $\pi(w_i)$ and $\pi(w_j)$. Hence, the difference in the left-hand side is equal to 
  \begin{equation}|d_{M^{(p)}}(\pi(w_i),\pi(w_j)) - d_{M^{(p)}}(\pi(w_i), \pi(v_{\mathrm m^{(p)}(i,j)}^{(p)})) - d_{M^{(p)}}(\pi(w_j),\pi(v_{\mathrm m^{(p)}(i,j)}^{(p)}))|.\end{equation}
  Since geodesics for the all three pairs must stay in $M^{(p)}|_{f^{(p)}_{\mathrm{m}^{(p)}(i,j)}}$, each of them are bounded above by the maximal distance between two boundary points of this face, which is at most $\per(f^{(p)}_{\mathrm{m}^{(p)}(i,j)})$.
  
  Recalling Proposition~\ref{prop:4}, since $p^{-1/2} \partial M^{(p)}$ converges in distribution to a multiple of the CRT with respect to the Gromov--Hausdorff distance, we have $p^{-1/2} \max_{i\in[\![0,2p]\!]} \per(f^{(p)}_i) \to 0$ almost surely as $p\to\infty$ (see \cite[Equation (15)]{KR20}).  
    Moreover, if we let $b^{(p)}(i):= 2i - \mathrm{H}_\cT^{(p)}(i/(2p)) \in [\![0,4p]\!]$ denote the first time that we encounter $v_i^{(p)}$ in the enumeration of vertices of $\cT^{(p)}$ in contour order, then     
    \begin{equation}
        \mathrm{H}_{\cT}^{(p)}\bigg( \frac{\mathrm{m}^{(p)}(i,j)}{2p} \bigg) = \mathrm{C}_\cT^{(p)}\bigg( \frac{b^{(p)}(\mathrm{m}^{(p)}(i,j))}{4p} \bigg) = \inf_{\frac{b^{(p)}(i)}{4p} \leq u \leq \frac{b^{(p)}(j)}{4p}} \mathrm{C}_\cT^{(p)}(u).
    \end{equation}
  Lemma~\ref{lem:31} implies $(\frac{b^{(p)}(2pt)}{4p})_{0\leq t\leq 1} \to (t)_{0\leq t\leq 1}$ almost surely in $C([0,1])$ and 
    \begin{equation}
        \sup_{0\leq t\leq 1} \left|\frac{b^{(p)}(2pt)}{4p} - t\right| + \sup_{0\leq t\leq 1} \left|\frac{\theta_\ttF}{\sqrt p}\mathrm{H}_M^{(p)}(t) - \frac{\sigma}{2\sqrt{2p}} \mathrm{H}_\cT^{(p)}(t) \right| \to 0
    \end{equation}
    almost surely as $p\to\infty$. Hence,
    \begin{equation}
        \lim_{n\to\infty} \frac{ \theta_\ttF }{\sqrt{p_n}}d_{M^{(p_n)}}\big(\pi(v_{i_{p_n}}^{(p_n)}), \pi(v_{j_{p_n}}^{(p_n)})\big) = \bbe_s + \bbe_t - 2\inf_{s\leq u\leq t} \bbe_u = d_{\bbe}(s,t) = \lim_{n\to\infty} d_{\bbe}(s_{p_n},t_{p_n})
    \end{equation}
    almost surely. This is a contradiction, thus proving $\dis(\mathcal R^{(p)}) \to 0$ as $p\to\infty$.
\end{proof}

\bibliographystyle{alpha}
{\small
\bibliography{stars}}

\end{document}